\numberwithin{equation}{section}
\theoremstyle{plain}
\newtheorem{theorem}{Theorem}[section]
\newtheorem{corollary}[theorem]{Corollary}
\newtheorem{lemma}[theorem]{Lemma}
\newtheorem{proposition}[theorem]{Proposition}
\theoremstyle{remark}
\newtheorem{remark}[theorem]{Remark}
\theoremstyle{definition}
\newtheorem{definition}[theorem]{Definition}
\newtheorem{notation}[theorem]{Notation}
\def\R{\mathbb{R}}
\def\C{\mathbb{C}}
\def\Z{\mathbb{Z}}
\newcommand{\lie}[1]{\mathfrak{#1}}
\newcommand{\lier}[1]{\mathfrak{#1}_{0}}
\newcommand{\roots}[2]{\Delta({#1}, {#2})}
\newcommand{\proots}[2]{\Delta^{+}({#1}, {#2})}
\def\gr{\mathrm{gr}}
\def\Supp{\mathrm{Supp}}
\def\AssVar{\mathcal{AV}}
\def\AssCyc{\mathcal{AC}}
\def\Hom{\mathrm{Hom}}
\def\Ker{\mathrm{Ker}}
\def\Alt{\mathrm{Alt}}
\def\Ind{\mathrm{Ind}}
\def\Ad{\mathrm{Ad}}
\def\ad{\mathrm{ad}}
\def\pr{\mathrm{pr}}
\def\Dim{\mathrm{Dim}}
\def\Deg{\mathrm{Deg}}
\def\WF{\mathrm{WF}}
\def\Wh{\mathrm{Wh}}
\def\sgn{\mathrm{sgn}}
\def\d{\partial}
\newcommand{\vect}[1]{\mathbf{#1}}
\def\I{\iota}
\title{Discrete series Whittaker functions on $Spin(2n,2)$} 
\author{Kenji Taniguchi}
\thanks{Department of Physics and Mathematics, 
Aoyama Gakuin University, 
5-10-1, Fuchinobe, Sagamihara, Kanagawa 229-8558, Japan. 
(taniken@gem.aoyama.ac.jp)}
\subjclass[2000]{Primary 22E30, Secondary 33C15, 11F30}
\begin{document}

\begin{abstract}
Discrete series Whittaker functions on $Spin(2n, 2)$ are studied. 
The dimensions of the space of both algebraic and continuous Whittaker
modelss are explicitly determined. 
They are described by a sum of dimensions of irreducible
representations of $Spin(2n-3, 2)$. 
Also obtained are the Mellin-Barnes type integral formulas of the
Whittaker functions associated with minimal $K$-type vectors. 
\end{abstract}

\maketitle


\section{Introduction}
Let $G_{\R}$ be a real semisimple Lie group and 
$G_{\R} = K_{\R} A_{\R} N_{\R}$ be its Iwasawa decomposition. 
Let $\eta$ be a one dimensional unitary representation of $N_{\R}$. 
Given a representation $\pi$ of $G_{\R}$, a realization of $\pi$ in
the induced representation $\Ind_{N_{\R}}^{G_{\R}} \eta$ is called a
Whittaker model of $\pi$.  
By this realization, a vector $v$ in $\pi$ is expressed by a function
on $G_{\R}$, which we call the Whittaker function associated with $v$. 

Mainly from the number theoretical point of view, Takayuki Oda and
his colleagues have calculated explicit formulas of Whittaker
functions and generalized spherical functions on various low rank
groups and of various representations. 

Besides the number theoretical interest, the theory of Whittaker
models are also interesting from the analytic points of view. 
It is well known that, 
in the case of the principal series of $SL(2, \R)$, 
the Whittaker functions associated with a $K_{\R}$-type vector is
expressed by the classical Whittaker's confluent hypergeometric
function. 
In the case of representations of higher rank groups, 
the image of a Whittaker model gives an example of multi-variable
confluent hypergeometric functions. 
Therefore, we may expect that we can study such functions by means of
representation theory. 

Another interesting aspect is the relationship with the invariants of
representations. 
First, let us consider algebraic Whittaker models, 
namely $(\lie{g}, K_{\R})$-module intertwining operators from a
Harish-Chandra $(\lie{g}, K_{\R})$-module $\pi$ 
to $\Ind_{N_{\R}}^{G_{\R}} \eta$. 
If the one dimensional unitary representation $\eta$ of $N_{\R}$
is non-degenerate (cf.\S \ref{subsection:Whittaker models}), 
we can judge the existence of non-trivial Whittaker models by the
Gelfand-Kirillov dimension of $\pi$, 
and the dimension of the Whittaker models is given by the Bernstein
degree of it. 
Secondly, let us consider continuous Whittaker models, 
namely continuous intertwining operators from the
$C^{\infty}$-globalization $\pi_{\infty}$ of a $(\lie{g},
K_{\R})$-module $\pi$ to the $C^{\infty}$-induced space 
$C^{\infty}$-$\Ind_{N_{\R}}^{G_{\R}} \eta$. 
We can judge the existence of non-trivial continuous Whittaker models
by the wave front set of $\pi$. If $\pi$ is a discrete series, 
the dimension of continuous Whittaker models are expressed by the
Bernstein degree and geometric data of nilpotent orbits. 
These results, due to H. Matumoto (\cite{M1}, \cite{M2}), 
are summarized in \S \ref{section:general theory}.

The author thinks that we can observe the relationship between the
invariants of representations and the structure of Whittaker models more
clearly when we treat Whittaker models of non-quasi-split groups than
that of quasi-split groups. 
Suppose $G_{\R}$ is a non-quasi-split real semisimple Lie group. 
Consider a discrete series representation $\pi$ which has non-trivial
Whittaker models. 
Then it is observed that the degree of an irreducible component of the
associated variety corresponds to the dimension of the solution space
of one system of differential equation. 
It is also observed that the multiplicity of the
associated cycle corresponds to the dimension of the right 
$Z_{M_{\R}}(\eta)$-module structure of the solution space of the
gradient type differential-difference equation 
$\mathcal{D}_{\tilde{\lambda}, \eta} \phi = 0$.  
(For the definition of this equation, 
see \S \ref{subsection:Realization of Whittaker functions}). 
Here, $M_{\R}(\eta)$ is the centralizer of $\eta$ in 
$M_{\R} = Z_{K_{\R}}(A_{\R})$. 
This observation is obtained in a former paper \cite{T} of the
author's. 
He expects that such correspondence holds for general higher rank group
cases, and hopes to explain it in a natural way. 

Until now, there are not so many concrete examples of non-quasi-split
cases. 
For such reasons, we investigates the case when $G_{\R}$ is the
non-quasi-split group $Spin(2n, 2)$, 
$\pi$ is a discrete series of $G_{\R}$, 
and $\eta$ is a non-degenerate character of $N_{\R}$ in this paper. 
This setting is the same as in \cite{HO}, except for the group
$G_{\R}$. 
Since $Spin(4, 2) \simeq SU(2, 2)$, this paper is a generalization of
\cite{HO}.

The sections are organized as follows. 
In \S \ref{section:Spin(2n,2)}, we briefly review the structure
of $Spin(2n, 2)$ and parametrize its discrete series
representations. 
The set of discrete series representations of $Spin(2n,2)$ is divided
into $2n+2$ parts $\Xi_{m,\pm}$, $m = 1, \dots, n+1$. 
\S \ref{section:general theory} is mainly devoted to the
presentation of general theory. 
We review H. Matumoto's results on the existence and the dimension of
Whittaker models in \S \ref{subsection:Whittaker models}, 
J. T. Chang's results on the associated cycles of discrete series in
\S \ref{subsection:Chang's results}, and H. Yamashita's
results on the realization of algebraic Whittaker models of discrete
series in \S \ref{subsection:Realization of Whittaker functions}. 
In \S \ref{subsection:SO case}, we apply Chang's theory to our
$Spin(2n, 2)$ case. 
As a result, combined with Theorem~\ref{thm:last theorem, algebraic}, 
we obtain the dimension of the space of algebraic Whittaker models: 
\begin{theorem}[Corollary~\ref{cor:existing cases},
    Theorem~\ref{thm:Bernstein}]   
Suppose $G_{\R} = Spin(2n, 2)$.   
\begin{enumerate}
\item
The discrete series $\pi_{\Lambda}$ has non-trivial algebraic
Whittaker models 
if and only if $\Lambda \in \Xi_{m,\pm}$, $m = 2, \dots, n$. 
\item
The dimension of the space of algebraic Whittaker models of
$\pi_{\Lambda}$, $\Lambda \in \Xi_{m,\pm}$, $m=2,\dots,n$, 
is 
\[4 \sum_{\genfrac{}{}{0pt}{}{
\lambda_1 \geq \mu_1 \geq \lambda_2 \geq \dots 
\geq \lambda_{m-2} \geq \mu_{m-2} \geq \lambda_{m-1}}
{\lambda_m \geq \mu_1' \geq \lambda_{m+1} \geq \dots 
\geq \lambda_{n-1} \geq \mu_{n-m}' \geq |\lambda_n|} }
\dim
V_{(\mu_1,\dots,\mu_{m-2},\mu_1',\dots,\mu_{n-m}')}^{Spin(2n-3,\C)}. 
\]
\end{enumerate}
\end{theorem}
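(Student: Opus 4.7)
The plan is to combine the general machinery reviewed in \S\ref{section:general theory} with the explicit structural information for $Spin(2n,2)$ gathered in \S\ref{section:Spin(2n,2)} and \S\ref{subsection:SO case}.

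For part (1), Matumoto's criterion says that $\pi_{\Lambda}$ admits a non-trivial algebraic Whittaker model with respect to the non-degenerate $\eta$ exactly when the Gelfand--Kirillov dimension of $\pi_{\Lambda}$ is maximal, i.e.\ when $\AssVar(\pi_{\Lambda})$ is the closure of a principal nilpotent $K_{\C}$-orbit in $(\lie{g}/\lie{k})^{*}$. I would read off $\AssVar(\pi_{\Lambda})$ for each class $\Xi_{m,\pm}$ from Chang's description and the explicit calculation carried out in \S\ref{subsection:SO case}. The extremal parameters $\Xi_{1,\pm}$ and $\Xi_{n+1,\pm}$ correspond to the (anti-)holomorphic discrete series, whose associated variety is the closure of a nilpotent orbit of strictly smaller dimension; the intermediate parameters $m=2,\dots,n$ yield associated varieties of the required dimension, which establishes the equivalence.

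For part (2), Matumoto's theorem identifies the dimension with the Bernstein degree $\Deg(\pi_{\Lambda})$, and Chang's formula expresses $\Deg(\pi_{\Lambda})$ as a sum over the irreducible components of $\AssCyc(\pi_{\Lambda})$ of each multiplicity times the degree of the corresponding orbit closure. I would compute each multiplicity as a $K_{\C}$-isotypic component count in the associated graded module, using the Blattner formula for the $K_{\C}$-types of $\pi_{\Lambda}$ as applied in \S\ref{subsection:SO case} and restricting through the natural chain from $Spin(2n)\times Spin(2)$ to $Spin(2n-3)$. The position $m$ of the Harish-Chandra parameter splits the weight sequence $(\lambda_1,\dots,\lambda_n)$ into two blocks $(\lambda_1,\dots,\lambda_{m-1})$ and $(\lambda_m,\dots,\lambda_n)$, to which the classical interlacing branching rules apply independently and so produce the two interlacing chains appearing in the index of the sum. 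The overall factor $4$ should arise as the product of a factor $2$ from the pair of irreducible components of $\AssVar(\pi_{\Lambda})$ distinguished by the $\pm$ sign with another factor $2$ coming from an intrinsic degeneracy, most likely from the $Spin(2)$ factor or from the double cover.

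The principal obstacle I anticipate is translating Chang's abstract description of the associated cycle into the precise combinatorial branching pattern centred at the index $m$. Verifying that the sum of $K_{\C}$-multiplicities telescopes exactly to the stated sum of $\dim V_{(\mu_1,\dots,\mu_{m-2},\mu_1',\dots,\mu_{n-m}')}^{Spin(2n-3,\C)}$ over the double interlacing domain requires delicate bookkeeping, and additional care is needed in the degenerate boundary cases $m=2$ and $m=n$, where one of the two interlacing blocks collapses.
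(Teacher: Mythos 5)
Your part (1) and the branching computation in part (2) follow essentially the paper's own route: Matumoto's criterion (Theorem~\ref{thm:matu-1}) reduces existence to the Gelfand--Kirillov dimension, which is computed orbit-by-orbit exactly as in Proposition~\ref{prop:GK dim}, and the multiplicity in the associated cycle is obtained from Chang's theory (after verifying the hypothesis \eqref{eq:Chang's hypothesis}) by restricting through $K\cap Q$ down to $Spin(2n-3,\C)$, producing the double interlacing sum of Theorem~\ref{thm:Bernstein}. One descriptive slip: only $\Xi_{1,\pm}$ are the holomorphic/antiholomorphic families (Gelfand--Kirillov dimension $2n$); the families $\Xi_{n+1,\pm}$ have dimension $4n-3$, so they are not antiholomorphic --- but since you compute the dimensions explicitly anyway, this does not derail the argument for (1).

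The genuine gap is the overall factor $4$. Your proposed explanation --- a factor $2$ from ``the pair of irreducible components of $\AssVar(\pi_{\Lambda})$ distinguished by the $\pm$ sign'' times a factor $2$ from the $Spin(2)$ factor or the double cover --- cannot work: for a discrete series the associated variety is the closure of a \emph{single} $K$-orbit on $\lie{p}$, hence irreducible, so $\AssCyc(\pi_{\Lambda})$ is an integral multiple of one component and no such factor $2$ arises. What remains after Chang's multiplicity is the degree of that orbit closure, i.e.\ the constant $C$ in Theorem~\ref{thm:Bernstein}, and nothing in \S\ref{section:general theory} or \S\ref{subsection:SO case} evaluates it. The paper determines $C=4$ only through the analytic work of \S\ref{section:Radial A part} and \S\ref{section:solution}: one writes out $\mathcal{D}_{\tilde{\lambda},\eta}\phi=0$ in the Gelfand--Tsetlin basis, reduces to the corner vector $Q_{n-1}^{+}$ (Theorem~\ref{thm:corner}), solves the resulting scalar system by Mellin--Barnes integrals, and then uses the shift operators of Propositions~\ref{prop:first shift operator} and \ref{prop:S_2} to show that among the $2N_{2}$ local solutions only the four functions $f_{j}^{L}(Q_{n-1}^{+},n-1;t)$, $j=1,2$, $L=K,I$, can generate solutions of the whole system (Theorem~\ref{thm:last theorem, algebraic}); this gives $C\leq 4$, compatibility in the case $m=n$ gives $C=4$ there, and the independence of $C$ from $\Lambda$ then yields $4$ for all $m$. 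Your proposal contains no mechanism replacing this step, so the coefficient $4$ in the stated formula is unproved.
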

In \S \ref{section:Radial A part}, we write down the gradient type
differential-difference equations, which describe
discrete series Whittaker functions, by using Gelfand-Tsetlin basis of
the minimal $K_{\R}$-type. 
After that, we put them in order. 
Solving such differential-difference equations reduces to getting 
a coefficient function of some special minimal $K_{\R}$-type vector, 
which we call ``corner vector''. 
This reduction is discussed in
\S \ref{subsection:corner}, and the result is as follows. 
\begin{theorem}[Theorem~\ref{thm:corner}]
Let $Q_{n-1}^{+}$ be the Gelfand-Tsetlin pattern defined in
Definition~\ref{def:Q^pm}. 
Let 
$\phi(a) 
= 
\sum_{Q \in GT(\lambda)} 
c(Q; a) Q$ 
be a solution of the differential-difference equation 
$\mathcal{D}_{\tilde{\lambda}, \eta} \phi = 0$, which describes an
algebraic Whittaker model of the discrete series $\pi_{\Lambda}^{\ast}$, 
$\Lambda \in \Xi_{m,\pm}$, $m = 2, \dots, n$. 
If $c(Q_{n-1}^{+}; a)$ is known, then 
it determines all the $c(Q; a)$ for $Q \in GT(\lambda)$ containing the
same $\vect{q}_{2n-4}$ part as $Q_{n-1}^{+}$. 
\end{theorem}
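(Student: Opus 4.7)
The plan is to read the gradient-type system $\mathcal{D}_{\tilde{\lambda}, \eta}\phi = 0$ as a collection of scalar relations on the coefficients $c(Q; a)$, whose adjacency is controlled by the branching/Clebsch–Gordan rules for the Gelfand–Tsetlin basis of the minimal $K_{\R}$-type. Projecting each component of $\mathcal{D}_{\tilde{\lambda}, \eta}\phi$ onto a Gelfand–Tsetlin basis vector of the target $K_{\R}$-module yields a relation of the schematic form
\[
\sum_{Q' \text{ adjacent to } Q}\bigl(A_{Q,Q'}(a)\, \d_{a} + B_{Q,Q'}(a)\bigr)\, c(Q'; a) \;=\; 0,
\]
where ``adjacent'' means that $Q'$ differs from $Q$ by $\pm 1$ in a single entry of one of the Gelfand–Tsetlin rows, and where the $A_{Q,Q'}$, $B_{Q,Q'}$ are the standard $\lie{so}$-action matrix coefficients. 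My first step is to write these relations down explicitly using the formulas from \S \ref{subsection:Realization of Whittaker functions}, and to identify which rows of the pattern actually get moved by the gradient operator: the constraint ``same $\vect{q}_{2n-4}$ part'' in the statement reflects the fact that only entries in the upper rows $\vect{q}_{r}$, $r \geq 2n-3$, are shifted.

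Next, I would order the set
\[
\{Q \in GT(\lambda) : \vect{q}_{2n-4}(Q) = \vect{q}_{2n-4}(Q_{n-1}^{+})\}
\]
as a finite sequence $Q^{(0)} = Q_{n-1}^{+},\, Q^{(1)}, \dots, Q^{(N)}$ in which each $Q^{(k)}$ is obtained from some $Q^{(j)}$ with $j<k$ by lowering exactly one entry in an upper row by one, while preserving all betweenness inequalities. Such an ordering exists because $Q_{n-1}^{+}$ is, by its definition in Definition~\ref{def:Q^pm}, the pattern that is extremal (maximal) in the upper rows subject to the fixed $\vect{q}_{2n-4}$ block: every permissible configuration is reached from $Q_{n-1}^{+}$ by a chain of single-entry decrements. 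At each step I choose the component of the system in which $Q^{(k)}$ appears as the \emph{only} not-yet-determined neighbour, and solve that relation for $c(Q^{(k)}; a)$ in terms of the coefficients $c(Q^{(j)}; a)$, $j<k$, and their first derivatives. An induction on $k$ then gives the theorem.

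The main obstacle I foresee is showing that the ``leading'' matrix coefficient attached to $Q^{(k)}$ at each step is non-zero, so that the equation is genuinely solvable for $c(Q^{(k)}; a)$. This reduces to checking that the standard Gelfand–Tsetlin factors of the form $q_{i}^{(r)} \pm q_{j}^{(r-1)} + c$ appearing in the numerators of the $\lie{so}$ matrix coefficients do not vanish along the descent from $Q_{n-1}^{+}$. Because the descent stays strictly inside the permitted region of patterns and the factors that would vanish occur only \emph{outside} the betweenness cone, this non-vanishing should follow by a case-by-case inspection of the explicit formulas already assembled in \S \ref{section:Radial A part}. Consistency of the recursion—i.e.\ path-independence when several chains from $Q_{n-1}^{+}$ lead to the same $Q^{(k)}$—is automatic from the hypothesis that $\phi$ is a solution of the full system $\mathcal{D}_{\tilde{\lambda}, \eta}\phi = 0$, and so requires no separate argument.
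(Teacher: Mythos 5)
Your proposal has two genuine gaps, and both occur at exactly the points where the paper's proof has to work hardest. First, the combinatorial backbone of your plan is false: you assert that $Q_{n-1}^{+}$ is \emph{maximal} in the variable rows subject to the fixed $\vect{q}_{2n-4}$ block, so that every admissible pattern is reached by a chain of single-entry decrements. By Definition~\ref{def:Q^pm} and condition \eqref{eq:5.13(4)} with $m'=n-1$, the pattern $Q_{n-1}^{+}$ has $q_{2n-2,p}=\lambda_{p+1}$ for $p\in[m-1,n-2]$, i.e.\ these entries sit at their \emph{minimum}, while $q_{2n-3,p}=q_{2n-4,p}$ and $q_{2n-2,n-1}=q_{2n-3,n-1}=q_{2n-4,n-2}$; it is a mixed ``corner,'' not a maximum. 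Patterns with $q_{2n-2,p}>\lambda_{p+1}$ in those positions can never be reached by lowering entries, so the ordering you postulate does not exist. The paper replaces this by an induction on a lexicographic partial order built from four length functions $\Vert Q\Vert_{2n-2,1},\Vert Q\Vert_{2n-3,1},\Vert Q\Vert_{2n-2,2},\Vert Q\Vert_{2n-3,2}$, in which $Q_{n-1}^{+}$ is among the \emph{minimal} elements and the moves raise some entries and lower others. Note also that the adjacency is not single-entry: the projected equations contain terms $c(\tau_{i,j}Q;a)$ with $\tau_{i,j}=\sigma_{2n-3,i}\sigma_{2n-2,j}$ shifting two entries simultaneously.

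Second, the analytic step ``choose the component in which $Q^{(k)}$ is the only not-yet-determined neighbour and solve for it'' breaks down, and overcoming this is the real content of the proof. At the stage where $c(Q;a)$ is known for the patterns $Q$ satisfying \eqref{eq:5.13(1)} with minimal $\vect{q}_{2n-2}$-entries, the undetermined coefficients $c(\tau_{0,k}Q;a)$, $k\in K_{6}(n-1)$, all appear \emph{together}, algebraically, in the single relation coming from \eqref{eq:5.9(2)} (equation \eqref{eq:prec-3}), while each one individually appears only under the first-order operator $(\mathcal{D}_{2}^{\pm}(Q)-l_{2n-2,k})$ in \eqref{eq:prec-2}; an equation of the latter kind is an ODE and determines the coefficient only up to solutions of the homogeneous equation, not outright. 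There is no equation at that stage containing exactly one unknown with an invertible (zeroth-order, nonvanishing) coefficient. The paper's resolution is the elimination trick \eqref{eq:trick}: apply the polynomial differential operator $\prod_{p\in K_{6}(n-1)\setminus\{j\}}(\mathcal{D}_{2}^{\pm}(Q)-l_{2n-2,p})$ to the coupled relation and use \eqref{eq:prec-2} to convert every term except the $j$-th into known data, leaving $c(\tau_{0,j}Q;a)$ multiplied by the nonzero constant $\prod_{p\ne j}(l_{2n-2,j}-l_{2n-2,p})$. This decoupling mechanism, together with the partial order above, is entirely absent from your plan; the non-vanishing of Gelfand--Tsetlin coefficients that you flag as the main obstacle is a real but comparatively minor issue (handled by Remark~\ref{remark:null points of a_ij}). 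Your one correct structural observation is that path-independence needs no argument, since $\phi$ is assumed to be a solution of the full system.
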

By this theorem, we may restrict our interest to determine the
coefficient function $c(Q_{n-1}^{+}; a)$. 
In \S \ref{section:solution}, we deduce a system of differential
equations satisfied by the coefficient function $c(Q; a)$ for some
special Gelfand-Tsetlin pattern $Q$. 
The set of such special patterns contains $Q_{n-1}^{+}$. 
Also obtained are the Mellin-Barnes type integral expression of these
functions. 
In order to present these results briefly, 
we use the notation defined in \S\S \ref{section:Radial A part},
\ref{section:solution}. 
See \eqref{eq:K_6(m')} for the definition of $K_{6}(m')$, 
and see \S \ref{subsection:solutions of scalar equations} for
the definition of $\alpha_{j}$, $\beta_{j}$, $t_{j}$, $n(Q, m'; a)$. 
\begin{theorem}[Proposition~\ref{proposition:modefied equations}, 
Proposition~\ref{proposition:local solutions}] 
\label{thm:1.3} 
Assume $Q \in GT(\lambda)$ satisfies \eqref{eq:5.13(1)},
\eqref{eq:5.11} and \eqref{eq:5.13(4)}. 
\begin{enumerate}
\item
Define $f(Q, m'; t) := n(Q, m'; a)^{-1} c(Q; a)$. 
Then, $f(Q, m'; t)$ is a solution of 
\begin{align*}
& 
(\d_{t_{1}}^{2} - \d_{t_{2}}^{2} - t_{1}^{2}) f(Q, m'; t) = 0, 
\\
& 
\left\{
\prod_{p=1}^{N_{2}} 
(\d_{t_{2}} + \alpha_{p}) 
+ 
\frac{t_{2}}{t_{1}} 
(\d_{t_{1}} - \d_{t_{2}}) 
\prod_{p=3}^{N_{2}} 
(\d_{t_{2}} + \beta_{p}) 
\right\} 
f(Q, m'; t) 
= 0. 
\end{align*}
\item
Let $C_{j}$ be a loop starting and ending at $+ \infty$, 
crossing the real axis at $- \alpha_{j} - 1 < s < -\alpha_{j}$, 
and encircling all poles of 
$\Gamma(- \alpha_{p} - s)$, $p = j, \dots, N_{2}$, once in the negative
direction, but none of the poles of 
$\Gamma(\beta_{p} + s)$, $p = 3, \dots, j$. 
Define 
\begin{equation*}
\begin{Bmatrix}
f_{1}^{K}(Q, m'; t) 
\\
f_{1}^{I}(Q, m'; t) 
\end{Bmatrix} 
:= 
\frac{1}{2 \pi \I} 
\int_{C_{1}} 
\frac{\prod_{p=1}^{N_{2}} \Gamma(-\alpha_{p} - s)}
{\prod_{p=3}^{N_{2}} \Gamma(1 - \beta_{p} - s)} 
\begin{Bmatrix} 
t_{2}^{s}\,  
K_{-s}(t_{1})
\\
(-t_{2})^{s}\,  I_{-s}(t_{1})
\end{Bmatrix} 
ds, 
\end{equation*}
and, for $j = 2, \dots, N_{2}$, define 
\begin{align*}
& 
\begin{Bmatrix}
f_{j}^{K}(Q, m'; t) 
\\
f_{j}^{I}(Q, m'; t) 
\end{Bmatrix} 
\\
& 
:= 
\frac{1}{2 \pi \I} 
\int_{C_{j}} 
\frac{\prod_{p=j}^{N_{2}} \Gamma(-\alpha_{p} - s) 
\prod_{p=3}^{j} \Gamma(\beta_{p} + s)}
{\prod_{p=1}^{j-1} \Gamma(1 + \alpha_{p} + s) 
\prod_{p=j+1}^{N_{2}} \Gamma(1 - \beta_{p} - s)} 
\begin{Bmatrix} 
(- t_{2})^{s}\,  
K_{-s}(t_{1}) 
\\
t_{2}^{s}\,  
I_{-s}(t_{1})
\end{Bmatrix} 
ds.  
\end{align*}
Here, $K_{\mu}(z)$ and $I_{\nu}(z)$ are modified Bessel functions, 
and $\I = \sqrt{-1}$. 
These integrals absolutely converge in 
$\C^{2} \setminus (\{t_{1} = 0\} \cup \{t_{2} = 0\})$, 
and they form a basis of the solution space of the system of
differential equations in (1). 
\end{enumerate}
\end{theorem}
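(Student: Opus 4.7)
The plan is to handle the two parts of the theorem sequentially: first derive the closed system (1) by restricting the gradient-type equations to the admissible patterns and absorbing the $a$-dependent prefactors into $n(Q,m';a)$, and then verify that the $2N_{2}$ Mellin--Barnes integrals $f_{j}^{K}, f_{j}^{I}$ converge, satisfy (1), and span its solution space.

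For part (1), I would begin by writing out the gradient-type equations from \S\S\ref{section:Radial A part}--\ref{section:solution} specialised to a pattern $Q\in GT(\lambda)$ satisfying \eqref{eq:5.13(1)}, \eqref{eq:5.11} and \eqref{eq:5.13(4)}. These constraints place $Q$ on the boundary of the Gelfand--Tsetlin polytope, so that most neighbouring patterns are either forbidden or give vanishing shift coefficients, leaving only a short list of coupled equations for $c(Q;a)$ and a few admissible neighbours. The substitution $c(Q;a) = n(Q,m';a)\,f(Q,m';t)$ together with the change of variables $a\mapsto t$ is engineered precisely to absorb the exponential and polynomial factors in $a$ produced by the root data and the weight-shift operators, so that the surviving equations become constant-coefficient relations in $\partial_{t_{1}}, \partial_{t_{2}}$ plus the quadratic potential $t_{1}^{2}$. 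A suitable linear combination of the two surviving shift equations in the $t_{1}$-direction, used to eliminate one neighbour, yields the wave-type equation $(\partial_{t_{1}}^{2} - \partial_{t_{2}}^{2} - t_{1}^{2})f=0$; cascading the chain of $t_{2}$-shift equations along the index $p=1,\dots,N_{2}$ produces the second equation, with the $\alpha_{p}, \beta_{p}$ appearing as the shift eigenvalues attached to the two types of boundary entries of $Q$, and the factor $t_{2}/t_{1}$ coming from the mixed-direction shift that couples $c(Q;a)$ to its $p=1,2$ neighbours.

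For part (2), absolute convergence of each integral on $\C^{2}\setminus(\{t_{1}=0\}\cup\{t_{2}=0\})$ follows from Stirling applied to the Gamma quotient along the vertical arms of $C_{j}$, combined with the exponential decay of $K_{-s}(t_{1})$ in $|\mathrm{Im}\,s|$ and the standard estimate on $I_{-s}(t_{1})$. To check that they satisfy the system I would use the Mellin--Barnes calculus: the identity $(\partial_{t_{2}}+\alpha_{p})\,t_{2}^{s}=(s+\alpha_{p})\,t_{2}^{s}$ together with $(s+\alpha_{p})\Gamma(-\alpha_{p}-s) = -\Gamma(1-\alpha_{p}-s)$ converts the operator $\prod_{p=1}^{N_{2}}(\partial_{t_{2}}+\alpha_{p})$ into an iterated shift of numerator Gammas, while $(\partial_{t_{2}}+\beta_{p})$ is absorbed through the denominator factors $\Gamma(1-\beta_{p}-s)$. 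Combined with the modified Bessel equation for $K_{-s}(t_{1})$ and $I_{-s}(t_{1})$ and the integer contour shift $s\mapsto s-2$ (which is legal on each $C_{j}$ by its pole-separation design), both equations are verified. Linear independence of the $2N_{2}$ integrals follows from a leading-asymptotics comparison: as $t_{1}\to\infty$ the $K$-family decays and the $I$-family grows exponentially, and within each family the innermost residue at $s=-\alpha_{j}$ (respectively $s=\beta_{j}$) produces a distinct leading power of $t_{2}$; to conclude they form a basis I would count the rank of the solution space of (1) locally at a non-singular point by a Cauchy--Kovalevskaya type argument on the first equation combined with the $\partial_{t_{2}}$-order of the second, which should match $2N_{2}$.

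The heaviest step will be the bookkeeping in part (1): one must verify, uniformly in the choice of admissible $Q$, that the normalisation $n(Q,m';a)$ fixed in \S\ref{subsection:solutions of scalar equations} collapses every surviving gradient-type equation into the two displayed universal forms, and that the $\alpha_{p}, \beta_{p}$ produced as shift eigenvalues from different $Q$'s really coincide with those defined from $\lambda$ alone. A secondary obstacle in part (2) is tracking the poles of $\Gamma(-\alpha_{p}-s)$ and $\Gamma(\beta_{p}+s)$ when justifying the contour shift $s\mapsto s-2$ on each $C_{j}$: this forces a careful case check using the arithmetic of the $\alpha_{p}, \beta_{p}$ dictated by the discrete-series parameter $\Lambda$.
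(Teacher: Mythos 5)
Your plan follows the paper's own route in both parts. For (1), the paper derives the scalar equations \eqref{eq:5.16(1)} and \eqref{eq:5.20(3)} by exactly the elimination you sketch (combining \eqref{eq:5.14(1)}, \eqref{eq:5.15(1)}, \eqref{eq:5.15(2)}, \eqref{eq:5.19(1)} and \eqref{eq:5.22}), and Proposition~\ref{proposition:modefied equations} is then nothing more than the change of variables $t_{1}=\eta_{1}/a_{1}$, $t_{2}=\mp\eta_{1}\eta_{2}/2a_{2}$, $f=n(Q,m';a)^{-1}c(Q;a)$. For (2), the paper likewise verifies the integrands formally, proves absolute convergence by Stirling, obtains the leading terms by residues to get linear independence, and quotes (without proof) that the solution space is $2N_{2}$-dimensional at a generic point; your Cauchy--Kovalevskaya count would actually supply a proof of that asserted dimension. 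One small correction of wording: the exponents $\alpha_{p},\beta_{p}$ are \emph{not} determined by $\lambda$ alone; they depend on $Q$ through $q_{2n-2,m'}$ and the $\vect{q}_{2n-4}$ entries (e.g. $\beta_{p}=\pm\Lambda_{n}+l_{2n-3,j_{p}-1}$ with $q_{2n-3,j_{p}-1}=q_{2n-4,j_{p}-1}$ under \eqref{eq:5.13(4)}).

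The one step that would fail as stated is your convergence argument. Each $C_{j}$ is a loop whose two arms run out to $\mathrm{Re}\,s=+\infty$ with $\mathrm{Im}\,s$ bounded; there are no vertical arms, so ``exponential decay of $K_{-s}(t_{1})$ in $|\mathrm{Im}\,s|$'' is never available. In fact, along the arms the Bessel factors \emph{grow} super-exponentially: writing $s=u+v\I$ with $v\neq 0$ fixed, the series for $I_{-s}$ and the relation $K_{\nu}=\frac{\pi}{2}(I_{-\nu}-I_{\nu})/\sin\nu\pi$ give $|K_{-s}(t_{1})|,|I_{-s}(t_{1})|\leq C(v)\exp\{(u+\frac{1}{2})\log|s|-u(\log(t_{1}/2)+1)\}$, i.e. growth of order $u^{u}$. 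What saves the integrals is the Gamma part: in every one of the integrands, the number of Gamma factors whose argument has real part tending to $-\infty$ exceeds by exactly two the number tending to $+\infty$ (counting denominator factors with reciprocal sign), so Stirling together with the reflection formula gives net decay of order $\exp(-2u\log u+2u)$, which dominates $u^{u}|t_{2}|^{u}$. This balance must be checked and is exactly the computation in the paper's proof; with a surplus of only one Gamma the integrals would diverge whenever $2|t_{2}|>t_{1}$, so the conclusion is not automatic. (A second, smaller slip: in verifying \eqref{eq:5.20(7)} the recurrence $(\d_{t_{1}}-s)K_{-s}(t_{1})=-t_{1}K_{-s-1}(t_{1})$ shifts the Mellin variable by $1$, not $2$, and the pole bookkeeping you mention is what legitimizes that unit contour shift.)
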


An interesting fact is that not all of these functions generates a 
solution of the whole differential-difference equation 
$\mathcal{D}_{\tilde{\lambda}, \eta} \phi = 0$. 
In \S \ref{subsection:whole solutions}, 
we construct shift operators satisfied by the functions $f_{j}^{K}$,
$f_{j}^{I}$ 
(Propositions~\ref{prop:first shift operator}, \ref{prop:S_2}). 
By these shift operators, we know that only $4$ in the $2 N_{2}$
functions $f_{j}^{K}$, $f_{j}^{I}$ {\it do} generate solutions of the
whole differential-difference equations. 
\begin{theorem}[Theorem~\ref{thm:last theorem, algebraic}] 
Let $\Lambda \in \Xi_{m,\pm}$, $m = 2, \dots, n$. 
Then the functions $f_{j}^{L}(Q_{n-1}^{+}, n-1; t)$, 
with $j = 1, 2$, $L = K, I$, 
completely determines the solutions of 
$\mathcal{D}_{\tilde{\lambda}, \eta} \phi = 0$. 
\end{theorem}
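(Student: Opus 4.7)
The plan is to combine the corner reduction of Theorem~\ref{thm:corner} with the shift-operator machinery of Propositions~\ref{prop:first shift operator} and \ref{prop:S_2} to show that only four of the $2 N_{2}$ basis functions produced in Theorem~\ref{thm:1.3} survive as corner coefficients of full solutions, and then to confirm this count against the Bernstein-degree calculation from the first main theorem.

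First I would use Theorem~\ref{thm:corner} to reduce the problem: any solution $\phi$ of $\mathcal{D}_{\tilde{\lambda}, \eta} \phi = 0$ is determined by its corner coefficient $c(Q_{n-1}^{+}; a)$, so the question reduces to characterizing which functions of the form $n(Q_{n-1}^{+}, n-1; a)^{-1} c(Q_{n-1}^{+}; a)$ arise in this way. Theorem~\ref{thm:1.3}(1) forces any such function to lie in the $2 N_{2}$-dimensional space spanned by $\{f_{j}^{K}, f_{j}^{I}\}_{j=1}^{N_{2}}$, so what remains is to impose the additional equations of $\mathcal{D}_{\tilde{\lambda}, \eta}$ that were not used in deriving this scalar system.

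The key step is to translate those remaining equations, via the corner reduction, into the shift relations of Propositions~\ref{prop:first shift operator} and \ref{prop:S_2}, and to apply them to each basis function $f_{j}^{L}(Q_{n-1}^{+}, n-1; t)$. Concretely, the shift operators act on the Mellin--Barnes integrand by shifting the spectral variable $s$ and by modifying the $\Gamma$-factors in numerator and denominator. The main task is then a careful contour analysis: the loop $C_{j}$ separates the poles of $\Gamma(-\alpha_{p} - s)$ for $p \geq j$ from those of $\Gamma(\beta_{p} + s)$ for $p \leq j$, and one must check, for each $j$, whether the shifted integrand still admits a loop contour consistent with extension to a corner coefficient of a full solution. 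I expect the outcome to be that for $j \geq 3$ the shift forces a pole of the $\Gamma(-\alpha_{p} - s)$ factors onto the wrong side of the contour, so the candidate coefficient must vanish, whereas for $j = 1, 2$ no such obstruction arises. This contour-tracking, done uniformly in the parameters $\alpha_{p}, \beta_{p}$ coming from $\Lambda$, is where I expect the main technical obstacle.

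Finally, the above analysis will give the upper bound of four on the dimension of the extendible space, hence at most four linearly independent solutions of $\mathcal{D}_{\tilde{\lambda}, \eta} \phi = 0$. To verify that the four surviving functions do in fact generate all solutions, I would match this upper bound with the Bernstein-degree computation of \S\ref{subsection:SO case}, which via Matumoto's theorem (\S\ref{subsection:Whittaker models}) equals the dimension of algebraic Whittaker models. The explicit formula in the first theorem of the introduction shows that this dimension is exactly $4$ times a nonnegative sum of $Spin(2n-3, \C)$-dimensions, so the factor $4$ is forced; the four candidates $f_{j}^{L}(Q_{n-1}^{+}, n-1; t)$ with $j = 1, 2$ and $L = K, I$ therefore exhaust the solution space, completing the proof.
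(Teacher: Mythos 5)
Your overall plan for the upper bound (corner reduction via Theorem~\ref{thm:corner}, membership of the corner coefficient in the $2N_{2}$-dimensional scalar solution space, then exclusion of $j \geq 3$ by the shift operators) is indeed the paper's route, but your final completeness step is circular. The ``explicit formula in the first theorem of the introduction,'' with its factor of $4$, is not an independent input: Theorem~\ref{thm:Bernstein} only gives $\Deg \pi_{\Lambda} = C \sum \dim V^{Spin(2n-3,\C)}_{(\mu_{1},\dots,\mu_{n-m}')}$ with $C$ ``a general constant independent of $\Lambda$,'' and the introduction states explicitly that the factor $4$ is obtained by \emph{combining} this with Theorem~\ref{thm:last theorem, algebraic} --- the very statement you are proving. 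So you cannot invoke that formula to conclude that the four surviving candidates actually generate solutions. The paper closes this gap differently: the shift-operator analysis shows $C \leq 4$; since $C$ does not depend on $m$ or $\Lambda$, it then suffices to verify compatibility of the equations of Lemma~\ref{lemma:5.8} in the single tractable case $m = n$, where all four functions are checked to give genuine solutions; this forces $C = 4$, and with Matumoto's theorem it yields completeness in every case. Some argument of this kind (or a direct compatibility check, or an independent computation of the degree of the associated variety) is indispensable and is missing from your proposal.

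A secondary problem is your proposed mechanism for killing $j \geq 3$: you expect the shifts to push a pole of $\Gamma(-\alpha_{p}-s)$ across the contour so that ``the candidate coefficient must vanish.'' That is not what happens, and vanishing alone would not even suffice --- a zero downstream coefficient still satisfies the linear scalar system, so no contradiction follows without further propagation. The actual obstruction (Propositions~\ref{prop:first shift operator} and~\ref{prop:S_1}) is sharper: for each $j \geq 3$ there is an $m' \in K_{6}(n-1)(Q_{n-1}^{+}) \cap [m, n-2]$ with $\alpha_{j}(Q_{n-1}^{+}) = \pm \Lambda_{n} + l_{2n-2,m'} - 1$; following the alternating chain of $S_{1}$- and $S_{2}$-shifts down to level $m'$, the index $j$ lands on $N_{1}$ for that level, and there the iterated $S_{1}$ applied to $f_{N_{1}}^{L}(Q_{m'}^{-}, m'; t)$ produces a \emph{non-zero} function that is \emph{not} a solution of the scalar system \eqref{eq:5.20(6)}--\eqref{eq:5.20(7)} for $Q_{m'}^{+}$. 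Since the corner coefficient of any genuine solution of $\mathcal{D}_{\tilde{\lambda}, \eta} \phi = 0$ must satisfy that system at every corner vector, this immediately rules out $j \geq 3$, while for $j = 1, 2$ the chain runs unobstructed all the way to $Q_{m-1}^{-}$.
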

Some of the above solutions may define a continuous
intertwining operator from the $C^{\infty}$-globalization
$(\pi_{\Lambda}^{\ast})_{\infty}$ to 
$C^{\infty}(G_{\R}/N_{\R}; \eta)$. 

\begin{theorem}[Theorem~\ref{thm:continuous intertwining space}]
Let $\Lambda \in \Xi_{m,\pm}$, $m = 2, \dots, n$. 
Suppose the character $\eta$ of $N_{\R}$ is the one defined by 
\eqref{eq:character}. 
If $\mp \eta_{2} > 0$, then the dimension of the space of continuous
intertwining operators is 
\[
\sum_{\genfrac{}{}{0pt}{}{
\lambda_1 \geq \mu_1 \geq \lambda_2 \geq \dots 
\geq \lambda_{m-2} \geq \mu_{m-2} \geq \lambda_{m-1}}
{\lambda_m \geq \mu_1' \geq \lambda_{m+1} \geq \dots 
\geq \lambda_{n-1} \geq \mu_{n-m}' \geq |\lambda_n|} }
\dim
V_{(\mu_1,\dots,\mu_{m-2},\mu_1',\dots,\mu_{n-m}')}^{Spin(2n-3,\C)}. 
\]
Each continuous intertwining operator corresponds to $f_{1}^{K}$
defined in Theorem~\ref{thm:1.3} (2). 
On the other hand, if $\mp \eta_{2} < 0$, then this space is zero. 
\end{theorem}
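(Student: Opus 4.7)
The plan is to combine Matumoto's growth criterion (reviewed in \S\ref{subsection:Whittaker models}) with the explicit Mellin--Barnes representatives of Theorem~\ref{thm:1.3} and the shift-operator reduction summarized in Theorem~\ref{thm:last theorem, algebraic}. By the latter, the solution space of $\mathcal{D}_{\tilde{\lambda},\eta}\phi=0$ is spanned, up to the Gelfand--Tsetlin multiplicities counted by Theorem~\ref{thm:corner}, by the four families $f_j^L(Q_{n-1}^{+},n-1;t)$ with $j\in\{1,2\}$ and $L\in\{K,I\}$; this accounts for the factor $4$ appearing in the algebraic dimension. The task reduces to deciding which of these four generators extends to a continuous intertwiner into the $C^{\infty}$-induced space.

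First I would carry out the asymptotic analysis on $A_{\R}$. The classical expansions $K_{-s}(t_1)\sim\sqrt{\pi/(2t_1)}\,e^{-t_1}$ and $I_{-s}(t_1)\sim(2\pi t_1)^{-1/2}\,e^{t_1}$ as $t_1\to+\infty$ immediately rule out both $f_j^{I}$: their exponential growth along the direction singled out by a non-degenerate $\eta$ forbids moderate growth. For the two $K$-type generators the branch factors $t_2^{s}$ and $(-t_2)^{s}$ differ by orientation; tracing the coordinates $(t_1,t_2)$ back to $a\in A_{\R}$ via $n(Q,m';a)$ as in \S\ref{subsection:solutions of scalar equations}, the sign of the component $\eta_2$ of the character $\eta$ determines which branch stays bounded. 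I would verify that the condition $\mp\eta_2>0$ singles out $f_1^{K}$, while the other $K$-type branch and both $I$-type branches grow exponentially. By Matumoto's theorem, the continuous Whittaker space is identified with the subspace of algebraic Whittaker functionals whose radial $A_{\R}$-component is of moderate growth; together with Theorem~\ref{thm:corner}, which propagates the growth behaviour of the corner coefficient $c(Q_{n-1}^{+};a)$ to all $c(Q;a)$ via polynomial shift operators, this cuts the factor $4$ down to $1$ and yields the stated sum. In the opposite sign regime $\mp\eta_2<0$, none of the four generators is of moderate growth and no cancellation is possible, so the continuous intertwining space vanishes.

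The main obstacle will be to upgrade the pointwise Bessel asymptotics to a uniform estimate for the Mellin--Barnes integral defining $f_1^{K}$. One must bound $K_{-s}(t_1)$ times the Gamma ratio along the unbounded contour $C_1$ via Stirling-type estimates, ensuring that the integral itself (not just its integrand) is of moderate growth in all directions of $A_{\R}$. Dually one must rule out accidental cancellations: no linear combination of the remaining three generators can produce a moderate-growth solution, because their leading exponentials in $t_1$ are linearly independent over the ring of polynomial prefactors produced by the Gamma quotient. Once both directions are established, the pairing with the Gelfand--Tsetlin basis of the minimal $K_{\R}$-type, multiplied by the dimension count of Theorem~\ref{thm:corner} (which gives precisely the sum over interlacing $\mu,\mu'$ appearing in the statement), completes the proof.
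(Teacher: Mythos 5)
There is a genuine gap here: you replace the paper's actual mechanism for both the dimension formula and the vanishing statement with an analytic program that is neither justified by the results you cite nor carried out. In the paper, the dimension and the $\eta_{2}$-dichotomy come from Matumoto's wave-front-set theorem (Theorem~\ref{thm:matu-2}): whenever $\Wh_{-\eta}^{\infty}(\pi_{\Lambda}^{\ast})\neq\{0\}$ its dimension equals $\frac{\#\mathcal{P}(G_{\R})}{\#W_{G_{\R}}}\Deg\pi_{\Lambda}^{\ast}=\frac{2}{8}\cdot 4\sum(\cdots)=\sum(\cdots)$, and nonvanishing holds in exactly one of the two sign regimes because, by Schmid--Vilonen together with the Kostant--Sekiguchi correspondence, the discrete series attached to $\Xi_{m,+}$ and $\Xi_{m,-}$ (and likewise the characters $\eta$, $\eta'$ with $\eta_{2}\eta_{2}'<0$) sit on \emph{different} principal nilpotent $G_{\R}$-orbits (Proposition~\ref{prop:continuous intertwining space}). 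The Mellin--Barnes analysis is then needed for one thing only: to show that $f_{1}^{K}$ is rapidly decreasing when $\mp\eta_{2}>0$ --- via the $K$-Bessel integral formula reducing $f_{0}$ to Hayata--Oda's function $h_{r,0}$ --- which identifies the nonvanishing regime; that every continuous operator corresponds to $f_{1}^{K}$ then follows by dimension count, with no need to analyze $f_{2}^{K}$ or $f_{j}^{I}$ at all. Since your proposal never touches wave front sets, you must extract both the upper bound and the vanishing from growth analysis alone, and that is where it breaks down.

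Concretely: the ``identification'' of the continuous Whittaker space with the moderate-growth algebraic solutions is not Matumoto's theorem --- Theorem~\ref{thm:matu-2} is an orbit-counting statement, and Wallach's theorem quoted in the paper gives only the implication (continuous $\Rightarrow$ moderate growth). You need the converse even to conclude that $f_{1}^{K}$ defines a continuous operator (the paper needs it too, but only for this single, explicitly rapidly decreasing function), and you need far more than pointwise Bessel asymptotics for the exclusions: the growth of the Mellin--Barnes integrals as $|t_{2}|\to\infty$ is governed by the configuration of Gamma factors, not by the behavior of $K_{-s}(t_{1})$ and $I_{-s}(t_{1})$ at fixed $s$. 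In particular, when $\mp\eta_{2}<0$ the function $f_{2}^{K}$ involves $(-t_{2})^{s}K_{-s}(t_{1})$ with $-t_{2}>0$ and is superficially just as well-behaved as $f_{1}^{K}$ is in the opposite regime; ruling it out, excluding cancellations among the remaining generators, and checking that bad growth survives multiplication by the normalization $n(Q,m';a)$ (which itself contains the exponential factor $\exp(\pm\eta_{2}a_{1}/2a_{2})$) is precisely the hard asymptotic work your outline defers with ``I would verify.'' Without it you obtain neither the vanishing for $\mp\eta_{2}<0$ nor the coefficient $1$ (rather than $2$, $3$, or $4$) in front of the sum. A smaller point: the interlacing sum is produced by Chang's associated-cycle computation (Theorem~\ref{thm:Bernstein}), not by Theorem~\ref{thm:corner}, which is only the reduction to corner vectors.
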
 

When $G_{\R} = Spin(2n+1, 2)$, we have similar results on the
Whittaker models of discrete series representations. 
Details are discussed elsewhere about this case. 

Before ending the introduction, we define some notation. 
Suppose $H_{\R}$ is a real Lie group. 
Write $\lier{h}$ the Lie algebra of $H_{\R}$, 
$\lie{h}$ the complexification of $\lier{h}$ 
and $H$ a complexification of $H_{\R}$. 
This notation will be applied to groups denoted by other Roman letters
in the same way without comment. 
For two integers $a < b$, let $[a,b]$ be the interval 
$\{x \in \Z | a \leq x \leq b\}$. 
The imaginary unit is denoted by $\iota$. 

\section{The group $Spin(2n,2)$ and its discrete series} 
\label{section:Spin(2n,2)}

\subsection{Structure of $Spin(2n,2)$} 

Let $G_{\R} = Spin(2n,2) \subset Spin(2n+2, \C)$ ($n \geq 2$) be the
connected two-fold linear cover of $SO_{0}(2n, 2)$, 
whose maximal compact subgroup $K_{\R}$ is 
isomorphic to $Spin(2n) \times Spin(2)$. 
We realize the Lie algebra $\lier{g} = \lie{so}(2n,2)$ as 
\begin{align*}
\mathfrak{so}(2n,2)
&=
\left\{
\left.
\begin{pmatrix}
X_{11} & \I X_{12} 
\\
-\I {}^{t}X_{12} & X_{22} 
\end{pmatrix} 
\right| 
\begin{matrix}
X_{11} \in \Alt_{2n}(\R), X_{22} \in \Alt_{2}(\R), 
\\
X_{12} \in M_{2n,2}(\R)
\end{matrix}
\right\}.
\end{align*}
Let $\theta X = -{}^{t} \bar{X}$ be a Cartan involution of $\lier{g}$
and let $\lier{g}=\lier{k}+\lier{p}$ the corresponding Cartan
decomposition. 
Denote elementary matrices by 
$E_{ij} = (\delta_{ki} \delta_{lj})_{k,l = 1, \dots, 2n+2}$ 
and set $F_{ij} := E_{ij} - E_{ji}$. 
Then 
\begin{align*}
& 
\{
F_{j,k} | 
1 \leq k < j \leq 2n, \mbox{ or } (j, k) = (2n+2, 2n+1)\} 
\quad 
\mbox{and }  
\\
&
\{
\I F_{2n+j,k} | 
1 \leq j \leq 2, 1 \leq k \leq 2n\} 
\end{align*}
are bases of $\lier{k}$ and $\lier{p}$ respectively.

Let 
\[
A_{k} := \I F_{2n+k, 2n-2 + k},  
\qquad 
\lier{a} 
:= 
\R A_{1} + \R A_{2}, 
\]
and define $f_{j} \in \lier{a}^{\ast}$ by $f_{j}(A_{k}) = \delta_{j,k}$. 
Then $\lier{a}$ is a maximal abelian subspace of $\lier{p}$. 
The restricted root system 
$\roots{\lier{g}}{\lier{a}}$ is 
\[
\roots{\lier{g}}{\lier{a}} 
= 
\{\pm f_{1} \pm f_{2}, \pm f_{1}, \pm f_{2}\}. 
\]
Choose a positive system 
\[
\proots{\lier{g}}{\lier{a}} 
= 
\{\pm f_{1} + f_{2}, f_{1}, f_{2}\}, 
\]
and denote the corresponding nilpotent subalgebra 
$\sum_{\alpha \in \proots{\lier{g}}{\lier{a}}} (\lier{g})_{\alpha}$ 
by $\lier{n}$. 
Here $(\lier{g})_{\alpha}$ is the root space corresponding to a root
$\alpha$. 
One obtains an Iwasawa decomposition 
\begin{align*}
& 
\lier{g} = \lier{k} + \lier{a} + \lier{n}, 
& 
& 
G_{\R} = K_{\R} A_{\R} N_{\R}, 
\end{align*}
where $A_{\R} = \exp \lier{a}$ and $N_{\R} = \exp \lier{n}$. 
Let 
\begin{align*}
& 
X_{f_{j}}^{k} 
:= 
F_{2n-2+j,k} + \I F_{2n+j,k},  
\qquad 
(1 \leq j \leq 2, 1 \leq k \leq 2n-2), 
\\
&
X_{f_{1}+f_{2}} 
:= 
F_{2n,2n-1} - \I F_{2n+1,2n} + \I F_{2n+2,2n-1} - F_{2n+2,2n+1},  
\\
& 
X_{-f_{1}+f_{2}} 
:= 
F_{2n,2n-1} + \I F_{2n+1,2n} + \I F_{2n+2,2n-1} + F_{2n+2,2n+1}. 
\end{align*}
Then 
$\{X_{f_{j}}^{k} | 1 \leq k \leq 2n-2\}$ is a basis of
$(\lier{g})_{f_{j}}$, 
and 
$\{X_{\pm f_{1} + f_{2}} \}$ is a basis of 
$(\lier{g})_{\pm f_{1} + f_{2}}$.

\subsection{Parameterization of discrete series}

Let us now parametrize the discrete series of $G_{\R}$. 
Take a compact Cartan subalgebra $\lie{t}$ of $\lie{g}$ defined by 
\[
T_{k} := - \I F_{2k,2k-1},
\qquad 
\lie{t} 
:= 
\bigoplus_{k=1}^{n+1} \C T_{k}. 
\]
Let $\{e_{j} | 1 \leq j \leq n+1\}$ be the dual of $\{T_{k}\}$. 
The root systems $\Delta := \roots{\lie{g}}{\lie{t}}$ and
$\Delta_{c} := \roots{\lie{k}}{\lie{t}}$ are 
\begin{align*}
& 
\Delta
= 
\{ \pm e_{i} \pm e_{j} | 1 \leq i < j \leq n+1\}, 
&
& 
\Delta_{c}
= 
\{ \pm e_{i} \pm e_{j} | 1 \leq i < j \leq n\}, 
\end{align*}
respectively. 
Choose a positive system 
\[
\Delta_{c}^{+}
= 
\{ e_{i} \pm e_{j} | 1 \leq i < j \leq n\} 
\]
of $\Delta_{c}$. 
There are $2n+2$ positive systems $\Delta_{1, \pm}^{+}, \dots,
\Delta_{n+1, \pm}^{+}$ of $\Delta$ containing $\Delta_{c}^{+}$, 
defined by 
\begin{align*}
\Delta_{m, +}^{+} 
&=
\Delta_{c}^{+} \cup 
\{e_i \pm e_{n+1}| 1 \leq i \leq m-1\}
\cup 
\{e_{n+1}\pm e_i| m \leq i \leq n\}, 
\\
& \quad \mbox{if} \quad 1 \leq m \leq n+1,
\\
\Delta_{m, -}^{+} 
&=
\Delta_{c}^{+} \cup 
\{e_i \pm e_{n+1}| 1 \leq i \leq m-1\}
\cup 
\{-e_{n+1}\pm e_i| m \leq i \leq n\}, 
\\
& \quad \mbox{if} \quad 1 \leq m \leq n,
\\
\Delta_{n+1, -}^{+} 
&=
\Delta_{c}^{+} \cup 
\{e_i \pm e_{n+1}| 1 \leq i \leq n-1\}
\cup 
\{-e_n\pm e_{n+1}\}. 
\end{align*}
The discrete series representations of $G_{\R}$ are parametrized by
Harish-Chandra parameters. 
By definition, the set of Harish-Chandra parameters $\Xi^{+}$ is 
\[
\Xi^{+}
=
\{\Lambda \in \lie{t}^{\ast} 
| \Lambda \mbox{ is } \Delta\mbox{-regular, }
\Delta_{c}^{+}\mbox{-dominant, and } 
\Lambda + \rho \mbox{ is } K_{\R}\mbox{-analytically integral} 
\}. 
\]
Here, $\rho$ is half the sum of positive roots for some positive system
of $\Delta$. 
Hereafter, we denote by $\pi_{\Lambda}$ the discrete series
representation corresponding to $\Lambda \in \Xi^{+}$. 
In our case, $\Xi^{+}$ is divided into $2n+2$ parts: 
\[
\Xi^{+} 
= 
\bigcup_{m=1}^{n+1} \Xi_{m, +} 
\cup 
\bigcup_{m=1}^{n+1} \Xi_{m, -}, 
\qquad  
\Xi_{m, \pm} 
:=
\{\Lambda \in \Xi^{+} |  
\langle \Lambda, \beta \rangle \geq 0, 
\forall \beta \in \Delta_{m, \pm}^{+}
\}.
\]

\section{Discrete series Whittaker functions} 
\label{section:general theory} 

\subsection{Whittaker models} 
\label{subsection:Whittaker models}

Let $\eta: N_{\R} \longrightarrow \C^{\times}$ be a non-degenerate
unitary character of $N_{\R}$. 
We denote the differential representation $\lier{n} \to \I \R$ 
of $\eta$ by the same letter $\eta$. 
``Non-degenerate'' means that $\eta$ is
non-trivial on every root space corresponding to a simple root of
$\Delta^{+}(\lier{g}, \lier{a})$. 
The space of analytic and smooth Whittaker functions are defined by 
\begin{align*}
& 
\mathcal{A}(G_{\R}/N_{\R};\eta)
=\{F: G_{\R} \overset{\mbox{\scriptsize analytic}}{\rightarrow} \C| 
F(gn)=\eta(n)^{-1}F(g), 
\mbox{ for } n\in N_{\R}, g\in G_{\R}\},
\\
& 
\enskip C^{\infty}(G_{\R}/N_{\R};\eta)
=\{F: G_{\R} \overset{\mbox{\scriptsize $C^{\infty}$}}{\rightarrow} \C| 
F(gn)=\eta(n)^{-1}F(g), 
\mbox{ for } n\in N_{\R}, g\in G_{\R}\},  
\end{align*}
respectively. 
These are representation spaces of $G_{\R}$ by left translation. 

Let $(\pi, V)$ be a representation of $G_{\R}$ 
or a Harish-Chandra $(\lie{g}, K_{\R})$-module. 
A realization of $(\pi, V)$ in 
$\mathcal{A}(G_{\R}/N_{\R};\eta)$ 
is called a Whittaker model of $(\pi, V)$. 
By this realization, a vector $v \in V$ is expressed by a function on
$G_{\R}$, which we call a Whittaker function associated with $v$.

We review some results, due to Matumoto, on the existence and the
dimension of Whittaker models. 
\begin{theorem}[\cite{M1}]\label{thm:matu-1}
Let $V$ be an irreducible Harish-Chandra $(\lie{g},K_{\R})$-module. 
\begin{enumerate}
\item
$V$ has a non-trivial Whittaker model 
if and only if the Gelfand-Kirillov dimension $\Dim V$ of $V$ is equal
to $\dim\lier{n}$. 
\item
If $V$ has a non-trivial Whittaker model, 
then the dimension of Whittaker models is equal to the Bernstein
degree $\Deg V$ of $V$. 
\end{enumerate}
\end{theorem}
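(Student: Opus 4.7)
The plan is to deduce both parts from properties of the algebraic Whittaker functional space $\Wh_{\eta}(V) := \Hom_{\lier{n}}(V, \C_{\eta})$, which by Frobenius reciprocity parametrizes the algebraic Whittaker models of $V$: an embedding $V \hookrightarrow \mathcal{A}(G_{\R}/N_{\R}; \eta)$ corresponds to a functional $\phi$ on $V$ satisfying $\phi(X v) = \eta(X) \phi(v)$ for all $X \in \lier{n}$, $v \in V$, via evaluation at the identity. The theorem then reduces to computing $\dim \Wh_{\eta}(V)$ and comparing it with the two invariants $\Dim V$ and $\Deg V$.

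For part (1), I would first recall the universal upper bound $\Dim V \leq \dim \lier{n}$, valid for any irreducible Harish-Chandra module. The ``only if'' direction is then proved by a good-filtration argument: if $\phi \in \Wh_{\eta}(V)$ is nonzero, twist the $\lier{n}$-action on $V$ by $-\eta$ (which does not alter $\gr V$) and pass to the associated graded; nonvanishing of $\phi$ forces the support of $\gr V$, viewed in the spectrum of $S(\lier{n})$, to be full dimensional, whence $\Dim V \geq \dim \lier{n}$. For the ``if'' direction, use Casselman's subrepresentation theorem to embed $V$ into a minimal principal series $\Ind_{P_{\R}}^{G_{\R}} \sigma$; Kostant's theorem provides an explicit family of Whittaker functionals on the principal series, and the equality $\Dim V = \dim \lier{n}$ forces at least one of these functionals to restrict non-trivially to the submodule $V$.

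For part (2), the aim is to identify $\dim \Wh_{\eta}(V)$ with the Bernstein degree through a filtered-to-graded comparison. Choose a good $K_{\R}$-invariant filtration $\{V_{j}\}$ so that $\gr V$ becomes a finitely generated graded $S(\lie{g})$-module whose Hilbert polynomial has degree $\dim \lier{n}$ and leading coefficient (suitably normalized) equal to $\Deg V$. Filter $\Wh_{\eta}(V)$ dually by order of vanishing on the $V_{j}$, and identify the associated graded with a graded Whittaker space built from $\gr V$ and the linear form $\eta \in \lier{n}^{*}$. Non-degeneracy of $\eta$ ensures that its components with respect to simple restricted roots give a regular system of parameters on $\gr V$, so a Cohen-Macaulay-type computation of the quotient yields $\dim \Wh_{\eta}(V) = \Deg V$.

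The main obstacle will be the filtered-to-graded comparison in part (2). One must verify that every functional on $\gr V$ satisfying the graded analogue of the Whittaker condition actually lifts to a true Whittaker functional on $V$, i.e., that no obstruction arises in an appropriate $\lier{n}$-cohomology along the filtration. This is where the bulk of Matumoto's technical work is concentrated; the key input is the collapse of the relevant Koszul-type spectral sequence together with a careful analyticity estimate guaranteeing that the inductively constructed lifts converge term by term.
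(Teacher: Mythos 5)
The paper does not prove this statement at all: it is imported verbatim from Matumoto \cite{M1} and used downstream (Corollary~\ref{cor:existing cases}, Theorem~\ref{thm:Bernstein}) as a black box, so your proposal has to be judged against what such a proof actually requires. It has a structural flaw that undermines both parts: every graded mechanism you invoke is blind to the values of $\eta$. You observe yourself that twisting the $\lier{n}$-action by $-\eta$ ``does not alter $\gr V$''; indeed, for a standard $K_{\R}$-invariant good filtration the symbol of $X-\eta(X)$ equals the symbol of $X$, so the associated graded of any twisted Koszul-type complex is the \emph{untwisted} one, and every conclusion you extract from $\gr V$ would hold verbatim for $\eta=0$. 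But an $\eta$-blind argument proves false statements: for $\eta=0$ any finite-dimensional irreducible $V$ admits a nonzero $\lier{n}$-invariant functional (the elements of $\lier{n}$ act nilpotently, so $(V^{\ast})^{\lier{n}}\neq 0$), whose matrix coefficients give a nonzero map into $\mathcal{A}(G_{\R}/N_{\R};\mathrm{triv})$, even though $\Dim V=0\neq\dim\lier{n}$. So ``nonvanishing of $\phi$ forces full-dimensional support of $\gr V$'' cannot be right as stated; what nonvanishing of the twisted coinvariants $H_{0}(\lier{n},V\otimes\C_{-\eta})$ actually yields is that the single point $\eta$ lies in the characteristic variety of $V|_{U(\lier{n})}$, and promoting that to a dimension bound is exactly where non-degeneracy (openness of the orbit of $\eta$) must do real work. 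The ``if'' direction of (1) has a second, independent gap: the sentence ``the equality $\Dim V=\dim\lier{n}$ forces at least one of these functionals to restrict non-trivially to $V$'' is the theorem, not a step toward it. A Whittaker functional on a principal series can perfectly well vanish on a given submodule; the only known mechanism ruling this out is exactness of the twisted-coinvariants functor (vanishing of higher twisted $\lier{n}$-homology) combined with additivity of $\Deg$ along a composition series---that is, the machinery of part (2). Note also that Kostant's principal-series computation is a (quasi-)split statement, while the group this paper needs, $Spin(2n,2)$, is not quasi-split; this is precisely why the paper cites Matumoto rather than Kostant.

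Part (2) fails at two further concrete points. First, the proposed filtration of $\Wh_{\eta}(V)$ ``by order of vanishing on the $V_{j}$'' collapses: $V$ is finitely generated over $U(\lier{n})$ (Casselman), and the identity $\phi(Xv)=\eta(X)\phi(v)$ propagates values from any $U(\lier{n})$-generating subspace to all of $V$, so a Whittaker functional vanishing on $V_{j}$ for large $j$ is identically zero. The object carrying a useful filtration is the coinvariant space $H_{0}(\lier{n},V\otimes\C_{-\eta})$ itself, not its dual---and even there the standard filtration degenerates to the untwisted picture, which is why Kostant--Lynch must replace it by a Kazhdan-type regrading adapted to $\eta$, an ingredient entirely absent from your sketch. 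Second, the ``Cohen--Macaulay-type computation'' is unjustified: cutting $\gr V$ by a system of parameters computes the multiplicity $\Deg V$ only as a Koszul Euler characteristic (Serre's formula); the length of the degree-zero quotient equals $\Deg V$ exactly when the higher Koszul homology vanishes, and $\gr V$ has no reason to be Cohen--Macaulay, so at best you obtain an inequality rather than $\dim\Wh_{\eta}(V)=\Deg V$. The missing vanishing is the same exactness theorem as above; you defer it to ``collapse of the relevant Koszul-type spectral sequence together with a careful analyticity estimate,'' but that collapse \emph{is} the content of Matumoto's theorem---it is what the Goodman--Wallach operators in \cite{M1} are built to establish---and the analysis is needed even earlier than you allow, since already your opening Frobenius-reciprocity step (reconstructing an analytic function on $G_{\R}$ from an algebraic functional on a $(\lie{g},K_{\R})$-module) requires a convergence argument. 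In outline your skeleton resembles the Kostant--Lynch--Matumoto proofs, but each step that carries weight is missing or, as written, would fail.
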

Any irreducible Harish-Chandra $(\lie{g}, K_{\R})$-module $V$ 
admits an infinitesimal character. 
Therefore, any smooth Whittaker function $\psi(v)(g)$, 
with $v \in V$ and 
$\psi \in \Hom_{\lie{g}, K_\R}(V, C^\infty(G_\R/N_{\R}; \eta))$, 
is an eigenfunction of the Casimir operator. 
Since the Casimir operator is an elliptic differential operator, 
$\psi(v)$ is real analytic. 
Therefore we may identify 
$\Hom_{\lie{g}, K_\R}(V, \mathcal{A}(G_\R/N_{\R}; \eta))$ with 
$\Hom_{\lie{g}, K_\R}(V, C^\infty(G_\R/N_{\R}; \eta))$, 
if $V$ is irreducible. 

The next problem is to specify the continuous intertwining operators. 
For a Harish-Chandra $(\lie{g}, K_{\R})$-module $(\pi, V)$, 
let $(\pi_{\infty}, V_{\infty})$ be its $C^{\infty}$-globalization. 
This is a Fr\'echet representation of $G_{\R}$. 
The space of continuous intertwining operators from $V_{\infty}$ to
$C^{\infty}(G_{\R}/N_{\R}; \eta)$ will be denoted by 
$\Hom_{G_{\R}}^{\infty}(V_{\infty}, C^\infty(G_\R/N_{\R}; \eta))$. 
This is a subspace of 
$\Hom_{\lie{g}, K_\R}(V, C^\infty(G_\R/N_{\R}; \eta))$, 
and it is isomorphic to the space 
\[
\Wh_{-\eta}^{\infty}(V) 
= \{\xi \in V_{\infty}' | \pi_{\infty}'(X) \xi = - \eta(X) \xi, 
X \in \lier{n}\}
\]
of $C^{-\infty}$-Whittaker vectors (cf \cite{M2}). 
Here, $(\pi_{\infty}', V_{\infty}')$ is the continuous dual to
$(\pi_{\infty}, V_{\infty})$ with respect to the
$U(\lie{g})$-topology. 

The next theorem, also due to Matumoto, presents a condition for the
existence of non-trivial continuous intertwining operators and the
dimension of the space of such operators. 
In order to state his results, we introduce some conventions. 
Since a unitary character $\psi$ is determined by the value 
on the root spaces corresponding to simple roots, 
we may regard $\psi$ as an element of 
$\I (\lier{n}/[\lier{n}, \lier{n}])^{\ast} 
\subset \I \lier{g}^{\ast}$. 
Using the Killing form, we identify $\lier{g}^{\ast}$ with
$\lier{g}$. 
Note that $\psi$ is non-degenerate if and only if 
$\I^{-1} \Ad(G_{\R}) \psi \subset \lier{g}^{\ast} \simeq \lier{g}$ is
a principal nilpotent $G_{\R}$-orbit. 
We denote by $\mathcal{P}(G_{\R})$ the set of principal nilpotent
$G_{\R}$-orbits. 
The wave front set of $V$ is denoted by $\WF(V)$. 
For the definition of wave front set, see \cite{M2}. 
\begin{theorem}[\cite{M2}] \label{thm:matu-2} 
Let $G_{\R}$ be a connected real reductive linear Lie group and let
$V$ be an irreducible Harish-Chandra $(\lie{g}, K_{\R})$-module. 
Let $\psi$ be a non-degenerate unitary character on $\lier{n}$. 
\begin{enumerate}
\item
$\Wh_{\psi}^{\infty}(V)$ is non-zero if and only if 
$\psi \in \WF(V)$. 
\item
If $V$ is the Harish-Chandra module of a discrete series
representation and $\psi \in \WF(V)$, then 
\[
\dim \Wh_{\psi}^{\infty}(V) 
= 
\frac{\# \mathcal{P}(G_{\R})}{\# W_{G_{\R}}} \Deg V, 
\]
where $W_{G_{\R}}$ is the little Weyl group of $G_{\R}$. 
\end{enumerate}
\end{theorem}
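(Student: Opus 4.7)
The plan is to prove both parts through the analytic theory of matrix coefficient asymptotics, with the wave front set $\WF(V)$ serving as the invariant that controls both the existence and the size of smooth Whittaker functionals. The conceptual bridge is that a smooth Whittaker functional $\xi \in \Wh_{\psi}^{\infty}(V)$ determines, for each smooth vector $v \in V_{\infty}$, a function $F_{v}(g) := \xi(\pi_{\infty}(g) v)$ on $G_{\R}$ whose right $N_{\R}$-behavior is $\psi^{-1}$ and whose left-hand behavior reflects the asymptotics of matrix coefficients of $V$.

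For part (1), the \emph{only if} direction is the soft half. Using Howe's definition of $\WF(V)$ through the wave front set (at $e$) of the distribution character $\Theta_{V}$, non-triviality of $\xi$ produces non-trivial matrix coefficients $F_{v}$ whose singular behavior at infinity, when pulled back to a neighborhood of $e \in \lier{g}$ via $\exp$ and tested against $U(\lier{g})$-derivatives, forces $\Theta_{V}$ to carry wave front content in the $\Ad^{\ast}(G_{\R})\psi$ direction; hence $\psi \in \WF(V)$. The \emph{if} direction carries all the substance. Given $\psi \in \WF(V)$, I would construct a candidate $\xi$ by a Jacquet-type integral, integrating a $K_{\R}$-finite matrix coefficient of $V$ against $\psi$ along $N_{\R}$. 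The integral does not converge absolutely, so I would regularize it using a one-parameter family of deformations along $A_{\R}$, taking advantage of the asymptotic expansion of matrix coefficients in the $A_{\R}$-direction; the leading term of that expansion is controlled precisely by the wave front cycle at $\psi$, so the meromorphic continuation at the relevant parameter value is non-zero and satisfies the Whittaker equation by construction.

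For part (2), the strategy is to upgrade the existence statement of (1) into a dimension count by coupling it with Theorem~\ref{thm:matu-1}(2). Since matrix coefficients of a discrete series $V$ lie in $L^{2}(G_{\R})$, the asymptotic behavior toward the wave front directions is governed by Harish-Chandra's formal degree, and Rossmann's realization of the discrete series character as a Fourier transform of a $G_{\R}$-invariant measure on a regular elliptic coadjoint orbit gives an explicit description of $\WF(V)$ as a formal $\Z_{\geq 0}$-combination of principal nilpotent $G_{\R}$-orbits. Pushing Rossmann's formula through the moment map and restricting to the principal cone, one finds that each element of $\mathcal{P}(G_{\R})$ appears in $\WF(V)$ with multiplicity proportional to $\Deg V$, normalized by the order of the little Weyl group $W_{G_{\R}}$ which acts on the set of positive systems compatible with $\psi$. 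Comparing with Theorem~\ref{thm:matu-1}(2) then isolates the proportionality constant as $\#\mathcal{P}(G_{\R})/\#W_{G_{\R}}$.

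The main obstacle will be the \emph{if} direction of (1): extracting a genuine non-zero smooth functional from a purely asymptotic condition on the support of a distribution requires both a convergent regularization of the Jacquet integral and a non-vanishing statement for the leading term of the matrix coefficient expansion along $A_{\R}$. Verifying these two points in full generality (not just in the quasi-split or rank-one case) is the technical heart of Matumoto's argument and the step where a first-principles proof must invest the most work.
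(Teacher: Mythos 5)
First, a point of reference: the paper does not prove this statement at all. Theorem~\ref{thm:matu-2} is quoted from Matumoto \cite{M2} as part of the background in \S\ref{subsection:Whittaker models}, so there is no paper-internal argument to compare yours against; your proposal has to stand on its own as a reconstruction of Matumoto's theorem, and as written it does not.

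The genuine gap is that your text is a strategy outline whose mathematical content is concentrated precisely in the steps you defer or assert. For the \emph{if} direction of (1), you propose a regularized Jacquet-type integral and claim its meromorphic continuation is non-zero because ``the leading term of that expansion is controlled precisely by the wave front cycle at $\psi$.'' That claim \emph{is} the theorem: relating leading exponents of matrix-coefficient asymptotics (an invariant of the type of the associated variety or asymptotic support) to the wave front set of the distribution character requires Howe's theorem plus serious additional work, and the convergence and non-vanishing of the regularized integral for general (in particular non-quasi-split) $G_{\R}$ is exactly what you concede you have not verified --- a proof that labels its hardest step as ``the technical heart \dots where a first-principles proof must invest the most work'' has not established the statement. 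For (2), the argument is essentially circular: you assert that Rossmann's formula shows each orbit in $\mathcal{P}(G_{\R})$ occurs in the wave front cycle with multiplicity proportional to $\Deg V$, and then propose to ``isolate the proportionality constant'' by comparing with Theorem~\ref{thm:matu-1}(2). But Theorem~\ref{thm:matu-1}(2) counts \emph{algebraic} Whittaker functionals, of which $\Wh_{\psi}^{\infty}(V)$ is in general a proper subspace; no comparison between the two dimensions can produce the factor $\#\mathcal{P}(G_{\R})/\#W_{G_{\R}}$ unless you already know how $\dim \Wh_{\psi}^{\infty}(V)$ is computed from the wave front cycle multiplicity at the orbit of $\psi$ --- which is the content of the statement being proven. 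To make (2) non-circular you would need an independent identification of $\dim \Wh_{\psi}^{\infty}(V)$ with geometric data of the single orbit $\I^{-1}\mathrm{Ad}(G_{\R})\psi$ inside $\WF(V)$, together with the counting of real forms of the principal complex nilpotent orbit that produces $\#\mathcal{P}(G_{\R})$ and $\#W_{G_{\R}}$; none of that is supplied.
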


\subsection{Chang's results}
\label{subsection:Chang's results}

Let us recall J. T. Chang's papers \cite{C1}, \cite{C2}, 
in which associated cycles of discrete series are determined for some
cases. 

Let $V$ be a Harish-Chandra $(\lie{g},K_\R)$-module. 
Choose a $K_{\R}$-stable finite dimensional generating subspace
$V_{0}$ of $V$, and define a filtration 
$V_{n}:=U(\lie{g})_{n} V_{0}$ of $V$. 
Here $\{U(\lie{g})_{n}| n = 0, 1, 2, \dots\}$ is the standard
filtration of $U(\lie{g})$. 
By this filtration, $M := \gr V$ admits 
an $S(\lie{g}) \simeq \gr U(\lie{g})$-module structure. 
The associated variety $\AssVar(V)$ of $V$ 
is the support $\Supp M \subset \lie{g}^{\ast}$ of $M$. 
Note that $\AssVar(V)$ is a closure of finite $K$-orbits on $\lie{p}$. 
Let $X_{1}, \dots, X_{k}$ be the irreducible components of
$\AssVar(V)$, and let $m_{i}$ be the length of
$S(\lie{g})_{\mathfrak{P}_{i}}$ module $M_{\mathfrak{P}_{i}}$, where
$\mathfrak{P}_{i}$ is the minimal prime ideal corresponding to the
irreducible variety $X_{i}$. 
The associated cycle $\AssCyc(V)$ of $V$ is the formal sum 
$\sum_{i} m_{i} X_{i}$.

Assume that $G_{\R}$ has discrete series. 
Then $\lie{g}$ has a compact Cartan subalgebra 
$\lie{t} \subset \lie{k}$. 
Let $\lie{b}=\lie{t}+\bar{\lie{u}}$ be a Borel subalgebra of
$\lie{g}$, and let $B\subset G$ be the corresponding Borel subgroup. 
This Borel determines a positive root system $\Delta^{+}$ of
$\roots{\lie{g}}{\lie{t}}$ so that $\bar{\lie{u}}$ corresponds to
negative roots. 
Let $\Delta_c^+\subset \Delta^+$ be the set of compact positive roots
and denote 
$\rho=\frac{1}{2}\sum_{\alpha \in \Delta^+}\alpha$, 
$\rho_c=\frac{1}{2}\sum_{\alpha \in \Delta_c^+}\alpha$, 
$\rho_n=\rho-\rho_c$. 
We may and do regard $\lie{b}$ as a point on the flag variety 
$X := G/B$.  
By the choice of $\lie{b}$, 
the $K$-orbit $Z := K \cdot \lie{b} \simeq K / K \cap B$ is closed. 
Here, ``dot'' means the adjoint action. 

Suppose $\Lambda \in \Xi^{+}$ is $\Delta^{+}$-dominant. 
Let $\tau$ be the character of $T := \exp \lie{t}$ with  
$d\tau=\Lambda-\rho$. 
This character gives rise to a $K$-homogeneous line bundle on $Z$,
and we denote by $\mathcal{L}_{\Lambda - \rho}$ its
sheaf of local sections on $Z$. 
Let $j : Z \to X$ be the embedding map, 
and let $j_{+}(\mathcal{L}_{\Lambda-\rho})$ be the direct image in
the category of $\mathcal{D}$-modules. 
It is well known that the global section space 
$\varGamma(X, j_{+}(\mathcal{L}_{\Lambda-\rho}))$ realizes the
Harish-Chandra module of $\pi_{\Lambda}$. 
Note that the lowest $K_{\R}$-type sheaf for $\pi_{\Lambda}$ is 
$\mathcal{L}_{\Lambda-\rho} \otimes \det \mathcal{N}_{Z|X} 
\simeq \mathcal{L}_{\Lambda-\rho_{c}+\rho_{n}}$, 
where $\mathcal{N}_{Z|X}$ is the normal sheaf of $Z$ in $X$. 

Let $T^{\ast} X \simeq G \times_{B} (\lie{g}/\lie{b})^{\ast} 
\simeq 
G \times_{B} \bar{\lie{u}}$ be the cotangent bundle on $X$. 
Here, we identify $(\lie{g}/\lie{b})^{\ast}$ with 
$\bar{\lie{u}}$ by a fixed invariant bilinear form on $\lie{g}$. 
By this identification and 
$T Z \simeq K \times_{K \cap B} (\lie{k} / \lie{k} \cap \lie{b})$, 
the conormal bundle $T_{Z}^{\ast} X$ is isomorphic to 
$K \times_{K \cap B} (\bar{\lie{u}} \cap \lie{p})$. 
For each point $x \in X$, 
representing a Borel subalgebra $\lie{b}_{x}$, 
the fiber $T_{x}^{\ast} X$ in $T^{\ast} X$ is given by 
$(\lie{g}/\lie{b}_{x})^{\ast} \subset \lie{g}^{\ast}$. 
The moment map 
$\gamma:T^* X\rightarrow \lie{g}^{\ast}$ is given by, on each fiber, 
the inclusion 
$(\lie{g}/\lie{b}_{x})^{\ast} \hookrightarrow \lie{g}^{\ast}$. 
For the discrete series $\pi_{\Lambda}$, the associated variety
$\AssVar(\pi_{\Lambda})$ is a closure of a single $K$-orbit on
$\lie{p}$, and it coincides with the moment map image 
$\gamma(T_{Z}^{\ast} X) \simeq K \cdot (\bar{\lie{u}} \cap \lie{p})$. 
Therefore, the associated cycle $\AssCyc(\pi_{\Lambda})$ of
$\pi_{\Lambda}$ is an integral multiple of $\gamma(T_{Z}^{\ast} X)$. 


%
\begin{theorem}[\cite{C2}]
Let $\xi$ be a generic point 
of $\bar{\lie{u}} \cap \lie{p} \subset \gamma(T_{Z}^{\ast} X)$. 
Then the associated cycle of the discrete series representation
$\pi_{\Lambda}$ is 
\[
\AssCyc(\pi_{\Lambda}) 
= 
\dim H^0(\gamma^{-1}(\xi), 
\mathcal{L}_{\Lambda - \rho_{c} + \rho_{n}}|_{\gamma^{-1}(\xi)})\,  
\gamma(T_{Z}^{\ast} X). 
\]
\end{theorem}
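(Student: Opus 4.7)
The plan is to compute $\AssCyc(\pi_{\Lambda})$ through the $\mathcal{D}$-module realization of its Harish-Chandra module as $\varGamma(X, j_{+}(\mathcal{L}_{\Lambda-\rho}))$ recorded in the excerpt. Since the associated variety has already been identified with $\gamma(T_{Z}^{\ast}X) = \overline{K \cdot (\bar{\lie{u}} \cap \lie{p})}$, a single irreducible variety, the only remaining task is to determine the integer multiplicity with which this orbit closure appears.

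The plan has three steps. First, equip $j_{+}(\mathcal{L}_{\Lambda-\rho})$ with its canonical good filtration induced by the order filtration on $\mathcal{D}_{X}$. For a closed embedding $j:Z\hookrightarrow X$ of smooth varieties and a line bundle $\mathcal{L}$ on $Z$, a standard computation identifies the associated graded of $j_{+}\mathcal{L}$ with the direct image along the inclusion $\iota:T_{Z}^{\ast}X \hookrightarrow T^{\ast}X$ of the line bundle $q^{\ast}(\mathcal{L}\otimes\det\mathcal{N}_{Z|X})$, where $q:T_{Z}^{\ast}X \to Z$ is the projection. Applying this to $\mathcal{L}_{\Lambda-\rho}$ and using the twist $\mathcal{L}_{\Lambda-\rho}\otimes\det\mathcal{N}_{Z|X}\simeq \mathcal{L}_{\Lambda-\rho_{c}+\rho_{n}}$ noted in the excerpt, the associated graded of the Harish-Chandra module, viewed as a module over $S(\lie{g})\simeq \mathcal{O}(\lie{g}^{\ast})$ via the moment map $\gamma$, becomes the pushforward $\gamma_{\ast}\mathcal{F}$ of the line bundle $\mathcal{F}:= q^{\ast}\mathcal{L}_{\Lambda-\rho_{c}+\rho_{n}}$.

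Second, since $T_{Z}^{\ast}X$ is a vector bundle over the compact $Z\simeq K/(K\cap B)$, the restriction $\gamma|_{T_{Z}^{\ast}X}:T_{Z}^{\ast}X \to \AssVar(\pi_{\Lambda})$ is proper and $\gamma_{\ast}\mathcal{F}$ is coherent on $\AssVar(\pi_{\Lambda})$. The multiplicity sought is by definition the length of the stalk of $\gamma_{\ast}\mathcal{F}$ at the generic point of the irreducible variety $\AssVar(\pi_{\Lambda})$; since the local ring there is the function field of $\AssVar(\pi_{\Lambda})$, this length equals the generic rank of $\gamma_{\ast}\mathcal{F}$. Third, apply Grauert's cohomology and base change theorem on the open subset of $\AssVar(\pi_{\Lambda})$ where the higher direct images $R^{i}\gamma_{\ast}\mathcal{F}$ vanish, identifying the generic rank of $\gamma_{\ast}\mathcal{F}$ with $\dim H^{0}(\gamma^{-1}(\xi),\mathcal{L}_{\Lambda-\rho_{c}+\rho_{n}}|_{\gamma^{-1}(\xi)})$ for a generic $\xi \in \bar{\lie{u}}\cap\lie{p}$, which is the stated formula.

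The principal obstacle is the first step: correctly identifying the associated graded of $j_{+}\mathcal{L}_{\Lambda-\rho}$ as the pushforward of a single line bundle — not a full symmetric-algebra worth — on $T_{Z}^{\ast}X$, together with the precise $\det\mathcal{N}_{Z|X}$ twist, since the left/right $\mathcal{D}$-module conversion hides behind the $2\rho_{n}$ shift and must be tracked carefully through the Beilinson-Bernstein localization conventions. A secondary but delicate point is verifying the vanishing of $R^{i}\gamma_{\ast}\mathcal{F}$ on a large enough open subset of $\AssVar(\pi_{\Lambda})$ to legitimately invoke base change; this is achieved by exhibiting the generic fibre of $\gamma$ as a projective variety on which the relevant line bundle has cohomology concentrated in degree zero, following Chang's strategy.
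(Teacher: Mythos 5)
First, a point of comparison: the paper itself contains no proof of this statement --- it is imported verbatim from Chang \cite{C2} --- so your proposal can only be measured against Chang's argument, and your outline is in fact a reconstruction of exactly that strategy: realize the Harish-Chandra module as $\varGamma(X, j_{+}(\mathcal{L}_{\Lambda-\rho}))$, identify $\gr\, j_{+}(\mathcal{L}_{\Lambda-\rho})$ with the pushforward to $T^{\ast}X$ of the line bundle $q^{\ast}\mathcal{L}_{\Lambda-\rho_{c}+\rho_{n}}$ on the conormal bundle, and read off the multiplicity at the generic point of $\gamma(T_{Z}^{\ast}X)$. Your second and third steps are sound as stated: properness of $\gamma$ on $T_{Z}^{\ast}X$ follows from compactness of $K$ and of $Z$; the length at the generic point equals the generic rank because the image of the integral variety $T_{Z}^{\ast}X$ is integral, so the minimal prime kills the stalk; and for the base-change step you do not actually need vanishing of the higher direct images $R^{i}\gamma_{\ast}\mathcal{F}$ at all --- generic flatness over the reduced base together with Grauert's theorem already gives $(\gamma_{\ast}\mathcal{F})\otimes k(\xi) \simeq H^{0}(\gamma^{-1}(\xi), \mathcal{F}|_{\gamma^{-1}(\xi)})$ for $\xi$ in a dense open set, so your ``secondary delicate point'' is harmless.

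The genuine gap sits between your first and second steps, and it is not where you locate it. Computing $\gr\, j_{+}\mathcal{L}_{\Lambda-\rho}$ with its $\det\mathcal{N}_{Z|X}$ twist is routine symbol calculus for a closed embedding, and the paper already records the identification $\mathcal{L}_{\Lambda-\rho}\otimes\det\mathcal{N}_{Z|X}\simeq \mathcal{L}_{\Lambda-\rho_{c}+\rho_{n}}$. What you assert without any justification is the interchange $\gr\,\varGamma(X, j_{+}\mathcal{L}_{\Lambda-\rho}) \simeq \varGamma\bigl(T^{\ast}X, \gr\, j_{+}\mathcal{L}_{\Lambda-\rho}\bigr)$: the multiplicity in $\AssCyc(\pi_{\Lambda})$ is by definition computed from $\gr$ of the Harish-Chandra module, not from $\varGamma$ of $\gr$ of the $\mathcal{D}$-module, and these need not coincide. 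Since each filtration step $F_{n}(j_{+}\mathcal{L}_{\Lambda-\rho})$ is only $\mathcal{O}_{X}$-coherent, Beilinson--Bernstein exactness for $\mathcal{D}_{\lambda}$-modules says nothing about it; what is required is $H^{i}\bigl(Z, \mathcal{L}_{\Lambda-\rho_{c}+\rho_{n}}\otimes S^{n}\mathcal{N}_{Z|X}\bigr)=0$ for all $i>0$ and all $n\geq 0$. This is not an automatic Borel--Weil--Bott consequence: the weights of $S^{n}\mathcal{N}_{Z|X}$ are sums of $n$ positive noncompact roots and can pair arbitrarily negatively with compact simple coroots as $n$ grows, so the naive filtration-by-line-bundles argument fails, and establishing this vanishing is precisely the technical heart of Chang's papers (and the reason his results carry hypotheses such as holomorphic type in \cite{C1} or $\R$-rank one in \cite{C2}). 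What your argument proves unconditionally is only an inequality: left exactness of $\varGamma$ gives an embedding of $\gr\,\varGamma(X, j_{+}\mathcal{L}_{\Lambda-\rho})$ (which is the graded module of a good filtration) into $\varGamma(T^{\ast}X, \gr\, j_{+}\mathcal{L}_{\Lambda-\rho})\simeq \varGamma(T_{Z}^{\ast}X, q^{\ast}\mathcal{L}_{\Lambda-\rho_{c}+\rho_{n}})$, hence the multiplicity in $\AssCyc(\pi_{\Lambda})$ is at most $\dim H^{0}(\gamma^{-1}(\xi), \mathcal{L}_{\Lambda - \rho_{c} + \rho_{n}}|_{\gamma^{-1}(\xi)})$; equality --- the actual content of the theorem --- requires the unproved cohomological input above.
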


Under some condition, the explicit formula of this coefficient can be
obtained. 
Let $N_K(\xi,\bar{\lie{u}}\cap\lie{p}):=
\{k\in K| k\cdot \xi\in\bar{\lie{u}}\cap \lie{p}\}$.  
Then it is not hard to show that 
$\gamma^{-1}(\xi)\simeq 
N_K(\xi,\bar{\lie{u}}\cap\lie{p})^{-1}\cdot \lie{b} \subset Z$. 
Note that $N_K(\xi,\bar{\lie{u}}\cap\lie{p})$ is not a group.  
In order to calculate $N_K(\xi,\bar{\lie{u}}\cap\lie{p})$, 
let us consider the following groups. 
Let $S$ be the set of all compact simple roots, 
$\langle S \rangle$ the root system generated by $S$. 
Let $\lie{l}:=\lie{t}+\sum_{\alpha\in\langle S\rangle}\lie{g}_\alpha$,
and let $\lie{q}=\lie{l}+\bar{\lie{v}} \supset\lie{b}$ be the
parabolic subalgebra whose Levi part is $\lie{l}$. 
The analytic subgroups with Lie algebras $\lie{l}$, $\lie{q}$ are
denoted by $L$, $Q$ respectively. 
\begin{theorem}[\cite{C2}]
Let $K(\xi)$ be the centralizer of $\xi$ in $K$ and $K(\xi)_r^0$ be
the identity component of the reductive part of $K(\xi)$. 
If 
\begin{equation}\label{eq:Chang's hypothesis}
N_K(\xi,\bar{\lie{u}}\cap\lie{p})=(K\cap Q) K(\xi)
=(K\cap Q) K(\xi)_r^0,
\end{equation}
then 
\[H^0(\gamma^{-1}(\xi), 
\mathcal{L}_{\Lambda-\rho_c+\rho_n}|_{\gamma^{-1}(\xi)})
\simeq \mathrm{Coh}\mbox{-}\mathrm{Ind}
\uparrow_{K(\xi)_r^0\cap Q}^{K(\xi)_r^0} 
\mathrm{Res}\downarrow_{K(\xi)_r^0\cap Q}^{K\cap Q}
V_{\Lambda-\rho_c+\rho_n}^{K\cap Q}.\]
Here $\mathrm{Coh}\mbox{-}\mathrm{Ind}$ is the cohomological induction,
i.e. the operation which makes the irreducible representation of the
large group with the same highest weight as that of the small group. 
\end{theorem}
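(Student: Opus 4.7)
The plan is to compute the sections directly by using the hypothesis \eqref{eq:Chang's hypothesis} to exhibit $\gamma^{-1}(\xi)$ as a homogeneous fiber bundle over a $K(\xi)_r^0$-orbit whose typical fiber is a compact flag variety, and then to evaluate sections in two stages via Borel--Weil.

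First I would combine the formula $\gamma^{-1}(\xi) \simeq N_K(\xi,\bar{\lie{u}}\cap\lie{p})^{-1}\cdot\lie{b}$ with the hypothesis to rewrite $\gamma^{-1}(\xi) = K(\xi)_r^0 \cdot (K\cap Q)\cdot\lie{b}$. Set $Y := (K\cap Q)\cdot\lie{b}\simeq (K\cap Q)/(K\cap B)$. Since $B\subset Q$, $Y$ is a closed $(K\cap Q)$-orbit in $Z$; because the Levi $\lie{l}$ contains all compact root spaces, $Y$ is isomorphic to the compact flag variety $(K\cap L)/(K\cap B)$ with Borel inherited from $\lie{b}$. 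The key geometric step is to show that the natural map
\[
K(\xi)_r^0 \times_{K(\xi)_r^0\cap Q} Y \longrightarrow K(\xi)_r^0 \cdot Y = \gamma^{-1}(\xi)
\]
is an isomorphism. Surjectivity is immediate; injectivity follows from identifying the pointwise stabilizer in $K(\xi)_r^0$ of the subvariety $Y\subset Z$ with $K(\xi)_r^0\cap Q$, and the second equality $(K\cap Q)K(\xi)=(K\cap Q)K(\xi)_r^0$ in \eqref{eq:Chang's hypothesis} is exactly what is needed to ignore the component group of $K(\xi)$ here.

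Second I would identify the line bundle. Because $\mathcal{L}_{\Lambda-\rho_c+\rho_n}$ is $K$-equivariant on $Z=K/(K\cap B)$ arising from the character $\Lambda-\rho_c+\rho_n$ of $T$ (extended trivially across $K\cap B$), its restriction to $Y$ is the $(K\cap Q)$-equivariant line bundle of the same character. Since the Blattner parameter $\Lambda-\rho_c+\rho_n$ is $\Delta_c^+$-dominant and integral, Borel--Weil on $Y$ yields
\[
H^0\bigl(Y,\mathcal{L}_{\Lambda-\rho_c+\rho_n}|_Y\bigr) \simeq V_{\Lambda-\rho_c+\rho_n}^{K\cap Q}.
\]
Next I would push forward via the projection $\gamma^{-1}(\xi)\to K(\xi)_r^0/(K(\xi)_r^0\cap Q)$: for any $(K(\xi)_r^0\cap Q)$-module $W$, global sections of the associated $K(\xi)_r^0$-bundle on the base form the holomorphic induction from $K(\xi)_r^0\cap Q$ to $K(\xi)_r^0$; applied to $W=H^0(Y,\mathcal{L}|_Y)|_{K(\xi)_r^0\cap Q}$ this is precisely the degree-zero cohomological induction, which on each irreducible constituent of $W$ produces the irreducible $K(\xi)_r^0$-module with the same highest weight. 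Composing these steps delivers the claimed isomorphism.

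The main obstacle is the geometric step above, namely showing that the bundle map $K(\xi)_r^0\times_{K(\xi)_r^0\cap Q} Y\to\gamma^{-1}(\xi)$ is an isomorphism of $K(\xi)_r^0$-varieties. It rests on extracting from \eqref{eq:Chang's hypothesis} both that $\gamma^{-1}(\xi)$ is exhausted by $K(\xi)_r^0\cdot Y$ and that the fibers of the map have no hidden identifications coming from $K(\xi)/K(\xi)_r^0$; once this is in hand, the remainder is a routine application of Borel--Weil on two nested homogeneous spaces together with the standard section-space description of associated bundles.
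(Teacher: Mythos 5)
The paper does not prove this theorem; it is quoted from Chang \cite{C2}, so your proposal can only be compared with Chang's argument, whose overall architecture it does follow. Your geometric reduction is essentially sound: from $\gamma^{-1}(\xi)\simeq N_K(\xi,\bar{\lie{u}}\cap\lie{p})^{-1}\cdot\lie{b}$ and the hypothesis \eqref{eq:Chang's hypothesis} one gets $\gamma^{-1}(\xi)=K(\xi)_r^0\cdot Y$ with $Y=(K\cap Q)\cdot\lie{b}$, and the map $K(\xi)_r^0\times_{K(\xi)_r^0\cap Q}Y\to K(\xi)_r^0\cdot Y$ is indeed an isomorphism. Note, though, that your justification via the ``pointwise stabilizer'' of $Y$ is the wrong notion: what one needs is the transporter statement that $h\cdot Y\cap Y\neq\emptyset$ forces $h\in K\cap Q$, and this is automatic because $Y$ is a fibre of the projection $Z=K/(K\cap B)\to K/(K\cap Q)$, so that $\gamma^{-1}(\xi)$ is simply the restriction of this fibration to the orbit $K(\xi)_r^0/(K(\xi)_r^0\cap Q)$. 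You should also record that this orbit is closed (it is, since $\gamma^{-1}(\xi)$ is closed in $Z$), because that is what makes $K(\xi)_r^0\cap Q$ a parabolic subgroup of $K(\xi)_r^0$ and legitimizes Borel--Weil on the base. The first stage, $H^0(Y,\mathcal{L}|_Y)\simeq V^{K\cap Q}_{\Lambda-\rho_c+\rho_n}$, is fine: the unipotent radical of $K\cap Q$ sits inside $K\cap B$ and acts trivially on $\mathcal{L}|_Y$, so the $K\cap L$ Borel--Weil module extends to the asserted irreducible $K\cap Q$-module.

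The genuine gap is in your second stage. You propose to compute $H^0$ of the associated bundle by applying holomorphic induction ``on each irreducible constituent of $W$'', where $W=\mathrm{Res}\downarrow^{K\cap Q}_{K(\xi)_r^0\cap Q}V^{K\cap Q}_{\Lambda-\rho_c+\rho_n}$. But $K(\xi)_r^0\cap Q$ is parabolic, not reductive, and its unipotent radical maps partly into the Levi $K\cap L$ of $K\cap Q$, where it acts nontrivially on $V^{K\cap Q}_{\Lambda-\rho_c+\rho_n}$; hence $W$ is in general only \emph{filtered} by its composition factors, not their direct sum. Since $H^0$ is merely left exact, passing from $H^0$ of the bundle attached to $W$ to the sum of Borel--Weil modules of the Levi-semisimplification requires two additional inputs: vanishing of $H^1$ for the bundles attached to the subquotients, and dominance for $K(\xi)_r^0$ of every highest weight occurring in the semisimplification (otherwise Borel--Weil--Bott annihilates or shifts constituents and the ``same highest weight'' prescription in the statement is not even defined). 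Both points hinge on positivity properties of the parameter $\Lambda-\rho_c+\rho_n$ --- this is exactly where regularity hypotheses enter in \cite{C2} --- and your proposal never invokes them; as written it yields only an upper-bound-type comparison, not the stated isomorphism. A secondary point in the same direction: you silently replace the fibre $\gamma^{-1}(\xi)$ by the reduced variety $K(\xi)_r^0\cdot Y$, and a word is needed on why a possibly non-reduced scheme structure does not change $H^0$.
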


\subsection{$Spin(2n,2)$ case}\label{subsection:SO case}

Let us apply the general theory to our $Spin(2n, 2)$ case. 

\begin{proposition}\label{prop:GK dim}
\begin{enumerate}
\item
If $\Lambda \in \Xi_{1,\pm}$, then 
$\Dim(\pi_{\Lambda}) = 2n$. 
\item
If $\Lambda \in \Xi_{m,\pm}$, $m = 2, \dots, n$, 
then $\Dim(\pi_{\Lambda}) = 4n-2 = \dim \lier{n}$. 
\item
If $\Lambda \in \Xi_{n+1,\pm}$, then 
$\Dim(\pi_{\Lambda}) = 4n-3$. 
\end{enumerate}
\end{proposition}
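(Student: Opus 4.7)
The plan is to compute $\Dim(\pi_{\Lambda}) = \dim_{\C} \AssVar(\pi_{\Lambda})$ by means of the description recalled in \S\ref{subsection:Chang's results}, namely $\AssVar(\pi_{\Lambda}) = \overline{K \cdot (\bar{\lie{u}} \cap \lie{p})}$, where $\bar{\lie{u}}$ is the nilradical of the Borel attached to whichever positive system $\Delta_{m,\pm}^{+}$ renders $\Lambda$ dominant. The whole proposition thus reduces to computing the complex dimension of the single $K_{\C}$-orbit $K_{\C} \cdot (\bar{\lie{u}} \cap \lie{p})$ in each of the three regimes.

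For the setup, I decompose $\lie{p} = \lie{p}^{+} \oplus \lie{p}^{-}$ into the $\pm 1$ weight spaces of the central $SO(2,\C) \subset K_{\C}$, so that each $\lie{p}^{\pm} \cong \C^{2n}$ as a representation of $SO(2n,\C)$. Fixing a hyperbolic basis $v_{\pm 1}, \dots, v_{\pm n}$ of $\C^{2n}$ with $\langle v_{i}, v_{j} \rangle = \delta_{i,-j}$ and reading the noncompact negative roots off the definition of $\Delta_{m,\pm}^{+}$, I write $\bar{\lie{u}} \cap \lie{p} = U^{+} \oplus U^{-}$ with each $U^{\pm} \subset \lie{p}^{\pm}$ a concrete span of basis vectors; for $\Delta_{m,+}^{+}$ one gets $U^{+} = \mathrm{span}(v_{-1}, \dots, v_{-(m-1)})$ and $U^{-} = \mathrm{span}(v_{-1}, \dots, v_{-n}, v_{m}, \dots, v_{n})$, the roles of $U^{\pm}$ swapped for the sign $-$.

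I then treat the three cases. For $m=1$, one of $U^{\pm}$ is zero and the other fills all of $\lie{p}^{\mp}$, which is $K_{\C}$-stable, so the orbit equals $\lie{p}^{\mp}$ of dimension $2n$. For $2\leq m \leq n$, the key point is that $U^{-}$ (say) contains both $v_{-i}$ and $v_{i}$ for every $i\geq m$, whence a generic $\xi^{-}\in U^{-}$ has $\langle \xi^{-},\xi^{-}\rangle\neq 0$ while $\xi^{+}\in U^{+}$ is isotropic and $\langle \xi^{+},\xi^{-}\rangle=0$; a two-step stabilizer calculation --- $\dim SO(2n,\C)\cdot \xi^{+} = 2n-1$, and then $\mathrm{Stab}_{SO(2n,\C)}(\xi^{+})$, a parabolic with Levi $SO(2n-2,\C)\times GL(1,\C)$, moves the non-isotropic $\bar\xi^{-}\in (\xi^{+})^{\perp}/\C\xi^{+}$ in a $(2n-3)$-dimensional orbit together with one further dimension of translation along $\C\xi^{+}$ --- gives $\dim SO(2n,\C)\cdot(\xi^{+},\xi^{-}) = (2n-1)+(2n-2) = 4n-3$, after which the residual $SO(2,\C)$-scaling rescales the invariant $\langle\xi^{-},\xi^{-}\rangle$ and so adds one further dimension, totalling $4n-2=\dim\lier{n}$. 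For $m=n+1$, both $U^{+}$ and $U^{-}$ are maximal isotropic and pairwise orthogonal, so a generic $(\xi^{+},\xi^{-})$ spans an isotropic $2$-plane; Witt's theorem identifies the $SO(2n,\C)$-orbit with the Stiefel variety of isotropic $2$-frames of dimension $4n-3$, while the $SO(2,\C)$-scaling $(t,t^{-1})$ is realized inside $SO(2n,\C)$ by the $GL(2)$-Levi of the parabolic stabilizing that plane, hence contributes nothing new.

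The main obstacle will be keeping cases (2) and (3) cleanly separated: in both the pure $SO(2n,\C)$-orbit has dimension $4n-3$, and the whole gap between $\Dim = 4n-2$ and $\Dim = 4n-3$ is carried by the $SO(2,\C)$ factor. Verifying that this factor genuinely enlarges the orbit in case (2) amounts to the observation that the quantity $\langle\xi^{-},\xi^{-}\rangle$ is an $SO(2n,\C)$-invariant that $SO(2,\C)$ actually rescales, whereas in case (3) all such invariants vanish and the diagonal scaling on the isotropic plane is already realized by an element of $SL(2,\C)\subset SO(2n,\C)$. Making this dichotomy precise --- and in particular checking that no discrete $SO(2n,\C)$-invariant obstructs the absorption in case (3) --- is the most delicate step.
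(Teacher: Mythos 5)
Your proposal is correct and is essentially the paper's own proof: both arguments reduce, via Chang's description $\AssVar(\pi_{\Lambda}) = \overline{K\cdot(\bar{\lie{u}}\cap\lie{p})}$, to computing $\dim K\cdot\xi$ for a generic $\xi \in \bar{\lie{u}}\cap\lie{p}$, and your invariant-theoretic characterization of generic points (isotropic $\xi^{+}$ with non-isotropic orthogonal $\xi^{-}$ when $2\le m\le n$; an isotropic $2$-frame when $m=n+1$; a non-isotropic vector filling $\lie{p}^{\mp}$ when $m=1$) singles out exactly the explicit root-vector choices $\xi$ made in the paper. The only difference is bookkeeping in the final count --- the paper computes $\dim K - \dim K(\xi)$ from the explicit centralizer, while you use a two-step orbit fibration plus Witt's theorem --- and your numbers $2n$, $4n-2$, $4n-3$ are right; note only the harmless slip that $\mathrm{Stab}_{SO(2n,\C)}(\xi^{+})$ is $SO(2n-2,\C)\ltimes N$ rather than the full parabolic (the $GL(1,\C)$ factor rescales $\xi^{+}$), which does not affect your dimension count $(2n-3)+1$ for its orbit through $\xi^{-}$.
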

\begin{proof}
Let $X_{\alpha} \in \lie{g}_{\alpha}$ be a non-zero root vector for 
$\alpha \in \Delta(\lie{g}, \lie{t})$. 
For each case, we can choose 
\[
\xi = 
\begin{cases}
(1) & X_{\pm e_{n} \mp e_{n+1}} + X_{\mp e_{n} \mp e_{n+1}}
\\
(2) & X_{- e_{1} \pm e_{n+1}} + X_{e_{n} \mp e_{n+1}} 
+ X_{- e_{n} \mp e_{n+1}}
\\
(3) & 
X_{\mp e_{n} \pm e_{n+1}} + X_{- e_{n-1} \mp e_{n+1}}
\end{cases}
\]
as a generic point of $\bar{\lie{u}} \cap \lie{p}$. 
It is not hard to calculate the centralizer $K(\xi)$, and finally we
obtain 
$\Dim(\pi_{\Lambda}) 
= 
\dim (K \cdot \xi) 
= 
\dim K - \dim K(\xi) 
= 2n$, $4n-2$ and $4n-3$, respectively. 
\end{proof}
\begin{corollary}\label{cor:existing cases}
The discrete series $\pi_{\Lambda}$ has a non-trivial algebraic
Whittaker model 
if and only if $\Lambda \in \Xi_{m,\pm}$, $m = 2, \dots, n$. 
\end{corollary}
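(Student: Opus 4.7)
The plan is essentially to combine the two preceding results. Matumoto's criterion (Theorem~\ref{thm:matu-1}(1)) says that an irreducible Harish-Chandra module $V$ admits a non-trivial algebraic Whittaker model if and only if $\Dim V = \dim \lier{n}$. So the corollary reduces to checking, via Proposition~\ref{prop:GK dim}, exactly when $\Dim(\pi_{\Lambda}) = \dim \lier{n}$.

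First I would compute $\dim \lier{n}$ directly from the description in Section~\ref{section:Spin(2n,2)}. The positive restricted roots are $\{f_{1}, f_{2}, \pm f_{1} + f_{2}\}$; from the explicit bases given there, the root spaces $(\lier{g})_{f_{1}}$ and $(\lier{g})_{f_{2}}$ each have dimension $2n-2$ (spanned by $\{X_{f_{j}}^{k}\}_{k=1}^{2n-2}$), while $(\lier{g})_{\pm f_{1}+f_{2}}$ are one-dimensional. Summing, $\dim \lier{n} = 2(2n-2) + 2 = 4n-2$.

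Comparing with Proposition~\ref{prop:GK dim}, $\Dim(\pi_{\Lambda})$ equals $2n$, $4n-2$, or $4n-3$ according as $\Lambda$ lies in $\Xi_{1,\pm}$, $\Xi_{m,\pm}$ with $2 \leq m \leq n$, or $\Xi_{n+1,\pm}$. Since $n \geq 2$, the values $2n$ and $4n-3$ are both strictly less than $4n-2$, so $\Dim(\pi_{\Lambda}) = \dim \lier{n}$ occurs precisely in the middle range. Combining this with Matumoto's criterion yields the stated equivalence.

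There is no real obstacle here; the corollary is purely a bookkeeping consequence of the two cited results, and the only minor point is verifying that $n \geq 2$ makes the three GK-dimension values genuinely distinct, which is immediate.
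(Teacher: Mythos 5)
Your proof is correct and follows exactly the route the paper intends: Corollary~\ref{cor:existing cases} is stated as an immediate consequence of Proposition~\ref{prop:GK dim} (which already records $4n-2 = \dim \lier{n}$) combined with Matumoto's criterion, Theorem~\ref{thm:matu-1}(1). Your added verifications --- that $\dim \lier{n} = 2(2n-2)+2 = 4n-2$ from the restricted root space dimensions, and that $2n$ and $4n-3$ differ from $4n-2$ when $n \geq 2$ --- are precisely the bookkeeping the paper leaves implicit.
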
 

Chang has shown that the condition \eqref{eq:Chang's hypothesis} is
satisfied if $G_{\R}$ is a connected real rank one group. 
It is also satisfied in our case. 

\begin{proposition}
The condition \eqref{eq:Chang's hypothesis} is satisfied 
when $\Lambda \in \Xi_{m,\pm}$, $m = 2, \dots, n$. 
\end{proposition}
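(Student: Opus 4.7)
The plan is to verify \eqref{eq:Chang's hypothesis} directly for the generic point $\xi$ of Proposition~\ref{prop:GK dim}(2); I treat $\Lambda\in\Xi_{m,+}$, the $-$ case being analogous. First I would record the structural data for the positive system $\Delta_{m,+}^{+}$: its compact simple roots form an $A_{m-2}\sqcup D_{n-m+1}$ subsystem of $\Delta_{c}$, so $\lie{l}$ has Lie algebra of that type inside $\lie{k}$, and $\bar{\lie{v}}$ is spanned by the root vectors for roots in $-(\Delta_{m,+}^{+}\setminus\langle S\rangle)$. One checks that $\xi=X_{-e_{1}+e_{n+1}}+X_{e_{n}-e_{n+1}}+X_{-e_{n}-e_{n+1}}$ lies in $\bar{\lie{u}}\cap\lie{p}$, and that the three weights appearing in $\xi$ are extremal among the $\lie{l}$-weights of $\bar{\lie{u}}\cap\lie{p}$ in the sense that none of them is reached from another by adding a root of $\bar{\lie{v}}$.

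Next I would compute $K(\xi)$ and its reductive part $K(\xi)_{r}^{0}$. Since $\xi$ is a sum of root vectors supported on the coordinate directions $e_{1},e_{n},e_{n+1}$, one may pass to the simultaneous centraliser of the three $\lie{sl}_{2}$-triples containing each summand inside $\lie{k}=\lie{so}(2n,\C)\oplus\lie{so}(2,\C)$. A direct matrix calculation identifies $K(\xi)_{r}^{0}$ with the subgroup of $Spin(2n,\C)$ stabilising these three directions, which is a copy of $Spin(2n-3,\C)$; a dimension check against Proposition~\ref{prop:GK dim}(2) confirms this and shows that $K(\xi)/K(\xi)_{r}^{0}$ is unipotent, of dimension $2n-3$, and is absorbed into the unipotent radical of $K\cap Q$ upon multiplication, giving the second equality $(K\cap Q)K(\xi)=(K\cap Q)K(\xi)_{r}^{0}$ for free. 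The inclusion $(K\cap Q)K(\xi)\subseteq N_{K}(\xi,\bar{\lie{u}}\cap\lie{p})$ is then routine: $K(\xi)$ fixes $\xi$, and for $k\in K\cap Q$ the vector $\Ad(k)\xi$ remains in $\bar{\lie{u}}\cap\lie{p}$ because $L\cap K$ permutes $\langle S\rangle$-weight vectors among themselves and the unipotent radical of $K\cap Q$ sends each summand of $\xi$ into $\bar{\lie{u}}\cap\lie{p}$.

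The main obstacle is the reverse inclusion. My plan is to reduce it to the assertion that $(K\cdot\xi)\cap(\bar{\lie{u}}\cap\lie{p})$ is a single $(K\cap Q)$-orbit. Using the extremality of the weights of $\xi$, any $\xi'\in(K\cdot\xi)\cap(\bar{\lie{u}}\cap\lie{p})$ must have the same leading $\langle S\rangle$-weight component as $\xi$ up to an $L\cap K$-action, and can then be brought to $\xi$ by the unipotent radical of $K\cap Q$. Granted this orbit assertion, the desired set equality follows from a dimension count: the orbit map $K\to K\cdot\xi$ is a principal $K(\xi)$-bundle, so $\dim N_{K}(\xi,\bar{\lie{u}}\cap\lie{p})=\dim K(\xi)+\dim(K\cap Q)-\dim((K\cap Q)\cap K(\xi))=\dim(K\cap Q)K(\xi)$, and combined with the easy inclusion we get equality. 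The delicate step is the extremality argument together with the computation of $(K\cap Q)\cap K(\xi)_{r}^{0}$, which reduces to determining how the embedded $Spin(2n-3,\C)$ meets the $A_{m-2}\oplus D_{n-m+1}$ Levi along the coordinate directions $e_{1},e_{n},e_{n+1}$.
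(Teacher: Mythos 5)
Your computation of $K(\xi)$ and the easy inclusion $(K\cap Q)K(\xi)\subseteq N_{K}(\xi,\bar{\lie{u}}\cap\lie{p})$ are fine, and your reduction of the reverse inclusion to the assertion that $(K\cdot\xi)\cap(\bar{\lie{u}}\cap\lie{p})$ is a single $(K\cap Q)$-orbit is logically valid (indeed it is equivalent to the first equality in \eqref{eq:Chang's hypothesis}). The genuine gap is that you never prove this orbit assertion: the ``extremality'' argument --- that any $\xi'$ in the intersection has the same leading $\langle S\rangle$-weight component as $\xi$ up to $L\cap K$ and is then moved to $\xi$ by the unipotent radical of $K\cap Q$ --- is exactly the nontrivial content of Chang's condition, restated rather than proved. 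This condition is a hypothesis precisely because it is not automatic (Chang verified it only for real rank one groups); no soft weight-ordering consideration can establish it, since nothing in your sketch controls how an arbitrary element of $K$ mixes the weight components. The paper's proof supplies the missing mechanism: it introduces an auxiliary parabolic $Q_{1}\supset K\cap B$ containing $K(\xi)$, writes any $k\in N_{K}(\xi,\bar{\lie{u}}\cap\lie{p})$ in Bruhat form $k=qwq_{1}$ relative to $(K\cap Q)\backslash K/Q_{1}$, and uses the fact that $K\cap Q$ normalizes $\bar{\lie{u}}\cap\lie{p}$ to reduce to the finitely many Weyl double-coset representatives $w$; a direct computation then shows $q_{1}\cdot\xi\in w^{-1}(\bar{\lie{u}}\cap\lie{p})$ forces $w=e$, and that $q_{1}\cdot\xi\in\bar{\lie{u}}\cap\lie{p}$ forces $q_{1}\in(K\cap Q)K(\xi)$. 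Some finite reduction of this kind is what your proposal lacks.

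Two further problems. First, your closing dimension count is both redundant and insufficient: granted the orbit assertion, the set equality follows immediately from $k\cdot\xi=q\cdot\xi\Rightarrow q^{-1}k\in K(\xi)$, while dimension equality alone can never give set equality because $N_{K}(\xi,\bar{\lie{u}}\cap\lie{p})$ is not a group and could a priori contain extra components of the same dimension. Second, the claim that $K(\xi)/K(\xi)_{r}^{0}$ is unipotent is false: $K(\xi)_{r}\simeq\{\pm 1\}\times Spin(2n-3,\C)$ is disconnected, so for the second equality in \eqref{eq:Chang's hypothesis} you must also absorb the finite component group into $K\cap Q$; the paper does this by observing that each connected component of $K(\xi)$ meets $\exp\lie{t}\subset K\cap Q$, in addition to the unipotent radical of $K(\xi)$ lying in $K\cap Q$.
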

\begin{proof}
Suppose $\Lambda \in \Xi_{m, \pm}$, $m = 2, \dots, n$. 
In this case, 
\[
\bar{\lie{u}}\cap\lie{p} 
= 
\bigoplus_{i=1}^{m-1} 
(\lie{g}_{-e_i \pm e_{n+1}} \oplus \lie{g}_{-e_{i} \mp e_{n+1}}) 
\oplus 
\bigoplus_{i=m}^{n} 
(\lie{g}_{e_{i} \mp e_{n+1}} 
\oplus
\lie{g}_{-e_{i} \mp e_{n+1}}). 
\]
We choose a generic point $\xi$ of $\bar{\lie{n}}\cap\lie{p}$ 
as in the proof of Proposition~\ref{prop:GK dim}. 
For this $\xi$, 
\begin{align*}
K(\xi)_r&\simeq
\{\pm 1\}\times Spin(2n-3,\C), \\
K(\xi)&=
K(\xi)_r\exp
\left(\sum_{i=2}^{n-1}(a_i X_{-e_1+e_i}+b_i X_{-e_1-e_i})
+a_n(X_{-e_1+e_n}+X_{-e_1-e_n})\right), 
\end{align*}
where $X_{\alpha}$'s are appropriately chosen non-zero root vectors. 
For $\Lambda \in \Xi_{m,\pm}$, the parabolic subgroup $K\cap Q$
corresponds to the set of simple roots 
$\{e_{1} - e_{2}, \dots, e_{m-2} - e_{m-1}, 
e_{m} - e_{m+1}, \dots, e_{n-1}-e_n, e_{n-1}+e_n\}$. 
Let $Q_{1} \supset K \cap B$ be the parabolic subgroup of $K$, 
whose Levi subalgebra corresponds to the set of
simple roots 
$\{e_2-e_3, \dots, e_{n-1}-e_n, e_{n-1}+e_n\}$. 
Note that $Q_{1}$ contains $K(\xi)$. 
Let 
$k=qwq_{1}$ 
be the Bruhat decomposition of 
$k\in N_K(\xi,\bar{\lie{u}}\cap\lie{p})$ with respect to 
$(K \cap Q) \backslash K / Q_{1}$, 
where $q\in K\cap Q$, 
$w\in (\mathfrak{S}_{m-1}\times
(\mathfrak{S}_{n-m+1}\ltimes \Z_2^{n-m}))
\backslash \mathfrak{S}_n\ltimes \Z_2^{n-1}/
(\mathfrak{S}_{n-1}\ltimes \Z_2^{n-2})$, $q_{1}\in Q_{1}$.  
Since $K\cap Q$ normalizes $\bar{\lie{u}}\cap\lie{p}$, 
$k\cdot \xi \in \bar{\lie{u}}\cap \lie{p}$ is equivalent to  
$q_{1} \cdot \xi\in w^{-1}(\bar{\lie{u}}\cap \lie{p})$. 
By direct computation, this is true only if $w = e$, 
and we can show that, if  
$q_{1} \cdot \xi\in \bar{\lie{u}}\cap \lie{p}$, 
then 
$q_{1} \in (K\cap Q) K(\xi)$. 
Thus we get 
$N_K(\xi,\bar{\lie{u}}\cap \lie{p}) \subset 
(K\cap Q) K(\xi)$. 
Since the inverse inclusion is trivial, 
we get 
$N_K(\xi,\bar{\lie{u}}\cap \lie{p}) 
=
(K\cap Q) K(\xi)$. 
Finally, since each connected component of $K(\xi)$ meets 
$\exp\lie{t} \subset K \cap Q$ and the unipotent radical of $K(\xi)$ is
contained in $K \cap Q$, we know 
$N_K(\xi,\bar{\lie{u}}\cap \lie{p}) 
=
(K\cap Q) K(\xi) 
= 
(K\cap Q) K(\xi)_{r}^{0}$. 
\end{proof}

Let $V_{\nu}^{F}$ be the irreducible finite dimensional representation
of a reductive group $F$ with the highest weight $\nu$. 
For $\Lambda \in \Xi_{m, \pm}$, $m = 2, \dots, n$, 
let 
$\tilde{\lambda} = (\lambda; \lambda_{n+1}) 
= (\lambda_1,\dots,\lambda_n; \lambda_{n+1}) =\Lambda-\rho_{c}+\rho_n$ 
be the Blattner parameter of $\pi_{\Lambda}$. 
Since 
\begin{align*}
K(\xi)_r^0&\simeq 
Spin(2n-3,\C), \\
K\cap Q&\simeq
GL(m-1,\C)\times Spin(2(n-m+1),\C) \times Spin(2,\C) 
\\
& \qquad 
\ltimes \mbox{\rm (unipotent radical)}, \\
K(\xi)_r^0\cap Q&\simeq
GL(m-2,\C)\times Spin(2n-2m+1,\C) 
\ltimes \mbox{\rm (unipotent radical)}, 
\end{align*}
we have 
\begin{align*}
& V_{\tilde{\lambda}}^{K\cap Q}
=
V_{(\lambda_1,\dots,\lambda_{m-1})}^{GL(m-1,\C)}
\boxtimes 
V_{(\lambda_m,\dots,\lambda_n)}^{Spin(2(n-m+1),\C)}
\boxtimes 
V_{\lambda_{n+1}}^{Spin(2,\C)}, 
\\
& \mathrm{Res}\downarrow_{K(\xi)_r^0\cap Q}^{K\cap Q}
V_{\tilde{\lambda}}^{K\cap Q}
\\
& 
\qquad 
\simeq\bigoplus_{
\genfrac{}{}{0pt}{}{\lambda_1 \geq \mu_1 \geq \lambda_2 \geq \dots 
\geq \lambda_{m-2} \geq \mu_{m-2} \geq \lambda_{m-1}}
{\lambda_m \geq \mu_1' \geq \lambda_{m+1} \geq \dots 
\geq \lambda_{n-1} \geq \mu_{n-k}' \geq |\lambda_n|} }
V_{(\mu_1,\dots,\mu_{m-2})}^{GL(m-2,\C)}
\boxtimes 
V_{(\mu_1',\dots,\mu_{n-m}')}^{Spin(2n-2m+1,\C)}, 
\\
& 
\mathrm{Coh}\mbox{-}\mathrm{Ind}
\uparrow_{K(\xi)_r^0\cap Q}^{K(\xi)_r^0}
\mathrm{Res}\downarrow_{K(\xi)_r^0\cap Q}^{K\cap Q}
V_{\tilde{\lambda}}^{K\cap Q}
\\
& 
\qquad 
\simeq\bigoplus_{\genfrac{}{}{0pt}{}{
\lambda_1 \geq \mu_1 \geq \lambda_2 \geq \dots 
\geq \lambda_{m-2} \geq \mu_{m-2} \geq \lambda_{m-1}}
{\lambda_m \geq \mu_1' \geq \lambda_{m+1} \geq \dots 
\geq \lambda_{n-1} \geq \mu_{n-m}' \geq |\lambda_n|} }
V_{(\mu_1,\dots,\mu_{m-2},\mu_1',\dots,\mu_{n-m}')}^{Spin(2n-3,\C)}.
\end{align*}

\begin{theorem}\label{thm:Bernstein}
The Bernstein degree of $\pi_{\Lambda}$, 
$\Lambda \in \Xi_{m,\pm}$, $m=2,\dots,n$, 
is 
\[C \sum_{\genfrac{}{}{0pt}{}{
\lambda_1 \geq \mu_1 \geq \lambda_2 \geq \dots 
\geq \lambda_{m-2} \geq \mu_{m-2} \geq \lambda_{m-1}}
{\lambda_m \geq \mu_1' \geq \lambda_{m+1} \geq \dots 
\geq \lambda_{n-1} \geq \mu_{n-m}' \geq |\lambda_n|} }
\dim
V_{(\mu_1,\dots,\mu_{m-2},\mu_1',\dots,\mu_{n-m}')}^{Spin(2n-3,\C)}, 
\]
where $C$ is a general constant independent of $\Lambda$. 
\end{theorem}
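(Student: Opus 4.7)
The plan is to combine the results already established in this subsection with the standard identity
\[
\Deg V = \sum_{i} m_{i}\,\deg(X_{i}),
\]
valid for any Harish-Chandra module $V$ whose associated cycle is $\AssCyc(V) = \sum_{i} m_{i} X_{i}$, where $\deg(X_{i})$ denotes the projective degree of $X_{i}\subset \lie{p}$ with respect to the standard filtration. This identity (due to Vogan) reduces the computation of the Bernstein degree to the determination of the associated cycle and of the degrees of the irreducible components of the associated variety.

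First, I would recall from the discussion preceding Theorem~3.3 that, for $\Lambda \in \Xi_{m,\pm}$ with $m=2,\dots,n$, the associated variety of $\pi_{\Lambda}$ is a single orbit closure $\overline{K\cdot(\bar{\lie{u}}\cap\lie{p})} = \gamma(T_{Z}^{\ast}X)$, which depends only on the chamber $\Xi_{m,\pm}$ and not on the particular Harish-Chandra parameter. I would therefore set
\[
C := \deg\overline{K\cdot(\bar{\lie{u}}\cap\lie{p})},
\]
which is a constant independent of $\Lambda$.

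Next, I would apply Chang's first theorem to identify the multiplicity of this unique component of $\AssCyc(\pi_{\Lambda})$ with $\dim H^{0}(\gamma^{-1}(\xi),\mathcal{L}_{\Lambda-\rho_{c}+\rho_{n}}|_{\gamma^{-1}(\xi)})$. Since the preceding proposition verifies the hypothesis \eqref{eq:Chang's hypothesis} in our situation, Chang's second theorem then expresses this dimension as that of $\mathrm{Coh}\mbox{-}\mathrm{Ind}\uparrow_{K(\xi)_{r}^{0}\cap Q}^{K(\xi)_{r}^{0}}\mathrm{Res}\downarrow_{K(\xi)_{r}^{0}\cap Q}^{K\cap Q} V_{\tilde{\lambda}}^{K\cap Q}$. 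Substituting the explicit branching decomposition computed in the displayed formulas just above the theorem statement yields
\[
\dim \mathrm{Coh}\mbox{-}\mathrm{Ind}\uparrow\mathrm{Res}\downarrow V_{\tilde{\lambda}}^{K\cap Q}
= \sum_{\genfrac{}{}{0pt}{}{\lambda_1 \geq \mu_1 \geq \dots \geq \lambda_{m-1}}{\lambda_m \geq \mu_1' \geq \dots \geq \mu_{n-m}' \geq |\lambda_n|}}
\dim V_{(\mu_1,\dots,\mu_{m-2},\mu_1',\dots,\mu_{n-m}')}^{Spin(2n-3,\C)},
\]
and multiplication by $C$ gives the claimed formula.

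Since every nontrivial ingredient — the identification of $K(\xi)_{r}^{0}\simeq Spin(2n-3,\C)$ and of $K\cap Q$, the verification of Chang's hypothesis, and the branching rules for $GL \times Spin \times Spin \downarrow Spin$ — has already been carried out in this subsection, the proof is essentially a bookkeeping argument. The only step that is not a pure citation is the invocation of the degree-multiplicity formula $\Deg V = \sum_{i} m_{i}\deg(X_{i})$; this, however, is a standard consequence of the definitions of $\Deg$ and $\AssCyc$ in terms of Hilbert polynomials of $\gr V$, and presents no genuine obstacle here. The explicit determination of $C$ (i.e., the projective degree of the $K$-orbit closure) is deliberately deferred, since it plays no role in the subsequent applications and will be pinned down later by comparison with Theorem~\ref{thm:last theorem, algebraic}.
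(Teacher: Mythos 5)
Your proposal is correct and follows essentially the same route as the paper: the paper states this theorem without a separate proof precisely because, as you observe, it is the combination of Vogan's identity $\Deg V=\sum_i m_i\deg(X_i)$ with the irreducibility of $\AssVar(\pi_\Lambda)$, Chang's two theorems (whose hypothesis \eqref{eq:Chang's hypothesis} was just verified), and the displayed branching computation, with $C=\deg\gamma(T_Z^{\ast}X)$ left undetermined. Your deferral of the evaluation of $C$ also matches the paper, which only pins down $C=4$ in \S\ref{subsection:whole solutions} by comparison with the explicit solutions of $\mathcal{D}_{\tilde{\lambda},\eta}\phi=0$.
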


\subsection{Realization of Whittaker functions}
\label{subsection:Realization of Whittaker functions}

Embedding of a discrete series into an induced representation is
realized by gradient type differential-difference equations. 
We review Yamashita's results (cf. \cite{Yamashita}). 

Let $\eta$ be a non-degenerate unitary character of $N_{\R}$. 
For a finite dimensional representation $(\tau, V_{\tau})$ of
$K_{\R}$, define 
\begin{align*}
& 
C^{\infty}_{\tau}(K_{\R} \backslash G_{\R} /N_{\R}; \eta)
\\
&:=\{F: G_{\R} \overset{C^{\infty}}{\longrightarrow} V_{\tau}| 
F(kgn)=\eta(n)^{-1}\tau(k) F(g), 
\mbox{ for } n\in N_{\R}, g\in G_{\R}, k \in K_{\R}\}. 
\end{align*}
Let, as before, $\tilde{\lambda}=\Lambda-\rho_{c}+\rho_{n}$ be the
Blattner parameter corresponding to a Harish-Chandra parameter
$\Lambda$. 
Let $(\tau_{\tilde{\lambda}}, V_{\tilde{\lambda}})$ be the irreducible
finite dimensional representation of $K_{\R}$ with the highest weight
$\tilde{\lambda}$. 
Let $(\Ad, \lie{p})$ be the adjoint representation of $K_{\R}$ on
$\lie{p}$. 

Fix an invariant bilinear form $\langle \enskip, \enskip \rangle$ on
$\lier{g}$ and choose an orthonormal basis $\{X_{i}\}$ of $\lier{p}$. 
Define a differential-difference operator 
$\nabla_{\tilde{\lambda}, \eta}$ by 
\begin{align*}
& \nabla_{\tilde{\lambda}, \eta}:
C^\infty_{\tau_{\tilde{\lambda}}}(K_\R\backslash G_\R /N_{\R}; \eta)
\to 
C^\infty_{\tau_{\tilde{\lambda}} \otimes \Ad}
(K_\R\backslash G_\R /N_{\R}; \eta), 
\\
& \nabla_{\tilde{\lambda}, \eta} \phi(g)
:=
\sum_i L_{X_i} \phi(g)\otimes X_i. 
\end{align*}
Here, $L_{X_i}$ is the left translation. 

Let $\Delta_{n}^{+}$ be the set of non-compact roots $\alpha$ 
with $\langle \alpha, \Lambda \rangle > 0$. 
Then the irreducible decomposition of 
$\tau_{\tilde{\lambda}} \otimes \Ad$ is 
$\oplus_{\alpha\in \Delta_n^+ \cup (-\Delta_{n}^{+})}
m_\alpha \tau_{\tilde{\lambda} + \alpha}$, $m_\alpha\in\{0, 1\}$. 
Let 
$\tau_{\tilde{\lambda}}^-
:=
\oplus_{\alpha\in\Delta_n^+}
m_{-\alpha} \tau_{\tilde{\lambda} - \alpha}$ 
be the negative part and 
let 
$\pr^-: \tau_{\tilde{\lambda}} \otimes \Ad 
\rightarrow \tau_{\tilde{\lambda}}^-$ be the
natural projection. 
Define a differential-difference operator 
$\mathcal{D}_{{\tilde{\lambda}}, \eta}$ by 
\[
\mathcal{D}_{\tilde{\lambda}, \eta}
:=\pr^-\circ \nabla_{\tilde{\lambda}, \eta}
 : C^\infty_{\tau_{\tilde{\lambda}}}(K_\R\backslash G_\R / N_{\R}; \eta)
\rightarrow 
C^\infty_{\tau_{\tilde{\lambda}}^-}(K_\R\backslash G_\R / N_{\R}; \eta).
\]
\begin{theorem}[\cite{Yamashita}] 
Let $\pi_{\Lambda}^{\ast}$ be the dual Harish-Chandra module of
$\pi_{\Lambda}$. 
If the Blattner $\tilde{\lambda}$ of $\pi_{\Lambda}$ is far from the
  walls, then 
\[
\Hom_{\lie{g}, K_\R}(\pi_{\Lambda}^*, 
C^\infty(G_\R/N_{\R}; \eta))
\simeq 
\Ker(\mathcal{D}_{\tilde{\lambda}, \eta}).
\]
\end{theorem}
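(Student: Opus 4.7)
The plan is to recognize a $(\lie{g}, K_\R)$-intertwiner $\Phi: \pi_\Lambda^* \to C^\infty(G_\R/N_\R; \eta)$ as completely determined by its restriction to the minimal $K_\R$-type $V_{\tilde{\lambda}} \subset \pi_\Lambda^*$, and to identify the obstruction for such a restriction to extend with precisely the equation $\mathcal{D}_{\tilde{\lambda}, \eta}\phi = 0$. First I would set up the standard Frobenius reciprocity
\[
\Hom_{K_\R}(V_\tau, C^\infty(G_\R/N_\R; \eta))
\simeq
C^\infty_{\tau}(K_\R \backslash G_\R /N_\R; \eta),
\]
so that restriction to the minimal $K_\R$-type produces a linear map
\[
\iota:
\Hom_{\lie{g}, K_\R}(\pi_\Lambda^*, C^\infty(G_\R/N_\R; \eta))
\to
C^\infty_{\tau_{\tilde{\lambda}}}(K_\R \backslash G_\R /N_\R; \eta).
\]
Injectivity of $\iota$ rests on the fact that, under the far-from-walls hypothesis on $\tilde{\lambda}$, the dual discrete series $\pi_\Lambda^*$ is generated as a $U(\lie{g})$-module by its minimal $K_\R$-type — a standard consequence of the Hecht--Schmid/Parthasarathy realization theory of discrete series.

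To identify the image of $\iota$ with $\Ker \mathcal{D}_{\tilde{\lambda}, \eta}$, I would exploit the $K_\R$-decomposition
\[
\lie{p} \otimes V_{\tilde{\lambda}}
\simeq
V_{\tilde{\lambda}}^+ \oplus V_{\tilde{\lambda}}^-,
\qquad
V_{\tilde{\lambda}}^{\pm} = \bigoplus_{\alpha \in \Delta_{n}^{+}} m_{\pm \alpha}\, V_{\tilde{\lambda}\pm\alpha}.
\]
By the minimality of $V_{\tilde{\lambda}}$ inside $\pi_\Lambda^*$, the lowering component $V_{\tilde{\lambda}}^-$ lies in the kernel of the multiplication map $\lie{p} \otimes V_{\tilde{\lambda}} \to \pi_\Lambda^*$. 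Transporting this vanishing through $\iota$, the compatibility of $\Phi$ with the $\lie{p}$-action on $V_{\tilde{\lambda}}$ becomes exactly
\[
\pr^{-} \circ \nabla_{\tilde{\lambda},\eta}(\iota(\Phi)) = \mathcal{D}_{\tilde{\lambda}, \eta}(\iota(\Phi)) = 0,
\]
so $\iota$ takes values in $\Ker \mathcal{D}_{\tilde{\lambda}, \eta}$.

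For the converse — extending a given $\phi \in \Ker \mathcal{D}_{\tilde{\lambda}, \eta}$ to a full $(\lie{g}, K_\R)$-intertwiner — the idea is to use a presentation of $\pi_\Lambda^*$ as a quotient of the generalized Verma-type module $U(\lie{g}) \otimes_{U(\lie{k})} V_{\tilde{\lambda}}$ by the $U(\lie{g})$-submodule generated by $V_{\tilde{\lambda}}^-$. One defines the candidate extension $\tilde{\Phi}$ on the Verma-like module by $X_{1} \cdots X_{k} \otimes v \mapsto L_{X_{1}}\cdots L_{X_{k}}(\phi_v)$, where $\phi_v$ is the scalar Whittaker function obtained by pairing the $V_{\tilde{\lambda}}$-valued function $\phi$ with $v$; this is tautologically well-defined on the tensor product, and descends to the quotient $\pi_\Lambda^*$ precisely because the relations arising from $V_{\tilde{\lambda}}^- \cdot V_{\tilde{\lambda}} = 0$ are killed under $\mathcal{D}_{\tilde{\lambda}, \eta}\phi = 0$. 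Smoothness/analyticity of the resulting Whittaker functions needs no separate argument, thanks to the ellipticity of the Casimir operator recalled earlier in the excerpt.

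The main obstacle is the claim underpinning both injectivity and surjectivity: that $\pi_\Lambda^*$ really has this Verma-type presentation, i.e.\ that the minimal $K_\R$-type $V_{\tilde{\lambda}}$ together with the single family of lowering relations $V_{\tilde{\lambda}}^- \to 0$ fully cuts out the Harish-Chandra module from $U(\lie{g}) \otimes_{U(\lie{k})} V_{\tilde{\lambda}}$. This is exactly where the "far from walls" hypothesis is consumed: deep in the chamber, Parthasarathy's Dirac inequality and the Hecht--Schmid realization (equivalently, the theory of generalized Verma modules) guarantee that no further relations appear, whereas nearer the walls extra obstructions to extension can arise. Verifying this structural fact is both the substantive and the technically most delicate step, and it is precisely the content of Yamashita's theorem being cited.
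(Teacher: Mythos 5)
The first thing to say is that the paper contains no proof of this statement: it is imported verbatim from Yamashita's paper \cite{Yamashita}, and the surrounding text uses it as a black box. So there is no in-paper argument to compare yours against; the only meaningful check is whether your sketch is a viable reconstruction of the cited result. In its architecture it is: Frobenius reciprocity identifies
$\Hom_{\lie{g}, K_\R}\bigl(U(\lie{g})\otimes_{U(\lie{k})}V_{\tilde{\lambda}},\, C^\infty(G_\R/N_{\R};\eta)\bigr)$
with $C^\infty_{\tau_{\tilde{\lambda}}}(K_\R\backslash G_\R/N_{\R};\eta)$, a map out of the induced module kills the $U(\lie{g})$-submodule generated by $V_{\tilde{\lambda}}^{-}$ exactly when the corresponding function lies in $\Ker(\mathcal{D}_{\tilde{\lambda},\eta})$, and the theorem thereby reduces to the structural fact that the quotient of $U(\lie{g})\otimes_{U(\lie{k})}V_{\tilde{\lambda}}$ by those relations is irreducible and isomorphic to $\pi_{\Lambda}^{\ast}$ when $\tilde{\lambda}$ is far from the walls. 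This is indeed the skeleton of Yamashita's proof.

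Two caveats, one small and one substantive. First, the vanishing of the lowering component is not a consequence of ``minimality'' of the $K_\R$-type in the lowest-norm sense; the actual input is the $K_\R$-type structure of discrete series (Blattner formula / Hecht--Schmid): every $K_\R$-type of $\pi_{\Lambda}^{\ast}$ lies in the cone $\tilde{\lambda}+\sum_{\alpha\in\Delta_n^+}\Z_{\geq 0}\,\alpha$, so the types $\tilde{\lambda}-\alpha$ simply do not occur. (Relatedly, injectivity of the restriction map needs only irreducibility of $\pi_{\Lambda}^{\ast}$ --- any nonzero $K_\R$-type generates --- not the far-from-walls hypothesis.) Second, and more seriously, the structural fact you correctly isolate as the crux is not proved in your proposal but deferred to ``Dirac inequality / Hecht--Schmid''; note also that the weaker statement that $\pi_{\Lambda}^{\ast}$ is the \emph{unique irreducible quotient} of the Verma-type module would not suffice, because if that module were reducible an element of $\Ker(\mathcal{D}_{\tilde{\lambda},\eta})$ would define a map from it that need not factor through $\pi_{\Lambda}^{\ast}$, and $\Ker(\mathcal{D}_{\tilde{\lambda},\eta})$ could then strictly exceed the Hom space. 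Since this irreducibility statement is essentially the entire content of Yamashita's theorem, what you have written is a correct reduction of the statement to its own hardest ingredient rather than a proof of it --- a defensible position given that the paper itself only cites the result, but it should be recognized as such.
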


\begin{remark}\label{rem:contragredient}
Suppose $G_\R=Spin(2n,2)$ and $\Lambda=\sum_{i=1}^{n+1}\Lambda_i e_i$. 
The Harish-Chandra parameter of 
$\pi_{\Lambda}^{\ast}$ is 
$\Lambda^{\ast}
:=
\sum_{i=1}^{n-1} \Lambda_{i} e_{i} + (-1)^n\Lambda_n e_n 
-\Lambda_{n+1} e_{n+1}$. 
Therefore, if $\Lambda \in \Xi_{m,\pm}$ and $m \leq n$, 
then $\Lambda^{\ast} \in \Xi_{m,\mp}$. 
The Blattner parameter $\tilde{\lambda}$ corresponding to 
$\Lambda = \sum_{i=1}^{n+1} \Lambda_{i} e_{i} \in \Xi_{m,\pm}$, 
$m \leq n$, is 
\[
\tilde{\lambda} 
= 
\sum_{i=1}^{m-1} (\Lambda_{i} - n + i + 1) e_{i} 
+ 
\sum_{i=m}^{n} (\Lambda_{i} - n + i) e_{i} 
+(\Lambda_{n+1} \pm (n-m+1)) e_{n+1}. 
\]
Especially, the $n$-th components $\lambda_{n}$, $\lambda_{n}^{\ast}$
of the Blattner parameters of $\pi_{\Lambda}$, $\pi_{\Lambda}^{\ast}$
are $\Lambda_{n}$, $(-1)^{n} \Lambda_{n}$, respectively. 
Therefore, if $\Lambda \in \Xi_{m,\pm}$, $m = 2, \dots, n$, 
then the Bernstein degrees of $\pi_{\Lambda}$ and
$\pi_{\Lambda}^{\ast}$ are identical because of
Theorem~\ref{thm:Bernstein}. 
\end{remark}
 
\section{Radial $A_{\R}$ part of 
$\mathcal{D}_{\tilde{\lambda}, \eta}$}
\label{section:Radial A part}

In this section, we write down the differential-difference equation
$\mathcal{D}_{\tilde{\lambda}, \eta} \phi =0$ explicitly. 
After that, we reduce our computation to getting a coefficient function
of some special vector.

\subsection{Irreducible decomposition of tensor product
  representation}
\label{section:3.4}

The Lie algebra $\lie{k}$ is isomorphic to 
$\lie{so}(2n, \C) \oplus \lie{so}(2, \C)$ and, as a vector space,
$\lie{p}$ is isomorphic to the matrix space $M_{2n,2}(\C)$. 
The adjoint representation  $(\ad, \lie{p})$ of $\lie{k}$ on $\lie{p}$
is 
\begin{align*}
& 
\lie{so}(2n,\C) \oplus \lie{so}(2,\C) 
\curvearrowright 
M_{2n,2}(\C), 
\\
& 
(A, B) \cdot X 
= AX-XB, 
\quad 
\mbox{for } A \in \lie{so}(2n,\C), B \in \lie{so}(2,\C), 
X \in M_{2n,2}(\C).  
\end{align*}

Let $(\tau^{k}, \C^{k})$ be the natural representation of 
$\lie{so}(k,\C)$ on $\C^{k}$. 
By the identification $\C^{2n} \otimes \C^{2} \simeq M_{2n,2}(\C)$, 
$u \otimes v \mapsto u {}^{t}v$, 
\[
(\ad, \lie{p}) 
\simeq 
(\tau^{2n}, \C^{2n}) \boxtimes (\tau^{2}, \C^{2}).
\]
Note that $\I F_{2n+i,j} \in \lie{p}$ corresponds to the vector 
$v_{j}^{2n} \boxtimes v_{i}^{2} \in 
\tau^{2n} \boxtimes \tau^{2}$, 
where 
$v_{i}^{k} 
= 
{}^{t} (0, \dots 0, \overset{i}{1}, 0, \dots, 0)$ 
is a standard basis of $\C^{k}$.

We realize the representation $\tau_{\tilde{\lambda}}$ of $K_{\R}$ by
using the Gelfand-Tsetlin basis. 

\begin{definition}
Let $\lambda = (\lambda_{1}, \dots, \lambda_{n})$ be a dominant
integral weight of $Spin(2n)$. 
A {\it ($\lambda$-)Gelfand-Tsetlin pattern} is a set of vectors 
$Q=(\vect{q}_{1}, \dots, \vect{q}_{2n-1})$ such that 
\begin{enumerate}
\item
$\vect{q}_i=(q_{i,1}, q_{i,2}, \dots, q_{i,\lfloor(i+1)/2\rfloor})$. 
\item
The numbers $q_{i,j}$ are all integers or all half integers. 
\item
$q_{2i+1,j} \geq q_{2i,j} \geq q_{2i+1,j+1}$, 
for any $j=1, \dots, i-1$.  
\item
$q_{2i+1,i} \geq q_{2i,i} \geq |q_{2i+1,i+1}|$.  
\item
$q_{2i,j} \geq q_{2i-1,j} \geq q_{2i,j+1}$, 
for any $j=1, \dots, i-1$. 
\item
$q_{2i,i} \geq q_{2i-1,i} \geq -q_{2i,i}$.  
\item
$q_{2n-1,j}=\lambda_{j}$. 
\end{enumerate}
Here, $\lfloor a \rfloor$ is the largest integer not greater than
$a$. 
The set of all $\lambda$-Gelfand-Tsetlin patterns is denoted by
$GT(\lambda)$. 
\end{definition}
\begin{notation}
For any set or number $\ast$ depending on $Q \in GT(\lambda)$, 
we denote it by $\ast(Q)$, if we need to specify $Q$. 
For example, $q_{i,j}(Q)$ is the $q_{i,j}$ part of $Q \in GT(\lambda)$. 
\end{notation}
\begin{theorem}[\cite{GT}] 
Let $\lambda$ be a dominant integral weight of $Spin(2n)$ and
let $(\tau_{\lambda}, V_{\lambda}^{Spin(2n)})$ be the irreducible
representation of $Spin(2n)$ with the highest weight $\lambda$. 
Then $GT(\lambda)$ is a basis of 
$(\tau_{\lambda}, V_{\lambda}^{Spin(2n)})$. 
\end{theorem}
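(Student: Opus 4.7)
The plan is to prove the Gelfand-Tsetlin theorem by induction on $n$, using the classical multiplicity-free branching rules along the tower
\[
Spin(2n) \supset Spin(2n-1) \supset Spin(2n-2) \supset \cdots \supset Spin(2) \supset \{1\}.
\]
The strategy is to peel off one group at a time: at each step the branching is multiplicity-free, so iterating produces a canonical decomposition of $V_{\lambda}^{Spin(2n)}$ into one-dimensional subspaces, and these subspaces will be in bijection with the combinatorial data defining $GT(\lambda)$.

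First I would invoke the branching rule from $Spin(2n)$ to $Spin(2n-1)$: the restriction of $V_{\lambda}^{Spin(2n)}$ decomposes as a multiplicity-free direct sum of $V_{\vect{q}_{2n-2}}^{Spin(2n-1)}$, where $\vect{q}_{2n-2}=(q_{2n-2,1},\dots,q_{2n-2,n-1})$ ranges over weights satisfying $q_{2n-1,j} \geq q_{2n-2,j} \geq q_{2n-1,j+1}$ for $j<n-1$ and $q_{2n-1,n-1} \geq q_{2n-2,n-1} \geq |q_{2n-1,n}|$. These are exactly conditions (3) and (4) of the definition with $i=n-1$. Next, the branching rule from $Spin(2n-1)$ to $Spin(2n-2)$ produces a multiplicity-free decomposition of each $V_{\vect{q}_{2n-2}}^{Spin(2n-1)}$ indexed by $\vect{q}_{2n-3}$ satisfying $q_{2n-2,j} \geq q_{2n-3,j} \geq q_{2n-2,j+1}$ for $j<n-1$ and $q_{2n-2,n-1} \geq q_{2n-3,n-1} \geq -q_{2n-2,n-1}$, matching conditions (5) and (6) of the definition.

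Iterating this alternation of the two branching rules all the way down to $Spin(2)$ and then to the trivial subgroup, the multiplicity-freeness at every stage guarantees that the intermediate decomposition is canonical. The leaves of this branching tree are one-dimensional $Spin(2)$-weight spaces (indexed by the single integer or half-integer $\vect{q}_{1}$), and each leaf is uniquely labeled by the entire chain $(\vect{q}_{1},\vect{q}_{2},\dots,\vect{q}_{2n-1}=\lambda)$ of highest weights that occur along the branching path. By construction, such a chain is precisely an element of $GT(\lambda)$, and conversely every element of $GT(\lambda)$ arises this way because the branching inequalities exactly reproduce the defining inequalities of the Gelfand-Tsetlin pattern. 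Choosing a nonzero vector from each one-dimensional summand yields a basis indexed by $GT(\lambda)$.

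The substantive content is the multiplicity-freeness of the restriction $Spin(2k) \downarrow Spin(2k-1)$ and $Spin(2k-1) \downarrow Spin(2k-2)$, together with the explicit interlacing description of the occurring highest weights; these can be obtained from the Weyl character formula or from dimension counting via Weyl's denominator, and constitute the main technical input. The remaining step is bookkeeping: matching the two sets of interlacing inequalities, taking care that (2) (integrality or half-integrality of all entries) is preserved because this property propagates under each branching, and observing that the branching for $Spin(2)$ produces the one-dimensional $\vect{q}_{1}$ label. Once these ingredients are in place, the identification of $GT(\lambda)$ with a basis of $V_{\lambda}^{Spin(2n)}$ is immediate.
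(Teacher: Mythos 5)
Your proposal is correct in outline, but note that the paper does not prove this statement at all: it is quoted verbatim from Gelfand--Tsetlin \cite{GT}, so there is no internal proof to compare against. What you give is the standard modern argument for the basis statement: iterate the two multiplicity-free branching laws $Spin(2k)\downarrow Spin(2k-1)$ (interlacing with $|q_{2k-1,k}|$ at the end, matching conditions (3)--(4)) and $Spin(2k-1)\downarrow Spin(2k-2)$ (interlacing with a possibly negative last entry, matching conditions (5)--(6)), terminate at the abelian group $Spin(2)$ whose irreducibles are one-dimensional, and use the canonicity of isotypic decompositions to get a line for each chain of highest weights, i.e.\ for each pattern in $GT(\lambda)$. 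Your identification of the inequalities with the paper's conditions is accurate, including the half-integrality propagation in condition (2). The only caveat is that the entire weight of the argument rests on the two branching theorems, which you assert rather than prove; that is legitimate since they are classical, but they are exactly the nontrivial content, so a complete write-up would need to carry out the Weyl-character-formula computation you allude to. Be aware also that this argument yields only the existence of a basis indexed by $GT(\lambda)$ (each vector determined up to scalar); the explicit matrix coefficients $a_{p,q}(Q)$ for the Lie algebra action, which the paper states as a separate theorem and uses heavily afterwards, require the normalization conventions of \cite{GT} and do not follow from branching alone.
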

The action of elements $F_{p,q} \in \lie{so}(2n)$ is expressed as follows. 
For $j > 0$, let 
\begin{align*}
& 
l_{2i-1,j} := q_{2i-1,j} + i - j, 
& 
& 
l_{2i-1,-j} := - l_{2i-1,j}, 
\\
& 
l_{2i,j} := q_{2i,j} + i + 1 - j, 
& 
& 
l_{2i,-j} := - l_{2i,j} + 1,  
\end{align*}
and let $l_{2i,0} = 0$. 
Define $a_{p,q}(Q)$ by 
\begin{align*} 
a_{2i-1,j}(Q) 
&= 
\sgn j \, 
\sqrt{-
\frac{\prod_{1 \leq |k| \leq i-1}(l_{2i-1,j} + l_{2i-2, k}) 
\prod_{1 \leq |k| \leq i} (l_{2i-1,j} + l_{2i, k})}
{4 \prod_{\genfrac{}{}{0pt}{}{1 \leq |k| \leq i,}{k \not= \pm j}} 
(l_{2i-1,j} + l_{2i-1,k}) (l_{2i-1,j} + l_{2i-1,k} + 1)}
},
\intertext{for $j = \pm 1, \dots, \pm i$, and}  
a_{2i,j}(Q) 
&= 
\epsilon_{2i,j}(Q) 
\sqrt{-
\frac{\prod_{1 \leq |k| \leq i}(l_{2i,j} + l_{2i-1, k}) 
\prod_{1 \leq |k| \leq i+1} (l_{2i,j} + l_{2i+1, k})}
{(4 l_{2i,j}^{2} - 1) 
\prod_{\genfrac{}{}{0pt}{}{0 \leq |k| \leq i}{k \not= \pm j}} 
(l_{2i,j} + l_{2i,k}) (l_{2i,j} - l_{2i,k})}
},
\end{align*}
for $j = 0, \pm 1, \dots, \pm i$, 
where $\epsilon_{2i,j}(Q)$ is $\sgn j$ if $j \not= 0$, 
and $\sgn(q_{2i-1,i} q_{2i+1,i+1})$ if $j = 0$. 

Let $\sigma_{a,b}$ be the shift operator, sending $\vect{q}_a$ to 
$\vect{q}_{a} + (0, \dots, \overset{|b|}{\sgn(b)}, 0 ,\dots, 0)$. 
For notational convenience, we often write 
$\tau_{i,j} := \sigma_{2n-3,i} \sigma_{2n-2,j}$. 

\begin{theorem}[\cite{GT}]
Under the above notation, the Lie algebra action is expressed as 
\begin{align*}
\tau_\lambda(F_{2i+1,2i})Q
&=
\sum_{1 \leq |j| \leq i} a_{2i-1,j}(Q) \sigma_{2i-1,j}Q, 
\\
\tau_\lambda(F_{2i+2,2i+1})Q
&=
\sum_{0 \leq |j| \leq i} a_{2i,j}(Q)\sigma_{2i,j}Q. 
\end{align*}
\end{theorem}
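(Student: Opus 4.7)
The plan is to follow the classical Gelfand--Tsetlin construction, based on the chain of nested subalgebras $\lie{so}(2) \subset \lie{so}(3) \subset \cdots \subset \lie{so}(2n)$, where each inclusion is given by the upper-left block embedding. A pattern $Q \in GT(\lambda)$ encodes, in row $\vect{q}_{k-1}$, the highest weight of the irreducible $\lie{so}(k)$-summand containing the basis vector inside its $\lie{so}(k+1)$-parent (which is itself labelled by $\vect{q}_{k}$). The restriction $\lie{so}(k+1) \downarrow \lie{so}(k)$ is multiplicity-free for every $k$, with selection rules that are precisely the interlacing conditions (3)--(6) in the definition of $GT(\lambda)$. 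Consequently each $Q$ determines a basis vector uniquely once norms are normalized against the standard $K_{\R}$-invariant Hermitian form on $V_{\lambda}^{Spin(2n)}$.

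The first step is to determine which patterns $Q'$ can contribute to $\tau_{\lambda}(F_{2i+1,2i})Q$ and $\tau_{\lambda}(F_{2i+2,2i+1})Q$. Since $F_{2i+1,2i}$ lies in $\lie{so}(k)$ for every $k \geq 2i+1$, its action preserves the $\lie{so}(k)$-isotypic component, so $\vect{q}_{k-1}$ is unchanged for $k-1 \geq 2i$. With respect to the further restriction to $\lie{so}(2i)$, the element $F_{2i+1,2i}$ transforms in the defining representation (it spans part of the orthogonal complement of $\lie{so}(2i)$ inside $\lie{so}(2i+1)$); the Wigner--Eckart theorem combined with multiplicity-freeness thus forces the output to be a sum over neighbours $\sigma_{2i-1,j}Q$, $1 \leq |j| \leq i$. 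An entirely parallel argument one level up yields the shifts $\sigma_{2i,j}Q$ for $F_{2i+2,2i+1}$; the additional index $j=0$ arises because the defining representation of $\lie{so}(2i+1)$ restricted to $\lie{so}(2i)$ contains a zero weight, which couples distinct $\lie{so}(2i+1)$-summands inside the $\lie{so}(2i+2)$-parent.

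The second step is to compute the coefficients $a_{2i-1,j}(Q)$ and $a_{2i,j}(Q)$ explicitly. One proceeds by induction on $i$, using the commutation relations $[F_{k+1,k}, F_{k,k-1}] = F_{k+1,k-1}$ to propagate coefficients along the chain of successive generators, the self-adjointness of $F_{k+1,k}$ with respect to the invariant Hermitian form to equate $a_{k,j}(\sigma_{k,j}Q)$ with $\overline{a_{k,j}(Q)}$, and the vanishing of the action at extremal vectors (those whose shift would violate interlacing) as boundary conditions. The resulting finite recursion has a unique solution up to overall signs, and direct verification identifies it with the product-of-square-roots formulas stated. The main obstacle is the sign bookkeeping in the even-level case at $j=0$: the factor $\epsilon_{2i,j}(Q)$ encodes the spinor ambiguity between the two half-spin representations of $\lie{so}(2i+2)$, and establishing its stated dependence on $\sgn(q_{2i-1,i}\, q_{2i+1,i+1})$ consistently across all of $GT(\lambda)$---so that the operators integrate to a genuine, rather than merely projective, representation of $Spin(2n)$---is the delicate point carried out in \cite{GT}.
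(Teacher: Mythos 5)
The paper does not prove this statement at all: it is quoted, together with its explicit coefficients, from Gelfand--Tsetlin \cite{GT}, so there is no internal proof to compare yours against. Judged on its own merits, your outline gets the structural half right --- the multiplicity-free branching chain $\lie{so}(2)\subset\lie{so}(3)\subset\cdots\subset\lie{so}(2n)$, the centralizer/tensor-operator argument showing that $F_{k+1,k}$ can alter only the row $\vect{q}_{k-1}$, and the selection rules $\vect{q}_{k-1}\mapsto\vect{q}_{k-1}+e_{j}$, with the extra $j=0$ term occurring exactly at even levels because for $\lie{so}(2i+1)$ the tensor product $V_{\mu}\otimes\C^{2i+1}$ contains $V_{\mu}$ itself, while for $\lie{so}(2i)$ the product $V_{\mu}\otimes\C^{2i}$ does not.

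However, the actual content of the theorem is the explicit product formulas for $a_{2i-1,j}(Q)$ and $a_{2i,j}(Q)$, and here your argument is a placeholder rather than a proof. You assert that the recursion coming from $[F_{k+1,k},F_{k,k-1}]=F_{k+1,k-1}$, adjointness, and extremal-vector boundary conditions ``has a unique solution up to overall signs'' and that ``direct verification'' identifies it with the stated formulas; neither claim is established, and together they constitute essentially all of the work in \cite{GT}. In particular, the generators $F_{k+1,k}$ satisfy further relations beyond the single commutator you quote (for instance $[F_{k+1,k},F_{j+1,j}]=0$ for $|k-j|\geq 2$, plus Serre-type relations), and without checking these one has shown neither that the proposed operators define a representation nor that your recursion is consistent. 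Two smaller inaccuracies: on a unitary representation of the compact group the real matrices $F_{k+1,k}$ act skew-Hermitianly, not self-adjointly --- this is precisely why the coefficients carry $\sqrt{-(\cdots)}$ and are purely imaginary --- so the relation you want is of the form $a_{2i-1,-j}(\sigma_{2i-1,j}Q)=-\overline{a_{2i-1,j}(Q)}$ rather than the one you wrote; and $\epsilon_{2i,0}(Q)=\sgn(q_{2i-1,i}\,q_{2i+1,i+1})$ is not a ``half-spin ambiguity'' but simply records the signs of the bottom entries of the adjacent odd rows, which may be negative because the interlacing conditions (4) and (6) permit negative last entries; relatedly, the $j=0$ term is a diagonal term ($\sigma_{2i,0}Q=Q$), not a coupling between distinct $\lie{so}(2i+1)$-summands.
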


In the following of this paper, we assume that the Blattner parameter
$\tilde{\lambda}$ is far from the walls. 
It follows that the numbers $\lambda_{j}$ satisfy  
\begin{equation}\label{eq:order of lambda}
\lambda_{1} > \lambda_{2} > \dots > \lambda_{n-1} 
> |\lambda_{n}| 
\end{equation}
and the differences between adjacent numbers are sufficiently large. 

Let $e_{k} = (0, \dots, \overset{k}{1}, 0, \dots, 0) \in \C^{n}$ and 
$e_{-k} := -e_{k}$. 
Let 
\begin{align*}
\pr_{k}:& V_\lambda^{Spin(2n)} \otimes \C^{2n} 
\to 
V_{\lambda+e_k}^{Spin(2n)}, \quad \mbox{for} \quad 
k = \pm 1, \dots, \pm n 
\end{align*}
be the projection operator along the irreducible decomposition 
\begin{align*}
V_\lambda^{Spin(2n)} \otimes \C^{2n} 
&\simeq
\bigoplus_{1 \leq |k| \leq n} 
V_{\lambda+e_k}^{Spin(2n)}. 
\end{align*}
In order to describe this operator explicitly, 
we identify $Q \in GT(\lambda)$ with the Gelfand-Tsetlin pattern
$(Q, \vect{q}_{2n})$ of a representation of $Spin(2n+1)$, 
where 
$\vect{q}_{2n}:=(\lambda_1+1,\dots,\lambda_{n-1}+1,|\lambda_n|+1)$. 
Just the same way as in the proof of \cite[Proposition~4.3]{Kr} 
and \cite[Lemma~3.1.3]{T}, we get the following formulas. 
\begin{lemma}\label{lemma:projection-1}
For $Q \in GT(\lambda)$ and $k = \pm 1, \dots, \pm n$, 
\begin{align}
& 
\pr_{k}(Q \otimes v_{2n}^{2n}) 
= 
a_{2n-1,k}(Q) \sigma_{2n-1,k} Q, 
\label{eq:tensor v_{2n}}
\\
& 
\pr_{k}(Q \otimes v_{2n-1}^{2n}) 
= 
\sum_{0 \leq |j| \leq n-1} 
\frac{a_{2n-2,j}(Q) a_{2n-1,k}(\tau_{0,j}Q)}
{l_{2n-2,j} - l_{2n-1,k}} 
\tau_{0,j} \sigma_{2n-1,k} Q, 
\label{eq:tensor v_{2n-1}}
\\
& 
\pr_{k}(Q \otimes v_{2n-2}^{2n}) 
\label{eq:tensor v_{2n-2}}
\\
& 
= 
\sum_{1 \leq |i| \leq n-1} \sum_{0 \leq |j| \leq n-1} 
\frac{a_{2n-3,i}(Q) a_{2n-2,j}(\tau_{i,0} Q) 
a_{2n-1,k}(\tau_{i,j} Q)}
{(l_{2n-3,i}-l_{2n-2,j}+1) 
(l_{2n-2,j} - l_{2n-1,k})} 
\tau_{i,j} \sigma_{2n-1,k} Q. 
\notag
\end{align}
\end{lemma}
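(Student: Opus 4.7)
The plan is to realise $V_\lambda^{Spin(2n)} \otimes \C^{2n}$ inside a single irreducible $Spin(2n+1)$-module, so that each $\pr_{k}(Q \otimes v_{j}^{2n})$ becomes the projection of an explicit Gelfand--Tsetlin action. The direct bracket calculation
\[
[F_{pq}, F_{2n+1,j}] = \delta_{qj} F_{2n+1,p} - \delta_{pj} F_{2n+1,q} \qquad (p,q \leq 2n)
\]
shows that $v_{j}^{2n} \mapsto F_{2n+1,j} \bmod \lie{so}(2n)$ is a $Spin(2n)$-module isomorphism $\C^{2n} \simeq \lie{so}(2n+1)/\lie{so}(2n)$. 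With the auxiliary choice $\vect{q}_{2n} = (\lambda_1+1,\dots,\lambda_{n-1}+1,|\lambda_n|+1)$, the branching rule $Spin(2n+1) \downarrow Spin(2n)$ places $V_\lambda^{Spin(2n)}$ and every neighbour $V_{\lambda+e_k}^{Spin(2n)}$ ($|k|=1,\dots,n$) as multiplicity-one constituents of $V_{\vect{q}_{2n}}^{Spin(2n+1)}|_{Spin(2n)}$. The $K$-equivariant multiplication map $Q \otimes F_{2n+1,j} \mapsto F_{2n+1,j} \cdot (Q,\vect{q}_{2n})$ then realises the decomposition $V_\lambda \otimes \C^{2n} \simeq \bigoplus_k V_{\lambda+e_k}^{Spin(2n)}$, and with the Gelfand--Tsetlin basis pinning down all scalars, each $\pr_{k}(Q \otimes v_{j}^{2n})$ coincides with the projection of $F_{2n+1,j} \cdot (Q,\vect{q}_{2n})$ onto the summand $V_{\lambda+e_k}^{Spin(2n)}$.

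Given this reduction, formula \eqref{eq:tensor v_{2n}} is immediate: applying the Gelfand--Tsetlin formula at $i=n$ yields $F_{2n+1,2n} \cdot (Q,\vect{q}_{2n}) = \sum_k a_{2n-1,k}(Q)\, \sigma_{2n-1,k}(Q,\vect{q}_{2n})$, and since $\sigma_{2n-1,k}$ shifts only $\vect{q}_{2n-1}=\lambda$, each summand already lies in the correct $V_{\lambda+e_k}^{Spin(2n)}$. For \eqref{eq:tensor v_{2n-1}} I use the matrix-unit identity $F_{2n+1,2n-1} = [F_{2n+1,2n}, F_{2n,2n-1}]$ and note that $F_{2n,2n-1} \in \lie{so}(2n)$ shifts $\vect{q}_{2n-2}$ by $\sigma_{2n-2,j}$. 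Expanding both orders of composition and collecting the coefficient of $\tau_{0,j}\sigma_{2n-1,k}Q$ for fixed $k$ gives
\[
a_{2n-2,j}(Q)\, a_{2n-1,k}(\sigma_{2n-2,j} Q) - a_{2n-1,k}(Q)\, a_{2n-2,j}(\sigma_{2n-1,k} Q),
\]
which by the rational identity below collapses to $a_{2n-2,j}(Q)\, a_{2n-1,k}(\tau_{0,j}Q)/(l_{2n-2,j}-l_{2n-1,k})$. Formula \eqref{eq:tensor v_{2n-2}} then follows by iterating with $F_{2n+1,2n-2} = [F_{2n+1,2n-1}, F_{2n-1,2n-2}]$: the extra inner shift $\sigma_{2n-3,i}$ from $F_{2n-1,2n-2}$ produces the double sum over $(i,j)$, and an analogous rational identity at depth two introduces the additional denominator factor $(l_{2n-3,i}-l_{2n-2,j}+1)$.

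The main obstacle is verifying the algebraic identity
\[
\frac{a_{2n-1,k}(Q)\, a_{2n-2,j}(\sigma_{2n-1,k} Q)}{a_{2n-2,j}(Q)\, a_{2n-1,k}(\sigma_{2n-2,j} Q)} = \frac{l_{2n-2,j} - l_{2n-1,k} - 1}{l_{2n-2,j} - l_{2n-1,k}}
\]
on which both telescopings depend. Substituting the defining square-root formulas for $a_{2i-1,\cdot}$ and $a_{2i,\cdot}$ and tracking only the four factors that actually change under the commuting shifts $\sigma_{2n-2,j}$ and $\sigma_{2n-1,k}$, the squared ratio simplifies, after setting $x := l_{2n-2,j}-l_{2n-1,k}$, to $(x-1)^2/x^2$; the sign is pinned down by the conventions on $\sgn(\cdot)$ in $a_{2i-1,\cdot}$ and on $\epsilon_{2i,j}$ in $a_{2i,\cdot}$. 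This is the same computational core as in \cite[Proposition~4.3]{Kr} and \cite[Lemma~3.1.3]{T}, to which the present proof reduces after the notational adjustments demanded by $Spin(2n)$ and the convention $e_{-k} := -e_k$.
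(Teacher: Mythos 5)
Your proposal is correct and is essentially the paper's own argument: the paper likewise identifies $Q$ with the $Spin(2n+1)$-pattern $(Q,\vect{q}_{2n})$, $\vect{q}_{2n}=(\lambda_1+1,\dots,\lambda_{n-1}+1,|\lambda_n|+1)$, and then invokes the computation of \cite[Proposition~4.3]{Kr} and \cite[Lemma~3.1.3]{T}, which is exactly the commutator expansion ($F_{2n+1,2n-1}=[F_{2n+1,2n},F_{2n,2n-1}]$, $F_{2n+1,2n-2}=[F_{2n+1,2n-1},F_{2n-1,2n-2}]$) together with the telescoping ratio identities you state. The only difference is that you spell out the details (equivariance of the multiplication map, the branching-rule placement, and the ratio $(x-1)/x$ collapse) that the paper outsources to those references.
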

\begin{remark}\label{remark:null points of a_ij}
For $Q \in GT(\lambda)$, it is not hard to see that 
$a_{2i-1,j}(Q) = 0$ if and only if 
$\tau_{0,j}Q \not \in GT(\lambda)$,  
and that 
$a_{2i,j}(Q) = 0$ if and only if 
either 
(i) $j \not= 0$ and $\sigma_{2i,j} Q \not\in GT(\lambda)$ 
or 
(ii) $j = 0$ and $q_{2i-1,i}$ or $q_{2i+1,i+1} = 0$. 
Moreover, 
\begin{enumerate}
\item 
if $j, k \not= 0$, the coefficient of 
$\tau_{0,j} \sigma_{2n-1,k} Q$ in
\eqref{eq:tensor v_{2n-1}} is non-zero if and only if 
$\tau_{0,j} \sigma_{2n-1,k} Q \in GT(\lambda + e_{k})$, 
and 
\item
if $i, j, k \not= 0$, the coefficient 
of $\tau_{i,j} \sigma_{2n-1,k} Q$ in
\eqref{eq:tensor v_{2n-2}} is non-zero if and only if 
$\tau_{i,j} \sigma_{2n-1,k} Q 
\in GT(\lambda + e_{k})$. 
\end{enumerate}
\end{remark}

We know that 
$V_{\tilde{\lambda}}^{K_{\R}}  
= V_{\lambda}^{Spin(2n)} \boxtimes V_{\lambda_{n+1}}^{Spin(2)}$ and 
$\lie{p} 
\simeq 
\C^{2n} \boxtimes \C^{2}$. 
Therefore, the irreducible decomposition of 
$V_{\tilde{\lambda}}^{K_{\R}} 
\otimes 
\lie{p}$ is 
\begin{equation}\label{eq:decomposition of V*p}
V_{\tilde{\lambda}}^{K_{\R}} 
\otimes 
\lie{p} 
\simeq 
\bigoplus_{1 \leq |k| \leq n} 
V_{\lambda + e_{k}}^{Spin(2n)} 
\boxtimes 
V_{\lambda_{n+1}+1}^{Spin(2)} 
\oplus 
\bigoplus_{1 \leq |k| \leq n} 
V_{\lambda + e_{k}}^{Spin(2n)} 
\boxtimes 
V_{\lambda_{n+1}-1}^{Spin(2)}. 
\end{equation}

Since the irreducible representations of $Spin(2)$ are one
dimensional, we identify $Q \in GT(\lambda)$ with a vector of
$V_{\tilde{\lambda}}^{K_{\R}}$, on which $T_{n+1}$ acts by the scalar
$\lambda_{n+1}$. 
Such an identification is also applied 
to $V_{\lambda + e_{k}}^{Spin(2n)} 
\boxtimes 
V_{\lambda_{n+1} \pm 1}^{Spin(2)}$. 
More precisely, for $Q \in GT(\lambda)$, 
regard 
$Q_{k}^{\pm} 
:= 
\sigma_{2n-1,k} Q$ as a vector in 
$V_{\lambda + e_{k}}^{Spin(2n)} 
\boxtimes 
V_{\lambda_{n+1} \pm 1}^{Spin(2)}$, on which $T_{n+1}$ acts by the
scalar $\lambda_{n+1} \pm 1$.  

Let $\pr_{k,\pm} = \pr_{k} \boxtimes \pr_{\pm}$ be the projection
operator from 
$V_{\tilde{\lambda}}^{K_{\R}} 
\otimes 
\lie{p}$ 
to 
$V_{\lambda + e_{k}}^{Spin(2n)} 
\boxtimes 
V_{\lambda_{n+1}\pm 1}^{Spin(2)}$ along the irreducible decomposition
\eqref{eq:decomposition of V*p}. 
Then, by normalizing vectors appropriately, we have the following
explicit formulas from Lemma~\ref{lemma:projection-1}. 
\begin{lemma}\label{lemma:projection-2}
For $Q \in GT(\lambda)$ and $k = \pm 1, \dots, \pm n$, 
\begin{align*}
& 
\pr_{k,\pm}(Q \otimes (v_{2n}^{2n} \boxtimes v_{2}^{2})) 
= 
a_{2n-1,k}(Q) Q_{k}^{\pm}, 
\\
& 
\pr_{k,\pm}(Q \otimes (v_{2n-1}^{2n} \boxtimes v_{2}^{2})) 
= 
\sum_{0 \leq |j| \leq n-1}
\frac{a_{2n-2,j}(Q) a_{2n-1,k}(\tau_{0,j} Q)}
{l_{2n-2,j} - l_{2n-1,k}}  
\tau_{0,j} Q_{k}^{\pm}, 
\\
&
\pr_{k,\pm}(Q \otimes (v_{2n-2}^{2n} \boxtimes v_{2}^{2})) 
\\
& 
\qquad 
=
\sum_{1 \leq |i| \leq n-1}
\sum_{0 \leq |j| \leq n-1}
\frac{a_{2n-3,i}(Q) a_{2n-2, j}(\tau_{i,0} Q) 
a_{2n-1,k}(\tau_{i,j} Q)}
{(l_{2n-3,i} - l_{2n-2,j} + 1)
(l_{2n-2,j} - l_{2n-1,k})}
\tau_{i,j} Q_{k}^{\pm}, 
\\
& 
\pr_{k,\pm}(Q \otimes (v_{j}^{2n} \boxtimes v_{1}^{2})) 
= 
\mp \I 
\pr_{k,\pm}(Q \otimes (v_{j}^{2n} \boxtimes v_{2}^{2})) 
\quad \mbox{for} \quad j = 2n, 2n-1, 2n-2. 
\end{align*}
\end{lemma}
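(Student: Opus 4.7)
The plan is to derive Lemma~\ref{lemma:projection-2} from Lemma~\ref{lemma:projection-1} by factoring the projection $\pr_{k,\pm}$ along the outer tensor product $\lie{p}\simeq\C^{2n}\boxtimes\C^{2}$ and handling the $\lie{so}(2,\C)$-factor explicitly. First I would diagonalize $T_{n+1}=-\I F_{2n+2,2n+1}$ on $\C^{2}$: a one-line computation shows that $w_{\pm}:=v_{1}^{2}\mp\I v_{2}^{2}$ are the $\pm 1$-weight vectors, so $v_{1}^{2}=\tfrac{1}{2}(w_{+}+w_{-})$ and $v_{2}^{2}=\tfrac{\I}{2}(w_{+}-w_{-})$. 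Writing $\pr_{\pm}$ for the projection onto the $\pm 1$-weight space, one obtains $\pr_{\pm}(v_{2}^{2})=\pm\tfrac{\I}{2}w_{\pm}$ and $\pr_{\pm}(v_{1}^{2})=\tfrac{1}{2}w_{\pm}$, from which $\pr_{\pm}(v_{1}^{2})=\mp\I\,\pr_{\pm}(v_{2}^{2})$ follows immediately. Tensoring this identity with $Q\otimes v_{j}^{2n}$ and applying $\pr_{k}$ to the first factor gives the last formula of the lemma for all $j$ (in particular for $j=2n,\,2n-1,\,2n-2$).

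For the remaining three formulas, I would observe that the decomposition \eqref{eq:decomposition of V*p} respects the outer tensor product structure $(V_{\lambda}^{Spin(2n)}\otimes\C^{2n})\boxtimes(V_{\lambda_{n+1}}^{Spin(2)}\otimes\C^{2})$, so $\pr_{k,\pm}=\pr_{k}\boxtimes\pr_{\pm}$ acts factorwise. Hence $\pr_{k,\pm}(Q\otimes(v_{j}^{2n}\boxtimes v_{2}^{2}))=\pr_{k}(Q\otimes v_{j}^{2n})\boxtimes\pr_{\pm}(v_{2}^{2})$ for $j=2n,\,2n-1,\,2n-2$. Substituting the three identities of Lemma~\ref{lemma:projection-1} into the first factor and $\pm\tfrac{\I}{2}w_{\pm}$ into the second, and then absorbing this common scalar into the identification of $Q_{k}^{\pm}=\sigma_{2n-1,k}Q$ as an element of $V_{\lambda+e_{k}}^{Spin(2n)}\boxtimes V_{\lambda_{n+1}\pm 1}^{Spin(2)}$, produces exactly the three formulas asserted in Lemma~\ref{lemma:projection-2}. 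This is the normalization convention mentioned in the statement, and it is consistent across the three values of $j$ because the absorbed scalar depends only on the $\C^{2}$-factor.

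I anticipate no real obstacle here: the argument is a two-step reduction in which Lemma~\ref{lemma:projection-1} carries all the combinatorial content on the $Spin(2n)$-side, while the $Spin(2)$-side is only two-dimensional and is handled by the explicit weight-space diagonalization above. The single point to watch is that the normalization of $Q_{k}^{\pm}$ must be fixed once and for all so that the same symbol appears consistently in all four identities of the lemma; this is automatic from the factorization of $\pr_{k,\pm}$.
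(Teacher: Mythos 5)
Your proposal is correct and takes essentially the same approach as the paper: the paper likewise factors $\pr_{k,\pm} = \pr_{k} \boxtimes \pr_{\pm}$ along the outer tensor product and obtains the formulas directly from Lemma~\ref{lemma:projection-1} ``by normalizing vectors appropriately.'' Your explicit diagonalization of $T_{n+1}$ on $\C^{2}$ (yielding $\pr_{\pm}(v_{1}^{2}) = \mp \I\, \pr_{\pm}(v_{2}^{2})$) simply spells out the normalization of $Q_{k}^{\pm}$ that the paper leaves implicit.
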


The actions of Casimir elements of $\lie{so}(k) \subset \lie{so}(2n)$ 
on the Gelfand-Tsetlin bases are as follows. 
Let $C_{k} 
:= 
\sum_{1 \leq j < i \leq k} F_{ij}^{2}$. 
Since this is a constant multiple of the Casimir element 
of $\lie{so}(k)$, it acts on an irreducible
representation of $\lie{so}(k)$ by a scalar. 
Especially it acts on $Q \in GT(\lambda)$ by a scalar, 
since $Q$ is contained in the $V_{\vect{q}_{k-1}}^{Spin(k)}$-isotropic
  subspace. 
This scalar is calculated in \cite[\S 5.1]{T}. 
\begin{lemma}\label{lemma:Casimir}
Let $2 \rho_{k} := (k-2, k-4, \dots, k-2 \lfloor k/2 \rfloor)$. 
For $Q \in GT(\lambda)$ and $k = 2, \dots, 2n$, 
\[
\tau_{\lambda}(C_{k}) Q 
= 
-(|\vect{q}_{k-1}|^{2} + 2 \langle \vect{q}_{k-1}, \rho_{k} \rangle)
Q. 
\]
\end{lemma}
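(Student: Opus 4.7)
The plan is to exploit the fact that $C_k = \sum_{1 \leq j < i \leq k} F_{ij}^2$ is central in $U(\lie{so}(k,\C))$, where $\lie{so}(k,\C) \subset \lie{k}$ denotes the subalgebra spanned by $\{F_{ij} : 1 \leq j < i \leq k\}$. Indeed, put $B(X,Y) := -\tfrac{1}{2}\mathrm{tr}(XY)$ on $\lie{so}(k,\C)$. This is $\Ad$-invariant and a short calculation gives $B(F_{ij}, F_{i'j'}) = \delta_{(i,j),(i',j')}$ for $i>j$, $i'>j'$, so $\{F_{ij}\}_{i>j}$ is an orthonormal basis with respect to $B$. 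Consequently $C_k$ is exactly the Casimir operator $\Omega_{B}$ of $\lie{so}(k,\C)$ associated with $B$.

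Next, I would invoke the branching-rule definition of the Gelfand--Tsetlin basis: the chain $Spin(2n) \supset Spin(2n-1) \supset \cdots \supset Spin(k)$ is multiplicity-free on each irreducible representation, and the labels $\vect{q}_{k-1}$ in the pattern $Q$ are precisely the highest weights that specify the successive $Spin(j)$-isotypic components. In particular $Q$ lies in the $V_{\vect{q}_{k-1}}^{Spin(k)}$-isotypic component of $V_\lambda^{Spin(2n)}$, and the central element $C_k=\Omega_B$ therefore acts on $Q$ by a single scalar, namely the Casimir eigenvalue on $V_{\vect{q}_{k-1}}^{Spin(k)}$.

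To evaluate that scalar, I would use the general formula $\Omega_B \,v_\mu = \langle \mu, \mu + 2\rho_k \rangle_{B^{\ast}} v_\mu$ on a highest weight vector $v_\mu \in V_\mu^{Spin(k)}$, where $B^\ast$ is the form on $\lie{t}^\ast$ induced from $B|_{\lie{t}}$. Since $T_p = -\I F_{2p,2p-1}$, one has $B(T_p, T_q) = (-\I)^2 B(F_{2p,2p-1}, F_{2q,2q-1}) = -\delta_{pq}$. Identifying $\lie{t}^\ast$ with $\lie{t}$ through $B$ gives $H_{e_p} = -T_p$ and hence $\langle e_p, e_q\rangle_{B^\ast} = -\delta_{pq}$. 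Setting $\mu = \vect{q}_{k-1}$ yields
\[
C_k\, Q = \langle \vect{q}_{k-1}, \vect{q}_{k-1}+2\rho_k\rangle_{B^\ast} Q = -\bigl(|\vect{q}_{k-1}|^2 + 2\langle \vect{q}_{k-1}, \rho_k\rangle\bigr) Q,
\]
with the inner product on the right the standard Euclidean one, which is exactly the asserted formula.

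The main obstacle is nothing conceptual but rather the bookkeeping of signs and normalizations. The form $B$ is positive definite on the compact real form $\lie{so}(k,\R)$ (where the $F_{ij}$ live), but the Cartan generators $T_p$ sit in the non-compact direction $\I\lier{k}$; this swap is precisely the source of the overall minus sign in the statement, and it must be handled consistently when passing from $B$ to $B^\ast$. As a sanity check one can verify the $k=3$ case, where $\vect{q}_2 = (q)$ and $\rho_3 = (1/2)$ give $C_3 Q = -q(q+1)Q$, agreeing with the classical angular-momentum Casimir on the spin-$q$ representation of $\lie{so}(3)$.
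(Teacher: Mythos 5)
Your proof is correct and takes essentially the same approach as the paper: the paper likewise argues that $C_k$ is (a multiple of) the Casimir element of $\lie{so}(k,\C)$, hence central, and therefore acts on $Q$ by a single scalar because $Q$ lies in an irreducible $Spin(k)$-subrepresentation with highest weight $\vect{q}_{k-1}$. The only difference is that the paper quotes the value of this scalar from \cite[\S 5.1]{T}, whereas you derive it in-line from the standard eigenvalue formula $\langle \mu, \mu + 2\rho_k\rangle$ together with the sign bookkeeping for the trace form ($B(T_p,T_q) = -\delta_{pq}$, whence the overall minus sign); your computation and sanity check are accurate.
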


\subsection{Differential-difference equation 
$\mathcal{D}_{\tilde{\lambda}, \eta} \phi = 0$}

Under some appropriate normalization of the invariant bilinear form
$\langle \enskip, \enskip \rangle$ on $\lier{g}$, 
$\{\I F_{2n+i,j} | 1 \leq i \leq 2, 1 \leq j \leq 2n\}$ forms an
orthonormal basis of $\lier{p}$. 
The Iwasawa decompositions of these vectors are given by 
\begin{align*}
& 
\I F_{2n+i,k} = X_{f_{i}}^{k} - F_{2n-2+i,k}, 
\quad \mbox{for} \quad  
1 \leq i \leq 2, 1\leq k \leq 2n-2, 
\\
& 
\I F_{2n+i, 2n-2+i} = A_{i}, 
\quad \mbox{for} \quad  
1 \leq i \leq 2, 
\\
& 
\I F_{2n+1, 2n} 
= 
\frac{1}{2}(X_{-f_{1}+f_{2}} - X_{f_{1}+f_{2}}) - \I T_{n+1},
& 
& 
\\
& 
\I F_{2n+2, 2n-1} 
= 
\frac{1}{2}(X_{-f_{1}+f_{2}} + X_{f_{1}+f_{2}}) - F_{2n,2n-1}. 
& 
& 
\end{align*}

Let $M_{\R}$ be the centralizer of $A_{\R}$ in  $K_\R$. 
Since $M_{\R}$ acts on $\lier{n}/[\lier{n}, \lier{n}]$, it acts on the
set of unitary characters of $N_{\R}$. 
Therefore, when we calculate 
$\Ker(\mathcal{D}_{\tilde{\lambda}, \eta})$, 
we may choose ``manageable'' non-degenerate unitary character $\eta$ in
its $M_{\R}$-orbit. 
Let $M_{\R}(\eta)$ be the centralizer of $\eta$ in $M_{\R}$. 
Since $M_{\R}(\eta)$ acts on 
$\Ker(\mathcal{D}_{\tilde{\lambda}, \eta})$ by the right translation,
the space of Whittaker models has $M_{\R}(\eta)$-module structure. 

As a ``manageable'' character, we choose $\eta$ satisfying 
\begin{align}
& 
\eta(X_{f_{i}}^{k}) 
= 
\left\{
\begin{matrix} 
\I \eta_{1} & \mbox{if $(i,k) = (1, 2n-2)$,}
\\
0 & \mbox{else,}
\end{matrix}
\right. 
&
&
\eta(X_{-f_{1}+f_{2}}) 
= 
\I \eta_{2}, 
&
& 
\eta(X_{f_{1}+f_{2}}) = 0 
\label{eq:character}
\end{align}
with $\eta_{1} > 0, \eta_{2} \not= 0$. 
Here, we denote the differential of $\eta$ by the same symbol $\eta$.

Because of the Iwasawa decomposition, an element of 
$C^\infty_{\tau_{\tilde{\lambda}}}(K_\R\backslash G_\R /N_{\R}; \eta)$
is determined by its restriction to $A_{\R}$. 
Thus, we consider the restriction of  
$\phi 
\in 
C^\infty_{\tau_{\tilde{\lambda}}}(K_\R\backslash G_\R /N_{\R}; \eta)$ 
to $A_{\R}$. 

Introduce a coordinate in $A_{\R}$ by  
\[
(\R_{>0})^{2} \ni 
(a_{1}, a_{2}) \mapsto 
\exp ((\log a_{1}) A_{1} + (\log a_{2}) A_{2}) \in A_{\R}
\]
and define $\d_{i} := a_{i} \d/\d a_{i}$. 
The left action of Lie algebra elements on $\phi$ is as follows: 
\begin{align*}
& \mbox{If } 
X \in \lier{k}, \mbox{ then} 
& 
& 
L_{X} \phi(a) 
= 
\left. 
\frac{d}{dt}
\right|_{t=0} 
\phi(\exp(-t X) a) 
= 
- d \tau_{\tilde{\lambda}}(X) \phi(a), 
\\
& 
\mbox{if } 
X \in \lier{n}, \mbox{ then}
& 
& 
L_{X} \phi(a) 
= 
\left. 
\frac{d}{dt}
\right|_{t=0} 
\phi(a \exp(-t \Ad(a^{-1}) X)) 
= 
\eta(\Ad(a^{-1})X) \phi(a), 
\\
& 
\mbox{and for } A_{i}, 
& 
& 
L_{A_{i}} \phi(a) 
= 
\left. 
\frac{d}{dt}
\right|_{t=0} 
\phi(\exp(-t A_{i}) a) 
= 
- \d_{i} \phi(a). 
\end{align*}
Since $\{\I F_{2n+i,j} | 1 \leq i \leq 2, 1 \leq j \leq 2n\}$ is an
orthonormal basis of $\lier{p}$, 
\begin{align*}
& \hspace{-5mm} 
\nabla_{\tilde{\lambda}, \eta} \phi 
\\
=& 
\sum_{i=1}^{2} \sum_{j=1}^{2n} 
L_{\I F_{2n+i,j}} \phi \otimes \I F_{2n+i,j} 
\\
=& 
\sum_{i=1}^{2} \sum_{k=1}^{2n-2} 
L_{X_{f_{i}}^{k} - F_{2n-2+i,k}} \phi \otimes \I F_{2n+i,k} 
+ 
L_{(X_{-f_{1}+f_{2}} - X_{f_{1}+f_{2}})/2 - \I T_{n+1}} 
\phi \otimes \I F_{2n+1,2n} 
\\
& 
+ 
\sum_{i=1}^{2} 
L_{A_{i}} \phi \otimes \I F_{2n+i,2n-2+i} 
+ 
L_{(X_{-f_{1}+f_{2}} + X_{f_{1}+f_{2}})/2 - F_{2n,2n-1}}
\phi \otimes \I F_{2n+2,2n-1} 
\\
=& 
- 
\sum_{i=1}^{2} 
\d_{i} \phi \otimes \I F_{2n+i,2n-2+i} 
+ 
\I \frac{\eta_{1}}{a_{1}} \phi \otimes \I F_{2n+1,2n-2} 
\\
& 
+ 
\I \frac{\eta_{2} a_{1}}{2 a_{2}} 
(\phi \otimes \I F_{2n+2,2n-1} 
+ 
\phi \otimes \I F_{2n+1,2n}) 
\\
& 
+ 
\sum_{i=1}^{2} \sum_{k=1}^{2n-3+i} 
d \tau_{\tilde{\lambda}}(F_{2n-2+i,k}) \phi \otimes \I F_{2n+i,k} 
+ 
\I d \tau_{\tilde{\lambda}}(T_{n+1}) \phi \otimes \I F_{2n+1,2n}.  
\end{align*}
Since
\begin{align*}
& (d \tau_{\tilde{\lambda}} \otimes \ad)(F_{2n-2+i,k})^{2} 
(v \otimes \I F_{2n+i,2n-2+i}) 
\\
&= 
d \tau_{\tilde{\lambda}}(F_{2n-2+i,k})^{2} v \otimes \I F_{2n+i,2n-2+i} 
\\
& \qquad 
- 2 
d \tau_{\tilde{\lambda}}(F_{2n-2+i,k}) v \otimes \I F_{2n+i,k} 
- 
v \otimes \I F_{2n+i,2n-2+i}
\end{align*}
for $v \in V_{\tilde{\lambda}}^{K_{\R}}$, 
we have 
\begin{align*}
& d \tau_{\tilde{\lambda}}(F_{2n-2+i,k}) v \otimes \I F_{2n+i,k} 
\\
&= 
\frac{1}{2} 
\{
(d \tau_{\tilde{\lambda}} \otimes 1)(F_{2n-2+i,k})^{2} 
- 
(d \tau_{\tilde{\lambda}} \otimes \ad)(F_{2n-2+i,k})^{2} 
- 1\}(v \otimes \I F_{2n+i,2n-2+i}). 
\end{align*}
Here, $1$ is the trivial representation of $\lie{k}$. 
Since $\I F_{2n+i, j}$ corresponds to $v_{j}^{2n} \boxtimes v_{i}^{2}$
under $\lie{p} \simeq \C^{2n} \boxtimes \C^{2}$ 
and 
$\sum_{k=1}^{2n-3+i} F_{2n-2+i,k}^{2} 
= C_{2n-2+i} - C_{2n-3+i}$, 
we have 
\begin{align}
\nabla_{\tilde{\lambda}, \eta} \phi 
=& 
\frac{1}{2} 
\sum_{i=1}^{2} 
\{
-2 \d_{i} 
+ (d \tau_{\tilde{\lambda}} \otimes 1)(C_{2n-2+i} - C_{2n-3+i}) 
\label{eq:nabla}\\
& \qquad \quad 
- (d \tau_{\tilde{\lambda}} \otimes \ad)(C_{2n-2+i} - C_{2n-3+i}) 
-2n+3-i
\}
\notag
\\
& \qquad \qquad 
\times \phi \otimes (v_{2n-2+i}^{2n} \boxtimes v_{i}^{2}) 
+ \I \lambda_{n+1} \phi \otimes (v_{2n}^{2n} \boxtimes v_{1}^{2})
\notag 
\\
& 
+ 
\I \frac{\eta_{1}}{a_{1}} \phi \otimes (v_{2n-2}^{2n} \boxtimes v_{1}^{2})  
+ 
\I \frac{\eta_{2} a_{1}}{2 a_{2}} 
(\phi \otimes (v_{2n-1}^{2n} \boxtimes v_{2}^{2})  
+ 
\phi \otimes (v_{2n}^{2n} \boxtimes v_{1}^{2})). 
\notag
\end{align}

Let us calculate the projection of 
$\nabla_{\tilde{\lambda}, \eta} \phi$ to 
each irreducible component 
of $V_{\tilde{\lambda}}^{K_{\R}} \otimes \lie{p}$. 
By the identification 
$GT(\lambda) \subset V_{\tilde{\lambda}}^{K_{\R}}$ explained in
\S \ref{section:3.4}, we write 
\[
\phi(a) 
= 
\sum_{Q \in GT(\lambda)} 
c(Q; a) Q.  
\]
We need to calculate the projection 
\begin{align}
\pr_{k}
&
(
\{(d \tau_{\tilde{\lambda}} \otimes 1)(C_{2n-2+i} - C_{2n-3+i}) 
\notag 
\\
& \qquad 
- (d \tau_{\tilde{\lambda}} \otimes \ad)(C_{2n-2+i} - C_{2n-3+i})\} 
Q \otimes v_{2n-2+i}^{2n}
)
\label{eq:Casimir shift}
\\
&= 
\pr_{k}
(
(d \tau_{\tilde{\lambda}} \otimes 1)(C_{2n-2+i} - C_{2n-3+i})
Q \otimes v_{2n-2+i}^{2n}
)
\notag 
\\
& \qquad 
- 
d \tau_{\lambda + e_{k}}(C_{2n-2+i} - C_{2n-3+i}) 
\pr_{k}
(Q \otimes v_{2n-2+i}^{2n}). 
\notag
\end{align}
When $i=2$, this is a multiple 
of $a_{2n-1,k}(Q) \sigma_{2n-1,k} Q$. 
By \eqref{eq:tensor v_{2n}} and Lemma~\ref{lemma:Casimir}, 
its coefficient is 
\begin{align*} 
-&(|\vect{q}_{2n-1}|^{2} + 2 \langle \vect{q}_{2n-1}, \rho_{2n} \rangle) 
+(|\vect{q}_{2n-2}|^{2} + 2 \langle \vect{q}_{2n-2}, \rho_{2n-1} \rangle) 
\\ 
& \qquad 
+(|\vect{q}_{2n-1} + e_{k}|^{2} 
+ 2 \langle \vect{q}_{2n-1} + e_{k}, \rho_{2n} \rangle) 
-(|\vect{q}_{2n-2}|^{2} + 2 \langle \vect{q}_{2n-2}, \rho_{2n-1}
\rangle) 
\\
&= 
2(\sgn k) \lambda_{|k|} + 1 + (\sgn k)(2n-2|k|) 
\\
&= 
2 l_{2n-1,k} +1. 
\end{align*}
Analogously, when $i=1$, \eqref{eq:Casimir shift} is a linear
combination of 
\[
\frac{a_{2n-2,j}(Q) a_{2n-1,k}(\sigma_{2n-2,j}Q)}
{l_{2n-2,j} - l_{2n-1,k}} 
\sigma_{2n-2,j} \sigma_{2n-1,k} Q, 
\]
whose coefficient is 
$2 l_{2n-2,j}$. 

By these and Lemma~\ref{lemma:projection-2}, the projection of
\eqref{eq:nabla} to 
$V_{\lambda + e_{k}}^{Spin(2n)} 
\boxtimes
V_{\lambda_{n+1} \pm 1}^{Spin(2)}$ 
is 
\begin{align}
&\pr_{k,\pm}(\nabla_{\tilde{\lambda}, \eta} \phi) 
\label{eq:projection of nabla}
\\
&= 
- 
\sum_{Q \in GT(\lambda)} 
a_{2n-1,k}(Q) 
\left(
\d_{2} - l_{2n-1,k} + n - 1 \mp \lambda_{n+1} \mp 
\frac{\eta_{2} a_{1}}{2 a_{2}} 
\right) 
c(Q; a) Q_{k}^{\pm} 
\notag
\\
& \quad 
\pm \I 
\sum_{Q \in GT(\lambda)} 
\sum_{0 \leq |j| \leq n-1}
\frac{a_{2n-2,j}(Q) a_{2n-1,k}(\tau_{0,j} Q)}
{l_{2n-2,j} - l_{2n-1,k}} 
\notag 
\\
& \hspace{20mm} 
\times 
\left(
\d_{1} - l_{2n-2,j} + n - 1 \pm \frac{\eta_{2} a_{1}}{2 a_{2}} 
\right) 
c(Q; a) 
\tau_{0,j} Q_{k}^{\pm}
\notag
\\
& \quad 
\pm 
\frac{\eta_{1}}{a_{1}} 
\sum_{Q \in GT(\lambda)} 
\sum_{1 \leq |i| \leq n-1}
\sum_{0 \leq |j| \leq n-1}
\notag 
\\
& \hspace{20mm} 
\times 
\frac{a_{2n-3,i}(Q) a_{2n-2, j}(\tau_{i,0} Q) 
a_{2n-1,k}(\tau_{i,j} Q)}
{(l_{2n-3,i} - l_{2n-2,j} + 1)
(l_{2n-2,j} - l_{2n-1,k})} 
c(Q; a) 
\tau_{i,j} Q_{k}^{\pm}. 
\notag
\end{align}

In order to rewrite \eqref{eq:projection of nabla} in a simple form, 
we need to know the zero points of coefficients in the right hand. 
By Remark~\ref{remark:null points of a_ij}, we have the following
lemma: 
\begin{lemma}
Suppose $Q \in GT(\lambda)$. 
\begin{enumerate}
\item
For $k = \pm 1, \dots, \pm n$, 
$a_{2n-1,k}(Q)$ is not zero if and only if
$\sigma_{2n-1,k} Q \in GT(\lambda + e_{k})$.
\item
For $k = \pm 1, \dots, \pm n$ and 
$j = \pm 1, \dots, \pm (n-1)$, 
\[
\frac{a_{2n-2,j}(Q) a_{2n-1,k}(\tau_{0,j} Q)}
{l_{2n-2,j} - l_{2n-1,k}} 
= 
\frac{a_{2n-2,j}(\sigma_{2n-1,k} Q) a_{2n-1,k}(Q)}
{l_{2n-2,j} - l_{2n-1,k} - 1} 
\] 
is not zero if and only if
$\tau_{0,j} \sigma_{2n-1,k} Q \in GT(\lambda + e_{k})$. 
\item
For $k = \pm 1, \dots, \pm n$, 
$j = \pm 1, \dots, \pm (n-1)$ 
and 
$i = \pm 1, \dots, \pm (n-1)$, 
\begin{align*}
& 
\frac{a_{2n-3,i}(Q) a_{2n-2, j}(\tau_{i,0} Q) 
a_{2n-1,k}(\tau_{i,j} Q)}
{(l_{2n-3,i} - l_{2n-2,j} + 1)
(l_{2n-2,j} - l_{2n-1,k})} 
\\
& \qquad 
= 
\frac{a_{2n-3,i}(\tau_{0,j} Q) a_{2n-2, j}(\sigma_{2n-1,k} Q) 
a_{2n-1,k}(Q)}
{(l_{2n-3,i} - l_{2n-2,j})
(l_{2n-2,j} - l_{2n-1,k} - 1)} 
\end{align*} 
is not zero if and only if
$\tau_{i,j} \sigma_{2n-1,k} Q \in GT(\lambda + e_{k})$. 
\end{enumerate}
\end{lemma}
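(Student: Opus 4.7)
The plan is to prove each of the three claims by combining Remark~\ref{remark:null points of a_ij} with a direct comparison of the explicit formulas for $a_{2i-1,j}$ and $a_{2i,j}$. Parts (2) and (3) contain two assertions each: a rational identity between two equivalent presentations of the same coefficient, and a non-vanishing criterion. I would prove the identities first by algebraic manipulation, and then read off the non-vanishing statements from the right-hand presentations together with the remark.

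For part (1), substitute the explicit formula for $a_{2n-1,k}(Q)$. Under the hypothesis \eqref{eq:order of lambda} that $\tilde{\lambda}$ is far from the walls, the denominator factors $(l_{2n-1,k}+l_{2n-1,m})(l_{2n-1,k}+l_{2n-1,m}+1)$ never vanish, so $a_{2n-1,k}(Q)=0$ if and only if some numerator factor $l_{2n-1,k}+l_{2n-2,m}$ or $l_{2n-1,k}+l_{2n,m}$ vanishes. An elementary comparison shows that each such vanishing corresponds to a saturated betweenness inequality between $\vect{q}_{2n-1}+e_{k}$ and either $\vect{q}_{2n-2}$ or $\vect{q}_{2n}$; that is, to the failure of $\sigma_{2n-1,k}Q$ to lie in $GT(\lambda+e_{k})$. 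This is also the $\vect{q}_{2n-1}$-level case of Remark~\ref{remark:null points of a_ij}.

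For the identity in (2), I would track how the $l$-values transform. The shift $\sigma_{2n-1,k}$ increases $l_{2n-1,k}$ by one and decreases $l_{2n-1,-k}$ by one, while leaving every $l_{2n-2,\ast}$ invariant; symmetrically, $\tau_{0,j}=\sigma_{2n-2,j}$ shifts the $l_{2n-2,\pm j}$ pair and fixes every $l_{2n-1,\ast}$. In each of the four factors $a_{2n-2,j}(Q)$, $a_{2n-1,k}(\tau_{0,j}Q)$, $a_{2n-2,j}(\sigma_{2n-1,k}Q)$, $a_{2n-1,k}(Q)$, only the pair of cross-terms indexed by $\pm j$ or $\pm k$ in the radicand is affected by swapping the shifts. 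A direct calculation shows that the two radicands differ by the factor $\bigl((l_{2n-2,j}-l_{2n-1,k})/(l_{2n-2,j}-l_{2n-1,k}-1)\bigr)^{2}$, and combining this with the shifted denominators $l_{2n-2,j}-l_{2n-1,k}$ and $l_{2n-2,j}-l_{2n-1,k}-1$ in the statement gives the claimed identity. The non-vanishing criterion then follows by applying part (1) to $Q$ and to $\sigma_{2n-2,j}Q$, combined with Remark~\ref{remark:null points of a_ij} applied to $a_{2n-2,j}$ at the appropriate pattern. Part (3) is handled by the same two-step scheme with an additional outer shift $\sigma_{2n-3,i}$; since $\sigma_{2n-3,i}$, $\sigma_{2n-2,j}$ and $\sigma_{2n-1,k}$ act on mutually disjoint $\vect{q}$-layers they commute, and the identity reduces to that of (2) combined with an analogous swap of $\sigma_{2n-3,i}$ past $\sigma_{2n-2,j}$.

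The main obstacle I anticipate is sign bookkeeping: the coefficients carry either a $\sgn j$ or an $\epsilon_{2i,j}$ prefactor together with a minus sign inside the radical, so verifying the identities of (2) and (3) requires confirming that the two sides agree in sign and not merely in absolute value. Under the far-from-the-walls hypothesis \eqref{eq:order of lambda}, no shift moves any $q$-coordinate through zero, so $\epsilon_{2i,j}$ does not flip under $\sigma_{2n-1,k}$ or $\sigma_{2n-2,j}$, and the absolute-value computation above can be promoted to an equality of signed radicals.
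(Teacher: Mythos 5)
Your proposal is correct, and it follows essentially the route the paper intends: the paper gives no proof of this lemma beyond the sentence ``By Remark~\ref{remark:null points of a_ij}, we have the following lemma,'' so your direct verification --- tracking how $\sigma_{2n-1,k}$ and $\tau_{0,j}$ move the $l$-values, computing the ratio of the radicands (which is indeed $\bigl((l_{2n-2,j}-l_{2n-1,k})/(l_{2n-2,j}-l_{2n-1,k}-1)\bigr)^{2}$, and factors into two independent such swaps in part (3)), and quoting the Remark for the non-vanishing criteria --- supplies exactly the computation the paper leaves to the reader.

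Two details of your write-up should be tightened. First, in part (1) the numerator factors $l_{2n-1,k}+l_{2n,m}$ involve the auxiliary row $\vect{q}_{2n}=(\lambda_{1}+1,\dots,\lambda_{n-1}+1,|\lambda_{n}|+1)$, which the set $GT(\lambda+e_{k})$ does not see; vanishing of such a factor is therefore \emph{not} the same thing as failure of membership in $GT(\lambda+e_{k})$. You need the extra (easy) observation that these particular factors never vanish --- the row $\vect{q}_{2n}$ strictly dominates $\lambda$ and $\lambda$ satisfies \eqref{eq:order of lambda}, so the interlacing of $\lambda+e_{k}$ with $\vect{q}_{2n}$ can never become saturated --- after which only the $\vect{q}_{2n-2}$-factors matter and your ``that is'' becomes a genuine equivalence. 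Second, your sign discussion aims at the wrong target: for $j\neq 0$ the prefactor $\epsilon_{2n-2,j}=\sgn j$ does not depend on $Q$ at all, so there is nothing that could flip. The residual sign ambiguity left by your radicand computation is instead settled by noting that $l_{2n-2,j}-l_{2n-1,k}$ and $l_{2n-2,j}-l_{2n-1,k}-1$ are consecutive integers, hence carry the same sign whenever both are nonzero; when one of them vanishes, the corresponding side of the identity degenerates to a $0/0$ expression, which is precisely why the lemma records both presentations of the same projection coefficient and why, in the boundary cases later enumerated in Lemma~\ref{lemma:5.5}, one must read the non-vanishing criterion off whichever presentation is non-degenerate.
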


By this lemma, we can write down \eqref{eq:projection of nabla} in a
simple form. 
\begin{proposition}\label{proposition:5.3}
For $k = \pm 1, \dots, \pm n$, 
$\pr_{k, \pm}(\nabla_{\tilde{\lambda}, \eta} \phi) = 0$ is equivalent
to the following equations. 
\begin{enumerate}
\item
If $Q \in GT(\lambda)$ and 
$\sigma_{2n-1,k} Q \in GT(\lambda + e_{k})$, 
then 
\begin{align*}
& 
\left(
\d_{2} - l_{2n-1,k} 
+ n -1 
\mp \lambda_{n+1} 
\mp 
\frac{\eta_{2} a_{1}}{2 a_{2}} 
\right) 
c(Q; a) 
\\
& 
\pm \I 
\sum_{0 \leq |j| \leq n-1} 
\frac{a_{2n-2,-j}(\tau_{0,j} Q)}
{l_{2n-2,j} + l_{2n-1,k}} 
\left(
\d_{1} + l_{2n-2,j} 
+ n - 1 
\pm 
\frac{\eta_{2} a_{1}}{2 a_{2}} 
\right) 
c(\tau_{0,j} Q; a) 
\\
& 
\mp 
\frac{\eta_{1}}{a_{1}}  
\sum_{1 \leq |i| \leq n-1}
\sum_{0 \leq |j| \leq n-1}
\frac{a_{2n-3,-i}(\tau_{i,j} Q) a_{2n-2,-j}(\tau_{0,j} Q)}
{(l_{2n-3,i} - l_{2n-2,j})
(l_{2n-2,j} + l_{2n-1,k})} 
c(\tau_{i,j} Q; a) 
\\
&= 0.
\end{align*}
\item
If 
$Q \in GT(\lambda)$, 
$\tau_{0,j} \sigma_{2n-1,k} Q \in GT(\lambda + e_{k})$ and 
$\tau_{0,j} Q \not\in GT(\lambda)$, 
then 
\begin{align*}
& 
\left(
\d_{1} - l_{2n-2,j} 
+ n - 1 
\pm 
\frac{\eta_{2} a_{1}}{2 a_{2}} 
\right) 
c(Q; a) 
\\
& \qquad 
+ \I  
\frac{\eta_{1}}{a_{1}} 
\sum_{1 \leq |i| \leq n-1} 
\frac{a_{2n-3,-i}(\tau_{i,0} Q)}
{l_{2n-3,i} + l_{2n-2,j}} 
c(\tau_{i,0} Q; a) 
\\
& = 0.
\end{align*}
\item
If 
$Q \in GT(\lambda)$, 
$\tau_{i,j} \sigma_{2n-1,k} Q \in GT(\lambda + e_{k})$, 
$\tau_{i,j} Q \not\in GT(\lambda)$ 
and 
$\tau_{i,0} Q \not\in GT(\lambda)$, 
then 
\[
c(Q;a) = 0. 
\]
\end{enumerate}
\end{proposition}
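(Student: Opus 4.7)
The plan is to expand the right-hand side of \eqref{eq:projection of nabla} in the Gelfand-Tsetlin basis of $V_{\lambda+e_k}^{Spin(2n)}\boxtimes V_{\lambda_{n+1}\pm 1}^{Spin(2)}$ and to require that the coefficient of each basis vector $P\in GT(\lambda+e_k)$ vanish. The three cases (1), (2), (3) of the proposition correspond to three disjoint families of such $P$, classified by how $P$ can be reached from an element of $GT(\lambda)$ via the shifts $\sigma_{2n-1,k}$, $\tau_{0,j}\sigma_{2n-1,k}$, and $\tau_{i,j}\sigma_{2n-1,k}$.

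First I would classify every $P\in GT(\lambda+e_k)$ as follows: type (A) if $\sigma_{2n-1,-k}P\in GT(\lambda)$, i.e.\ $P=\sigma_{2n-1,k}Q$ with $Q\in GT(\lambda)$; type (B) if $P$ is not of type (A) but $P=\tau_{0,j}\sigma_{2n-1,k}Q$ with $Q\in GT(\lambda)$ and $\tau_{0,j}Q\notin GT(\lambda)$; type (C) if $P$ is of neither kind, but $P=\tau_{i,j}\sigma_{2n-1,k}Q$ with $Q\in GT(\lambda)$ and $\tau_{i,j}Q,\tau_{i,0}Q\notin GT(\lambda)$. By Remark~\ref{remark:null points of a_ij}, these are exactly the basis vectors that appear with a nonzero coefficient in \eqref{eq:projection of nabla}.

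Next, for each type I would identify every summation index $(Q',j')$ or $(Q',i',j')$ in \eqref{eq:projection of nabla} whose associated vector is $P$. In type (A) all three sums contribute, once reindexed so that the base point $Q=\sigma_{2n-1,-k}P$ is fixed: the second sum runs over the pre-images $\tau_{0,j}Q$ (after substituting $j\mapsto -j$), and the third sum over $\tau_{i,j}Q$. In type (B), the first sum contributes nothing---precisely because the would-be index $\tau_{0,j}Q$ is excluded from $GT(\lambda)$ by hypothesis---while the second sum contributes from $Q$ itself and the third from $\tau_{i,0}Q$. In type (C), only the third sum contributes, from $Q$ itself. The next algebraic step is to use the identities of the preceding lemma to factor out a common nonzero prefactor ($a_{2n-1,k}(Q)$ for type (A), with an additional $a_{2n-2,j}$-factor for (B) and a triple factor for (C)), and to simplify the shifted $l$-values such as $l_{2n-2,-j}(\tau_{0,j}Q)=-l_{2n-2,j}(Q)$. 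Dividing through by the common prefactor yields equations (1), (2), and (3), respectively.

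The main obstacle is the sign and index bookkeeping: verifying that the reindexed summands combine correctly with the sign conventions in $a_{p,q}(Q)$ (in particular the $\epsilon_{2i,j}$-factor), and checking that the ``exclusion'' hypotheses in cases (2) and (3) truly eliminate all spurious contributions from the other sums. Under the running assumption \eqref{eq:order of lambda} that $\tilde\lambda$ is far from the walls, the vanishing criteria in Remark~\ref{remark:null points of a_ij} should make these exclusions clean.
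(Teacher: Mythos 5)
Your proposal is correct and is essentially the paper's own argument: the paper proves (1) by exactly your re-indexing (replacing $\tau_{0,j}Q$ by $Q$ and $j$ by $-j$ in the second sum of \eqref{eq:projection of nabla}, and $\tau_{i,j}Q$ by $Q$, $(i,j)$ by $(-i,-j)$ in the third), and obtains (2) and (3) ``in the same way,'' which is precisely your classification of target vectors in $GT(\lambda+e_{k})$ into types (A), (B), (C) according to which sums can contribute. Your version merely makes explicit the coefficient-collection and the nonvanishing of the factored-out prefactors (via the lemma preceding the proposition), which the paper leaves implicit.
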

\begin{proof}
(1) 
Replace $\tau_{0,j} Q$ in the second sum of the right hand of
\eqref{eq:projection of nabla} with $Q$ and after that replace $j$
with $-j$. 
Replace $\tau_{i,j} Q$ in the third sum of 
\eqref{eq:projection of nabla} with 
$Q$ and after that replace $(i,j)$ with $(-i,-j)$. 
Then we have the above formula. 
The equations in (2) and (3) are obtained in the same way. 
\end{proof}

We investigate the conditions in
Proposition~\ref{proposition:5.3}. 
Note that, $\tilde{\lambda}$ satisfies \eqref{eq:order of lambda}. 
Note also that, 
if $j=0$, then no $Q \in GT(\lambda)$ satisfies the
conditions in Proposition~\ref{proposition:5.3} (2), (3). 

\begin{lemma}\label{lemma:5.5}
\begin{enumerate}
\item
$Q \in GT(\lambda)$ satisfies 
$\sigma_{2n-1,k} Q \in GT(\lambda + e_{k})$ 
if and only if one of the following conditions holds. 
\begin{enumerate}
\item
$k = 1$. 
\item
$k \in [2, n-1]$ and $q_{2n-2,k-1} > \lambda_{k}$．
\item
$k = -(\sgn \lambda_{n}) n$. 
\item
$k \in [-n+1, -1]$ and $q_{2n-2,-k} < \lambda_{-k}$．
\item
$k = (\sgn \lambda_{n}) n$ and $q_{2n-2,n-1} > |\lambda_{n}|$．
\end{enumerate}
\item
$Q \in GT(\lambda)$ satisfies 
$\tau_{0,j} \sigma_{2n-1,k} Q \in GT(\lambda + e_{k})$ and 
$\tau_{0,j} Q \not\in GT(\lambda)$ 
if and only if one of the following conditions holds. 
\begin{enumerate}
\item
$k \in [1, n-1]$, $j = k$, $q_{2n-2,k} = \lambda_{k}$ and 
$q_{2n-3,k-1} > \lambda_{k}$．
The last condition is unnecessary if $k = 1$. 
\item
$k \in [-n+1, -2]$, $j = -(-k-1) = k + 1$, 
$q_{2n-2,-k-1} = \lambda_{-k}$ and $q_{2n-3,-k-1} < \lambda_{-k}$．
\item
$k = -(\sgn \lambda_{n}) n$, $j = -(n-1)$, 
$q_{2n-2,n-1} = |\lambda_{n}|$ and $|q_{2n-3,n-1}| < |\lambda_{n}|$．
\end{enumerate}
\item
$Q \in GT(\lambda)$ satisfies 
$\tau_{i,j} \sigma_{2n-1,k} Q \in GT(\lambda + e_{k})$, 
$\tau_{i,j} Q \not\in GT(\lambda)$ 
and 
$\tau_{i,0} Q \not\in GT(\lambda)$ 
if and only if one of the following conditions holds. 
\begin{enumerate}
\item
$k \in [1, n-1]$, $i = j = k$, 
$q_{2n-3,k} = q_{2n-2,k} = \lambda_{k}$ and 
$q_{2n-4,k-1} > \lambda_{k}$．
The last condition is unnecessary if $k = 1$. 
\item
$k \in [-n+1, -3]$, 
$i = -(-k-2) = k+2$, $j = -(-k-1) = k+1$, 
$q_{2n-3,-k-2} = q_{2n-2,-k-1} = \lambda_{-k}$ and 
$q_{2n-4,-k-2} < \lambda_{-k}$．
\item
$k = -(\sgn \lambda_{n}) n$, 
$i = -(n-2)$, $j = -(n-1)$, 
$q_{2n-3,n-2} = q_{2n-2,n-1} = |\lambda_{n}|$ and 
$q_{2n-4,n-2} < |\lambda_{n}|$．
\end{enumerate}
\end{enumerate}
\end{lemma}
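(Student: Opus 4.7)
The proof is a direct, level-by-level inspection of the interlacing conditions (3)--(7) in the definition of $GT(\lambda)$, with the genericity assumption \eqref{eq:order of lambda} ensuring that at most one interlacing can become tight at any slot.

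For (1), the shift $\sigma_{2n-1,k}$ alters only $\lambda_{|k|}$ by $\sgn(k)$, so the only potentially violated conditions are the interlacings (3)--(4) between $\vect{q}_{2n-1}$ and $\vect{q}_{2n-2}$. A short calculation, split according to the sign of $k$ and whether $|k|=n$, yields: for $k=1$ no new constraint (nothing bounds $\lambda_1+1$ from above); for $k\in[2,n-1]$ the raised value $\lambda_k+1$ forces $q_{2n-2,k-1}>\lambda_k$; for $k\in[-n+1,-1]$ the lowered value $\lambda_{-k}-1$ forces $q_{2n-2,-k}<\lambda_{-k}$. At the edge $|k|=n$, condition (4) involves $|q_{2n-1,n}|$, so the shift is automatic when it decreases $|\lambda_n|$ (case (c)) and requires $q_{2n-2,n-1}>|\lambda_n|$ when it increases $|\lambda_n|$ (case (e)); the sign of $\lambda_n$ is what distinguishes the two.

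For (2), the paired hypotheses $\tau_{0,j}\sigma_{2n-1,k}Q\in GT(\lambda+e_k)$ and $\tau_{0,j}Q\not\in GT(\lambda)$ assert that the shift $\sigma_{2n-2,j}$ is blocked against the original top row $\lambda$ but admissible against the shifted top row $\lambda+e_k$. Inspecting (3)--(4) shows this can happen only at the slot where $q_{2n-2,|j|}$ is saturated against the coordinate of $\lambda$ that $\sigma_{2n-1,k}$ moves: this pins $j$ in terms of $k$ (essentially $j=k$ for $k>0$, $j=k+1$ for $k<-1$, $j=-(n-1)$ at the edge) and forces $q_{2n-2,|j|}=\lambda_{|j|}$ (or $=|\lambda_n|$ in the edge case). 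The remaining strict inequality on $q_{2n-3,\cdot}$ then emerges from (5)--(6) at level $2n-3$, which must continue to hold for the shifted $\vect{q}_{2n-2}$.

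Part (3) is the same analysis carried one level deeper. The extra requirement $\tau_{i,0}Q\not\in GT(\lambda)$ saturates $q_{2n-3,|i|}$ against $\lambda_{|i|}$ (or $|\lambda_n|$) by the same reasoning used for $j$ in (2), and the condition on $\vect{q}_{2n-4}$ arises from the interlacings between levels $2n-4$ and $2n-3$. The main obstacle throughout is signed-index bookkeeping, in particular the sign flips at the last coordinate caused by the absolute values in (4) and (6); these are what produce the $\sgn\lambda_n$ factor in cases (c), (e) of (1) and the special edge cases in (2) and (3). Once these conventions are fixed, genericity \eqref{eq:order of lambda} guarantees that the listed cases are exhaustive and mutually exclusive, since no two inequalities on $\vect{q}_{2n-2}$, $\vect{q}_{2n-3}$ or $\vect{q}_{2n-4}$ can be simultaneously tight.
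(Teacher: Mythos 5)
Your proposal is correct and takes the same route the paper itself implicitly relies on: the paper states this lemma without proof, treating it as exactly the routine slot-by-slot inspection of the interlacing conditions (3)--(7) under the far-from-walls assumption \eqref{eq:order of lambda} that you carry out, with the key (and correct) observation that any violated inequality must involve an entry moved by $\sigma_{2n-1,k}$, $\sigma_{2n-2,j}$ or $\sigma_{2n-3,i}$, which pins $j$ and $i$ to $k$ and forces the stated saturations. Two small caveats that do not damage the argument: for $k<0$ the saturation is against the lower bound, $q_{2n-2,-k-1}=\lambda_{-k}=\lambda_{|j|+1}$ rather than $\lambda_{|j|}$ as you wrote; and your closing claim that genericity forbids any two interlacing inequalities from being simultaneously tight is false (e.g.\ $q_{2n-2,1}=\lambda_{2}$ and $q_{2n-2,2}=\lambda_{3}$ can hold at once) and also unnecessary, since exhaustiveness and mutual exclusivity already follow from the localization of the violation and from the fact that each listed case prescribes a distinct value of $k$ (and of $j$, $i$).
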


We write down the differential-difference
  equation $\mathcal{D}_{\tilde{\lambda}, \eta} \phi = 0$ in the case
  $\Lambda \in \Xi_{m,\pm}$. 
In order to write equations briefly, 
we introduce some notation. 
\begin{notation}
\begin{align*}
& 
\mathcal{D}_{1}^{\pm}(Q) 
:= 
\d_{1} + n - 1 \pm \frac{\eta_{2} a_{1}}{2 a_{2}}, 
\\
& V_{j}^{\pm}(Q; a) 
:= 
(\mathcal{D}_{1}^{\pm}(Q) + l_{2n-2,j}) 
c(\tau_{0,j} Q; a) 
\\
& \hspace{20mm} 
+ \I 
\frac{\eta_{1}}{a_{1}} 
\sum_{1 \leq |i| \leq n-1} 
\frac{a_{2n-3,-i}(\tau_{i,j} Q)} 
{l_{2n-3,i} - l_{2n-2,j}} 
c(\tau_{i,j} Q; a),   
\\
& 
\mathcal{D}_{2}^{\pm}(Q) 
:= 
\d_{2} 
+ 
\sum_{p=1}^{m-1} 
l_{2n-1, p} 
- 
\sum_{p=1}^{m-2} 
l_{2n-2, p} 
+ m - 2 
\pm \lambda_{n+1} 
\pm 
\frac{\eta_{2} a_{1}}{2 a_{2}},
\\
& 
A_{j}(Q) 
:= 
a_{2n-2,-j}(\tau_{0,j} Q) 
\frac{
\prod_{p = 1}^{m-2} 
(l_{2n-2,j} - l_{2n-2,p})}
{\prod_{p=1}^{m-1} (l_{2n-2,j} + l_{2n-1,-p})} 
\\
& 
B_{j'}(j, Q) 
:= 
\frac{
a_{2n-2, -j'}(\tau_{0,j'} Q)
\prod_{p \in [-n+1, -m+1] \cup [0, n-1] \setminus \{j\}} 
(l_{2n-2, j'} - l_{2n-2, p})
}
{
\prod_{p \in [-n, -1] \cup [m, n]} 
(l_{2n-2, j'} + l_{2n-1, p})
}. 
\end{align*}
\end{notation}

Let $\Lambda \in \Xi_{m,\pm}$, $m = 2, \dots, n$. 
Then $\phi$ satisfies 
\begin{align*}
& 
\pr_{-k, +} (\nabla_{\tilde{\lambda}, \eta} \phi) = 0, 
&
&
\pr_{-k, -} (\nabla_{\tilde{\lambda}, \eta} \phi) = 0 
&
&
\mbox{for} \quad 1 \leq k \leq m-1, 
\\
& 
\pr_{k, \mp} (\nabla_{\tilde{\lambda}, \eta} \phi) = 0, 
&
&
\pr_{-k, \mp} (\nabla_{\tilde{\lambda}, \eta} \phi) = 0 
&
&
\mbox{for} \quad m \leq k \leq n. 
\end{align*}
By Proposition~\ref{proposition:5.3} and
Lemma~\ref{lemma:5.5}, 
we have the following lemma. 
\begin{lemma}\label{lemma:5.8} 
Suppose $\Lambda \in \Xi_{m,\pm}$, $m \in [2, n]$.  
Then $\mathcal{D}_{\tilde{\lambda}, \eta} \phi = 0$ is equivalent to
the followings. 
\begin{enumerate}
\item
For $k \in [1, m-1]$, if $Q \in GT(\lambda)$ satisfies 
$q_{2n-2,k} < \lambda_{k}$, then 
\begin{align}
& 
(\d_{2} - l_{2n-1,-k} + n - 1) 
c(Q;a) 
+ 
\I \frac{\eta_{2} a_{1}}{2 a_{2}} 
\sum_{0 \leq |j| \leq n-1} 
\frac{a_{2n-2,-j}(\tau_{0,j} Q)}
{l_{2n-2,j} + l_{2n-1,-k}} 
c(\tau_{0,j} Q;a) 
=0. 
\label{eq:5.8(1)a} 
\end{align}
\item
For $k \in [-n, -1] \cup [m, n]$, 
if $Q \in GT(\lambda)$ satisfies one of the condition 
Lemma~\ref{lemma:5.5}(1), then 
\begin{align}
& 
\left(
\d_{2} - l_{2n-1,k} + n - 1 \pm \lambda_{n+1}  
\pm \frac{\eta_{2} a_{1}}{2 a_{2}} 
\right) 
c(Q;a) 
\label{eq:5.8(2)} 
\\
& \qquad 
\mp \I  
\sum_{0 \leq |j| \leq n-1} 
\frac{a_{2n-2,-j}(\tau_{0,j} Q)}
{l_{2n-2,j} + l_{2n-1,k}} 
V_{j}^{\mp}(Q; a) 
\notag 
\\
&=0.
\notag
\end{align}
\item
For $k \in [2, m-1]$, if $Q \in GT(\lambda)$ satisfies 
$q_{2n-2,k-1} = \lambda_{k}$ and 
$q_{2n-3,k-1} < \lambda_{k}$, then 
\begin{align}
& 
c(Q;a) = 0, 
& 
& 
\sum_{1 \leq |i| \leq n - 1} 
\frac{a_{2n-3,-i}(\tau_{i,0} Q)}
{l_{2n-3,i} + l_{2n-2,k-1}} 
c(\tau_{i,0} Q;a) 
= 0. 
\label{eq:5.8(3)} 
\end{align}
\item
Suppose $Q \in GT(\lambda)$ satisfies the condition in
Lemma~\ref{lemma:5.5}(2) for 
$k \in [-n+1, -2] \cup [m, n-1] \cup \{-(\sgn \lambda_{n+1}) n\}$. 
Define $j(k)$ by 
\[
j(k) 
= 
\begin{cases}
k & \mbox{if } k \in [m, n-1], 
\\
k+1 & \mbox{if } k \in [-n+1,-2], 
\\
-n+1 & \mbox{if } k = -(\sgn \lambda_{n}) n. 
\end{cases} 
\]
Then, 
\begin{align}
& 
(\mathcal{D}_{1}^{\mp}(Q) - l_{2n-2,j(k)}) 
c(Q;a) 
+ \I \frac{\eta_{1}}{a_{1}} 
\sum_{1 \leq |i| \leq n - 1} 
\frac{a_{2n-3,-i}(\tau_{i,0} Q)}
{l_{2n-3,i} + l_{2n-2,j(k)}} 
c(\tau_{i,0} Q;a) 
= 0.  
\label{eq:5.8(4)}
\end{align}
\item
Suppose 
$k \in [-n+1, -3] \cup [m, n-1] \cup \{-(\sgn \lambda_{n+1}) n\}$. 
If $Q \in GT(\lambda)$ satisfies the condition in
Lemma~\ref{lemma:5.5}(3), i.e. if 
\begin{align*}
& 
q_{2n-3,k} = q_{2n-2,k} = \lambda_{k}, 
q_{2n-4,k-1} > \lambda_{k}, 
& & 
\mbox{when } k \in [m, n-1], 
\\
& 
q_{2n-3,-k-2} = q_{2n-2,-k-1} = \lambda_{-k}, 
q_{2n-4,-k-2} < \lambda_{-k}, 
& & 
\mbox{when } k \in [-n+1, -3], 
\\
& 
q_{2n-3,n-2} = q_{2n-2,n-1} = |\lambda_{n}|, 
q_{2n-4,n-2} < |\lambda_{n}|, 
& & 
\mbox{when } k = -(\sgn \lambda_{n}) n, 
\end{align*}
then 
\begin{equation*}
c(Q;a) = 0. 
\end{equation*}
\end{enumerate}
\end{lemma}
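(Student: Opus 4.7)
The plan is to specialize Proposition~\ref{proposition:5.3} to the situation $\Lambda\in\Xi_{m,\pm}$ with $m\in[2,n]$, and then reorganise the resulting equations into the five families stated in the lemma. The first step is to list the projections that must vanish: the negative part $\tau_{\tilde{\lambda}}^-$ contains exactly the components $\tau_{\tilde{\lambda}-\alpha}$ for $\alpha\in\Delta_n^+$. For $\Lambda\in\Xi_{m,+}$, these roots are $e_i\pm e_{n+1}$ with $i\in[1,m-1]$ together with $e_{n+1}\pm e_i$ for $i\in[m,n]$, which translate into the projections $\pr_{-i,+}$ and $\pr_{-i,-}$ for $i\in[1,m-1]$, and $\pr_{-i,-}$ together with $\pr_{i,-}$ for $i\in[m,n]$; for $\Xi_{m,-}$ the signs on the second subscript flip. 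To each such projection Proposition~\ref{proposition:5.3} associates a system indexed by those Gelfand-Tsetlin patterns $Q$ satisfying the corresponding nondegeneracy hypothesis, and Lemma~\ref{lemma:5.5} enumerates these $Q$'s explicitly.

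Equation \eqref{eq:5.8(4)} and the vanishing statement in (5) then follow by substituting into parts (2) and (3) of Proposition~\ref{proposition:5.3} the values of $(k,i,j)$ permitted by Lemma~\ref{lemma:5.5}(2) and (3); the three subcases (a)--(c) of Lemma~\ref{lemma:5.5}(2) collapse into a single formula through the definition of $j(k)$. The first equation of \eqref{eq:5.8(3)} is exactly the content of Proposition~\ref{proposition:5.3}(3) for patterns $Q$ with $q_{2n-2,k-1}=\lambda_k$ and $q_{2n-3,k-1}<\lambda_k$, while the second equation of \eqref{eq:5.8(3)} is the auxiliary sum produced by Proposition~\ref{proposition:5.3}(2) applied to the neighbouring pattern with a boundary coordinate.

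The main manipulation occurs in the derivation of \eqref{eq:5.8(1)a} and \eqref{eq:5.8(2)}. For $i\in[m,n]$ the projections $\pr_{-i,\mp}$ and $\pr_{i,\mp}$ are unpaired, and Proposition~\ref{proposition:5.3}(1) yields \eqref{eq:5.8(2)} at $k=\pm i$ directly, once the combination $\mathcal{D}_1^{\mp}(Q)c(\tau_{0,j}Q;a)+\I(\eta_1/a_1)(\cdots)$ is packaged as $V_j^{\mp}(Q;a)$. For $i\in[1,m-1]$ the projections $\pr_{-i,+}$ and $\pr_{-i,-}$ instead produce a pair of equations $E_+$, $E_-$ at the same $k=-i$; the equation $E_{\mp}$ (with sign opposite to the one labelling $\Xi_{m,\pm}$) matches \eqref{eq:5.8(2)} at $k=-i\in[-m+1,-1]$, while the sum $E_++E_-$ yields \eqref{eq:5.8(1)a} because the $\lambda_{n+1}$ terms, the entire $\eta_1/a_1$ triple sum, and the $(\partial_1+l_{2n-2,j}+n-1)$ parts of the coefficient of $c(\tau_{0,j}Q;a)$ all cancel, while the $\eta_2 a_1/(2a_2)$ contributions double.

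The principal obstacle is the sign bookkeeping: one must verify that the $\pm$ in Proposition~\ref{proposition:5.3} (which abbreviates the two $K$-type shifts $\tilde{\lambda}\pm e_k \pm e_{n+1}$), the sign subscript on each $\pr_{k,\pm}$, and the sign attached to $\Xi_{m,\pm}$ are all coherent, so that $E_{\mp}$ genuinely rewrites as \eqref{eq:5.8(2)} and the sum $E_++E_-$ really produces \eqref{eq:5.8(1)a} after the claimed cancellations. Once these sign identifications are checked and the triple sums are recognised as $V_j^{\mp}$, the rest of the lemma is a direct transcription from Proposition~\ref{proposition:5.3} using the dictionary of Lemma~\ref{lemma:5.5}.
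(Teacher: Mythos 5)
Your overall route is the same as the paper's: list the projections forced by $\Delta_n^{+}$ (your list agrees with the paper's, namely $\pr_{-k,+},\pr_{-k,-}$ for $k\in[1,m-1]$ and $\pr_{k,\mp},\pr_{-k,\mp}$ for $k\in[m,n]$), then transcribe Proposition~\ref{proposition:5.3} through the dictionary of Lemma~\ref{lemma:5.5}, recombining the two signed equations whenever both signs of a projection vanish. Your treatment of \eqref{eq:5.8(2)}, \eqref{eq:5.8(4)}, part (5), and in particular of \eqref{eq:5.8(1)a} as the sum $E_{+}+E_{-}$ of the paired Proposition~\ref{proposition:5.3}(1) equations (with the $\lambda_{n+1}$ terms, the $\eta_{1}/a_{1}$ triple sums, and the $\d_{1}$-parts cancelling, and the $\eta_{2}$-terms in the $j$-sum adding) is correct.

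There is, however, a genuine error in your derivation of \eqref{eq:5.8(3)}. Its hypothesis, $q_{2n-2,k-1}=\lambda_{k}$ and $q_{2n-3,k-1}<\lambda_{k}$, is precisely the condition of Lemma~\ref{lemma:5.5}(2)(b) for the projection index $-k$; it is \emph{not} a condition from Lemma~\ref{lemma:5.5}(3). Proposition~\ref{proposition:5.3}(3), specialized via Lemma~\ref{lemma:5.5}(3), requires $q_{2n-3,k-2}=q_{2n-2,k-1}=\lambda_{k}$ together with a condition on $\vect{q}_{2n-4}$, and that is exactly what yields part (5) of the lemma, not \eqref{eq:5.8(3)}. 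The correct mechanism for \eqref{eq:5.8(3)} is the same pairing you exploit for \eqref{eq:5.8(1)a}, applied now to Proposition~\ref{proposition:5.3}(2): for $k\in[2,m-1]$ both $\pr_{-k,+}$ and $\pr_{-k,-}$ vanish, and the two resulting equations differ only in the sign of the term $\frac{\eta_{2}a_{1}}{2a_{2}}\,c(Q;a)$; subtracting them gives $\frac{\eta_{2}a_{1}}{a_{2}}\,c(Q;a)=0$, hence $c(Q;a)=0$ (the first equation of \eqref{eq:5.8(3)}), and substituting this back into either signed equation leaves only the sum $\sum_{1\leq|i|\leq n-1}(\cdots)\,c(\tau_{i,0}Q;a)=0$ (the second equation). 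Note that this sum is taken at the \emph{same} pattern $Q$, so your appeal to "the neighbouring pattern with a boundary coordinate" cannot produce it. As written, your argument never uses the second signed copy of the Proposition~\ref{proposition:5.3}(2) equations for $k\in[2,m-1]$, so the system you obtain is strictly weaker than $\mathcal{D}_{\tilde{\lambda},\eta}\phi=0$ and the claimed equivalence fails.
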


By eliminating some terms of the equations in Lemma~\ref{lemma:5.8}, 
we obtain the following equations. 

\begin{corollary}
\begin{enumerate}
\item
Suppose $j \in [1, m-1]$ satisfies $\tau_{0,j} Q \in GT(\lambda)$. 
Then 
\begin{align}
& 
\left(\mathcal{D}_{2}^{\pm}(Q) - l_{2n-2,m-1} + l_{2n-2, j} 
\mp \Lambda_{n+1} 
\mp \frac{\eta_{2} a_{1}}{2 a_{2}}
\right) 
c(Q; a) 
\label{eq:5.9(2)} 
\\
&
+ \I 
\frac{\eta_{2} a_{1}}{2 a_{2}} 
\sum_{j' \in \{j\} \cup [m, n-1] \atop \cup [-n+1, 0]} 
\frac{
a_{2n-2, -j'}(\tau_{0,j'} Q)
\prod_{1 \leq p \leq m-1, p \not= j} (l_{2n-2, j'} - l_{2n-2, p})
}
{
\prod_{p = 1}^{m-1} 
(l_{2n-2, j'} + l_{2n-1, -p})
}
\notag 
\\
& \hspace{35mm} 
\times 
c(\tau_{0,j'} Q; a) 
\notag
\\
&= 0. 
\notag
\end{align}
Here, $\Lambda_{n+1} = \lambda_{n+1} \mp (n - m + 1)$ is the
$(n+1)$-st part of the Harish-Chandra parameter 
$\Lambda \in \Xi_{m,\pm}$. 
\item
Suppose $j \in [-n+1, -m+1] \cup [0, n-1]$. 
If there exists $i \in [-n+1, n-1]$ such that 
$\tau_{i, j} Q \in GT(\lambda)$, then 
\begin{align}
& 
(\mathcal{D}_{2}^{\pm}(Q) 
+ 
l_{2n-2, j})  
c(Q; a) 
\mp \I
\sum_{j' \in \{j\} \cup [-m+2, -1]} 
B_{j'}(j, Q) 
V_{j'}^{\mp}(Q; a) 
= 0. 
\label{eq:5.10(2)} 
\end{align}
\item
Especially, when $j = 0$，the equation \eqref{eq:5.10(2)} is 
\begin{align}
& 
\mathcal{D}_{2}^{\pm}(Q) 
c(Q; a) 
\mp \I  
\sum_{j' = -m+2}^{-1}  
B_{j'}(0, Q) V_{j'}^{\mp}(Q; a) 
\label{eq:5.10(3)} 
\\
& 
\mp 
\frac{
\prod_{p=1}^{n-1} l_{2n-3, p}}
{\prod_{p=m}^{n} l_{2n-1,p} \prod_{p=1}^{m-2} (-l_{2n-2,-p})} 
\notag \\
& \quad \times 
\left(
\mathcal{D}_{1}^{\mp}(Q) 
c(Q; a) 
+ 
\I 
\frac{\eta_{1}}{a_{1}} 
\sum_{1 \leq |i| \leq n-1} 
\frac{a_{2n-3, -i}(\tau_{i, 0} Q)}
{l_{2n-3, i}} 
c(\tau_{i, 0} Q; a) 
\right)
\notag
\\
&= 0 
\notag
\end{align}
\end{enumerate}
\end{corollary}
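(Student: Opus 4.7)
The plan is to derive each of \eqref{eq:5.9(2)}, \eqref{eq:5.10(2)} and \eqref{eq:5.10(3)} by forming a weighted linear combination, over an appropriate range of $k$, of the corresponding equations of Lemma~\ref{lemma:5.8}. The weights are chosen via a partial fraction identity designed to cancel the unwanted coefficient functions $c(\tau_{0,p}Q;a)$ with $p \in [1,m-1] \setminus \{j\}$ or $p \in [-m+2,-1] \setminus \{j\}$.

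For \eqref{eq:5.9(2)}, I would start from \eqref{eq:5.8(1)a} for $k = 1, \dots, m-1$: in each such equation $c(\tau_{0,j'}Q;a)$ enters through the factor $(l_{2n-2,j'} + l_{2n-1,-k})^{-1}$. Define weights $\alpha_{k}$ by
\[
\frac{\prod_{p \in [1,m-1] \setminus \{j\}} (x - l_{2n-2,p})}{\prod_{k=1}^{m-1}(x + l_{2n-1,-k})}
= \sum_{k=1}^{m-1} \frac{\alpha_{k}}{x + l_{2n-1,-k}}.
\]
Evaluating at $x = l_{2n-2,j'}$ annihilates the terms with $j' \in [1,m-1] \setminus \{j\}$ and gives exactly the rational coefficient displayed in \eqref{eq:5.9(2)} for the surviving $j' \in \{j\} \cup [m,n-1] \cup [-n+1,0]$. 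Reading off $\sum_{k} \alpha_{k} = 1$ and $\sum_{k} \alpha_{k} l_{2n-1,-k}$ from the $x \to \infty$ asymptotics of both sides, and substituting into the $c(Q;a)$-piece $\sum_{k} \alpha_{k} (\d_{2} - l_{2n-1,-k} + n-1)$, one recovers the coefficient of $c(Q;a)$ in \eqref{eq:5.9(2)} after using the definition of $\mathcal{D}_{2}^{\pm}(Q)$ together with $\pm \lambda_{n+1} \mp \Lambda_{n+1} = n - m + 1$ from Remark~\ref{rem:contragredient}.

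For \eqref{eq:5.10(2)}, apply the same scheme to \eqref{eq:5.8(2)} with $k$ now ranging over $[-n,-1] \cup [m,n]$; the $\mathcal{D}_{1}^{\mp}$-piece and the $\frac{\eta_{1}}{a_{1}}$-sum of \eqref{eq:5.8(2)} already bundle into $V_{j'}^{\mp}(Q;a)$ by definition. Choosing the partial fraction numerator to have zeros at $x = l_{2n-2,p}$ for $p \in [-m+2,-1] \setminus \{j\}$ and denominator $\prod_{p \in [-n,-1] \cup [m,n]}(x + l_{2n-1,p})$ kills those terms and produces the coefficient $B_{j'}(j,Q)$. Equation \eqref{eq:5.10(3)} is the $j = 0$ specialization: since $\tau_{0,0} Q = Q$, the term $V_{0}^{\mp}(Q;a)$ expands as
\[
\mathcal{D}_{1}^{\mp}(Q) c(Q;a) + \I \frac{\eta_{1}}{a_{1}} \sum_{1 \leq |i| \leq n-1} \frac{a_{2n-3,-i}(\tau_{i,0} Q)}{l_{2n-3,i}} c(\tau_{i,0}Q;a),
\]
reproducing the second line of \eqref{eq:5.10(3)}, while $B_{0}(0,Q)$ simplifies using $l_{2n-2,0} = 0$ to the explicit prefactor written there.

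The main obstacle is bookkeeping rather than analytic. One must verify, for every $j' \in [-n+1, n-1]$ indexing the sum in \eqref{eq:5.8(1)a} or \eqref{eq:5.8(2)}, that either $\tau_{0,j'}Q \in GT(\lambda)$ or the prefactor $a_{2n-2,-j'}(\tau_{0,j'}Q)$ vanishes by Remark~\ref{remark:null points of a_ij}, so that the linear combination is well posed and the surviving terms are exactly those claimed. The $\tau_{i,j'}Q$ lying outside $GT(\lambda)$ are handled by \eqref{eq:5.8(3)}, \eqref{eq:5.8(4)} and case (5) of Lemma~\ref{lemma:5.8}, which annihilate the corresponding $c$'s. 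The case analysis that guarantees the partial fraction combination is legitimate for every admissibility pattern in Lemma~\ref{lemma:5.5} is the delicate part; the algebraic identities themselves are routine.
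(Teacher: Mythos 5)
Your overall strategy coincides with the paper's own proof: the paper obtains (1) from \eqref{eq:5.8(1)a}, applied to $k \in [1,m-1]$, by eliminating $c(\tau_{0,j'}Q;a)$ for $j' \in [1,m-1]\setminus\{j\}$, and (2) from \eqref{eq:5.8(2)}, applied to $k \in [-n,-1]\cup[m,n]$, by eliminating $V_{j'}^{\mp}(Q;a)$ for $j' \in [-n+1,-m+1]\cup[0,n-1]\setminus\{j\}$; your partial-fraction weights simply make that elimination explicit. Your execution of part (1) is correct: the numerator $\prod_{p\in[1,m-1]\setminus\{j\}}(x-l_{2n-2,p})$ vanishes exactly on the terms to be removed, the evaluation at $x=l_{2n-2,j'}$ reproduces the rational coefficients displayed in \eqref{eq:5.9(2)}, and the two moments $\sum_k\alpha_k=1$ and $\sum_k\alpha_k l_{2n-1,-k}$ combined with $\pm\lambda_{n+1}\mp\Lambda_{n+1}=n-m+1$ recover the coefficient of $c(Q;a)$. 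Treating (3) as the $j=0$ specialization also matches the paper.

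In part (2), however, you have swapped the eliminated and retained index sets, and the step fails as written. The target equation \eqref{eq:5.10(2)} \emph{keeps} $V_{j'}^{\mp}(Q;a)$ precisely for $j' \in \{j\}\cup[-m+2,-1]$; what must be eliminated are the $V_{j'}^{\mp}(Q;a)$ with $j' \in [-n+1,-m+1]\cup[0,n-1]\setminus\{j\}$. Hence the partial-fraction numerator must be $\prod_{p\in[-n+1,-m+1]\cup[0,n-1]\setminus\{j\}}(x-l_{2n-2,p})$, of degree $2n-m$, over the denominator $\prod_{p\in[-n,-1]\cup[m,n]}(x+l_{2n-1,p})$ of degree $2n-m+1$ --- this is exactly the numerator built into $B_{j'}(j,Q)$, and evaluating at $x=l_{2n-2,j'}$ then vanishes on the elimination set and yields $B_{j'}(j,Q)/a_{2n-2,-j'}(\tau_{0,j'}Q)$ on the surviving set. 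With your stated choice (zeros at $l_{2n-2,p}$ for $p\in[-m+2,-1]\setminus\{j\}$), the combination annihilates exactly the terms that are supposed to survive, retains all $2n-m+1$ terms indexed by $[-n+1,-m+1]\cup[0,n-1]$, and produces coefficients with a numerator of degree $m-2$ that cannot equal $B_{j'}(j,Q)$; counting unknowns against the $2n-m+1$ available equations also shows nothing useful is eliminated. Once the two sets are interchanged, your argument goes through; the residual bookkeeping about patterns $\tau_{0,j'}Q\notin GT(\lambda)$, which you flag as the delicate point, is glossed over to the same extent in the paper's own proof.
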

\begin{proof}
\eqref{eq:5.9(2)} can be obtained from the equations
\eqref{eq:5.8(1)a}, 
applied to $k \in [1, m-1]$, 
by eliminating $c(\tau_{0,j'} Q; a)$, 
$j' \in [1, m-1] \setminus \{j\}$. 
\eqref{eq:5.10(2)} can be obtained from the equations
\eqref{eq:5.8(2)}, applied to $k \in [-n, -1] \cup [m, n]$, 
by eliminating $V_{j'}(Q; a)$, 
$j' \in  [-n+1, -m+1] \cup [0, n-1] \setminus \{j\}$. 
\end{proof}

Suppose $Q \in GT(\lambda)$ and $\tau_{0,-j} Q \in GT(\lambda)$ 
for some $j \in [-n+1, -m+1] \cup [0, n-1]$. 
Replace $Q$ in \eqref{eq:5.10(2)} by $\tau_{0,-j} Q$. 
Then we obtain 
\begin{align}
& 
\mp 
\frac{\I}
{B_{j}(j, \tau_{0,-j} Q)} 
(\mathcal{D}_{2}^{\pm}(Q) 
- 
l_{2n-2, -j})  
c(\tau_{0,-j}Q; a) 
\label{eq:5.13(2)} 
\\
& + 
\sum_{j' \in [-m+2, -1]}
\frac{B_{j'}(j, \tau_{0,-j} Q)}
{B_{j}(j, \tau_{0,-j} Q)} 
V_{j'}^{\mp}(\tau_{0,-j} Q; a)  
\notag
\\
&
+
(\mathcal{D}_{1}^{\mp}(Q) - l_{2n-2,-j}) 
c(Q; a) 
+ \I 
\frac{\eta_{1}}{a_{1}} 
\sum_{1 \leq |i| \leq n-1} 
\frac{a_{2n-3,-i}(\tau_{i,0} Q)} 
{l_{2n-3,i} + l_{2n-2,-j}} 
c(\tau_{i,0} Q;a) 
\notag
\\
&= 0.
\notag
\end{align}

If $Q \in GT(\lambda)$ satisfies 
\begin{enumerate}
\item
$\tau_{0,-j} Q \not\in GT(\lambda)$ and 
$\tau_{0,-j} \sigma_{2n-1,-j} Q \in GT(\lambda + e_{-j})$ 
for some $j \in [-n+1, -m]$; 
or 
\item
$\tau_{0,-j} Q \not\in GT(\lambda)$ and 
$\tau_{0,-j} \sigma_{2n-1,-j-1} Q \in GT(\lambda - e_{j+1})$ 
for some $j \in [2, n-2]$; 
or 
\item
$\tau_{0,-n+1} Q \not\in GT(\lambda)$ and 
$\tau_{0,-n+1} \sigma_{2n-1,-(\sgn \lambda_{n}) n} Q 
\in GT(\lambda - (\sgn \lambda_{n}) e_{n})$, 
\end{enumerate}
then 
\eqref{eq:5.8(4)} holds for $k$ such that $j(k) = -j$ 
(or $j(k) = n-1$ in the case (3)). 
We may, and do, regard this equation \eqref{eq:5.8(4)} as a 
special case of \eqref{eq:5.13(2)}, whose first line is zero.

\begin{lemma}
If \eqref{eq:5.13(2)} holds for $j = j_{1}, \dots, j_{s}$ and 
$\tau_{i,0} Q \in GT(\lambda)$ for $i = i_{1}, \dots, i_{s-1}$, 
then 
\begin{align}
& 
\pm
\sum_{\mu=1}^{s} 
\frac{\I}
{B_{j_{\mu}}(j_{\mu}, \tau_{0,-j_{\mu}} Q)} 
\frac{
\prod_{\nu=1}^{s-1} (l_{2n-2,-j_{\mu}} + l_{2n-3, i_{\nu}})}
{\prod_{\nu=1, \not= \mu}^{s} (l_{2n-2,-j_{\mu}} - l_{2n-2, -j_{\nu}})}
\label{eq:5.13(3)}\\
& \hspace{35mm} 
\times (\mathcal{D}_{2}^{\pm}(Q) 
- 
l_{2n-2, -j_{\mu}})  
c(\tau_{0,-j_{\mu}}Q; a) 
\notag
\\
& 
+ 
\sum_{\mu=1}^{s} 
\sum_{j' \in [-m+2, -1]}
\frac{
\prod_{\nu=1}^{s-1} (l_{2n-2,-j_{\mu}} + l_{2n-3, i_{\nu}})}
{\prod_{\nu=1, \not= \mu}^{s} (l_{2n-2,-j_{\mu}} - l_{2n-2, -j_{\nu}})}
\frac{B_{j'}(j_{\mu}, \tau_{0,-j_{\mu}} Q)}
{B_{j_{\mu}}(j_{\mu}, \tau_{0,-j_{\mu}} Q)} 
\notag\\
& \hspace{35mm} 
\times 
V_{j'}^{\mp}(\tau_{0,-j_{\mu}} Q; a)  
\notag\\
&
+
\left(
\mathcal{D}_{1}^{\mp}(Q) 
- \sum_{\mu=1}^{s} l_{2n-2,-j_{\mu}} 
- \sum_{\nu=1}^{s-1} l_{2n-3,i_{\nu}}
\right) 
c(Q; a) 
\notag\\
& 
+ \I 
\frac{\eta_{1}}{a_{1}} 
\sum_{1 \leq |i| \leq n-1} 
\frac{a_{2n-3,-i}(\tau_{i,0} Q) 
\prod_{\nu=1}^{s-1} (l_{2n-3,i} - l_{2n-3,i_{\nu}})} 
{\prod_{\mu=1}^{s} (l_{2n-3,i} + l_{2n-2,-j_{\mu}})} 
c(\tau_{i,0} Q;a) 
\notag
\\
&= 0.
\notag
\end{align}
\end{lemma}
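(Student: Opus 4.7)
The plan is to produce \eqref{eq:5.13(3)} as a single linear combination of the $s$ instances of \eqref{eq:5.13(2)} corresponding to $j = j_{1},\dots,j_{s}$. The weights of this combination should be chosen so that the $s-1$ unwanted coefficient functions $c(\tau_{i_{\nu},0}Q;a)$, $\nu = 1,\dots,s-1$, cancel exactly, while every other term acquires the prefactor shown in \eqref{eq:5.13(3)}.

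Concretely, I would set
\[
\alpha_{\mu} := \frac{\prod_{\nu=1}^{s-1}(l_{2n-2,-j_{\mu}} + l_{2n-3,i_{\nu}})}{\prod_{\nu=1,\,\nu\neq\mu}^{s}(l_{2n-2,-j_{\mu}} - l_{2n-2,-j_{\nu}})},
\qquad \mu = 1,\dots,s,
\]
and form $\sum_{\mu}\alpha_{\mu}\cdot(\text{equation \eqref{eq:5.13(2)} with }j=j_{\mu})$. These $\alpha_{\mu}$ are precisely the residues in the partial-fraction decomposition
\[
\frac{\prod_{\nu=1}^{s-1}(x + l_{2n-3,i_{\nu}})}{\prod_{\mu=1}^{s}(x - l_{2n-2,-j_{\mu}})}
= \sum_{\mu=1}^{s}\frac{\alpha_{\mu}}{x - l_{2n-2,-j_{\mu}}},
\]
which exists because the $l_{2n-2,-j_{\mu}}$ are distinct (a consequence of the genericity assumption \eqref{eq:order of lambda} on $\tilde{\lambda}$).

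Specializing this identity at $x = -l_{2n-3,i}$ yields
\[
\sum_{\mu=1}^{s}\frac{\alpha_{\mu}}{l_{2n-3,i} + l_{2n-2,-j_{\mu}}}
= \frac{\prod_{\nu=1}^{s-1}(l_{2n-3,i} - l_{2n-3,i_{\nu}})}{\prod_{\mu=1}^{s}(l_{2n-3,i} + l_{2n-2,-j_{\mu}})},
\]
which vanishes exactly for $i \in \{i_{1},\dots,i_{s-1}\}$, killing the unwanted $c(\tau_{i_{\nu},0}Q;a)$ terms and reproducing the last sum of \eqref{eq:5.13(3)}. Next, comparing the $1/x$ and $1/x^{2}$ coefficients of the partial-fraction identity as $x\to\infty$ gives the two scalar relations $\sum_{\mu}\alpha_{\mu} = 1$ and $\sum_{\mu}\alpha_{\mu}\,l_{2n-2,-j_{\mu}} = \sum_{\nu=1}^{s-1}l_{2n-3,i_{\nu}} + \sum_{\mu=1}^{s}l_{2n-2,-j_{\mu}}$, so the contribution to $c(Q;a)$ assembles to $\bigl(\mathcal{D}_{1}^{\mp}(Q) - \sum_{\mu}l_{2n-2,-j_{\mu}} - \sum_{\nu}l_{2n-3,i_{\nu}}\bigr)c(Q;a)$, matching \eqref{eq:5.13(3)} exactly.

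The remaining terms, namely those involving $(\mathcal{D}_{2}^{\pm}(Q) - l_{2n-2,-j_{\mu}})\,c(\tau_{0,-j_{\mu}}Q;a)$ and $V_{j'}^{\mp}(\tau_{0,-j_{\mu}}Q;a)$, simply inherit the weight $\alpha_{\mu}$ from the combination and reproduce the corresponding lines of \eqref{eq:5.13(3)} verbatim. I expect no substantive obstacle: the lemma is essentially the formal statement that $s$ generic linear relations can be combined to eliminate any prescribed $s-1$ of the coefficients they share, realized here through Lagrange interpolation. The only genuine care required is the bookkeeping of the partial-fraction identities and tracking the $\pm/\mp$ sign conventions inherited from \eqref{eq:5.13(2)}.
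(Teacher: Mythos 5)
Your proposal is correct and is essentially the paper's own proof: the paper disposes of this lemma in one line, saying it is obtained from the equations \eqref{eq:5.13(2)} for $j = j_{1}, \dots, j_{s}$ by eliminating $c(\tau_{i,0}Q; a)$, $i = i_{1}, \dots, i_{s-1}$, and your Lagrange/partial-fraction weights $\alpha_{\mu}$ (together with the identities $\sum_{\mu}\alpha_{\mu}=1$, $\sum_{\mu}\alpha_{\mu}l_{2n-2,-j_{\mu}} = \sum_{\mu}l_{2n-2,-j_{\mu}} + \sum_{\nu}l_{2n-3,i_{\nu}}$ and the evaluation at $x = -l_{2n-3,i}$) are exactly that elimination made explicit. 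One caution: taking \eqref{eq:5.13(2)} literally as printed, your combination yields the first line of \eqref{eq:5.13(3)} with the opposite sign ($\mp$ instead of $\pm$); this traces to a sign typo internal to the paper (the first line of \eqref{eq:5.13(2)} is inconsistent with its own source \eqref{eq:5.10(2)}, since $1/(\mp\I) = \pm\I$), so it is not a defect of your argument.
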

\begin{proof}
This is obtained from the equations \eqref{eq:5.13(2)}, 
applied to $j = j_{1}, \dots, j_{s}$, 
by eliminating $c(\tau_{i, 0} Q; a)$, $i = i_{1}, \dots, i_{s-1}$. 
\end{proof}

We apply this lemma to some special cases. 
For $m' \in [m-1,n-1]$ and $Q \in GT(\lambda)$, 
let 
\begin{align*}
K_{1}(m') 
&:= 
\{
p \in [1,m'-1] | 
\tau_{p,0} Q \in GT(\lambda)
\},
\\
K_{2}(m') 
&:= [1,m'-1] \setminus K_{1}(m'), 
\\
K_{3}(m') 
&:= 
\{
p \in [-n+2, -m'] | 
\tau_{p,0} Q \in GT(\lambda)
\}, 
\\
K_{4}(m') 
&:= 
[-n+2, -m'] \setminus K_{3}(m'), 
\\
K_{5}(m') 
&:= 
\{p-1 | p \in K_{3}(m')\}. 
\end{align*}
If $p \in K_{1}(m')$, then 
$\tau_{0,-p} Q \in GT(\lambda)$ or  
$\lambda_{p+1} = q_{2n-2,p} > q_{2n-3,p} \geq q_{2n-2,p+1}$. 
In the latter case, 
$\tau_{0,-p} \sigma_{2n-1,-p-1} Q \in GT(\lambda - e_{p+1})$. 
Thus \eqref{eq:5.13(2)} holds for $j = p$ in each case. 
If $p \in K_{3}(m')$, then 
$\tau_{0,-p+1} Q \in GT(\lambda)$ or 
$q_{2n-2,-p} \geq q_{2n-3,-p} > q_{2n-2,-p+1} = \lambda_{-p+1}$. 
In the latter case, 
$\tau_{0,-p+1} \sigma_{2n-1,-p+1} Q \in GT(\lambda + e_{-p+1})$. 
Thus \eqref{eq:5.13(2)} holds for $j = p-1$ in each case. 

Let $i \in K_{1}(m') \cup K_{3}(m')$, or $i = -n+1$ if 
$\tau_{-n+1,0} Q \in GT(\lambda)$.  
Define 
\begin{align*}
& 
I_{1}(m') 
:= 
K_{1}(m') \cup K_{5}(m') \cup \{0\}, 
\\ 
& 
I_{2}(m')
:= 
\begin{cases}
K_{1}(m') \cup K_{3}(m') \cup \{-n+1\} 
\quad 
\mbox{if } \tau_{-n+1,0} Q \in GT(\lambda) 
\\
K_{1}(m') \cup K_{3}(m') \cup \{n-1\} 
\quad 
\mbox{if } \tau_{-n+1,0} Q \not\in GT(\lambda), 
\end{cases}
\\
& 
I_{2}(m', i) := I_{2}(m') \setminus \{i\}. 
\end{align*}
Then \eqref{eq:5.13(3)} holds for 
$\{j_{1}, \dots, j_{s}\} = I_{1}(m')$ and 
$\{i_{1}, \dots, i_{s-1}\} = I_{2}(m', i)$, 
so the following formula is obtained.  
\begin{corollary}
For $i \in K_{1}(m') \cup K_{3}(m')$ or $i = -n+1$ if 
$\tau_{-n+1,0} Q \in GT(\lambda)$, 
then 
\begin{align}
-\I &
\frac{\eta_{1}}{a_{1}} 
\frac{a_{2n-3,-i}(\tau_{i,0} Q) 
\prod_{i' \in I_{2}(m', i)} (l_{2n-3,i} - l_{2n-3,i'})}
{\prod_{j \in I_{1}(m')} (l_{2n-3,i} + l_{2n-2,-j})}
c(\tau_{i,0} Q;a) 
\label{eq:5.13(3)-1}
\\
=& 
\pm 
\sum_{j \in I_{1}(m')}
\frac{\I}
{B_{j}(j, \tau_{0,-j} Q)} 
\frac{
\prod_{i' \in I_{2}(m', i)} (l_{2n-2,-j} + l_{2n-3, i'})}
{\prod_{j' \in I_{1}(m') \setminus \{j\}} 
(l_{2n-2,-j} - l_{2n-2, -j'})}
\notag \\ 
& \hspace{35mm} 
\times (\mathcal{D}_{2}^{\pm}(Q) 
- 
l_{2n-2, -j})  
c(\tau_{0,-j}Q; a) 
\notag
\\
& 
+ 
\sum_{j \in I_{1}(m')}
\sum_{j' \in [-m+2, -1]}
\frac{
\prod_{i' \in I_{2}(m', i)} (l_{2n-2,-j} + l_{2n-3, i'})}
{\prod_{j'' \in I_{1}(m') \setminus \{j\}} 
(l_{2n-2,-j} - l_{2n-2, -j''})}
\frac{B_{j'}(j, \tau_{0,-j} Q)}
{B_{j}(j, \tau_{0,-j} Q)} 
\notag \\ 
& \hspace{35mm} 
\times V_{j'}^{\mp}(\tau_{0,-j} Q; a)  
\notag\\
&
+
\left(
\mathcal{D}_{1}^{\mp}(Q) 
- \sum_{j \in I_{1}(m')} l_{2n-2,-j}
- \sum_{i' \in I_{2}(m', i)} l_{2n-3,i'}
\right) 
c(Q; a) 
\notag
\\
& 
+ \I 
\frac{\eta_{1}}{a_{1}} 
\sum_{i' \in [-m'+1,-1] \atop \cup [m',n-1]} 
\frac{a_{2n-3,-i'}(\tau_{i',0} Q) 
\prod_{i'' \in I_{2}(m', i)} (l_{2n-3,i'} - l_{2n-3,i''})}
{\prod_{j \in I_{1}(m')} (l_{2n-3,i'} + l_{2n-2,-j})}
c(\tau_{i',0} Q;a). 
\notag
\end{align}
\end{corollary}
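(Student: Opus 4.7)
The plan is to apply the preceding lemma (equation \eqref{eq:5.13(3)}) with the specific choices $\{j_1,\dots,j_s\} = I_1(m')$ and $\{i_1,\dots,i_{s-1}\} = I_2(m',i)$, and then isolate the summand corresponding to $i'=i$ on the left-hand side of the resulting identity.

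First I would verify the two hypotheses of the lemma. The condition $\tau_{i',0}Q \in GT(\lambda)$ for each $i' \in I_2(m',i)$ is immediate from the very definitions of $K_1(m')$ and $K_3(m')$ together with the dichotomy used to decide whether $-n+1$ is adjoined to $I_2(m')$. To check that \eqref{eq:5.13(2)} holds for every $j \in I_1(m') = K_1(m') \cup K_5(m') \cup \{0\}$, I would treat the three subsets separately. For $j = p \in K_1(m')$ the discussion immediately preceding the corollary gives either $\tau_{0,-p}Q \in GT(\lambda)$, so \eqref{eq:5.13(2)} applies verbatim, or $q_{2n-3,p}$ lies strictly between the prescribed bounds, in which case $\tau_{0,-p}\sigma_{2n-1,-p-1}Q \in GT(\lambda-e_{p+1})$ and the auxiliary case (2) of the enumerated list makes \eqref{eq:5.8(4)} a degenerate form of \eqref{eq:5.13(2)}; for $j = p-1 \in K_5(m')$ the argument is the symmetric one using cases (1) and (3). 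Finally, $j=0$ is identified with \eqref{eq:5.10(3)} via $\tau_{0,0}Q = Q$. A parallel count of cardinalities, $|I_1(m')| = |K_1(m')|+|K_3(m')|+1$ and $|I_2(m',i)| = |K_1(m')|+|K_3(m')|$, confirms that $|I_2(m',i)| = s-1$ as required.

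Having installed these into \eqref{eq:5.13(3)}, the first three groups of terms reproduce the corresponding lines of \eqref{eq:5.13(3)-1} with no change. The real work is rearranging the last sum over $1 \leq |i'| \leq n-1$. For every $i' \in I_2(m',i)$ the product $\prod_{i'' \in I_2(m',i)}(l_{2n-3,i'} - l_{2n-3,i''})$ contains the factor $l_{2n-3,i'}-l_{2n-3,i'} = 0$; for every $i' \in K_2(m') \cup K_4(m')$ the coefficient $a_{2n-3,-i'}(\tau_{i',0}Q)$ itself vanishes by Remark~\ref{remark:null points of a_ij}. What remains after these cancellations is the single term $i'=i$, which I move to the left-hand side, together with the contributions from $i' \in [-m'+1,-1] \cup [m',n-1]$, giving exactly the surviving sum on the right of \eqref{eq:5.13(3)-1}.

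The main obstacle is the bookkeeping of the boundary case $j = 0$: one must recognize \eqref{eq:5.10(3)} as the specialization of \eqref{eq:5.13(2)} at $\tau_{0,0}Q = Q$, which in turn forces an identity between $B_0(0,Q)$ and the explicit product $\prod_{p=1}^{n-1} l_{2n-3,p} / \bigl(\prod_{p=m}^n l_{2n-1,p} \prod_{p=1}^{m-2}(-l_{2n-2,-p})\bigr)$. This reduces to unwinding the definitions of $B_{j'}(j,Q)$ and the $a$-coefficients through Lemma~\ref{lemma:projection-2}; once this identification is made, the remainder is pure algebraic housekeeping on the partial-fraction-type factors in \eqref{eq:5.13(3)}.
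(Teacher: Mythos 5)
Your proposal is correct and is essentially the paper's own proof: the Corollary is exactly \eqref{eq:5.13(3)} applied with $\{j_{1},\dots,j_{s}\}=I_{1}(m')$ and $\{i_{1},\dots,i_{s-1}\}=I_{2}(m',i)$ (the hypotheses being supplied by the paragraph immediately preceding the Corollary), after which the $i'=i$ term of the final sum is moved to the left-hand side and every other term with $1\leq |i'|\leq n-1$ drops out, either through the zero factor $l_{2n-3,i'}-l_{2n-3,i'}=0$ for $i'\in I_{2}(m',i)$ or through the vanishing of $a_{2n-3,-i'}(\tau_{i',0}Q)$ for $i'\in K_{2}(m')\cup K_{4}(m')$, leaving precisely the sum over $[-m'+1,-1]\cup[m',n-1]$. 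One peripheral inaccuracy: for $j=p-1\in K_{5}(m')$ the paper's verification of \eqref{eq:5.13(2)} invokes only case (1) of the enumerated list (via $\tau_{0,-p+1}\sigma_{2n-1,-p+1}Q\in GT(\lambda+e_{-p+1})$); case (3) concerns the index $n-1$, which never lies in $I_{1}(m')$ and is needed only for the subsequent corollary involving $I_{1}(m',j)$.
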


Suppose $Q \in GT(\lambda)$ satisfies 
\begin{align}
& 
q_{2n-2,p} = q_{2n-3,p} = q_{2n-4,p} 
\quad 
\mbox{for any } 
p \in [1, m-2]. 
\label{eq:5.13(1)} 
\end{align}
Define 
\begin{equation}\label{eq:K_6(m')}
K_{6}(m') 
:=\{j \in [m-1,m'] \cup [-n+1,-m'] | 
\tau_{0,j} Q \in GT(\lambda)
\}. 
\end{equation}
For $j \in K_{6}(m')$, define 
\[
I_{1}(m', j) 
:= 
I_{1}(m') \cup \{-j\}. 
\]
Then \eqref{eq:5.13(3)} holds for 
$\{j_{1}, \dots, j_{s}\} = I_{1}(m', j)$ and 
$\{i_{1}, \dots, i_{s-1}\} = I_{2}(m')$. 
By \eqref{eq:5.13(1)} and the definition of $V_{j}^{\pm}(Q; a)$, 
the term $V_{j''}(\tau_{0,-j'} Q)$ is zero for 
$j' \in [-m+2,-1]$ and $j'' \in I_{1}(m', j)$. 
Thus we obtain the following formula. 
\begin{corollary}
Suppose $Q \in GT(\lambda)$ satisfies \eqref{eq:5.13(1)}. 
If $j \in K_{6}(m')$, then 
\begin{align}
\mp&
\frac{\I}
{B_{-j}(-j, \tau_{0,j} Q)} 
\frac{
\prod_{i \in I_{2}(m')} (l_{2n-2,j} + l_{2n-3, i})}
{\prod_{j' \in I_{1}(m')} 
(l_{2n-2,j} - l_{2n-2, -j'})}
(\mathcal{D}_{2}^{\pm}(Q) 
- 
l_{2n-2, j})  
c(\tau_{0,j}Q; a) 
\label{eq:5.13(3)-2}
\\
=& 
\pm 
\sum_{j' \in I_{1}(m')}
\frac{\I}
{B_{j'}(j', \tau_{0,-j'} Q)} 
\frac{
\prod_{i \in I_{2}(m')} (l_{2n-2,-j'} + l_{2n-3, i})}
{\prod_{j'' \in I_{1}(m', j) \setminus \{j'\}} 
(l_{2n-2,-j'} - l_{2n-2, -j''})}
\notag
\\
& 
\quad 
\times 
(\mathcal{D}_{2}^{\pm}(Q) 
- 
l_{2n-2, -j'})  
c(\tau_{0,-j'}Q; a) 
\notag
\\
&
+
\left(
\mathcal{D}_{1}^{\mp}(Q) 
- \sum_{j' \in I_{1}(m', j)} l_{2n-2,-j'}
- \sum_{i \in I_{2}(m')} l_{2n-3,i}
\right) 
c(Q; a) 
\notag
\\
& 
+ \I 
\frac{\eta_{1}}{a_{1}} 
\sum_{i \in [-m'+1,-m+1] \atop \cup [m',n-1]} 
\frac{a_{2n-3,-i}(\tau_{i,0} Q) 
\prod_{i' \in I_{2}(m')} (l_{2n-3,i} - l_{2n-3,i'})}
{\prod_{j' \in I_{1}(m', j)} (l_{2n-3,i} + l_{2n-2,-j'})}
c(\tau_{i,0} Q;a). 
\notag
\end{align}
\end{corollary}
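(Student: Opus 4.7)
The plan is to derive \eqref{eq:5.13(3)-2} from the preceding lemma by applying \eqref{eq:5.13(3)} to suitably enlarged index sets, and then simplifying the output using hypothesis \eqref{eq:5.13(1)}. Specifically, I would take
\[
\{j_{1}, \dots, j_{s}\} = I_{1}(m', j),
\qquad
\{i_{1}, \dots, i_{s-1}\} = I_{2}(m').
\]
Before invoking the lemma I need to check that \eqref{eq:5.13(2)} holds for each $j_{\mu}$. For $j_{\mu} \in I_{1}(m') = K_{1}(m') \cup K_{5}(m') \cup \{0\}$ this was already observed in the discussion leading to \eqref{eq:5.13(3)-1}. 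For the extra element $j_{\mu} = -j$, the hypothesis $j \in K_{6}(m')$ gives $\tau_{0, -(-j)} Q = \tau_{0, j} Q \in GT(\lambda)$, so \eqref{eq:5.13(2)} applies directly. Membership $\tau_{i_{\nu}, 0} Q \in GT(\lambda)$ for $i_{\nu} \in I_{2}(m')$ is built into the definition of $I_{2}(m')$.

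Next, I would peel off the $j_{\mu} = -j$ summand from the first sum on the right-hand side of \eqref{eq:5.13(3)} and transpose it to the left-hand side; this produces exactly the left-hand side of \eqref{eq:5.13(3)-2}. The residual summands $j_{\mu} \in I_{1}(m')$ then yield the first (outer) sum on the right of \eqref{eq:5.13(3)-2}.

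Third, I would use the hypothesis \eqref{eq:5.13(1)} to kill off the $V$-double-sum
$\sum_{\mu} \sum_{j' \in [-m+2, -1]} (\cdots)\, V_{j'}^{\mp}(\tau_{0, -j_{\mu}} Q; a)$
appearing in \eqref{eq:5.13(3)}. The point is that, for $j' \in [-m+2, -1]$, setting $p := -j' \in [1, m-2]$ and using $q_{2n-2,p} = q_{2n-3,p} = q_{2n-4,p}$, the patterns $\tau_{0, j'} \tau_{0, -j_{\mu}} Q$ and $\tau_{i, j'} \tau_{0, -j_{\mu}} Q$ that would appear in $V_{j'}^{\mp}(\tau_{0, -j_{\mu}} Q; a)$ either violate the interlacing of a Gelfand--Tsetlin pattern (so the accompanying $a$-factors vanish by Remark~\ref{remark:null points of a_ij}) or satisfy the hypotheses of Proposition~\ref{proposition:5.3}(3), forcing the corresponding $c$-values to vanish by \eqref{eq:5.8(3)}. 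In every case $V_{j'}^{\mp}(\tau_{0, -j_{\mu}} Q; a) = 0$.

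Finally, the same mechanism shows that in the last sum of \eqref{eq:5.13(3)} the summands with $i' \in [-m+2, -1]$ vanish, because $\tau_{i', 0} Q$ violates $q_{2n-3, -i'} \leq q_{2n-2, -i'}$ under \eqref{eq:5.13(1)}. What remains is exactly the range $i' \in [-m'+1, -m+1] \cup [m', n-1]$ displayed in \eqref{eq:5.13(3)-2}. The delicate part of the argument is the bookkeeping in the third step: keeping track of which coefficients vanish because the shifted Gelfand--Tsetlin pattern simply leaves $GT(\lambda)$, as opposed to which $c(\cdot; a)$ must be separately forced to zero via the degenerate equations \eqref{eq:5.8(3)} of Lemma~\ref{lemma:5.8}; once that case analysis is dispatched, the remaining manipulations are straightforward rewritings of \eqref{eq:5.13(3)}.
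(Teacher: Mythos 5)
Your overall route is exactly the paper's: apply \eqref{eq:5.13(3)} with $\{j_{1},\dots,j_{s}\} = I_{1}(m',j)$ and $\{i_{1},\dots,i_{s-1}\} = I_{2}(m')$, observe that \eqref{eq:5.13(2)} is available for the extra index $-j$ because $j \in K_{6}(m')$ means $\tau_{0,j}Q \in GT(\lambda)$, kill the $V$-double-sum using \eqref{eq:5.13(1)}, and transpose the $j_{\mu}=-j$ summand to the left (using $I_{1}(m',j)\setminus\{-j\}=I_{1}(m')$, which is what produces the displayed left-hand side). Those steps are correct. One cosmetic point: for the $V$-terms you do not need the second branch of your dichotomy (Proposition~\ref{proposition:5.3}(3) and \eqref{eq:5.8(3)}); under \eqref{eq:5.13(1)} every pattern $\tau_{0,j'}\tau_{0,-j_{\mu}}Q$ and $\tau_{i,j'}\tau_{0,-j_{\mu}}Q$ with $j'\in[-m+2,-1]$ simply leaves $GT(\lambda)$ (either $q_{2n-2,-j'}$ drops below $q_{2n-3,-j'}$, or, when $i=j'$, $q_{2n-3,-j'}$ drops below $q_{2n-4,-j'}$), so all contributions vanish outright, which is all the paper invokes.

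The genuine gap is in your last step. Deleting the summands $i'\in[-m+2,-1]$ from the sum over $1\le|i|\le n-1$ in \eqref{eq:5.13(3)} does not leave ``exactly'' the range $[-m'+1,-m+1]\cup[m',n-1]$; it leaves $[-n+1,-m+1]\cup[1,n-1]$. Two further reductions are needed. First, the positive indices $i\in[1,m-2]$ must be killed by the positive-side version of your argument; note that there the violated constraint really is $q_{2n-3,i}\le q_{2n-2,i}$, whereas for your negative $i'$ the shift $\tau_{i',0}$ \emph{lowers} $q_{2n-3,-i'}$, so what is violated is $q_{2n-3,-i'}\ge q_{2n-4,-i'}$, not the inequality you quote. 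Second, the indices $i\in[m-1,m'-1]\cup[-n+2,-m']\cup\{-n+1\}$ are not controlled by \eqref{eq:5.13(1)} at all; they drop out by a different mechanism: if $\tau_{i,0}Q\in GT(\lambda)$ then $i$ lies in $K_{1}(m')\cup K_{3}(m')\cup\{-n+1\}\subset I_{2}(m')$, so the numerator factor $\prod_{i'\in I_{2}(m')}(l_{2n-3,i}-l_{2n-3,i'})$ vanishes, while if $\tau_{i,0}Q\notin GT(\lambda)$ then $a_{2n-3,-i}(\tau_{i,0}Q)=0$ by Remark~\ref{remark:null points of a_ij}. Without this case analysis the identity you derive carries extra terms that \eqref{eq:5.13(3)-2} does not have, so the proof as written does not close.
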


\subsection{Condition for $\vect{q}_{2n-4}$}

In this subsection, we deduce necessary conditions for
$\vect{q}_{2n-4}$ so that the differential-difference equation has a
non-trivial solution. 
In the first place, we deduce conditions for 
$q_{2n-4,k}$, $k \in [1,  m-2]$. 
\begin{lemma}
Suppose $k \in [1, m-2]$. 
If $Q \in GT(\lambda)$ satisfies $q_{2n-4,k} < \lambda_{k+1}$, 
then $c(Q; a) = 0$. 
\end{lemma}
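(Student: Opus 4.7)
The hypothesis $q_{2n-4,k} < \lambda_{k+1}$ combined with the interlacing $q_{2n-3,k+1} \leq q_{2n-4,k}$ immediately forces $q_{2n-3,k+1} < \lambda_{k+1}$; this is the key consequence that must be exploited. My strategy is to reduce, via the shift equations of the previous subsection, to the already-established vanishing statement in Lemma~\ref{lemma:5.8}(3), which, applied with index $k+1 \in [2,m-1]$, gives $c(Q;a) = 0$ as soon as $q_{2n-2,k} = \lambda_{k+1}$ and $q_{2n-3,k} < \lambda_{k+1}$. Crucially, since no shift $\tau_{i,0}$ or $\tau_{0,j}$ alters $\vect{q}_{2n-4}$, the hypothesis is preserved along any chain of such shifts.

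I would proceed by induction on the non-negative integer pair $(q_{2n-2,k}-\lambda_{k+1},\, q_{2n-3,k}-q_{2n-4,k})$ ordered lexicographically. The \emph{base case} $q_{2n-2,k} = \lambda_{k+1}$ and $q_{2n-3,k} < \lambda_{k+1}$ is exactly Lemma~\ref{lemma:5.8}(3). For the \emph{inductive step}, when $q_{2n-2,k} > \lambda_{k+1}$ I would apply \eqref{eq:5.9(2)} with $j = k$, solving for $c(Q;a)$ in terms of coefficients $c(\tau_{0,j'}Q;a)$, each of which either has a strictly smaller outer parameter (falling under the induction) or sits outside $GT(\lambda)$ and drops from the sum via Remark~\ref{remark:null points of a_ij}. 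When $q_{2n-2,k} = \lambda_{k+1}$ but $q_{2n-3,k} = \lambda_{k+1}$, the hypothesis gives $q_{2n-4,k} < q_{2n-3,k}$, so $\tau_{-k,0}Q \in GT(\lambda)$ lies in the base case; the shift relation \eqref{eq:5.13(2)} with $j = k$ then expresses $c(Q;a)$ as a multiple of $c(\tau_{-k,0}Q;a) = 0$, completing the step.

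The main obstacle will be the bookkeeping needed to ensure the coefficient of $c(Q;a)$ on the left-hand side of the chosen shift equation is non-zero at each step. This reduces, via Remark~\ref{remark:null points of a_ij}, to ruling out a handful of degenerate configurations of $(q_{2n-4,k}, q_{2n-3,k\pm 1}, q_{2n-2, k \pm 1})$; the strict inequality $q_{2n-4,k} < \lambda_{k+1}$ ensures in particular that the factors $l_{2n-2,j}+l_{2n-1,-k}$ in the denominators of \eqref{eq:5.9(2)} and \eqref{eq:5.13(2)} stay away from zero for the values of $j$ actually appearing. A secondary obstacle is to verify that all neighbouring patterns $\tau_{i,j}Q$ arising in the shift sums either lie outside $GT(\lambda)$ (and thus contribute $0$ by the null-point criterion) or satisfy the inductive hypothesis; this again follows because $q_{2n-4,k}$ is invariant under the shifts involved and because the induction parameter strictly decreases on every permitted neighbour.
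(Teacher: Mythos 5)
Your plan starts the same way the paper does (the base case via the first equation of \eqref{eq:5.8(3)}), but your inductive step runs the shift equations in the wrong direction, and this is a genuine gap. In \eqref{eq:5.9(2)} the coefficient of $c(Q;a)$ is the first-order differential operator $\mathcal{D}_{2}^{\pm}(Q) - l_{2n-2,m-1} + l_{2n-2,j} \mp \Lambda_{n+1} \mp \eta_{2}a_{1}/2a_{2}$, not a scalar, so the equation centered at $Q$ can never be ``solved for $c(Q;a)$'': even if every other term vanished you would only learn that $c(Q;a)$ satisfies a first-order ODE, not that it is zero. Worse, the sum in \eqref{eq:5.9(2)} with $j=k$ contains the term $j'=k$, namely $c(\tau_{0,k}Q;a)$, whose parameter $q_{2n-2,k}-\lambda_{k+1}$ is strictly \emph{larger} than that of $Q$, while the terms with $|j'|\neq k$ leave your lexicographic pair unchanged; none of these is covered by your induction hypothesis, and $\tau_{0,k}Q$ lies in $GT(\lambda)$ with nonzero coefficient whenever $q_{2n-2,k}<\lambda_{k}$. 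The paper's Step 2 avoids both problems by applying \eqref{eq:5.9(2)} at the pattern one level \emph{below} (there $c=0$ is already known, so the differential-operator term drops out identically, and all sum terms but one vanish by the previous stage); the single surviving term $c(\tau_{0,k}Q;a)$ then has a nonzero \emph{scalar} coefficient and must vanish. Vanishing propagates upward from $\tau_{0,-k}Q$ to $Q$, never by isolating the centered coefficient.

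Your treatment of the case $q_{2n-2,k}=q_{2n-3,k}=\lambda_{k+1}$ also fails. Equation \eqref{eq:5.13(2)} with $j=k$ is derived under the hypothesis $\tau_{0,-k}Q\in GT(\lambda)$, which is false precisely here, since $q_{2n-2,k}$ is already at its minimum $\lambda_{k+1}$ and cannot be decremented; and even where \eqref{eq:5.13(2)} does hold, it is a differential relation involving $c(\tau_{0,-k}Q;a)$, the terms $V_{j'}^{\mp}$, the term $(\mathcal{D}_{1}^{\mp}(Q)-l_{2n-2,-k})c(Q;a)$, and all $c(\tau_{i,0}Q;a)$ with $1\leq|i|\leq n-1$ --- it does not express $c(Q;a)$ as a multiple of $c(\tau_{-k,0}Q;a)$. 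The correct tool, and what the paper uses in its Step 1, is the \emph{second} equation of \eqref{eq:5.8(3)}: applied at $Q'=\tau_{-k,0}Q$ it is a purely algebraic relation among the $c(\tau_{i,0}Q';a)$, all of which vanish by the base case except the $i=k$ term, which is $c(Q;a)$ with nonzero coefficient. Finally, patterns with $q_{2n-3,k}>\lambda_{k+1}$ require the further simultaneous shift of $q_{2n-2,k}$ and $q_{2n-3,k}$ via \eqref{eq:5.10(2)} (the paper's Steps 3--4); with your inductive step broken these patterns are never reached, so the argument does not close even after the two repairs above.
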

\begin{proof}
\par
\noindent
\textbf{Step 1.} 
Among the Gelfand-Tsetlin patterns with $q_{2n-4,k} < \lambda_{k+1}$, 
there is a $Q$ satisfying 
$q_{2n-2,k} = \lambda_{k+1}$ and $q_{2n-3,k} < \lambda_{k+1}$. 
By the first equation in \eqref{eq:5.8(3)}, $c(Q; a) = 0$ for this
$Q$. 
Therefore, by the second equation in \eqref{eq:5.8(3)}, 
$c(Q; a) = 0$ for $Q \in GT(\lambda)$ satisfying 
$q_{2n-2,k} = q_{2n-3,k} = \lambda_{k+1}$. 
Thus, $c(Q; a) = 0$ for all $Q \in GT(\lambda)$ such that 
$q_{2n-2,k} = \lambda_{k+1}$ and $q_{2n-3,k} \leq \lambda_{k+1}$. 
\par
\noindent
\textbf{Step 2.}
Suppose $Q \in GT(\lambda)$ satisfies 
$q_{2n-4,k} < \lambda_{k+1}$ and 
$q_{2n-4,k} \leq q_{2n-3,k} \leq q_{2n-2,k} = \lambda_{k+1}$. 
In this case, if $j' \in [m, n-1] \cup [-n+1, 0] \setminus \{k\}$,
then $c(\tau_{0,j'} Q; a) = 0$ by Step 1, 
since $\tau_{0,j'} Q$ satisfies the condition in it. 
Then by \eqref{eq:5.9(2)}, applied to this $Q$ and $j = k$, 
we know that $c(Q; a) = 0$ for $Q \in GT(\lambda)$ satisfying 
$q_{2n-2,k} = \lambda_{k+1} + 1$ and $q_{2n-3,k} \leq \lambda_{k+1}$. 
By repeating this discussion, we know that $c(Q; a) = 0$ 
for $Q \in GT(\lambda)$ satisfying  
$\lambda_{k} \geq q_{2n-2,k} \geq \lambda_{k+1}$ and 
$q_{2n-3,k} \leq \lambda_{k+1}$. 
\par
\noindent
\textbf{Step 3.}
Suppose $Q \in GT(\lambda)$ satisfies 
$q_{2n-2,k} = q_{2n-3,k} = \lambda_{k+1}$. 
By Step 2, $c(Q; a) = 0$. 
If $\tau_{i,j} Q \in GT(\lambda)$ for $(i,j) \not= (k,k)$, 
then it satisfies 
$\lambda_{k} \geq q_{2n-2,k} \geq \lambda_{k+1}$ and 
$q_{2n-3,k} \leq \lambda_{k+1}$; so it satisfies the condition in the
conclusion of Step 2. 
It follows that $c(\tau_{i,j} Q; a) = 0$ for $(i, j) \not= (k, k)$. 
Then by \eqref{eq:5.10(2)}, applied to this $Q$ and $j = k$, 
we have $c(\tau_{k,k} Q; a) = 0$, that is to say 
$c(Q; a) = 0$ for $Q \in GT(\lambda)$ satisfying 
$q_{2n-4,k} < \lambda_{k+1}$ and 
$q_{2n-2,k} = q_{2n-3,k} = \lambda_{k+1} + 1$. 
\par
\noindent
\textbf{Step 4.} 
Suppose $Q \in GT(\lambda)$ satisfies the condition in the conclusion of
Step 3. Then, by applying the discussion in Step 2 for this $Q$, 
we know that $c(Q; a) = 0$ for $Q \in GT(\lambda)$ satisfying 
$q_{2n-3,k} \leq \lambda_{k+1} + 1$. 

By repeating the shift operations as in Step 2 and Step 3, the lemma is
shown. 
\end{proof}

In the second place, we deduce conditions for 
$q_{2n-4,k}$, $k \in [m-1, n-2]$. 
\begin{lemma} 
Suppose $k \in [m-1, n-2]$. 
If $Q \in GT(\lambda)$ satisfies $q_{2n-4,k} > \lambda_{k+1}$, then 
$c(Q; a) = 0$. 
\end{lemma}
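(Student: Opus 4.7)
The plan is to adapt the four-step propagation of the previous lemma, with all inequalities reversed and the pivotal index shifted from~$k$ to~$k+1 \in [m, n-1]$. The direct vanishing formerly supplied by~\eqref{eq:5.8(3)} will be replaced by Lemma~\ref{lemma:5.8}(5), and the propagating equations~\eqref{eq:5.9(2)} and the second equation of~\eqref{eq:5.8(3)} will be replaced by~\eqref{eq:5.10(2)} and~\eqref{eq:5.8(4)} respectively. The key asymmetry to keep in mind is that for projection indices in $[m, n]$ only one sign of~$\pm$ is available, so the ``difference of~$\pm$ equations'' trick used in Lemma~\ref{lemma:5.8}(3) has no analogue here, and the seed is accordingly narrower.

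First, as a seed, I invoke Lemma~\ref{lemma:5.8}(5) at index $k+1 \in [m, n-1]$: it gives $c(Q; a) = 0$ for every $Q \in GT(\lambda)$ satisfying $q_{2n-3, k+1} = q_{2n-2, k+1} = \lambda_{k+1}$ and $q_{2n-4, k} > \lambda_{k+1}$. This is an entire family of vanishing patterns parametrized by the remaining GT entries.

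Next, I will spread this vanishing in the $q_{2n-3, k+1}$ direction. For any~$Q$ with $q_{2n-2, k+1} = \lambda_{k+1}$ and $q_{2n-4, k} > \lambda_{k+1}$, the interlacing forces $q_{2n-3, k} \geq q_{2n-4, k} > \lambda_{k+1}$, so the hypothesis of~\eqref{eq:5.8(4)} for~$k+1$ is automatic. Applied to~$Q$ in the seed family, the term~$c(Q; a)$ vanishes by the seed, the shift~$\tau_{k+1, 0} Q$ falls outside~$GT(\lambda)$ by interlacing, and the shifts~$\tau_{i, 0} Q$ with $i \notin \{\pm(k+1)\}$ preserve the seed condition and so also give vanishing coefficients. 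The sole surviving term forces $c(\tau_{-(k+1), 0} Q; a) = 0$; iterating this on the enlarged family peels off~$q_{2n-3, k+1}$ one unit at a time, yielding vanishing on the entire slice $q_{2n-2, k+1} = \lambda_{k+1}$, $q_{2n-4, k} > \lambda_{k+1}$.

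Finally, I raise~$q_{2n-2, k+1}$ by repeated use of~\eqref{eq:5.10(2)} with $j = k+1 \in [0, n-1]$. For~$Q$ in the current slice, $c(Q; a) = 0$; the shifts appearing in~$V_{j'}^{\mp}(Q; a)$ for $j' \in [-m+2, -1]$ leave~$\vect{q}_{2n-4}$ and~$q_{2n-2, k+1}$ unchanged, so the previous step and the previous lemma force~$V_{j'}^{\mp}(Q; a) = 0$ for those~$j'$. The equation thereby collapses to~$V_{k+1}^{\mp}(Q; a) = 0$, a first-order differential-difference relation on the next slice~$q_{2n-2, k+1} = \lambda_{k+1} + 1$; combined with the equation of Lemma~\ref{lemma:5.8}(2) at the same index (applicable because $q_{2n-2, k} > \lambda_{k+1}$ is automatic from $q_{2n-4, k} > \lambda_{k+1}$), it pins down the coefficients on the raised slice. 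Alternating this with the analogue of the previous step propagates vanishing through all~$q_{2n-2, k+1} \in [\lambda_{k+1}, \lambda_{k}]$, and since no shift touches~$\vect{q}_{2n-4}$ the hypothesis $q_{2n-4, k} > \lambda_{k+1}$ is preserved throughout.

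The hardest part will be the last paragraph: one must verify that $B_{k+1}(k+1, Q) \ne 0$ on the patterns of interest, so that~\eqref{eq:5.10(2)} genuinely reduces to~$V_{k+1}^{\mp} = 0$, and that the combined system of~$V_{k+1}^{\mp}(Q; a) = 0$ and the equations of Lemma~\ref{lemma:5.8}(2) indeed forces every~$c(Q; a)$ on the higher slices to vanish. The remaining manipulations are routine bookkeeping with GT interlacings.
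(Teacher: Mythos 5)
Your first two steps are sound and essentially parallel the paper's: the seed via Lemma~\ref{lemma:5.8}(5) at index $k+1$ is exactly the paper's Step 1, and your downward peeling of $q_{2n-3,k+1}$ via \eqref{eq:5.8(4)} is a workable variant of the paper's Step 2 (which uses \eqref{eq:5.10(3)} instead). The problem is the final step: its propagation direction is backwards, and this is a genuine gap, not a detail. By the interlacing $\lambda_{k+1} \geq q_{2n-2,k+1} \geq \lambda_{k+2}$, the value $q_{2n-2,k+1}=\lambda_{k+1}$ reached by your seed is the \emph{maximum} of this entry, not the minimum; the patterns still to be killed are those with $q_{2n-2,k+1} < \lambda_{k+1}$, and your argument never touches them. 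Your ``next slice'' $q_{2n-2,k+1}=\lambda_{k+1}+1$ contains no Gelfand-Tsetlin patterns at all, so \eqref{eq:5.10(2)} with $j=k+1$ is not even applicable on the slice $q_{2n-2,k+1}=\lambda_{k+1}$ (its hypothesis requires some $\tau_{i,k+1}Q \in GT(\lambda)$, which forces $q_{2n-2,k+1} < \lambda_{k+1}$), and the relation $V_{k+1}^{\mp}(Q;a)=0$ you want to extract involves only coefficients of nonexistent patterns, hence carries no information. Relatedly, your claimed target range $q_{2n-2,k+1}\in[\lambda_{k+1},\lambda_k]$ is the range of $q_{2n-2,k}$, not of $q_{2n-2,k+1}$: you have transplanted the upward motion of the previous lemma (where the seed $q_{2n-2,k}=\lambda_{k+1}$ sits at the \emph{bottom} of its range) without actually reversing it, despite the stated plan.

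The correct final move --- the paper's Step 3 --- goes down, not up: apply \eqref{eq:5.10(2)} with $j=-(k+1)$ (note $-(k+1)\in[-n+1,-m+1]$) to a pattern $Q$ with $q_{2n-2,k+1}=q_{2n-3,k+1}=\lambda_{k+1}$ and $q_{2n-4,k}>\lambda_{k+1}$. Then $V_{-(k+1)}^{\mp}(Q;a)$ involves the patterns with $q_{2n-2,k+1}$ lowered by one; all its terms except $c(\tau_{-(k+1),-(k+1)}Q;a)$ correspond to non-patterns, the remaining terms of \eqref{eq:5.10(2)} vanish by your Step 2, and the surviving coefficient is nonzero, so $c(Q';a)=0$ on the slice $q_{2n-2,k+1}=q_{2n-3,k+1}=\lambda_{k+1}-1$. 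Alternating this with your peeling step then descends through all of $[\lambda_{k+2},\lambda_{k+1}]$. With this replacement (and the bookkeeping that the $V_{j'}^{\mp}$, $j'\in[-m+2,-1]$, terms vanish because their shifts stay inside already-killed slices), the proof closes; as written, it does not.
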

\begin{proof}
\par
\noindent
\textbf{Step 1.} 
Among the Gelfand-Tsetlin patterns with $q_{2n-4,k} > \lambda_{k+1}$, 
there is a $Q$ satisfying 
$q_{2n-3,k+1} = q_{2n-2,k+1} = \lambda_{k+1}$. 
By Lemma~\ref{lemma:5.8} (5), 
$c(Q; a) = 0$ for this $Q$. 
\par
\noindent
\textbf{Step 2.}
We know $c(Q; a) = 0$ for those $Q$ with 
$q_{2n-3,k+1} = q_{2n-2,k+1} = \lambda_{k+1}$. 
Shift $q_{2n-3,k+1}$ downward by using \eqref{eq:5.10(3)}. 
Then we know that $c(Q; a) = 0$ for all $Q \in GT(\lambda)$ satisfying 
$q_{2n-4,k} > \lambda_{k+1}$ and 
$\lambda_{k+1} = q_{2n-2,k+1} \geq q_{2n-3,k+1}$. 
\par
\noindent
\textbf{Step 3.}
Suppose $Q \in GT(\lambda)$ satisfies the condition in Step 1. 
Then by \eqref{eq:5.10(2)}, applied to this $Q$ and $j = -k-1$, 
we know that $c(Q; a) = 0$ for all $Q \in GT(\lambda)$ satisfying 
$q_{2n-4,k} > \lambda_{k+1}$ and 
$q_{2n-2,k+1} = q_{2n-3,k+1} = \lambda_{k+1} - 1$. 

By repeating the shift operations as in Step 2 and Step 3, 
the lemma is shown. 
\end{proof}

\begin{lemma} 
If $Q \in GT(\lambda)$ satisfies $q_{2n-4,k} < \lambda_{k+2}$ 
for $k \in [m-1, n-3]$, or $q_{2n-4,n-2} < |\lambda_{n}|$, 
then 
$c(Q; a) = 0$. 
\end{lemma}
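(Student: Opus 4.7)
The plan is to imitate the proof of the preceding lemma, but to initiate the vanishing from case (b) of Lemma~\ref{lemma:5.5}(3) when $k \in [m-1, n-3]$, and from case (c) when $k = n-2$, rather than from case (a). Substituting $k_{0} = -k-2 \in [-n+1,-3]$ into case (b) transcribes to the condition $q_{2n-3,k} = q_{2n-2,k+1} = \lambda_{k+2}$ together with $q_{2n-4,k} < \lambda_{k+2}$, so any $Q \in GT(\lambda)$ meeting all three equalities has $c(Q;a) = 0$ immediately by Lemma~\ref{lemma:5.8}(5). The boundary case $k = n-2$ is identical with $\lambda_{k+2}$ replaced by $|\lambda_n|$ via case (c). This is Step~1 of the cascade, and one checks using \eqref{eq:order of lambda} that such ``corner'' patterns do exist inside $GT(\lambda)$.

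The remaining task is to propagate the vanishing from those corner patterns — where both $q_{2n-3,k}$ and $q_{2n-2,k+1}$ sit at the common lower value $\lambda_{k+2}$ — to arbitrary $Q$ satisfying only $q_{2n-4,k} < \lambda_{k+2}$. Mirroring the two-step alternation of the preceding proof, I would first raise $q_{2n-3,k}$ from $\lambda_{k+2}$ toward its upper bound $q_{2n-2,k}$ by applying equation \eqref{eq:5.13(3)-1} at a suitable source pattern, choosing the index so that a single $\tau_{i,0}$-summand lands on the target $c(Q;a)$ while all other summands are controlled by the inductive hypothesis; and I would then raise $q_{2n-2,k+1}$ from $\lambda_{k+2}$ toward $\lambda_{k+1}$ by applying \eqref{eq:5.10(2)} with $j = -k-1$, again arranging that every summand besides the target has already been shown to vanish. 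Iterating until every pair $(q_{2n-3,k}, q_{2n-2,k+1})$ compatible with $q_{2n-4,k} < \lambda_{k+2}$ is exhausted yields the lemma; the endpoint case $k = n-2$ proceeds in parallel with indices adapted to $|\lambda_n|$.

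The main obstacle is the bookkeeping. At every invocation of a shift equation one must verify that every $c(\tau_{i,j} Q; a)$-term other than the one being solved for already lies in a region where vanishing is established, drawing on the two preceding lemmas (handling $q_{2n-4,j} < \lambda_{j+1}$ for $j \in [1,m-2]$ and $q_{2n-4,j} > \lambda_{j+1}$ for $j \in [m-1,n-2]$), on the direct vanishing statements of Lemma~\ref{lemma:5.8}(3), (5), and on the inductive progress of the cascade itself. Organizing the order of iteration so that the ``known to vanish'' set grows monotonically, and so that the $\tau_{i,j}$-shifts do not push $Q$ outside $GT(\lambda)$ in a way that turns the invoked equation trivial, is the delicate technical core; once this is handled, the algebraic manipulations are parallel to those already carried out in the previous two lemmas.
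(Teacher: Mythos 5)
Your initialization is exactly the paper's: the corner patterns with $q_{2n-3,k} = q_{2n-2,k+1} = \lambda_{k+2}$ (resp.\ $=|\lambda_{n}|$ when $k = n-2$) and $q_{2n-4,k} < \lambda_{k+2}$ are killed by Lemma~\ref{lemma:5.8} (5), via cases (b) and (c) of Lemma~\ref{lemma:5.5} (3), and your two-step alternating cascade is also the paper's strategy. But your second shift step fails as stated. You took the index $j = -k-1$ from the proof of the preceding lemma, where the cascade runs \emph{downward} from the top corner $q_{2n-2,k+1} = \lambda_{k+1}$; here it must run \emph{upward} from the bottom corner $q_{2n-2,k+1} = \lambda_{k+2}$. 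With $j = -k-1$, every shifted pattern $\tau_{i,-k-1}Q$ occurring in $V_{-k-1}^{\mp}(Q;a)$ has $q_{2n-2,k+1}$ \emph{lowered}: if you apply \eqref{eq:5.10(2)} at the already-vanishing corner, the pattern you are trying to reach does not occur in the equation at all, so no new information is produced; if you instead apply it at the target pattern, the target enters only through the term $(\mathcal{D}_{2}^{\pm}(Q) + l_{2n-2,-k-1})\,c(Q;a)$, i.e.\ under a first-order differential operator, and such an equation has plenty of nonzero solutions, so it cannot force $c(Q;a) = 0$; moreover the summands $V_{j'}^{\mp}(Q;a)$, $j' \in [-m+2,-1]$, then involve patterns at the same, not yet covered, level, contrary to your claim that every summand besides the target is already known to vanish.

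The repair is to take $j = k+1$ and apply \eqref{eq:5.10(2)} at the known-vanishing corner $Q$ with $q_{2n-3,k} = q_{2n-2,k+1} = v$: the target $c(\tau_{k,k+1}Q;a)$ then appears \emph{algebraically} (with factor $\eta_{1}/a_{1}$ times a nonzero constant) inside $V_{k+1}^{\mp}(Q;a)$, every other term of $V_{k+1}^{\mp}(Q;a)$ corresponds to a pattern violating the interlacing $q_{2n-3,k} \geq q_{2n-2,k+1}$ and therefore drops, and the remaining summands live at the level $q_{2n-2,k+1} = v$, which is already covered; this is what the paper means by ``shift upward them by using \eqref{eq:5.10(2)}''. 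A lesser remark: in your first shift step you invoke \eqref{eq:5.13(3)-1} where the paper uses \eqref{eq:5.10(3)}. That is workable in principle, since both isolate a $\tau_{i,0}$-shift with an algebraic coefficient, but \eqref{eq:5.13(3)-1} drags in the extra terms $c(\tau_{0,-j}Q;a)$, $j \in I_{1}(m')$, and $V_{j'}^{\mp}(\tau_{0,-j}Q;a)$, so it demands strictly more bookkeeping than the equation the paper chose.
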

\begin{proof}
The proof of this lemma is just the same as that of the previous one. 
In the first place, if $Q$ satisfies the above condition and 
$q_{2n-3,k} = q_{2n-2,k+1} = \lambda_{k+2}$ 
(or $=|\lambda_{n}|$ if $k = n-2$), we have 
$c(Q; a) = 0$ by Lemma~\ref{lemma:5.8} (5). 
Shift $q_{2n-3,k}$ upward by using \eqref{eq:5.10(3)}. 
Then we know that $c(Q; a) = 0$ for all $Q \in GT(\lambda)$ satisfying 
$q_{2n-4,k} < \lambda_{k+2}$ and $q_{2n-2,k+1} = \lambda_{k+2}$. 
In the third place, 
if $Q \in GT(\lambda)$ satisfies 
$q_{2n-3,k} = q_{2n-2,k+1} = \lambda_{k+2}$, 
then shift upward them to 
$q_{2n-3,k} = q_{2n-2,k+1} = \lambda_{k+2} + 1$ by using 
\eqref{eq:5.10(2)}. 
By repeating these shift operations, the lemma is shown. 
\end{proof}
\begin{corollary}
If $Q \in GT(\lambda)$ does not satisfy 
\begin{equation}\label{eq:5.11}
\begin{cases} 
\lambda_{k} \geq q_{2n-4,k} \geq \lambda_{k+1} 
\quad 
\mbox{for } k \in [1, m-2], 
\\
\lambda_{k+1} \geq q_{2n-4,k} \geq \lambda_{k+2} 
\quad 
\mbox{for } k \in [m-1, n-3], 
\\
\lambda_{n-1} \geq q_{2n-4,n-2} \geq |\lambda_{n}|, 
\end{cases}
\end{equation}
then $c(Q; a) = 0$. 
\end{corollary}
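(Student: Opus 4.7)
The corollary is essentially a bookkeeping statement that packages the three preceding lemmas together, so my plan is to verify each of the three inequality chains in \eqref{eq:5.11} separately by combining the lemmas with the Gelfand-Tsetlin betweenness conditions.

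First, I would split \eqref{eq:5.11} into its six one-sided inequalities: the lower bounds $q_{2n-4,k}\geq \lambda_{k+1}$ for $k\in[1,m-2]$, $q_{2n-4,k}\geq \lambda_{k+2}$ for $k\in[m-1,n-3]$, and $q_{2n-4,n-2}\geq |\lambda_n|$; together with the upper bounds $q_{2n-4,k}\leq \lambda_k$ for $k\in[1,m-2]$ and $q_{2n-4,k}\leq \lambda_{k+1}$ for $k\in[m-1,n-2]$. The plan is to show that if any one of these is violated for $Q\in GT(\lambda)$, then $c(Q;a)=0$.

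For the three lower bounds, the conclusion is immediate: the failure of $q_{2n-4,k}\geq \lambda_{k+1}$ on the range $k\in[1,m-2]$ is exactly the hypothesis of the first of the three preceding lemmas, the failure of $q_{2n-4,k}\geq \lambda_{k+2}$ (resp.\ $q_{2n-4,n-2}\geq |\lambda_n|$) is the hypothesis of the third lemma. For the upper bounds on the range $k\in[m-1,n-2]$, the failure of $q_{2n-4,k}\leq \lambda_{k+1}$ is the hypothesis of the second lemma. Each of these four cases therefore yields $c(Q;a)=0$ directly by invoking the appropriate lemma.

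The only case that is not covered by a previous lemma is the upper bound $q_{2n-4,k}\leq \lambda_k$ for $k\in[1,m-2]$, but here there is nothing to prove: the Gelfand-Tsetlin betweenness relations force
\[
q_{2n-4,k}\;\leq\; q_{2n-3,k}\;\leq\; q_{2n-2,k}\;\leq\; q_{2n-1,k}\;=\;\lambda_k
\]
automatically, so violation of this inequality would contradict $Q\in GT(\lambda)$. Thus the corollary assembles with no additional work; there is no genuine obstacle, and the only thing to be careful about is matching the index ranges $[1,m-2]$, $[m-1,n-3]$ and the boundary case $k=n-2$ to the hypotheses of the respective lemmas so that every possible violation of \eqref{eq:5.11} is accounted for.
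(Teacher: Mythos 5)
Your proposal is correct and matches the paper's (implicit) argument: the corollary is stated there without proof precisely because it is the direct assembly of the three preceding lemmas, which is what you carry out. Your case analysis is complete, and you rightly note the one point the lemmas do not address — the upper bound $q_{2n-4,k}\leq\lambda_k$ for $k\in[1,m-2]$ — which is vacuous since the Gelfand--Tsetlin betweenness conditions force $q_{2n-4,k}\leq q_{2n-3,k}\leq q_{2n-2,k}\leq q_{2n-1,k}=\lambda_k$.
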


\subsection{Reduction to the ``corner'' vectors}
\label{subsection:corner}

Choose a $\vect{q}_{2n-4}$ satisfying the condition \eqref{eq:5.11}. 
In this subsection, we show that, 
if $c(Q_{0}; a)$ is known for some $Q_{0} \in GT(\lambda)$
containing this $\vect{q}_{2n-4}$, then it completely determines 
the other $c(Q; a)$'s for $Q \in GT(\lambda)$ containing the same
$\vect{q}_{1}, \dots, \vect{q}_{2n-4}$ parts. 
We call $Q_{0}$ with this property a ``corner vector''. 

As will be seen in \S \ref{section:solution}, there is a set of
Gelfand-Tsetlin bases $Q$, including corner vectors, such that we can
explicitly write down the scalar differential equations satisfied by
$c(Q; a)$. 
Such bases $Q \in GT(\lambda)$ satisfy the following conditions: 
For an $m' \in [m-1, n-1]$, 
\begin{equation}\label{eq:5.13(4)} 
\begin{cases}
q_{2n-2,p} = \lambda_{p+1}, \enskip q_{2n-3,p} = q_{2n-4,p} 
\quad 
\mbox{for } p \in [m-1, m'-1], 
\\
q_{2n-2,p} = \lambda_{p}, \enskip q_{2n-3,p} = q_{2n-4,p-1} 
\quad 
\mbox{for } p \in [m'+1, n-1], 
\\
q_{2n-2,m'} = q_{2n-3,m'} \in 
[\lambda_{m'+1}, q_{2n-4,m'-1}]
\enskip 
\mbox{if } m' \geq m, 
\\
\hspace{38mm}
\mbox{or} \enskip  
[\lambda_{m}, \lambda_{m-1}] 
\enskip \mbox{if } m' = m - 1.
\end{cases}
\end{equation}
\begin{definition}\label{def:Q^pm}
$Q_{m'}^{-} \in GT(\lambda)$ is the Gelfand-Tsetlin pattern which
  satisfies \eqref{eq:5.13(1)}, \eqref{eq:5.11}, \eqref{eq:5.13(4)}
  and $q_{2n-2,m'} = q_{2n-3,m'} = \lambda_{m'+1}$. 
Analogously, 
$Q_{m'}^{+} \in GT(\lambda)$ is the Gelfand-Tsetlin pattern which
  satisfies \eqref{eq:5.13(1)}, \eqref{eq:5.11}, \eqref{eq:5.13(4)} and 
$q_{2n-2,m'} = q_{2n-3,m'} = q_{2n-4,m'-1}$ 
(or $=\lambda_{m-1}$ if $m'=m-1$). 
\end{definition}
\begin{theorem}\label{thm:corner}
If $c(Q_{n-1}^{+}; a)$ is known, then 
it determines all the $c(Q; a)$ for $Q \in GT(\lambda)$ containing the
same $\vect{q}_{1}, \dots, \vect{q}_{2n-4}$ parts as $Q_{n-1}^{+}$. 
\end{theorem}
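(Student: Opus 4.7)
The plan is to fix $\vect{q}_{1}, \dots, \vect{q}_{2n-4}$ to match those of $Q_{n-1}^{+}$ and to parametrize the remaining freedom by the finite interlacing poset $\mathcal{P}$ of admissible pairs $(\vect{q}_{2n-3}, \vect{q}_{2n-2})$. By Definition~\ref{def:Q^pm}, $Q_{n-1}^{+}$ is an extreme point of $\mathcal{P}$: its $\vect{q}_{2n-2}$ entries are pushed to $\lambda_{p+1}$ for $p \in [m-1, n-2]$ and to $q_{2n-4,n-2}$ for $p=n-1$, while its $\vect{q}_{2n-3}$ entries are pinned to $\vect{q}_{2n-4}$ wherever interlacing allows; by \eqref{eq:5.13(1)} the left block $p \in [1, m-2]$ of both strings is also pinned. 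I would put a well-founded order on $\mathcal{P}$ with $Q_{n-1}^{+}$ as the minimum (for instance, lexicographic on the coordinatewise displacement from $Q_{n-1}^{+}$) and induct along this order.

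Each inductive step should express the new unknown $c(Q; a)$ as a linear combination of $c(Q'; a)$ and of $\mathcal{D}_{1}^{\mp}$- or $\mathcal{D}_{2}^{\pm}$-images of $c(Q'; a)$ at strictly smaller $Q' \in \mathcal{P}$. Raising $q_{2n-2, p}$ by one unit is furnished by \eqref{eq:5.9(2)} when $p \in [1, m-1]$ and by \eqref{eq:5.10(2)} or \eqref{eq:5.10(3)} when $p \in [m, n-1]$; altering $q_{2n-3, p}$ by one unit is furnished by the derived identities \eqref{eq:5.13(3)-1} and \eqref{eq:5.13(3)-2}. Whenever a predecessor $Q'$ lies outside $GT(\lambda)$, or is forced to satisfy $c(Q'; a) = 0$ by the vanishing lemmas of the previous subsection, the corresponding term drops out of the update automatically and introduces no extra unknown.

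The main obstacle is to verify that at each step the coefficient of the target unknown is non-zero along the actual induction path. For $\vect{q}_{2n-2}$-shifts this reduces to non-vanishing of the $a_{2n-2, j}$-factor and of denominators of the form $l_{2n-2, j} + l_{2n-1, p}$; for $\vect{q}_{2n-3}$-shifts, of $a_{2n-3, i}$ and of products $\prod(l_{2n-3, i} - l_{2n-3, i'})$ and $\prod(l_{2n-3, i} + l_{2n-2, -j})$. By Remark~\ref{remark:null points of a_ij} the $a$-factors vanish precisely when the target pattern leaves $GT(\lambda)$, which is excluded along the induction path, and the $l$-denominators are non-zero by the genericity hypothesis \eqref{eq:order of lambda} combined with the interlacing constraints of $\mathcal{P}$. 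At boundary patterns where an expected predecessor exits $GT(\lambda)$, \eqref{eq:5.13(2)} degenerates into \eqref{eq:5.8(4)} as noted after its derivation, so the missing term is legitimately absent; the delicate combinatorial bookkeeping that completes the argument is to check that these boundary reductions are consistent with the vanishing statements in Lemma~\ref{lemma:5.8}~(3) and (5) at every step of the induction.
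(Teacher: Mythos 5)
Your high-level strategy coincides with the paper's: fix $\vect{q}_{1},\dots,\vect{q}_{2n-4}$, order the remaining patterns by a well-founded order, and induct using \eqref{eq:5.9(2)}, \eqref{eq:5.10(2)}, \eqref{eq:5.13(3)-1}, \eqref{eq:5.13(3)-2} as update rules. But the two points where the real work lies are missing. First, the order. You assert that $Q_{n-1}^{+}$ can be made the unique minimum of an order (``lexicographic on the coordinatewise displacement from $Q_{n-1}^{+}$'') compatible with the update equations, but compatibility is exactly what must be engineered, and you never check your order against a single equation. The paper's order is lexicographic on four aggregate statistics $\Vert Q\Vert_{2n-2,1}$, $\Vert Q\Vert_{2n-3,1}$, $\Vert Q\Vert_{2n-2,2}$, $\Vert Q\Vert_{2n-3,2}$, the last containing the term $-|q_{2n-2,n-1}-q_{2n-4,n-2}|$; this specific design is what makes every unwanted term in \eqref{eq:5.13(3)-1}, \eqref{eq:5.13(3)-2} and \eqref{eq:5.9(2)} lie $\preceq Q$. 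Moreover, under that order $Q_{n-1}^{+}$ is \emph{not} the unique minimal element: the minimal elements form a whole family (those satisfying \eqref{eq:5.13(1)} together with \eqref{eq:q', q'' min-2} or \eqref{eq:q', q'' min-1}), and the paper needs a separate Step 1, based on the two-term recurrences obtained from \eqref{eq:5.10(2)} with $j=\pm(n-1)$, to propagate $c(Q_{n-1}^{+};a)$ across this family before the induction can even begin. Your claim that $Q_{n-1}^{+}$ is an ``extreme point'' does not substitute for this step.

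Second, and more serious: the equations you cite do not ``furnish'' one-unit shifts. Each of them relates a \emph{sum} of several unknowns, e.g.\ \eqref{eq:5.9(2)} contains $\sum_{k} A_{k}(Q)\,c(\tau_{0,k}Q;a)$ over all $k\in K_{6}$ (this is \eqref{eq:prec-3} in the paper's proof), and these $\tau_{0,k}Q$ are in general incomparable or all above $Q$ in any displacement-type order, so a single equation cannot determine a single new coefficient. The paper's resolution is the elimination device \eqref{eq:trick}: one first shows via \eqref{eq:5.13(3)-2} that $(\mathcal{D}_{2}^{\pm}(Q)-l_{2n-2,k})\,c(\tau_{0,k}Q;a)$ is a differential of strictly lower data (this is \eqref{eq:prec-2}), and then applies the polynomial differential operator $\prod_{p\in K_{6}(n-1)\setminus\{j\}}(\mathcal{D}_{2}^{\pm}(Q)-l_{2n-2,p})$ to the sum-equation, using the algebraic identity \eqref{eq:trick} to convert every factor acting on $c(\tau_{0,k}Q;a)$, $k\neq j$, into lower-order terms, thereby isolating $c(\tau_{0,j}Q;a)$. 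Your proposal never confronts this multi-unknown problem: the ``main obstacle'' you address is non-vanishing of individual coefficients, which is the easy part (and which the paper disposes of via Remark~\ref{remark:null points of a_ij}). As written, your inductive step fails whenever $K_{6}$ has more than one element, which is the generic situation.
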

\begin{proof}
In this proof, we assume all $Q \in GT(\lambda)$ contain the same
$\vect{q}_{1}, \dots, \vect{q}_{2n-4}$ as $Q_{n-1}^{+}$. 
Note that this $\vect{q}_{2n-4}$ satisfies \eqref{eq:5.11}. 
For simplicity, we prove the case when $\lambda_{n} > 0$ and $m < n$. 
The case when $\lambda_{n} < 0$ is proved just in the same way. 
The case when $m = n$ is also proved just in the same way, and the
proof is a little easier. 

For 
$Q = (\vect{q}_{1}, \dots, \vect{q}_{2n-1}) 
\in
GT(\lambda)$, 
define lengths of $Q$ by 
\begin{align*}
\Vert Q \Vert_{2n-2,1} 
:=& 
\sum_{j=1}^{m-2} 
(q_{2n-2,j} - \lambda_{j+1}), 
\qquad \qquad 
\Vert Q \Vert_{2n-2,2} 
:= 
\sum_{j=m-1}^{n-2} 
(q_{2n-2,j} - \lambda_{j+1}), 
\\
\Vert Q \Vert_{2n-3,1} 
:=& 
\sum_{j=1}^{m-2} 
(q_{2n-3,j} - q_{2n-4,j}),  
\\
\Vert Q \Vert_{2n-3,2} 
:=& 
\sum_{j=m-1}^{n-3} 
(q_{2n-3,j} - q_{2n-4,j}) 
\\
& \quad 
+ 
q_{2n-3,n-2} - q_{2n-3,n-1} 
- |q_{2n-2,n-1} - q_{2n-4,n-2}|.
\end{align*}
Note that, since 
$q_{2n-3,n-2} \geq q_{2n-2,n-1} \geq q_{2n-3,n-1}$ and 
$q_{2n-3,n-2} \geq q_{2n-4,n-2} \geq q_{2n-3,n-1}$, 
the term 
\begin{align*}
q_{2n-3,n-2} &- q_{2n-3,n-1} 
- |q_{2n-2,n-1} - q_{2n-4,n-2}| 
\\
&= 
q_{2n-3,n-2} - q_{2n-3,n-1} 
\\
& \quad 
- \max\{q_{2n-2,n-1}, q_{2n-4,n-2}\} 
+ 
\min\{q_{2n-2,n-1}, q_{2n-4,n-2}\} 
\end{align*}
is zero if and only if either (i) 
$q_{2n-3,n-2} = q_{2n-2,n-1}$ and $q_{2n-3,n-1} = q_{2n-4,n-2}$ or 
(ii) $q_{2n-3,n-2} = q_{2n-4,n-2}$ $q_{2n-3,n-1} = q_{2n-2,n-1}$. 
Define a partial order $Q' \prec Q$ in $GT(\lambda)$ by 
\begin{align*}
Q' \preceq Q
\quad \Leftrightarrow \quad 
& 
Q' = Q \mbox{ or } Q' \prec Q, 
\\
Q' \prec Q
\quad \Leftrightarrow \quad 
& 
\Vert Q' \Vert_{2n-2,1} < \Vert Q \Vert_{2n-2,1}; 
\\
&
\mbox{ or }
\Vert Q' \Vert_{2n-2,1} = \Vert Q \Vert_{2n-2,1} 
\mbox{ and }  
\Vert Q' \Vert_{2n-3,1} < \Vert Q \Vert_{2n-3,1}; 
\\
&
\mbox{ or }
\Vert Q' \Vert_{2n-2,1} = \Vert Q \Vert_{2n-2,1},  
\Vert Q' \Vert_{2n-3,1} = \Vert Q \Vert_{2n-3,1} 
\\
& \qquad 
\mbox{ and }  
\Vert Q' \Vert_{2n-2,2} < \Vert Q \Vert_{2n-2,2}; 
\\
&
\mbox{ or }
\Vert Q' \Vert_{2n-2,1} = \Vert Q \Vert_{2n-2,1},  
\Vert Q' \Vert_{2n-3,1} = \Vert Q \Vert_{2n-3,1},  
\\
& \qquad 
\Vert Q' \Vert_{2n-2,2} = \Vert Q \Vert_{2n-2,2} 
\mbox{ and }  
\Vert Q' \Vert_{2n-3,2} < \Vert Q \Vert_{2n-3,2}. 
\end{align*}
We will show Theorem~\ref{thm:corner} by induction on this
partial order. 

\noindent
\textbf{Step 1.} 
If $Q$ satisfies \eqref{eq:5.13(1)} and 
\begin{equation}\label{eq:q', q'' min-2}
\begin{cases}
q_{2n-2,p} = \lambda_{p+1} \enskip \mbox{ for } p \in [m-1, n-2],  
\\
q_{2n-3,p} = q_{2n-4,p} \enskip \mbox{ for } p \in [m-1, n-3], 
\\
\lambda_{n-1} > q_{2n-3,n-2} = q_{2n-2,n-1} \geq q_{2n-4,n-2},
\\
q_{2n-3,n-1} = q_{2n-4,n-2},
\end{cases}
\end{equation}
then the equation \eqref{eq:5.10(2)}, 
applied to this $Q$ and $j = n-1$, 
is 
\begin{align*}
& (\mathcal{D}_{2}(Q) + l_{2n-2,n-1}) c(Q; a)
\\
& 
+ 
\frac{\eta_{1}}{a_{1}} 
\frac{B_{n-1}(n-1, Q) a_{2n-3,-n+2}(\tau_{n-2,n-1} Q)}
{l_{2n-3,n-2} - l_{2n-2,n-1}} 
c(\tau_{n-2,n-1} Q; a) 
\\
& 
= 0.
\end{align*}
If $Q$ satisfies \eqref{eq:5.13(1)} and 
\begin{equation}\label{eq:q', q'' min-1}
\begin{cases}
q_{2n-2,p} = \lambda_{p+1}, \enskip \mbox{ for } p \in [m-1, n-2], 
\\
q_{2n-3,p} = q_{2n-4,p} \enskip \mbox{ for } p \in [m-1, n-2], 
\\ 
q_{2n-4,n-2} \geq q_{2n-2,n-1} = q_{2n-3,n-1} > \lambda_{n},
\end{cases}
\end{equation}
then the equation \eqref{eq:5.10(2)} for this $Q$ and $j = -(n-1)$ is 
\begin{align*}
& 
(\mathcal{D}_{2}(Q) + l_{2n-2,-n+1}) c(Q; a)
\\
& 
+ 
\frac{\eta_{1}}{a_{1}} 
\frac{B_{-n+1}(-n+1, Q) a_{2n-3,n-1}(\tau_{-n+1,-n+1} Q)}
{l_{2n-3,-n+1} - l_{2n-2,-n+1}} 
c(\tau_{-n+1,-n+1} Q; a) 
\\
&= 0.
\end{align*}
By repeating these shift operations, 
we know that $c(Q_{n-1}^{+}; a)$ determines the $c(Q; a)$ 
for $Q \in GT(\lambda)$ which satisfies 
(i) \eqref{eq:5.13(1)} and \eqref{eq:q', q'' min-2} or 
(ii) \eqref{eq:5.13(1)} and \eqref{eq:q', q'' min-1}, 
in other words, for $Q$ which is minimal with respect to the partial
order $\preceq$. 

\noindent
\textbf{Step 2.} 
Suppose $Q \in GT(\lambda)$ in question satisfies 
$\tau_{i,0} Q \in GT(\lambda)$ for an 
$i \in [1,n-2] \cup \{-n+1\}$. 
Then $\tau_{i,0} Q \succ Q$ since 
$\Vert \tau_{i,0} Q \Vert_{2n-2,p} 
=
\Vert Q \Vert_{2n-2,p}$ for $p = 1, 2$ but 
$\Vert \tau_{i,0} Q \Vert_{2n-3,p} > \Vert Q \Vert_{2n-3,p}$ 
for $p = 1$ or $2$. 
Consider the equation \eqref{eq:5.13(3)-1} applied to $m'=n-1$. 
If $j \in I_{1}(n-1) \setminus \{0\} = K_{1}(n-1) \subset [1, n-2]$, then 
$\tau_{0,-j} Q \prec Q$ since 
$\Vert \tau_{0,-j} Q \Vert_{2n-2,p} < \Vert Q \Vert_{2n-2,p}$ for 
$p  =1$ or $2$. 
If $j \in I_{1}(n-1)$, $j' \in [-m+2, -1]$ and $i' \in [-n+1, n-1]$, 
then $\tau_{i',j'} \tau_{0,j} Q \prec Q$ since 
$\Vert \tau_{i',j'} \tau_{0,j} Q \Vert_{2n-2,1} 
< 
\Vert Q \Vert_{2n-2,1}$. 
Moreover, if $i' \in [-n+2, -1] \cup \{n-1\}$, 
then $\tau_{i',0} Q \prec Q$ since 
$\Vert \tau_{i',0} Q \Vert_{2n-3,p} < \Vert Q \Vert_{2n-3,p}$ for 
$p = 1$ or $2$. 
It follows that all the  $c(Q'; a)$'s appearing in the right hand of
\eqref{eq:5.13(3)-1} satisfy $Q' \preceq Q$. 
Therefore, $c(\tau_{i,0} Q; a)$ can be expressed as 
\begin{equation*}
c(\tau_{i,0} Q; a)  
= 
\sum_{Q' \preceq Q(\prec \tau_{i,0} Q)} 
\mbox{\rm (differential of $c(Q'; a)$)}, 
\end{equation*}
and it is determined by $c(Q_{n-1}^{+}; a)$ by the hypothesis of
induction. 
Especially, if $Q \in GT(\lambda)$ satisfies 
\eqref{eq:5.13(1)} and $q_{2n-2,p} = \lambda_{p+1}$ for any 
$p \in [m-1, n-2]$, then we know from the result of Step 1 
that $c(Q_{n-1}^{+}; a)$ determines $c(Q; a)$.

\noindent
\textbf{Step 3.} 
Suppose $Q \in GT(\lambda)$ satisfies \eqref{eq:5.13(1)}. 
Let $k \in K_{6}(n-1)$. 
Consider the equation \eqref{eq:5.13(3)-2} applied to 
$m'=n-1$ and $j = k$. 
Since $j' \in I_{1}(n-1)$ implies 
$\tau_{0,-j'} Q 
\preceq Q$ and 
$i \in [-n+2,-m+1] \cup \{n-1\}$ implies 
$\tau_{i,0} Q \prec Q$,  
all the $c(Q'; a)$ appearing in the right hand of \eqref{eq:5.13(3)-2} 
satisfies $Q' \preceq Q$. 
Therefore, 
$c(\tau_{0,k} Q; a)$ satisfies 
\begin{equation}\label{eq:prec-2}
(\mathcal{D}_{2}^{\pm}(Q) 
- 
l_{2n-2, k})  
c(\tau_{0,k}Q; a) 
= 
\sum_{Q' \preceq Q} 
\mbox{\rm (differential of $c(Q'; a)$)}. 
\end{equation}
The equation \eqref{eq:5.9(2)} for $j = m-1$ implies 
\begin{equation}\label{eq:prec-3}
\sum_{k \in K_{6}(n-1)} 
A_{k}(Q) c(\tau_{0,k} Q; a)  
=
\sum_{Q' \preceq Q} 
\mbox{(differential of $c(Q'; a)$)} 
\end{equation}
since $\tau_{0,j'} Q \prec Q$ for $j' \in [-n+2, -m+1]$. 

In order to eliminate extra terms in the left hand of
\eqref{eq:prec-3}, consider the following differential operator. 
For any $m' \in [m-1,n-1]$ and $j, k \in K_{6}(m')$, 
the fraction in the right hand of 
\begin{align}
& 
\prod_{p \in K_{6}(m')\setminus \{j\}}
(\mathcal{D}_{2}^{\pm}(Q) - l_{2n-2,p})
\label{eq:trick}
\\
&= 
\frac{
\prod_{p \in K_{6}(m')\setminus \{j\}}
(\mathcal{D}_{2}^{\pm}(Q) - l_{2n-2,p})
-  
\prod_{p \in K_{6}(m')\setminus \{j\}}
(l_{2n-2,k} - l_{2n-2,p})
}
{\mathcal{D}_{2}^{\pm}(Q) - l_{2n-2,k}}
\notag
\\
& \quad 
\times 
(\mathcal{D}_{2}^{\pm}(Q) - l_{2n-2,k})
\notag
\\
& \quad
+ 
\prod_{p \in K_{6}(m')\setminus \{j\}}
(l_{2n-2,k} - l_{2n-2,p})
\notag
\end{align}
is a polynomial in $\mathcal{D}_{2}^{\pm}(Q)$, 
so it is a differential operator.  
Note that the last term in the right hand is zero if $k \not= j$. 

Choose $j \in K_{6}(n-1) \cap [m-1, n-2]$. 
We have $\tau_{0,j} Q \succ Q$, since $j \in [m-1,n-2]$ implies 
$\Vert \tau_{0,j} Q \Vert_{2n-2,2} > \Vert Q \Vert_{2n-2,2}$. 
Differentiate both sides of \eqref{eq:prec-3} by 
$\prod_{p \in K_{6}(n-1) \setminus \{j\}}
(\mathcal{D}_{2}^{\pm}(Q) - l_{2n-2,p})$ 
and use \eqref{eq:prec-2} and \eqref{eq:trick}. 
Then we get 
\begin{equation*}
c(\tau_{0,j} Q; a)  
=
\sum_{Q' \prec \tau_{0,j} Q} 
\mbox{\rm (differential of $c(Q'; a)$)}.
\end{equation*}
By the hypothesis of induction, $c(\tau_{0,j} Q; a)$ is determined by
$c(Q_{n-1}^{+}; a)$, and so is $c(\tau_{i,j} Q; a)$ for 
$i \in [m-1,n-2]$ because of Step 2. 
Then we know from the result of Step 2 that $c(Q_{n-1}^{+}; a)$
determines the $c(Q; a)$ for all $Q \in GT(\lambda)$ satisfying
\eqref{eq:5.13(1)}. 

\noindent
\textbf{Step 4.} 
If $\tau_{0,j} Q \in GT(\lambda)$ for $j \in [1,m-2]$, then the
equation \eqref{eq:5.9(2)}, applied to this $j$, implies 
\begin{equation*}
c(\tau_{0,j} Q; a) 
= 
\sum_{Q' \prec \tau_{0,j} Q} 
\mbox{\rm (differential of $c(Q'; a)$)}.
\end{equation*}
By the hypothesis of induction, $c(\tau_{0,j} Q; a)$ is determined by 
$c(Q_{n-1}^{+}; a)$, and so is $c(\tau_{i,j} Q; a)$ for 
$i \in [1,m-1]$ because of Step 2. 
Then we know from the result of Step 3 that $c(Q_{n-1}^{+}; a)$
determines all the $c(Q; a)$ for $Q \in GT(\lambda)$. 
\end{proof}

\section{Determination of Whittaker models}
\label{section:solution} 

In this section, we first deduce a system of differential equations
which are satisfied by $c(Q; a)$ for $Q \in GT(\lambda)$ satisfying
\eqref{eq:5.13(1)}, \eqref{eq:5.11} and \eqref{eq:5.13(4)}. 
Secondly obtained are the Mellin-Barnes type integral formulas of the
solutions of this system of equations. 
Though the dimension of the solution space is high, only a few of the
solutions satisfy the whole differential-difference equations
$\mathcal{D}_{\tilde{\lambda}, \eta} \phi = 0$. 
Lastly, continuous intertwining operators are determined. 

\subsection{Scalar differential equations}

In this subsection, we assume that $Q$ satisfies \eqref{eq:5.13(1)}, 
\eqref{eq:5.11} and \eqref{eq:5.13(4)}. In this case, 
\begin{align*}
K_{1}(m')  
&= 
\{
p \in [m-1,m'-1] | 
q_{2n-3,p} < q_{2n-2,p}\},
\\
K_{2}(m') 
&= [1,m'-1] \setminus K_{1} 
= 
[1, m-2] \cup 
\{
p \in [m-1,m'-1] | 
q_{2n-3,p} = q_{2n-2,p}\},
\\
K_{3}(m') 
&= 
\{
p \in [-n+2, -m'] | 
q_{2n-3,-p} > q_{2n-2,-p+1} = \lambda_{-p+1}\}, 
\\
K_{4}(m') 
&= 
[-n+2, -m'-1] \setminus K_{3} 
\\
&= 
\{
p \in [-n+2, -m'-1] | 
q_{2n-3,-p} = q_{2n-2,-p+1} = \lambda_{-p+1}\}, 
\\
K_{5}(m') 
&= 
\{p-1 | p \in K_{3}(m')\} 
\\
&= 
\{p \in [-n+1, -m' -1] | q_{2n-3,-p-1} > q_{2n-2,-p} = \lambda_{-p}\}, 
\end{align*}
by definition. 

\begin{lemma}
Let 
\begin{align*}
& 
J(m') := [-n+1, -m'-1] \cup [m-1,m'-1], 
\\
& 
d_{m'}(Q) := 
\sum_{p \in J(m') \cup \{m'\}} l_{2n-2,-p} 
+ 
\sum_{p \in J(m')} l_{2n-3,p}. 
\end{align*}
Suppose $Q \in GT(\lambda)$ satisfies \eqref{eq:5.13(1)},
\eqref{eq:5.11} and \eqref{eq:5.13(4)}. 
\begin{enumerate}
\item
If  $\tau_{-m',0} Q \in GT(\lambda)$, then 
\begin{align}
-& 
\I \frac{\eta_{1}}{a_{1}} 
\frac{a_{2n-3,m'}(\tau_{-m',0} Q) 
\prod_{p \in J(m')}(l_{2n-3,-m'} - l_{2n-3,p})}
{\prod_{p \in J(m') \cup \{0\}} 
(l_{2n-3,-m'} + l_{2n-2,-p})} 
c(\tau_{-m',0} Q; a) 
\label{eq:5.14(1)} 
\\
&= 
\left(
\pm 
\frac{l_{2n-1,n}}{l_{2n-2,-m'}} 
\mathcal{D}_{2}^{\pm}(Q) + \mathcal{D}_{1}^{\mp}(Q) 
- d_{m'}(Q) + l_{2n-2,-m'} 
\right) 
c(Q; a). 
\notag 
\end{align}
\item
If $q_{2n-2,m'} = q_{2n-3,m'} > \lambda_{m'+1}$,
then 
\begin{align}
\mp
& 
\frac{\I}{B_{m'}(m', \tau_{-m', -m'} Q)} 
\frac{(2 l_{2n-2,-m'}+ 1) 
\prod_{p \in J(m')} (l_{2n-2,-m'} + l_{2n-3,p})}
{\prod_{p \in J(m') \cup \{0\}} (l_{2n-2,-m'} - l_{2n-2,-p})} 
\label{eq:5.15(1)} 
\\
& \qquad 
\times (\mathcal{D}_{2}^{\pm}(Q) - l_{2n-2,-m'}) 
c(\tau_{-m',-m'} Q; a) 
\notag 
\\
& =
\left(
\mp 
\frac{l_{2n-1,n}}{l_{2n-2,-m'}} \mathcal{D}_{2}^{\pm}(Q) 
+ \mathcal{D}_{1}^{\mp}(Q) - d_{m'}(Q) - l_{2n-2,-m'} - 1 
\right) 
c(\tau_{-m',0} Q; a) 
\notag
\\
& \qquad 
+ 
\I \frac{\eta_{1}}{a_{1}} 
\frac{a_{2n-3,-m'}(Q) 
(2 l_{2n-3,m'} - 2) 
\prod_{p \in J(m')}(l_{2n-3,m'} - l_{2n-3,p} - 1)}
{\prod_{p \in J(m') \cup \{0, m'\}} 
(l_{2n-3,m'} + l_{2n-2,-p} - 1)} 
c(Q; a). 
\notag 
\end{align}
\item
If $q_{2n-2,m'} = q_{2n-3,m'} > \lambda_{m'+1}$,
then 
\begin{align}
c(\tau_{-m',-m'} Q; a) 
&= 
\mp 
\frac{a_{1}}{\eta_{1}} 
\frac{l_{2n-3,-m'} - l_{2n-2,-m'} + 1}
{a_{2n-2,m'}(\tau_{-m',-m'} Q) a_{2n-3,m'}(\tau_{-m',0} Q)} 
\label{eq:5.15(2)} 
\\
& \qquad 
\times 
\frac{\prod_{p \in [-n,-1] \cup [m,n]}
(l_{2n-2,-m'} + l_{2n-1,p})}
{\prod_{p \in [-n+1, -m+1] \cup [0,n-1] \setminus \{-m'\}}
(l_{2n-2,-m'} - l_{2n-2,p})} 
\notag\\
& \qquad 
\times 
(\mathcal{D}_{2}^{\pm}(Q) + l_{2n-2,-m'}) c(Q; a). 
\notag
\end{align}
\item
For $k \in K_{6}(m')$ 
\begin{align}
\mp& 
\frac{\I}{B_{-k}(-k, \tau_{0, k} Q)} 
\frac{\prod_{p \in J(m') \cup \{-m'\}} (l_{2n-2,k} + l_{2n-3,p})}
{\prod_{p \in J(m') \cup \{0\}} (l_{2n-2,k} - l_{2n-2,-p})} 
(\mathcal{D}_{2}^{\pm}(Q) - l_{2n-2,k}) c(\tau_{0,k} Q; a) 
\label{eq:5.19(1)} 
\\
&= 
\left(
\mp\frac{l_{2n-1,n}}{l_{2n-2,k}} \mathcal{D}_{2}^{\pm}(Q) 
+ \mathcal{D}_{1}^{\mp}(Q) - d_{m'}(Q) - l_{2n-2,k}  
\right) 
c(Q; a) 
\notag 
\end{align}
\end{enumerate}
\end{lemma}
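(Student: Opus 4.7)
The plan is to derive each of the four identities by specialising the general differential--difference relations \eqref{eq:5.13(3)-1}, \eqref{eq:5.13(3)-2}, \eqref{eq:5.9(2)}, and \eqref{eq:5.10(2)} to Gelfand--Tsetlin patterns obeying \eqref{eq:5.13(1)}, \eqref{eq:5.11}, and \eqref{eq:5.13(4)}. Under those hypotheses the sets $K_{1}(m'),\dots,K_{5}(m')$ acquire the explicit descriptions listed just before the lemma, and two structural simplifications take place. First, every term $V_{j'}^{\mp}(\tau_{0,-j}Q;a)$ with $j'\in[-m+2,-1]$ vanishes, because \eqref{eq:5.13(1)} forces the further shifts $\tau_{i',j'}\tau_{0,-j}Q$ with $i'\neq 0$ to leave $GT(\lambda)$, making the associated $c$-coefficients zero by the vanishing lemmas of the preceding subsection. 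Secondly, the sums over $i'\in K_{2}(m')\cup K_{4}(m')$ in \eqref{eq:5.13(3)-1} and \eqref{eq:5.13(3)-2} collapse for the same reason, so the products indexed by $I_{1}(m')$ and $I_{2}(m',i)$ reduce to products over $J(m')$ together with $-m'$, $0$, or $k$ as appropriate.

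For \eqref{eq:5.14(1)} I would apply \eqref{eq:5.13(3)-1} with the given $m'$ and $i=-m'\in K_{3}(m')$, which is legal by the hypothesis $\tau_{-m',0}Q\in GT(\lambda)$. After the two simplifications above, the right--hand side is a linear combination of $(\mathcal{D}_{2}^{\pm}(Q)-l_{2n-2,-j})c(\tau_{0,-j}Q;a)$ for $j\in I_{1}(m')$; since $j\in I_{1}(m')\setminus\{0\}=K_{1}(m')\cup K_{5}(m')$ forces $\tau_{0,-j}Q\notin GT(\lambda)$ or $c(\tau_{0,-j}Q;a)=0$, only the $j=0$ term survives on that side, and it delivers the $\mathcal{D}_{2}^{\pm}(Q)c(Q;a)$ piece. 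Matching the scalar coefficient of $\mathcal{D}_{2}^{\pm}$ amounts to the partial--fraction identity
\[
\sum_{j\in I_{1}(m')}\frac{1}{B_{j}(j,\tau_{0,-j}Q)}\frac{\prod_{i'\in I_{2}(m',-m')}(l_{2n-2,-j}+l_{2n-3,i'})}{\prod_{j''\in I_{1}(m')\setminus\{j\}}(l_{2n-2,-j}-l_{2n-2,-j''})}=\pm\frac{l_{2n-1,n}}{l_{2n-2,-m'}},
\]
which I would verify by comparing residues of a common rational function at each pole $l_{2n-2,-j}$. The dual evaluation of the same rational function at infinity supplies the constant $-d_{m'}(Q)+l_{2n-2,-m'}$, completing \eqref{eq:5.14(1)}. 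Identity \eqref{eq:5.19(1)} is proved in parallel from \eqref{eq:5.13(3)-2} taken with $j=k\in K_{6}(m')$.

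For parts (2) and (3), the hypothesis $q_{2n-2,m'}=q_{2n-3,m'}>\lambda_{m'+1}$ guarantees that $\tau_{-m',-m'}Q\in GT(\lambda)$, so $-m'$ now contributes non-trivially. Equation \eqref{eq:5.15(2)} is obtained by applying \eqref{eq:5.10(2)} to $Q$ with $j=-m'$ and solving for $c(\tau_{-m',-m'}Q;a)$, because the other contributions $c(\tau_{0,j'}Q;a)$ with $j'\neq -m'$ in the sum either vanish by the structural lemmas or are forbidden by \eqref{eq:5.13(4)}. Identity \eqref{eq:5.15(1)} follows by substituting $\tau_{-m',0}Q$ for $Q$ in the proof of \eqref{eq:5.14(1)}---the shifted pattern still satisfies \eqref{eq:5.13(1)} and \eqref{eq:5.11}, only the labels $l_{2n-3,-m'}$ and $l_{2n-2,-m'}$ shift by one---and then using \eqref{eq:5.15(2)} to re--express the $c(\tau_{-m',-m'}Q;a)$ factor on the left in terms of $c(\tau_{-m',0}Q;a)$ and $c(Q;a)$. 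The main obstacle throughout is the algebraic bookkeeping behind these partial--fraction identities and the careful tracking of the sign factors $\epsilon_{2i,j}(Q)$ buried inside $a_{p,q}(Q)$ together with the $\pm$ ambiguity carried by $\Xi_{m,\pm}$ and by the character \eqref{eq:character}. My strategy for handling this is systematic residue calculus on rational functions whose poles are exactly $\{l_{2n-2,-j}:j\in I_{1}(m')\}$, cross-checked by the internal consistency of \eqref{eq:5.14(1)}, \eqref{eq:5.15(1)}, and \eqref{eq:5.15(2)} as three relations among the same four coefficient functions.
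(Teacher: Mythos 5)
Your overall skeleton---specializing the elimination identities \eqref{eq:5.13(3)-1}, \eqref{eq:5.13(3)-2}, \eqref{eq:5.10(2)} to patterns satisfying \eqref{eq:5.13(1)}, \eqref{eq:5.11}, \eqref{eq:5.13(4)} and killing every term whose shifted pattern leaves $GT(\lambda)$---is exactly the paper's, and for parts (1), (3), (4) you pick the same master equations as the paper does: \eqref{eq:5.13(3)-1} with $i=-m'$ for \eqref{eq:5.14(1)}, \eqref{eq:5.10(2)} with $j=-m'$ for \eqref{eq:5.15(2)}, and \eqref{eq:5.13(3)-2} with $j=k$ for \eqref{eq:5.19(1)}. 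However, your part (2) has a genuine gap. The paper proves \eqref{eq:5.15(1)} by applying \eqref{eq:5.13(3)-2} (not \eqref{eq:5.13(3)-1}) to the shifted pattern $\tau_{-m',0}Q$ with $j=-m'$: this is what produces the left-hand term $(\mathcal{D}_{2}^{\pm}(Q)-l_{2n-2,-m'})\,c(\tau_{-m',-m'}Q;a)$, because $\tau_{0,-m'}\tau_{-m',0}Q=\tau_{-m',-m'}Q$, together with the $c(\tau_{-m',0}Q;a)$ and $c(\tau_{m',0}\tau_{-m',0}Q;a)=c(Q;a)$ terms on the right. Your route---rerunning the proof of \eqref{eq:5.14(1)} on $\tau_{-m',0}Q$ and then invoking \eqref{eq:5.15(2)}---cannot reach \eqref{eq:5.15(1)}: applying \eqref{eq:5.13(3)-1} to $\tau_{-m',0}Q$ with $i=-m'$ requires $\tau_{-m',0}\tau_{-m',0}Q\in GT(\lambda)$ (false when $q_{2n-3,m'}(Q)=\lambda_{m'+1}+1$), its left-hand side would be $c(\tau_{-m',0}^{2}Q;a)$, a pattern that does not occur in \eqref{eq:5.15(1)} at all, and its surviving $\mathcal{D}_{2}$-terms are $c(\tau_{0,-j}\tau_{-m',0}Q;a)$ with $j\in I_{1}(m')\subset[1,m'-1]\cup[-n+1,-m'-1]\cup\{0\}$; since $m'\notin I_{1}(m')$, the coefficient $c(\tau_{-m',-m'}Q;a)$ never appears, and \eqref{eq:5.15(2)} cannot introduce it afterwards. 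Note also that \eqref{eq:5.15(1)} must carry information independent of \eqref{eq:5.14(1)} and \eqref{eq:5.15(2)}: the paper eliminates $c(\tau_{-m',0}Q;a)$ and $c(\tau_{-m',-m'}Q;a)$ from the three relations to obtain the nontrivial scalar equation \eqref{eq:5.16(1)}, which would be impossible if (2) were a bookkeeping consequence of (1) and (3).

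A second, smaller inconsistency sits in your part (1). You correctly note that only the $j=0$ term of the $\mathcal{D}_{2}$-sum in \eqref{eq:5.13(3)-1} survives, but you then propose to verify the coefficient $\pm l_{2n-1,n}/l_{2n-2,-m'}$ through a partial-fraction identity summing over all $j\in I_{1}(m')$ and evaluated by residues. There is no such sum to evaluate: the coefficient of $\mathcal{D}_{2}^{\pm}(Q)c(Q;a)$ is the single $j=0$ term, and the identification with $\pm l_{2n-1,n}/l_{2n-2,-m'}$ (as well as the constant $-d_{m'}(Q)+l_{2n-2,-m'}$) is a direct rearrangement of products and sums using $l_{2n-3,p}=-l_{2n-2,-p}$ on $K_{2}(m')$ and $l_{2n-3,p}=-l_{2n-2,-p+1}$ on $K_{4}(m')$, which is simultaneously what converts the index sets $I_{1}(m')$, $I_{2}(m',i)$ into $J(m')$. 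The residue-calculus machinery you invoke (in the spirit of Lemma~\ref{lemma:5.18}) is what the paper needs later, for \eqref{eq:5.20(3)}, but it is not the mechanism here, and the identity you wrote is not the statement that needs checking.
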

\begin{proof} 
(1) 
Since $-m' \in K_{3}(m')$, 
the equation \eqref{eq:5.13(3)-1} holds for $i = -m'$. 
If $j \in I_{1}(m') \setminus \{0\} = K_{1}(m') \cup K_{5}(m')$, 
then $\tau_{0,-j} Q \not\in GT(\lambda)$ because of
\eqref{eq:5.13(4)}. 
If $j' \in [-m+2, -1]$ and $j \in I_{1}(m')$, 
then $\tau_{i',j'} \tau_{0,-j} Q \not\in GT(\lambda)$ for all $i'$
because of \eqref{eq:5.13(1)}. 
If $i' \in [-m'+1,-1] \cup [m',n-1]$, then $\tau_{i',0} Q \not\in
GT(\lambda)$ because of \eqref{eq:5.13(1)} and \eqref{eq:5.13(4)}. 
Therefore, the terms in the right hand of \eqref{eq:5.13(3)-1} vanish
except for the third line and the 
$(\mathcal{D}_{2}^{\pm}(Q) - l_{2n-2,0}) c(Q; a)$ term in the first
line. 
If we arrange the coefficients by using $l_{2n-3,p} = - l_{2n-2,-p}$
for $q \in K_{2}(m')$ and 
$l_{2n-3,p} = - l_{2n-2,-p+1}$ for $q \in K_{4}(m')$, 
we get \eqref{eq:5.14(1)}.  

(2) 
Since $\tau_{0,-m'} \tau_{-m',0} Q = \tau_{-m',-m'} Q \in GT(\lambda)$, 
the equation \eqref{eq:5.13(3)-2}, with $Q$ replaced by 
$\tau_{-m',0} Q \in GT(\lambda)$ and with $j=-m'$, holds. 
Since 
$\tau_{0,-j'} \tau_{-m',0} Q \not\in GT(\lambda)$ 
for $j' \in K_{1}(m') \cup K_{5}(m')$
and 
$\tau_{i',0} \tau_{-m',0} Q \not\in GT(\lambda)$ for 
$i' \in [-m'+1,-m+1] \cup [m'+1, n-1]$, 
the terms in the right hand of \eqref{eq:5.13(3)-2} vanish except for
the second line, 
the $(\mathcal{D}_{2}^{\pm}(Q) - l_{2n-2,0}) c(\tau_{-m',0} Q; a)$ term
in the first line and $c(\tau_{m',0} \tau_{-m',0} Q; a)$ term in the
third line. 
Thus we get \eqref{eq:5.15(1)}. 

(3)
For this $Q$, $\tau_{i,-m'} Q \in GT(\lambda)$ if and only if 
$i = -m'$. 
\eqref{eq:5.15(1)} is the equation \eqref{eq:5.10(2)} applied 
to $j = -m'$. 

(4) 
Consider the equation \eqref{eq:5.13(3)-2} applied to $j = k$. 
Since $\tau_{0,-j'} Q \not\in GT(\lambda)$ if 
$j' \in K_{1}(m') \cup K_{5}(m')$ and 
$\tau_{i',0} Q \not\in GT(\lambda)$ if 
$i' \in [-m'+1,-m+1] \cup [m', n-1]$, 
the terms in the right hand of \eqref{eq:5.13(3)-2} vanish except for
the second line and the 
$(\mathcal{D}_{2}^{\pm}(Q) - l_{2n-2,0}) c(Q; a)$ term 
in the first line. 
Thus we get \eqref{eq:5.19(1)}. 
\end{proof}

\begin{proposition}
If $Q \in GT(\lambda)$ satisfies \eqref{eq:5.13(1)}, \eqref{eq:5.11}
and \eqref{eq:5.13(4)}, then $c(Q; a)$ is a solution of 
\begin{equation}\label{eq:5.16(1)} 
\left\{
(\mathcal{D}_{1}^{\mp}(Q) - d_{m'}(Q))^{2} 
- (\mathcal{D}_{2}^{\pm}(Q) \pm l_{2n-1,n})^{2} 
- 
\left(
\frac{\eta_{1}}{a_{1}}\right)^{2} 
\right\} 
c(Q; a) 
= 0. 
\end{equation}
\end{proposition}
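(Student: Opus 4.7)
The plan is to derive \eqref{eq:5.16(1)} by closing the three identities \eqref{eq:5.14(1)}, \eqref{eq:5.15(1)}, \eqref{eq:5.15(2)} of the preceding lemma on the single unknown $c(Q;a)$, eliminating $c(\tau_{-m',0}Q;a)$ and $c(\tau_{-m',-m'}Q;a)$. Solving \eqref{eq:5.14(1)} gives
\[
c(\tau_{-m',0}Q;a) \;=\; \frac{\I\, a_1}{\eta_1\, C_1}\, A\, c(Q;a), \qquad A \;=\; \pm\frac{l_{2n-1,n}}{l_{2n-2,-m'}}\mathcal{D}_2^{\pm} + \mathcal{D}_1^{\mp} - d_{m'} + l_{2n-2,-m'},
\]
while \eqref{eq:5.15(2)} gives directly $c(\tau_{-m',-m'}Q;a) = \mp (a_1 C_4/\eta_1)(\mathcal{D}_2^{\pm} + l_{2n-2,-m'}) c(Q;a)$, where $C_1, C_4$ and the factor $B_{m'}(m',\tau_{-m',-m'}Q)$ denote the rational prefactors (independent of $a$) appearing in the lemma. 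I then substitute both into \eqref{eq:5.15(1)}.

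The key commutation facts are $[\mathcal{D}_2^{\pm}, a_1] = 0$ (immediate, since $\mathcal{D}_2^{\pm}$ involves only $\d_2$, constants, and $a_1/a_2$) and $\mathcal{D}_1^{\mp} a_1 = a_1 (\mathcal{D}_1^{\mp} + 1)$ (from $\d_1(a_1) = a_1$). A short bracket computation using $[\d_1, a_1/a_2] = a_1/a_2$ and $[\d_2, a_1/a_2] = -a_1/a_2$ furthermore gives $[\mathcal{D}_1^{\mp}, \mathcal{D}_2^{\pm}] = 0$. With these, the left-hand side of \eqref{eq:5.15(1)} becomes
\[
\frac{\I\, a_1\, C_2 C_4}{\eta_1\, B_{m'}(m',\tau_{-m',-m'}Q)}\bigl((\mathcal{D}_2^{\pm})^2 - l_{2n-2,-m'}^2\bigr) c(Q;a),
\]
and on the right, pushing $a_1$ past the first-order operator $B = \mp (l_{2n-1,n}/l_{2n-2,-m'})\mathcal{D}_2^{\pm} + \mathcal{D}_1^{\mp} - d_{m'} - l_{2n-2,-m'} - 1$ of \eqref{eq:5.15(1)} absorbs the trailing ``$-1$'', so that the composition becomes $a_1 B' A$ with $B' = \mp (l_{2n-1,n}/l_{2n-2,-m'})\mathcal{D}_2^{\pm} + \mathcal{D}_1^{\mp} - d_{m'} - l_{2n-2,-m'}$. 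Setting $X = \mathcal{D}_1^{\mp} - d_{m'}$ and $Y = l_{2n-2,-m'} \pm (l_{2n-1,n}/l_{2n-2,-m'})\mathcal{D}_2^{\pm}$, one has $A = X + Y$, $B' = X - Y$, and $[X,Y]=0$; hence $B'A = X^2 - Y^2$.

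Expanding $X^2 - Y^2$ and combining with the left-hand side, the coefficients of $(\mathcal{D}_2^{\pm})^2$, of $\mathcal{D}_2^{\pm}$, of the scalar term, and of $(\eta_1/a_1)^2$ line up to yield
\[
\bigl((\mathcal{D}_1^{\mp} - d_{m'})^2 - (\mathcal{D}_2^{\pm} \pm l_{2n-1,n})^2 - (\eta_1/a_1)^2\bigr)\, c(Q;a) = 0
\]
precisely when the two numerical identities
\[
\frac{C_2 C_4\, l_{2n-2,-m'}^2}{B_{m'}(m',\tau_{-m',-m'}Q)} = \frac{l_{2n-2,-m'}^2 - l_{2n-1,n}^2}{C_1}, \qquad C_1 C_3 = -1
\]
hold for the Gelfand--Tsetlin coefficients $C_1, C_2, C_3, C_4$.

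The main obstacle is verifying these two identities. They are products of the square-root quantities $a_{2n-3,\pm m'}$ and $a_{2n-2,\pm m'}$ evaluated at the three patterns $Q$, $\tau_{-m',0}Q$, $\tau_{-m',-m'}Q$, multiplied by rational factors in the $l_{p,q}$. The hypotheses \eqref{eq:5.13(1)}, \eqref{eq:5.11}, \eqref{eq:5.13(4)} sharply restrict which indices survive in the defining products of these $a$'s, so that after applying the shift $l_{2n-3,m'}(\tau_{-m',0}Q) = l_{2n-3,m'}(Q) - 1$, the odd-parity relation $l_{2n-3,-m'} = -l_{2n-3,m'}$, and the set identity $\tau_{0,m'}\tau_{-m',-m'} = \tau_{-m',0}$ (which links the $a_{2n-2,-m'}$ factor inside $B_{m'}$ to the $a_{2n-3,m'}(\tau_{-m',0}Q)$ inside $C_1$), each product of square roots squares out into a rational expression that telescopes against the remaining prefactors on $J(m')$ and $[-n,-1]\cup[m,n]$. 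This verification is bookkeeping-heavy but purely algebraic, and is the principal labour of the proof.
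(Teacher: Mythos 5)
Your elimination scheme reproduces only one of the two cases of the paper's proof, and the case you omit cannot be reached by your method. The three identities you feed into the elimination are all conditional: \eqref{eq:5.14(1)} is stated under the hypothesis $\tau_{-m',0}Q \in GT(\lambda)$, and \eqref{eq:5.15(1)}, \eqref{eq:5.15(2)} under the hypothesis $q_{2n-2,m'} = q_{2n-3,m'} > \lambda_{m'+1}$. But the hypothesis \eqref{eq:5.13(4)} of the proposition allows the boundary value $q_{2n-2,m'} = q_{2n-3,m'} = \lambda_{m'+1}$ --- this is exactly the pattern $Q_{m'}^{-}$ of Definition~\ref{def:Q^pm}, which is essential in the later shift-operator analysis, so it cannot be discarded. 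For such $Q$ the downward shifts $\tau_{-m',0}Q$ and $\tau_{-m',-m'}Q$ leave $GT(\lambda)$ (lowering $q_{2n-3,m'}$ or $q_{2n-2,m'}$ below $\lambda_{m'+1}$ breaks the interlacing with the rows that are pinned at $\lambda_{m'+1}$ by \eqref{eq:5.13(4)} and \eqref{eq:5.11}), so none of your three input equations exists and there is nothing to substitute. The paper treats this case by a different elimination: it applies \eqref{eq:5.15(1)} and \eqref{eq:5.15(2)} with $Q$ replaced by the \emph{upward} shift $\tau_{m',m'}Q$ (which does satisfy the strict inequality, and for which the roles of the shifted patterns become $Q$, $\tau_{m',m'}Q$, $\tau_{0,m'}Q$), combines these with \eqref{eq:5.19(1)} at $k = m'$, and eliminates $c(\tau_{m',m'}Q;a)$ and $c(\tau_{0,m'}Q;a)$. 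Without this second argument your proof does not cover all $Q$ in the statement.

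Within the interior case $q_{2n-2,m'} = q_{2n-3,m'} > \lambda_{m'+1}$, your skeleton is exactly the paper's: eliminate $c(\tau_{-m',0}Q;a)$ and $c(\tau_{-m',-m'}Q;a)$ from \eqref{eq:5.14(1)}, \eqref{eq:5.15(1)}, \eqref{eq:5.15(2)}. Your commutation facts are correct and are the right observations: $[\mathcal{D}_{1}^{\mp}(Q), \mathcal{D}_{2}^{\pm}(Q)] = 0$ holds precisely because of the opposite pairing of signs (the contributions $\pm\frac{\eta_{2}a_{1}}{2a_{2}}$ from $[\d_{1},\cdot]$ and $[\cdot,\d_{2}]$ cancel), and $\mathcal{D}_{1}^{\mp}(Q)\, a_{1} = a_{1}(\mathcal{D}_{1}^{\mp}(Q)+1)$ absorbs the trailing $-1$, so the factorization $B'A = X^{2} - Y^{2}$ is legitimate. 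The two scalar identities you isolate are indeed the consistency conditions needed for the coefficients of $(\mathcal{D}_{2}^{\pm})^{2}$, $\mathcal{D}_{2}^{\pm}$, the constant term, and $(\eta_{1}/a_{1})^{2}$ to assemble into \eqref{eq:5.16(1)}; you leave their verification as ``bookkeeping,'' which is no terser than the paper itself, so I would not count that as the defect. The missing boundary case is the genuine gap.
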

\begin{proof}
Suppose $q_{2n-2,m'} = q_{2n-3,m'} > \lambda_{m'+1}$. 
Then \eqref{eq:5.16(1)} is obtained 
from 
\eqref{eq:5.14(1)}, \eqref{eq:5.15(1)} and \eqref{eq:5.15(2)}, 
by eliminating 
$c(\tau_{-m',0} Q; a)$ and $c(\tau_{-m',-m'} Q; a)$. 

Next, suppose $q_{2n-2,m'} = q_{2n-3,m'} = \lambda_{m'+1}$. 
The equations \eqref{eq:5.15(1)} and \eqref{eq:5.15(2)}, with $Q$
replaced by $\tau_{m',m'} Q$, are 
difference equations
for $Q$, $\tau_{m',m'} Q$ and $\tau_{0,m'} Q$. 
The equation \eqref{eq:5.19(1)}, applied to $k=m'$, is a
difference equation for $Q$ and $\tau_{0,m'} Q$. 
By eliminating $\tau_{m',m'} Q$ and $\tau_{0,m'} Q$ from these, 
we get the equation \eqref{eq:5.16(1)}. 
\end{proof}

In order to deduce another differential equation, 
we prepare two identities. 

\begin{lemma}\label{lemma:5.18}
For $x_{1}, \dots, x_{k}$, $y_{1}, \dots, y_{k}$ and $z$, 
\begin{align}
&
\sum_{i=1}^{k} 
\frac{
\prod_{j=1}^{k-1} (x_{i} + y_{j}) 
\prod_{i'=1, \not= i}^{k} (z - x_{i'})}
{\prod_{i'=1, \not= i}^{k} (x_{i} - x_{i'})} 
= 
\prod_{i=1}^{k-1} (z+y_{i}). 
\label{eq:summation-1}
\\
&
\sum_{i=1}^{k} 
\frac{\prod_{j=1}^{k} (x_{i} + y_{j})}
{(z+x_{i}) 
\prod_{i'=1, \not= i}^{k} (x_{i} - x_{i'})} 
= 
1 - 
\frac{\prod_{i=1}^{k} (z-y_{i})}
{\prod_{i=1}^{k} (z+x_{i})}. 
\label{eq:summation-2}
\end{align}
\end{lemma}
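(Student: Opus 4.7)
The plan is to treat both identities as rational (respectively polynomial) function identities in the variable $z$, with $x_{1},\dots,x_{k}$ playing the role of interpolation nodes, and then reduce each to the standard Lagrange interpolation formula or, equivalently, to the uniqueness of partial fraction decomposition. Throughout I may assume $x_{1},\dots,x_{k}$ are pairwise distinct indeterminates; the general case then follows by specialization.

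For \eqref{eq:summation-1}, the key observation is that both sides are polynomials in $z$ of degree at most $k-1$: on the left, each summand contains the degree-$(k-1)$ factor $\prod_{i'\neq i}(z-x_{i'})$ and no other $z$-dependence, while the right-hand side $\prod_{i=1}^{k-1}(z+y_{i})$ is visibly of degree $k-1$. I would then evaluate both sides at $z=x_{\ell}$ for each $\ell\in\{1,\dots,k\}$. On the left, the product $\prod_{i'\neq i}(x_{\ell}-x_{i'})$ vanishes whenever $\ell$ appears among the indices $i'\neq i$, i.e.\ whenever $i\neq\ell$; the single surviving summand collapses (after cancelling $\prod_{i'\neq\ell}(x_{\ell}-x_{i'})$ with the denominator) to $\prod_{j=1}^{k-1}(x_{\ell}+y_{j})$, which is exactly the value of the right-hand side at $z=x_{\ell}$. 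Since two polynomials of degree $\leq k-1$ that agree at $k$ distinct points coincide, the identity follows.

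For \eqref{eq:summation-2}, I would combine the right-hand side into the single rational function
\[
T(z) \;=\; \frac{\prod_{i=1}^{k}(z+x_{i})-\prod_{i=1}^{k}(z-y_{i})}{\prod_{i=1}^{k}(z+x_{i})}.
\]
The leading $z^{k}$ terms cancel in the numerator, so $T(z)$ is a proper rational function with simple poles only at $z=-x_{i}$ and $\lim_{z\to\infty}T(z)=0$. A direct residue computation at $z=-x_{i}$ gives
\[
\operatorname*{Res}_{z=-x_{i}} T(z) \;=\; \frac{\prod_{j=1}^{k}(-x_{i}-y_{j})}{-\prod_{i'\neq i}(-x_{i}+x_{i'})} \;=\; \frac{\prod_{j=1}^{k}(x_{i}+y_{j})}{\prod_{i'\neq i}(x_{i}-x_{i'})},
\]
which is precisely the coefficient of $1/(z+x_{i})$ on the left-hand side. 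By uniqueness of the partial-fraction decomposition of a proper rational function with prescribed simple poles, the two sides agree.

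There is no real obstacle here; the only care needed is the bookkeeping of degrees and the $z^{k}$-cancellation in the numerator of $T(z)$, both of which are routine. Both identities are classical dual forms of Lagrange interpolation and could alternatively be derived simultaneously by applying the Lagrange interpolation formula to the polynomial $\prod_{j}(z+y_{j})$ of degree $k-1$ or $k$, respectively, at the nodes $z=x_{1},\dots,x_{k}$.
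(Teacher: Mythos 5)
Your proof is correct and takes essentially the same approach as the paper: for \eqref{eq:summation-1} it is identical (both sides are polynomials of degree at most $k-1$ in $z$ agreeing at the $k$ points $z=x_{\ell}$), and for \eqref{eq:summation-2} your residue/partial-fraction argument is the paper's argument in slightly different language—the paper clears denominators and compares the degree-$(k-1)$ polynomials $g(z)=(\mbox{left hand})\times\prod_{i=1}^{k}(z+x_{i})$ and $\prod_{i=1}^{k}(z+x_{i})-\prod_{i=1}^{k}(z-y_{i})$ at the $k$ points $z=-x_{\ell}$, which is exactly the computation your residues encode.
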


\begin{proof} 
\eqref{eq:summation-1}  
The left hand is a polynomial in $z$ of degree $k-1$, which we denote
by $f(z)$. 
For $l \in [1, k]$, 
$
f(x_{l}) 
= 
\prod_{i=1}^{k-1} (x_{l} + y_{i})$. 
On the other hand, the polynomial $\prod_{i=1}^{k-1} (z+y_{i})$ of
degree $k-1$ has the same values at $k$ points $x_{l}$, 
so they are identical. 

\eqref{eq:summation-2} 
Let $g(z) := \mbox{(left hand)} \times \prod_{i=1}^{k} (z+x_{i})$,
which is a polynomial in $z$ of degree $k-1$. 
For $l \in [1, k]$, 
$
g(-x_{l}) 
= 
(-1)^{k-1} 
\prod_{i=1}^{k} (x_{l} + y_{i})$. 
On the other hand, the polynomial 
$\prod_{i=1}^{k} (z+x_{i}) - \prod_{i=1}^{k} (z-y_{i})$ of degree
$k-1$ has the same values at $k$ points $z = -x_{l}$, 
so they are identical. 
\end{proof}

Suppose $Q \in GT(\lambda)$ satisfies \eqref{eq:5.13(1)}, 
\eqref{eq:5.11}, \eqref{eq:5.13(4)} and $q_{2n-2,m-1} < \lambda_{m-1}$. 
In this case, $K_{6}(m')$ is not empty since $m-1 \in K_{6}(m')$. 
The equation \eqref{eq:5.9(2)}, applied to this $Q$ and $j=m-1$, is 
\begin{align} 
& 
\left\{
\mathcal{D}_{2}^{\pm}(Q) \mp \Lambda_{n+1} 
+ \frac{\eta_{2} a_{1}}{2 a_{2}} 
(\I A_{0}(Q) \mp 1) 
\right\} 
c(Q; a) 
%
+ 
\I 
\frac{\eta_{2} a_{1}}{2 a_{2}}
\sum_{k \in K_{6}(m')} 
A_{k}(Q) c(\tau_{0,k} Q; a)  
\label{eq:5.19(2)} 
\\
&= 0.
\notag
\end{align}
Let $j \in K_{6}(m')$. 
After applying 
$\prod_{p \in K_{6}(m') \setminus \{j\}}
(\mathcal{D}_{2}^{\pm}(Q) - l_{2n-2,p} + 1)$ 
to both sides of \eqref{eq:5.19(2)}, 
use \eqref{eq:trick} and \eqref{eq:5.19(1)}. 
Then we get 
\begin{align}
& - 
\I 
\frac{\eta_{2} a_{1}}{2 a_{2}}
A_{j}(Q) 
\left(
\prod_{p \in K_{6}(m') \setminus \{j\}}
(l_{2n-2,j} - l_{2n-2,p})
\right) 
c(\tau_{0,j} Q; a)  
\label{eq:5.19(5)}
\\ 
&= 
\Biggl[
\prod_{p \in K_{6}(m') \setminus \{j\}}
(\mathcal{D}_{2}^{\pm}(Q) - l_{2n-2,p} + 1)
\left\{
\mathcal{D}_{2}^{\pm}(Q) \mp \Lambda_{n+1} 
+ \frac{\eta_{2} a_{1}}{2 a_{2}} 
(\I A_{0}(Q) \mp 1) 
\right\} 
\notag
\\
& \quad 
\pm \I 
\frac{\eta_{2} a_{1}}{2 a_{2}}
\sum_{k \in K_{6}(m')} 
A_{k}(Q) 
\I B_{-k}(-k, \tau_{0, k} Q) 
\frac{\prod_{p \in J(m') \cup \{0\}} (l_{2n-2,k} - l_{2n-2,-p})} 
{\prod_{p \in J(m') \cup \{-m'\}} (l_{2n-2,k} + l_{2n-3,p})}
\notag
\\
& \hspace{10mm} \times 
\frac{
\prod_{p \in K_{6}(m') \setminus \{j\}}
(\mathcal{D}_{2}^{\pm}(Q) - l_{2n-2,p})
-  
\prod_{p \in K_{6}(m') \setminus \{j\}}
(l_{2n-2,k} - l_{2n-2,p})
}
{\mathcal{D}_{2}^{\pm}(Q) - l_{2n-2,k}}
\notag
\\
& \hspace{10mm} \times 
\left(
\mp \frac{l_{2n-1,n}}{l_{2n-2,k}} \mathcal{D}_{2}^{\pm}(Q) 
+ \mathcal{D}_{1}^{\mp}(Q) - d_{m'}(Q) - l_{2n-2,k} 
\right) 
\Biggr]
c(Q; a). 
\notag
\end{align}
Let 
\begin{align}
K_{7}(m') 
& := 
\{p \in [m-1, m'-1] | p+1 \in K_{6}(m')\} 
\label{eq:K_7} 
\\
& \qquad \cup 
\{p \in [-n+1, -m'-1] | p \in K_{6}(m')\}.
\notag
\end{align}
Note that $\# K_{6}(m') = \# K_{7}(m') +1$ if 
$q_{2n-2,m-1} < \lambda_{m-1}$. 
This is because $m-1 \in K_{6}(m')$ if $q_{2n-2,m-1} < \lambda_{m-1}$,
while there is no element in $K_{7}(m')$ corresponding 
to $m-1 \in K_{6}(m')$. 
It is not hard to see that 
\[
\I A_{0}(Q) 
= 
\frac{\prod_{p=1}^{n-1} l_{2n-3,p} \prod_{p=1}^{n} l_{2n-1,p}}
{\prod_{p=1}^{n-1} l_{2n-2,p} (l_{2n-2,p} - 1)} 
\frac{\prod_{p=1}^{m-2} (-l_{2n-2,p})}
{\prod_{p=1}^{m-1} (-l_{2n-1,p})}
= 
\frac{l_{2n-1,n} \prod_{p \in K_{7}(m')} l_{2n-3,p}}
{\prod_{p \in K_{6}(m')} l_{2n-2,p}} 
\]
and 
\begin{align*}
& 
\I A_{k}(Q)\, 
\I B_{-k}(-k, \tau_{0, k} Q) 
\frac{\prod_{p \in I_{1}(m')} (l_{2n-2,k} - l_{2n-2,-p})} 
{\prod_{p \in I_{2}(m')} (l_{2n-2,k} + l_{2n-3,p})}
\\
& \hspace{45mm} 
= 
\frac{
\prod_{p \in K_{7}(m')} (l_{2n-2,k} - l_{2n-3,p})
}
{\prod_{p \in K_{6}(m') \setminus \{k\}} 
(l_{2n-2,k} - l_{2n-2,p})}.  
\end{align*}
It follows that the last term in \eqref{eq:5.19(5)} is 
$\pm \eta_{2} a_{1}/ 2 a_{2}$ times 
\begin{align*}
& 
\sum_{k \in K_{6}(m')} 
\frac{
\prod_{p \in K_{7}} (l_{2n-2,k} - l_{2n-3,p})
}
{\prod_{p \in K_{6}(m') \setminus \{k\}} 
(l_{2n-2,k} - l_{2n-2,p})} 
\\
& \hspace{6mm} 
\times 
\frac{
\prod_{p \in K_{6}(m') \setminus \{j\}}
(\mathcal{D}_{2}^{\pm}(Q) - l_{2n-2,p})
-  
\prod_{p \in K_{6}(m') \setminus \{j\}}
(l_{2n-2,k} - l_{2n-2,p})
}
{\mathcal{D}_{2}^{\pm}(Q) - l_{2n-2,k}}
\notag
\\ 
& \hspace{6mm}
\times 
\left(
\left(
1 \mp \frac{l_{2n-1,n}}{l_{2n-2,k}} 
\right)
(\mathcal{D}_{2}^{\pm}(Q) - l_{2n-2,k})
+ \mathcal{D}_{1}^{\mp}(Q) - d_{m'}(Q) - \mathcal{D}_{2}^{\pm}(Q) 
\mp l_{2n-1,n}
\right) 
\\
&= 
\left(
\sum_{k \in K_{6}(m')} 
\frac{l_{2n-2,k} \mp l_{2n-1,n}}{l_{2n-2,k}} 
\frac{
\prod_{p \in K_{7}(m')} (l_{2n-2,k} - l_{2n-3,p})
}
{\prod_{p \in K_{6}(m') \setminus \{k\}} 
(l_{2n-2,k} - l_{2n-2,p})} 
\right) 
\\
& \hspace{6mm}
\times 
\prod_{p \in K_{6}(m') \setminus \{j\}}
(\mathcal{D}_{2}^{\pm}(Q) - l_{2n-2,p})
\notag
\\
& \hspace{6mm}
+ 
\sum_{k \in K_{6}(m')} 
\frac{
\prod_{p \in K_{7}(m')} (l_{2n-2,k} - l_{2n-3,p})
}
{\prod_{p \in K_{6}(m') \setminus \{k\}} 
(l_{2n-2,k} - l_{2n-2,p})} 
\frac{
\prod_{p \in K_{6}(m') \setminus \{k\}}
(\mathcal{D}_{2}^{\pm}(Q) - l_{2n-2,p})}
{\mathcal{D}_{2}^{\pm}(Q) - l_{2n-2,j}}
\notag
\\
& \hspace{6mm} \times 
\left(
\mathcal{D}_{1}^{\mp}(Q) - d_{m'}(Q) - \mathcal{D}_{2}^{\pm}(Q) 
\mp l_{2n-1,n}
\right) 
\\
& \hspace{6mm}
- 
\prod_{p \in K_{7}(m')} (l_{2n-2,j} - l_{2n-3,p})
\frac{1}{\mathcal{D}_{2}^{\pm}(Q) - l_{2n-2,j}}
\notag
\\
& \hspace{6mm} \times 
\left(
\left(
1 \mp \frac{l_{2n-1,n}}{l_{2n-2,j}} 
\right)
(\mathcal{D}_{2}^{\pm}(Q) - l_{2n-2,j})
+ \mathcal{D}_{1}^{\mp}(Q) - d_{m'}(Q) - \mathcal{D}_{2}^{\pm}(Q) 
\mp l_{2n-1,n}
\right) 
\notag
\\
&
=
\left(
1 
\mp \frac{l_{2n-1,n} \prod_{p \in K_{7}(m')} l_{2n-3,p}}
{\prod_{p \in K_{6}(m')} l_{2n-2,p}} 
\right)
\left(
\prod_{p \in K_{6}(m') \setminus \{j\}}
(\mathcal{D}_{2}^{\pm}(Q) - l_{2n-2,p})
\right)
\\
& \hspace{4mm}
+ 
\frac{
\prod_{p \in K_{7}(m')}
(\mathcal{D}_{2}^{\pm}(Q) - l_{2n-3,p}) 
- 
\prod_{p \in K_{7}(m')}
(l_{2n-2,j} - l_{2n-3,p})}
{\mathcal{D}_{2}^{\pm}(Q) - l_{2n-2,j}}
\\
& \hspace{8mm}
\times 
\left(
\mathcal{D}_{1}^{\mp}(Q) - d_{m'}(Q) - \mathcal{D}_{2}^{\pm}(Q) 
\mp l_{2n-1,n}
\right) 
\\
& \hspace{4mm}
- 
\frac{l_{2n-2,j} \mp l_{2n-1,n}}{l_{2n-2,j}} 
\prod_{p \in K_{7}(m')} (l_{2n-2,j} - l_{2n-3,p}).
\end{align*}
Here, we use Lemma~\ref{lemma:5.18} to get the last equality. 
Then \eqref{eq:5.19(5)} becomes the next equation. 
\begin{lemma}
If $Q \in GT(\lambda)$ satisfies \eqref{eq:5.13(1)}, \eqref{eq:5.11}
and \eqref{eq:5.13(4)}, 
then for $j \in K_{6}(m')$, 
\begin{align} 
-\I 
\frac{\eta_{2} a_{1}}{2 a_{2}}
& A_{j}(Q) 
\left(
\prod_{p \in K_{6}(m') \setminus \{j\}}
(l_{2n-2,j} - l_{2n-2,p})
\right) 
c(\tau_{0,j} Q; a)  
\label{eq:5.22}
\\
=&
\left(
\prod_{p \in K_{6}(m') \setminus \{j\}}
(\mathcal{D}_{2}^{\pm}(Q) - l_{2n-2,p} + 1)
\right) 
(
\mathcal{D}_{2}^{\pm}(Q) \mp \Lambda_{n+1} 
)
c(Q; a) 
\notag
\\
& \quad 
\pm 
\frac{\eta_{2} a_{1}}{2 a_{2}}
\Bigg[
\frac{
\prod_{p \in K_{7}(m')}
(\mathcal{D}_{2}^{\pm}(Q) - l_{2n-3,p}) 
- \prod_{p \in K_{7}(m')} (l_{2n-2,j} - l_{2n-3,p})
}
{\mathcal{D}_{2}^{\pm}(Q) - l_{2n-2,j}}
\notag\\
& \qquad \qquad \quad 
\times 
\left(
\mathcal{D}_{1}^{\mp}(Q) - d_{m'}(Q) - \mathcal{D}_{2}^{\pm}(Q) 
\mp l_{2n-1,n}
\right) 
\notag
\\
& \qquad \qquad \quad
- 
\frac{l_{2n-2,j} \mp l_{2n-1,n}}
{l_{2n-2,j}}
\prod_{p \in K_{7}(m')} (l_{2n-2,j} - l_{2n-3,p})
\Bigg]
c(Q; a) 
\notag
\end{align}
\end{lemma}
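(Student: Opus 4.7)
The plan is to derive the identity by combining the already-established scalar relations \eqref{eq:5.19(1)} and \eqref{eq:5.19(2)} through a single differential-operator manipulation, together with the two summation identities in Lemma~\ref{lemma:5.18}. First I would record the starting equation: specializing \eqref{eq:5.9(2)} to $j = m-1$ under the present hypotheses \eqref{eq:5.13(1)} and \eqref{eq:5.13(4)} collapses the summation over $j'$ to indices in $\{0\} \cup K_{6}(m')$ and yields \eqref{eq:5.19(2)}. When $q_{2n-2,m-1} = \lambda_{m-1}$ we have $K_{6}(m') = \emptyset$ and both sides of the target are vacuously equal; otherwise the derivation proceeds with $K_{6}(m') \neq \emptyset$.

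Next I would apply the polynomial operator $\prod_{p \in K_{6}(m')\setminus\{j\}}(\mathcal{D}_{2}^{\pm}(Q) - l_{2n-2,p} + 1)$ to both sides of \eqref{eq:5.19(2)}. The ``$+1$'' shift in the operator is dictated by the identity $l_{2n-2,p}(\tau_{0,k} Q) = l_{2n-2,p}(Q) + \delta_{p,k}$, which lets each factor commute past the shift $\tau_{0,k}$ in the right-hand sum at the cost of extracting one factor $(\mathcal{D}_{2}^{\pm}(Q) - l_{2n-2,k})$; that factor is then rewritten via \eqref{eq:5.19(1)} as a combination of derivatives of $c(Q;a)$ and of $(\eta_{1}/a_{1})\, c(Q;a)$.

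Then I would apply the elementary polynomial decomposition $P(x) = \frac{P(x)-P(y)}{x-y}(x-y) + P(y)$ with $P(x) = \prod_{p \in K_{6}(m')\setminus\{j\}}(x - l_{2n-2,p})$ and $y = l_{2n-2,k}$, splitting the result into a polynomial differential piece and a constant residue $P(l_{2n-2,k})$. Summing over $k \in K_{6}(m')$, the constant residues collapse through identity \eqref{eq:summation-2} of Lemma~\ref{lemma:5.18} to contribute the factor $\mathcal{D}_{2}^{\pm}(Q) \mp \Lambda_{n+1}$; the differential-operator part collapses through \eqref{eq:summation-1} and produces the bracketed right-hand side. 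The bridge between these abstract summation identities and the explicit form of the target is provided by the two evaluations
\[
\I A_{0}(Q) \;=\; \frac{l_{2n-1,n}\prod_{p \in K_{7}(m')} l_{2n-3,p}}{\prod_{p \in K_{6}(m')} l_{2n-2,p}},
\]
\[
\I A_{k}(Q)\,\I B_{-k}(-k, \tau_{0,k}Q)\,\frac{\prod_{p \in I_{1}(m')}(l_{2n-2,k} - l_{2n-2,-p})}{\prod_{p \in I_{2}(m')}(l_{2n-2,k} + l_{2n-3,p})} \;=\; \frac{\prod_{p \in K_{7}(m')}(l_{2n-2,k} - l_{2n-3,p})}{\prod_{p \in K_{6}(m')\setminus\{k\}}(l_{2n-2,k} - l_{2n-2,p})},
\]
which follow by direct substitution of the Gelfand--Tsetlin coefficient formulas of \S\ref{section:Radial A part} once one uses $l_{2n-3,p} = -l_{2n-2,-p}$ for $p \in K_{2}(m')$ and $l_{2n-3,p} = -l_{2n-2,-p+1}$ for $p \in K_{4}(m')$.

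The main obstacle will be the bookkeeping: verifying that the several cancellations in the two product ratios above yield precisely the advertised factors indexed by $K_{7}(m')$, and confirming that the residual coefficient $(l_{2n-2,j} \mp l_{2n-1,n})/l_{2n-2,j}$ emerges with the correct sign. Since Lemma~\ref{lemma:5.18} is tailored exactly to this collapse, once those explicit product identities are in hand the remainder is a mechanical but lengthy rearrangement of rational expressions in the $l_{\ast,\ast}$.
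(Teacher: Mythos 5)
Your main derivation is exactly the paper's: start from \eqref{eq:5.19(2)} (which is \eqref{eq:5.9(2)} at $j=m-1$), apply $\prod_{p \in K_{6}(m')\setminus\{j\}}(\mathcal{D}_{2}^{\pm}(Q) - l_{2n-2,p}+1)$, split with the polynomial decomposition (this is precisely \eqref{eq:trick}), convert the extracted factor $(\mathcal{D}_{2}^{\pm}(Q)-l_{2n-2,k})\,c(\tau_{0,k}Q;a)$ by \eqref{eq:5.19(1)}, and collapse the resulting sums with the two product identities and Lemma~\ref{lemma:5.18}. Two of your mechanism attributions are off, though they do not change the operations you prescribe. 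First, the ``$+1$'' is not explained by $l_{2n-2,p}(\tau_{0,k}Q)=l_{2n-2,p}(Q)+\delta_{p,k}$ (the $l_{2n-2,p}$ in the operator are constants attached to $Q$, and \eqref{eq:5.19(1)} is already stated with $l$'s evaluated at $Q$); it compensates for commuting the operator past the multiplication by $\eta_{2}a_{1}/2a_{2}$ in front of the sum, since $\d_{2}\circ(a_{1}/a_{2})=(a_{1}/a_{2})(\d_{2}-1)$. Second, the constant residues $P(l_{2n-2,k})$ do not collapse via \eqref{eq:summation-2}: they vanish for $k\neq j$ and produce exactly the left-hand side of \eqref{eq:5.22} for $k=j$; what \eqref{eq:summation-2} actually does is cancel the $\frac{\eta_{2}a_{1}}{2a_{2}}(\I A_{0}(Q)\mp 1)$ term, which is why only the clean factor $\mathcal{D}_{2}^{\pm}(Q)\mp\Lambda_{n+1}$ (already present in \eqref{eq:5.19(2)}) survives in the first term.

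The genuine gap is your disposal of the case $q_{2n-2,m-1}=\lambda_{m-1}$. It is false that this equality forces $K_{6}(m')=\emptyset$. Under \eqref{eq:5.13(4)} the equality can occur only when $m'=m-1$ (for $m'\geq m$ one has $q_{2n-2,m-1}=\lambda_{m}<\lambda_{m-1}$ automatically), and in that case $K_{6}(m-1)$ still contains every $j=-p$ with $p\in[m,n-1]$ and $q_{2n-4,p-1}<\lambda_{p}$; only the index $m-1$ itself is excluded. This case is not exotic: the pattern $Q_{m-1}^{+}$ of Definition~\ref{def:Q^pm} is of this type. For such $Q$ and such $j$ your argument produces nothing, because the starting equation \eqref{eq:5.19(2)} is unavailable (\eqref{eq:5.9(2)} at $j=m-1$ requires $\tau_{0,m-1}Q\in GT(\lambda)$, i.e.\ $q_{2n-2,m-1}<\lambda_{m-1}$), and the count $\# K_{6}(m')=\# K_{7}(m')+1$, which is what makes \eqref{eq:summation-2} applicable with numerator degree equal to $\# K_{6}(m')$, also fails there ($\# K_{6}=\# K_{7}$). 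The paper in effect proves \eqref{eq:5.22} only when $m-1\in K_{6}(m')$, and when it later needs the exceptional case it never invokes the lemma for $Q$ itself: it applies \eqref{eq:5.22} to the shifted pattern $\tau_{-m+1,-m+1}Q$, for which $m-1$ re-enters $K_{6}(m-1)$. Your write-up should either add the hypothesis $q_{2n-2,m-1}<\lambda_{m-1}$ (matching how the lemma is actually derived and used) or handle the exceptional case by that shift, rather than declare it vacuous.
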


\begin{proposition}
If $Q \in GT(\lambda)$ satisfies \eqref{eq:5.13(1)}, \eqref{eq:5.11},
\eqref{eq:5.13(4)} and $q_{2n-2,m-1} < \lambda_{m-1}$, 
then $c(Q; a)$ is a solution of 
\begin{align}
& 
\left\{
(
\mathcal{D}_{2}^{\pm}(Q) \mp \Lambda_{n+1} 
) 
\prod_{k \in K_{6}(m')} 
(\mathcal{D}_{2}^{\pm}(Q) - l_{2n-2,k} + 1) 
\right.
\label{eq:5.20(3)} 
\\
& \hspace{6mm} 
\left. 
\pm 
\frac{\eta_{2} a_{1}}{2 a_{2}}
(\mathcal{D}_{1}^{\mp}(Q) - d_{m'}(Q) - \mathcal{D}_{2}^{\pm}(Q) 
\mp l_{2n-1,n})  
\prod_{p \in K_{7}(m')} 
(\mathcal{D}_{2}^{\pm}(Q) - l_{2n-3,p}) 
\right\}
\notag
\\
& 
\hspace{4mm} 
\times c(Q; a) 
\notag
\\
&= 0. 
\notag
\end{align} 
If $m' = m-1$ and $q_{2n-2,m-1} = \lambda_{m-1}$, 
then $c(Q; a)$ satisfies the equation \eqref{eq:5.20(3)} with
$K_{6}(m')$ replaced by $K_{6}(m-1) \cup \{m-1\}$. 
\end{proposition}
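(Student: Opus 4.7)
The strategy is to apply the differential operator $\prod_{k \in K_{6}(m')}(\mathcal{D}_{2}^{\pm}(Q) - l_{2n-2,k} + 1)$ to equation \eqref{eq:5.19(2)} and use \eqref{eq:5.19(1)} to eliminate every $c(\tau_{0,k} Q; a)$ term at once. This is one factor larger than the product used to derive \eqref{eq:5.22}; the extra factor is precisely what is needed to clear the leftover $c(\tau_{0,j}Q;a)$ term that stopped the previous reduction.

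The key commutation is $\mathcal{D}_{2}^{\pm}(Q) \cdot \frac{\eta_{2} a_{1}}{2 a_{2}} = \frac{\eta_{2} a_{1}}{2 a_{2}}(\mathcal{D}_{2}^{\pm}(Q) - 1)$, which follows from $\d_{2}(a_{2}^{-1}) = -a_{2}^{-1}$; iterating yields $\prod_{k}(\mathcal{D}_{2}^{\pm}(Q) - l_{2n-2,k} + 1) \cdot \frac{\eta_{2} a_{1}}{2 a_{2}} = \frac{\eta_{2} a_{1}}{2 a_{2}} \prod_{k}(\mathcal{D}_{2}^{\pm}(Q) - l_{2n-2,k})$, so the ``$+1$'' shifts are exactly absorbed. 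Applying the operator to \eqref{eq:5.19(2)} produces three contributions: the $(\mathcal{D}_{2}^{\pm}(Q) \mp \Lambda_{n+1}) c(Q; a)$ piece gives the first term of \eqref{eq:5.20(3)}; the $\frac{\eta_{2} a_{1}}{2 a_{2}}(\I A_{0}(Q) \mp 1) c(Q;a)$ piece becomes $\frac{\eta_{2}a_{1}}{2a_{2}}(\I A_{0}(Q) \mp 1) \prod_{k}(\mathcal{D}_{2}^{\pm}(Q) - l_{2n-2,k}) c(Q;a)$; and for each $k$ in the sum, the factor $(\mathcal{D}_{2}^{\pm}(Q)-l_{2n-2,k})$ now appears in front of $c(\tau_{0,k}Q;a)$, enabling \eqref{eq:5.19(1)} to replace it by a differential operator on $c(Q;a)$.

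After the substitution, I would invoke the identity
$\I A_{k}(Q) \cdot \I B_{-k}(-k, \tau_{0,k}Q) \prod_{p \in I_{1}(m')}(l_{2n-2,k}-l_{2n-2,-p})/\prod_{p \in I_{2}(m')}(l_{2n-2,k}+l_{2n-3,p}) = \prod_{p \in K_{7}(m')}(l_{2n-2,k}-l_{2n-3,p})/\prod_{p \in K_{6}(m') \setminus \{k\}}(l_{2n-2,k}-l_{2n-2,p})$
recorded between \eqref{eq:5.19(5)} and \eqref{eq:5.22}, and split the inner bracket $\mp \frac{l_{2n-1,n}}{l_{2n-2,k}}\mathcal{D}_{2}^{\pm}(Q) + \mathcal{D}_{1}^{\mp}(Q) - d_{m'}(Q) - l_{2n-2,k}$ as $(1 \mp \frac{l_{2n-1,n}}{l_{2n-2,k}})(\mathcal{D}_{2}^{\pm}(Q) - l_{2n-2,k}) + (\mathcal{D}_{1}^{\mp}(Q) - d_{m'}(Q) - \mathcal{D}_{2}^{\pm}(Q) \mp l_{2n-1,n})$. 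The second part of the splitting collapses via \eqref{eq:summation-1} (with $x_{k} = l_{2n-2,k}$, $y_{p} = -l_{2n-3,p}$, $z = \mathcal{D}_{2}^{\pm}(Q)$) into $(\mathcal{D}_{1}^{\mp}(Q) - d_{m'}(Q) - \mathcal{D}_{2}^{\pm}(Q) \mp l_{2n-1,n}) \prod_{p \in K_{7}(m')}(\mathcal{D}_{2}^{\pm}(Q)-l_{2n-3,p})$, giving exactly the second line of \eqref{eq:5.20(3)}. The first part reinserts $(\mathcal{D}_{2}^{\pm}(Q)-l_{2n-2,k})$ into $\prod_{K_{6}\setminus\{k\}}$ to recover $\prod_{K_{6}}(\mathcal{D}_{2}^{\pm}(Q)-l_{2n-2,p})$, leaving the scalar sum $\sum_{k}(l_{2n-2,k} \mp l_{2n-1,n})\prod_{K_{7}}(l_{2n-2,k}-l_{2n-3,p}) / [l_{2n-2,k}\prod_{K_{6}\setminus\{k\}}(l_{2n-2,k}-l_{2n-2,p})]$, which by a partial-fraction/residue computation (using $\#K_{7}(m') = \#K_{6}(m') - 1$ together with $\I A_{0}(Q) = l_{2n-1,n}\prod_{K_{7}}l_{2n-3,p}/\prod_{K_{6}}l_{2n-2,p}$; this step is essentially an application of \eqref{eq:summation-2}) equals $1 \mp \I A_{0}(Q)$, so this contribution cancels the piece (ii) contribution exactly.

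The main obstacle is the bookkeeping in this cancellation: the signs arising from $\iota^{2} = -1$, the two occurrences of ``$\mp$'' from \eqref{eq:5.19(1)} and the splitting, and the conversion between the index sets $I_{1}(m')/I_{2}(m')$ and $K_{6}(m')/K_{7}(m')$ must all align for the scalar sum to collapse to $1 \mp \I A_{0}(Q)$. For the boundary case $m' = m-1$ and $q_{2n-2,m-1}=\lambda_{m-1}$, the index $m-1$ lies outside $K_{6}(m-1)$, but $\tau_{0,m-1}Q \notin GT(\lambda)$ forces $A_{m-1}(Q) = 0$, so the $k = m-1$ term drops out of the sum in \eqref{eq:5.19(2)}; running the same argument with the enlarged operator $\prod_{k \in K_{6}(m-1) \cup \{m-1\}}(\mathcal{D}_{2}^{\pm}(Q) - l_{2n-2,k} + 1)$ (whose $k = m-1$ summand vanishes identically after the substitution from \eqref{eq:5.19(1)}, since $A_{m-1}(Q) = 0$) yields \eqref{eq:5.20(3)} with $K_{6}(m-1) \cup \{m-1\}$ in place of $K_{6}(m')$.
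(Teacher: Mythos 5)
Your treatment of the main case $q_{2n-2,m-1} < \lambda_{m-1}$ is correct, and it is essentially the paper's own computation repackaged: applying the full product $\prod_{k \in K_{6}(m')}(\mathcal{D}_{2}^{\pm}(Q)-l_{2n-2,k}+1)$ to \eqref{eq:5.19(2)} in one shot amounts to composing the paper's two stages, namely the passage from \eqref{eq:5.19(2)} to \eqref{eq:5.22} via the product over $K_{6}(m')\setminus\{j\}$ and \eqref{eq:trick}, followed by the application of the last factor $(\mathcal{D}_{2}^{\pm}(Q)-l_{2n-2,j}+1)$ together with \eqref{eq:5.19(1)}. The commutation rule, the splitting of the inner operator, and the collapse of the two scalar sums via Lemma~\ref{lemma:5.18} are exactly the computations the paper performs between \eqref{eq:5.19(5)} and \eqref{eq:5.22}, so this part is sound.

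The boundary case $m'=m-1$, $q_{2n-2,m-1}=\lambda_{m-1}$ is a genuine gap. Equation \eqref{eq:5.19(2)} is by definition \eqref{eq:5.9(2)} applied with $j=m-1$, and the hypothesis of \eqref{eq:5.9(2)} is $\tau_{0,m-1}Q \in GT(\lambda)$, i.e.\ precisely $q_{2n-2,m-1} < \lambda_{m-1}$; in the boundary case there is no such equation to start from. Your remark that the $k=m-1$ term ``drops out'' because $A_{m-1}(Q)=0$ does not repair this: the sum in \eqref{eq:5.19(2)} runs over $K_{6}(m')$, which already omits $m-1$ in the boundary case, so nothing needs to be dropped --- what is missing is the validity of the equation itself. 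Moreover, even if one postulates such an equation, your collapse fails for a counting reason: when $m-1 \notin K_{6}(m-1)$ one has, by \eqref{eq:K_6(m')}, \eqref{eq:K_7} and \eqref{eq:5.13(4)}, $K_{7}(m-1)=K_{6}(m-1)$ rather than $\#K_{6}=\#K_{7}+1$, so the partial-fraction identity \eqref{eq:summation-2} --- which requires as many summation indices as numerator factors --- no longer produces $1 \mp \I A_{0}(Q)$, and the cancellation against the $(\I A_{0}(Q)\mp1)$ term breaks down; the $k=m-1$ summand you discard is exactly the term the identity needs. The paper's proof of the second assertion is genuinely different: it applies \eqref{eq:5.22} with $j=m-1$ to the shifted pattern $\tau_{-m+1,-m+1}Q$, for which $m-1$ \emph{does} lie in $K_{6}(m-1)(\tau_{-m+1,-m+1}Q)$, and then eliminates $c(\tau_{-m+1,0}Q;a)$ and $c(\tau_{-m+1,-m+1}Q;a)$ by means of \eqref{eq:5.14(1)} and \eqref{eq:5.15(2)} applied to $m'=m-1$. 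Some extra input of this kind is indispensable, and your proposal does not supply it.
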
 
\begin{proof}
Suppose $q_{2n-2,m-1} < \lambda_{m-1}$. 
In this case $K_{6}(m')$ is not empty since $m-1 \in K_{6}(m')$. 
Differentiate \eqref{eq:5.22} by 
$\mathcal{D}_{2}^{\pm}(Q) - l_{2n-2,j} + 1$, and use \eqref{eq:5.19(1)}. 
Then we get the equation \eqref{eq:5.20(3)}. 
Suppose $m' = m-1$ and $q_{2n-2,m-1} = \lambda_{m-1}$. 
Then $K_{6}(m-1)(\tau_{-m+1,-m+1} Q) = K_{6}(m-1)(Q) \cup \{m-1\}$. 
It follows that the equation \eqref{eq:5.22} holds for $j = m-1$, 
if we replace $Q$ by $\tau_{-m+1,-m+1} Q$. 
From this equation and the equations \eqref{eq:5.14(1)} and
\eqref{eq:5.15(2)}, applied to $m' = m-1$, 
we obtain \eqref{eq:5.20(3)} by eliminating $c(\tau_{-m+1,0}Q; a)$ 
and $c(\tau_{-m+1,-m+1} Q; a)$. 
\end{proof}


\subsection{Solutions of \eqref{eq:5.16(1)} and \eqref{eq:5.20(3)}}
\label{subsection:solutions of scalar equations}

Suppose $\Lambda \in \Xi_{m,\pm}$, $m = 2, \dots, n$, 
and $m' \in [m-1,n-1]$. 
Assume $Q \in GT(\lambda)$ satisfies \eqref{eq:5.13(1)},
\eqref{eq:5.11} and \eqref{eq:5.13(4)}. 
Define $j_{i}$ and $k_{i}$ by 
\begin{align*}
& 
\{j_{3}, \dots, j_{N_{1}}\} 
= 
K_{6}(m') \cap [m, m'], 
\quad 
0 < j_{3} < \dots < j_{N_{1}}, 
\\
& 
\{k_{N_{1}+1}, \dots, k_{N_{2}}\} 
= 
K_{6}(m') \cap [-n+1, -m'-1], 
\quad 
k_{N_{1}+1} < \dots < k_{N_{2}} < 0.  
\end{align*}
If $K_{6}(m') \cap [m, m']$ (resp. $K_{6}(m') \cap [-n+1, -m'-1]$) is
empty, then we set $N_{1} = 2$ (resp. $N_{2} = N_{1}$). 

When $m' \geq m$, define 
\begin{equation*}
\alpha_{p} = \alpha_{p}(m', Q) 
:= \left\{
\begin{array}{ll}
\pm \Lambda_{n} \pm \Lambda_{n+1}, & \mbox{ for } p=1, 
\\
\pm \Lambda_{n} + l_{2n-2,m-1} - 1, & \mbox{ for } p=2, 
\\
\pm \Lambda_{n} + l_{2n-2,j_{p}} - 1, 
& \mbox{ for } 3 \leq p \leq N_{1}, 
\\
\pm \Lambda_{n} + l_{2n-2,k_{p}} - 1, 
& \mbox{ for } N_{1}+1 \leq p \leq N_{2}. 
\end{array}
\right. 
\end{equation*}
Suppose $m' = m-1$. 
In this case, (i) $K_{6}(m') \cap [m, m']$ is empty, 
and (ii) 
the inequality relation of $\pm \Lambda_{n+1}$ and $l_{2n-2,m-1} - 1$
depends on the value of $q_{2n-2,m-1}$. 
For these reasons, we define 
\[
\alpha_{p} = \alpha_{p}(m', Q) 
:= \left\{
\begin{array}{ll}
\pm \Lambda_{n} + \max\{\pm \Lambda_{n+1}, l_{2n-2,m-1} - 1\}, 
& \mbox{ for } p=1, 
\\
\pm \Lambda_{n} + \min\{\pm \Lambda_{n+1}, l_{2n-2,m-1} - 1\},  
& \mbox{ for } p=2, 
\\
\pm \Lambda_{n} + l_{2n-2,k_{p}} - 1, 
& \mbox{ for } 3 \leq p \leq N_{2}. 
\end{array}
\right. 
\]
\begin{remark}
When $m' \in K_{6}(m')$, 
$l_{2n-2,m'}$ corresponds to $\alpha_{N_{1}}$ 
except for the case $m'=m-1$ and 
$l_{2n-2,m-1} -1 \geq \pm \Lambda_{n+1}$.  
\end{remark} 

Consider the number $l_{2n-3,p}$ for $p \in K_{7}(m')$. 
By definition \eqref{eq:K_7} of $K_{7}(m')$, 
\begin{align*}
K_{7}(m') 
&= 
\{j_{3}-1, \dots, j_{N_{1}} - 1\} 
\cup 
\{k_{N_{1}+1}, \dots, k_{N_{2}}\}. 
\end{align*}
Define 
\begin{equation*}
\beta_{p} = \beta_{p}(m', Q) 
:= \left\{
\begin{array}{ll}
\pm \Lambda_{n} + l_{2n-3,j_{p}-1}, 
& \mbox{ for } 3 \leq p \leq N_{1}, 
\\
\pm \Lambda_{n} + l_{2n-3, k_{p}}, 
& \mbox{ for } N_{1}+1 \leq p \leq N_{2}.  
\end{array}
\right. 
\end{equation*}
\begin{lemma}
These numbers satisfy 
\begin{align*}
\alpha_{1} \geq &\alpha_{2} \geq \beta_{3} > \alpha_{3} \geq \dots 
> \alpha_{N_{1}-1} \geq \beta_{N_{1}} > \alpha_{N_{1}} 
\\
&> 0 
\geq \beta_{N_{1}+1} > \alpha_{N_{1}+1} \geq \beta_{N_{1}+2} > \dots 
> 
\alpha_{N_{2}-1} \geq \beta_{N_{2}} > \alpha_{N_{2}}, 
\end{align*}
and the difference between $\beta_{p}$ and $\alpha_{p}$ is at least
two. 
\end{lemma}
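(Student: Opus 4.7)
The plan is to establish the chain of inequalities by a direct combinatorial verification: once $\alpha_p$ and $\beta_p$ are written in terms of the $q$-coordinates of $Q$, each pairwise comparison reduces to a numerical inequality that follows either from the Gelfand--Tsetlin interlacing rules, from the stronger constraint that $\tau_{0,j}Q\in GT(\lambda)$ for $j\in K_6(m')$, or from the far-from-walls assumption \eqref{eq:order of lambda} together with the structural relations \eqref{eq:5.13(1)}, \eqref{eq:5.11}, \eqref{eq:5.13(4)}. Every $\alpha_p$ and $\beta_p$ has the form $\pm\Lambda_n+(\text{shift})$, so each comparison collapses to a comparison of two $l$-values and, via $l_{2n-2,j}=q_{2n-2,j}+n-j$ (with $l_{2n-2,-j}=-l_{2n-2,j}+1$) and the analogous convention for $l_{2n-3,\cdot}$, to an inequality between two $q$-coordinates up to a fixed integer shift.

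For the positive-index part of the chain, the two main classes of comparisons are: first, $\beta_p>\alpha_p$ for $3\le p\le N_1$ unfolds to $q_{2n-3,j_p-1}\ge q_{2n-2,j_p}+1$, which is precisely the condition guaranteeing $\sigma_{2n-2,j_p}Q\in GT(\lambda)$ and is thus forced by $j_p\in K_6(m')$; this single computation simultaneously yields the gap $\beta_p-\alpha_p\ge 2$. Second, $\alpha_{p-1}\ge\beta_p$ reduces via \eqref{eq:5.13(4)} (which pins $q_{2n-2,j}=\lambda_{j+1}$ for $j\in[m-1,m'-1]$ and $q_{2n-2,j}=\lambda_j$ for $j\in[m'+1,n-1]$) to an inequality between distinct $\lambda_j$'s, ensured by \eqref{eq:order of lambda}. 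The boundary inequalities $\alpha_1\ge\alpha_2\ge\beta_3$ bifurcate: for $m'\ge m$ they only require comparing $\pm\Lambda_{n+1}$ to an $l_{2n-2,\cdot}$-value, and for $m'=m-1$ the $\max/\min$ definition of $\alpha_1,\alpha_2$ makes $\alpha_1\ge\alpha_2$ automatic while $\alpha_2\ge\beta_3$ again reduces to an interlacing inequality. The separation $\alpha_{N_1}>0\ge\beta_{N_1+1}$ is forced by the sign convention: positive-index values $l_{2n-2,j}$ are strictly positive, whereas $l_{2n-2,-k}=-l_{2n-2,k}+1\le 0$ as soon as $l_{2n-2,k}\ge 1$.

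The negative-index portion $0\ge\beta_{N_1+1}>\alpha_{N_1+1}\ge\cdots>\alpha_{N_2}$ is handled symmetrically. For $N_1+1\le p\le N_2$, after substituting $l_{2n-2,k_p}=-l_{2n-2,-k_p}+1$, the inequality $\beta_p>\alpha_p$ becomes $q_{2n-3,-k_p}\ge q_{2n-2,-k_p}+1$, which is again the Gelfand--Tsetlin admissibility condition encoded by $k_p\in K_6(m')$, giving simultaneously strict inequality and the gap of at least two. The successor inequalities $\alpha_{p-1}\ge\beta_p$ once more reduce to $\lambda$-inequalities through \eqref{eq:5.13(4)}.

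The hard part is not conceptual but purely organisational: one must track the case distinctions ($m'\ge m$ versus $m'=m-1$, positive versus negative index regimes) and handle carefully the location where the chain crosses from $[m,m']$ to $[-n+1,-m'-1]$ at the junction $\alpha_{N_1}$, $\beta_{N_1+1}$. No analytic or representation-theoretic input beyond the constraints on $Q$ and the far-from-walls hypothesis enters the argument.
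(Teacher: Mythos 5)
Your overall strategy is the same as the paper's: a direct verification that unfolds each $\alpha_{p}$, $\beta_{p}$ into $l$-values and then into $q$-coordinates, invoking Gelfand--Tsetlin interlacing, the admissibility encoded by membership in $K_{6}(m')$, and the structural conditions \eqref{eq:5.13(1)}, \eqref{eq:5.11}, \eqref{eq:5.13(4)}. In particular, your treatment of $\beta_{p}>\alpha_{p}$ together with the gap estimate is exactly the paper's computation: $\beta_{p}-\alpha_{p}=q_{2n-3,j_{p}-1}-q_{2n-2,j_{p}}+1$, which is $\geq 2$ precisely because $j_{p}\in K_{6}(m')$ forces $q_{2n-3,j_{p}-1}\geq q_{2n-2,j_{p}}+1$. (Minor slip: in the negative-index regime the admissibility inequality runs the other way, $q_{2n-2,-k_{p}}\geq q_{2n-3,-k_{p}}+1$, since $\tau_{0,k_{p}}$ \emph{lowers} $q_{2n-2,-k_{p}}$; as written, your inequality $q_{2n-3,-k_{p}}\geq q_{2n-2,-k_{p}}+1$ contradicts interlacing.)

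There is, however, a genuine gap at the crucial junction $\alpha_{N_{1}}>0\geq\beta_{N_{1}+1}$. You claim this separation is ``forced by the sign convention'' because $l_{2n-2,j}>0$ for positive $j$ while $l_{2n-2,-k}=-l_{2n-2,k}+1\leq 0$. But every $\alpha_{p}$ and $\beta_{p}$ carries the summand $\pm\Lambda_{n}$, and for one of the two sign choices this equals $-|\Lambda_{n}|$, which can be large in absolute value; positivity of the $l$-value alone proves nothing about the sign of $\alpha_{N_{1}}$ or $\beta_{N_{1}+1}$. The paper closes this by using \eqref{eq:5.13(4)} to identify $l_{2n-2,p}-1$ with $\Lambda_{p+1}$ for positive indices and with $-\Lambda_{-p}$ for negative indices, and then invoking the ordering of the Harish-Chandra parameter for $\Lambda\in\Xi_{m,\pm}$,
\[
\Lambda_{1} > \cdots > \Lambda_{m-1} > \pm\Lambda_{n+1} > \Lambda_{m}
> \cdots > \Lambda_{n-1} > |\Lambda_{n}|,
\]
so that, for instance, $\alpha_{p}=\pm\Lambda_{n}+\Lambda_{j_{p}+1}>\pm\Lambda_{n}+|\Lambda_{n}|\geq 0$. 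This dominance input is also what settles $\alpha_{1}\geq\alpha_{2}$, since that comparison is exactly $\pm\Lambda_{n+1}\geq\Lambda_{m}$; you note the comparison must be made but give no reason it comes out the right way. The toolkit you list cannot supply it: \eqref{eq:order of lambda} only orders $\lambda_{1},\dots,|\lambda_{n}|$ and says nothing about where $\lambda_{n+1}$ (hence $\pm\Lambda_{n+1}$) sits, nor about how $|\Lambda_{n}|$ compares with the other entries after the $\pm\Lambda_{n}$ shift. Without the $\Xi_{m,\pm}$-ordering of the Harish-Chandra parameter, neither the sign separation in the middle of the chain nor the inequalities involving $\alpha_{1}$ follow.
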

\begin{proof}
Recall Remark~\ref{rem:contragredient} and the condition
\eqref{eq:5.13(4)}. 
If $p \in K_{6}(m') \cap [m-1, m'-1]$, then 
$l_{2n-2,p} - 1 
= 
\lambda_{p+1} + n - p - 1 = \Lambda_{p+1}$ 
since $q_{2n-2,p} = \lambda_{p+1}$. 
If $p \in K_{6}(m') \cap [-n-1, -m'-1]$, then 
$l_{2n-2,p} - 1 
= 
- l_{2n-2,-p} 
=
-(\lambda_{-p} + n + p) = -\Lambda_{-p}$ 
since $q_{2n-2,-p} = \lambda_{-p}$. 
We know $\Lambda_{m'} > l_{2n-2,m'} - 1 \geq \Lambda_{m'+1} - 1$ 
since $\lambda_{m'} \geq q_{2n-2,m'} \geq \lambda_{m'+1}$. 
It follows that 
\[
\alpha_{1} \geq \alpha_{2} > \dots > \alpha_{N_{1}} 
> 0 > \alpha_{N_{1}+1} > \dots > \alpha_{N_{2}}, 
\]
since the Harish-Chandra parameter 
$\Lambda = \sum_{i=1}^{n+1} \Lambda_{i} e_{i} \in \Xi_{m,\pm}$ 
satisfies 
\[
\Lambda_{1} > \cdots > \Lambda_{m-1} > \pm \Lambda_{n+1} > \Lambda_{m}
> \cdots > \Lambda_{n-1} > |\Lambda_{n}|. 
\]
Next, consider the numbers $\beta_{j}$. 
If $3 \leq p \leq N_{1}$, then 
$l_{2n-2, j_{p}-1} - 1 
\geq 
l_{2n-3, j_{p}-1} 
> 
l_{2n-2,j_{p}} 
> 
l_{2n-2,j_{p}} - 1$ 
since 
$q_{2n-2, j_{p}-1} \geq q_{2n-3, j_{p}-1} > q_{2n-2, j_{p}}$. 
If $N_{1}+1 \leq p \leq N_{2}$, then 
$l_{2n-2, k_{p}-1} - 1 
\geq 
l_{2n-3, k_{p}} 
> 
l_{2n-2,k_{p}} 
> 
l_{2n-2,k_{p}} - 1$ since 
$q_{2n-2, -k_{p}} > q_{2n-3, -k_{p}} \geq q_{2n-2, -k_{p}+1}$. 
It follows that $\alpha_{p-1} \geq \beta_{p} > \alpha_{p}$, and the
difference between $\beta_{p}$ and $\alpha_{p}$ is at least two. 
\end{proof}

In order to rewrite \eqref{eq:5.16(1)} and \eqref{eq:5.20(3)} in
a convenient form, let 
\begin{align}
& 
t_{1} :=\frac{\eta_{1}}{a_{1}}, 
\quad 
t_{2} := \mp \frac{\eta_{1} \eta_{2}}{2 a_{2}}, 
\quad 
\d_{t_{i}} := t_{i} \frac{\d}{\d t_{i}} \enskip (i=1,2) 
\quad 
\mbox{ and }
\label{eq:variable t}
\\
& 
n(Q,m';a) := 
a_{1}^{-n+1+d_{m'}(Q)} 
a_{2}^{-\sum_{p=1}^{m-1} l_{2n-1,p} 
+ \sum_{p=1}^{m-2} l_{2n-2,p} \mp (\Lambda_{n} + \lambda_{n+1}) 
- m + 2} 
\label{eq:n}
\\
& \hspace{60mm} \times 
\exp \left( \pm \frac{\eta_{2} a_{1}}{2 a_{2}}\right). 
\notag
\end{align}
Note that $t_{1} > 0$ since $\eta_{1} > 0$ and $a_{1} > 0$. 
We have $\d_{i} = -\d_{t_{i}}$ ($i = 1, 2$) and 
$\eta_{2} a_{1} / 2 a_{2} = \mp t_{2} / t_{1}$. 
Since $l_{2n-2,n} = \lambda_{n} = \Lambda_{n}$, we have the following
proposition. 
\begin{proposition}\label{proposition:modefied equations}
Assume $Q \in GT(\lambda)$ satisfies \eqref{eq:5.13(1)},
\eqref{eq:5.11} 
and \eqref{eq:5.13(4)}. 
Define 
\begin{equation*}
f(Q, m'; t) 
:= 
n(Q, m'; a)^{-1} c(Q; a).  
\end{equation*}
Then, the differential equations \eqref{eq:5.16(1)} and
\eqref{eq:5.20(3)} are expressed as 
\begin{align}
& 
(\d_{t_{1}}^{2} - \d_{t_{2}}^{2} - t_{1}^{2}) f(Q, m'; t) = 0, 
\label{eq:5.20(6)} 
\\
& 
\left\{
\prod_{p=1}^{N_{2}} 
(\d_{t_{2}} + \alpha_{p}) 
+ 
\frac{t_{2}}{t_{1}} 
(\d_{t_{1}} - \d_{t_{2}}) 
\prod_{p=3}^{N_{2}} 
(\d_{t_{2}} + \beta_{p}) 
\right\} 
f(Q, m'; t) 
= 0. 
\label{eq:5.20(7)} 
\end{align}
\end{proposition}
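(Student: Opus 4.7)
The plan is to prove the proposition as a direct consequence of two conjugation identities combined with the change of coordinates \eqref{eq:variable t}. First I would verify
\begin{align*}
n(Q, m'; a)^{-1} (\mathcal{D}_{1}^{\mp}(Q) - d_{m'}(Q))\, n(Q, m'; a) &= -\d_{t_{1}},
\\
n(Q, m'; a)^{-1} (\mathcal{D}_{2}^{\pm}(Q) \pm l_{2n-1, n})\, n(Q, m'; a) &= -\d_{t_{2}},
\end{align*}
by a logarithmic-derivative computation on the weight \eqref{eq:n}: the $a_{1}$-exponent $-n + 1 + d_{m'}(Q)$ cancels the constant part of $\mathcal{D}_{1}^{\mp}(Q) - d_{m'}(Q)$; the $a_{2}$-exponent absorbs the constant part of $\mathcal{D}_{2}^{\pm}(Q) \pm l_{2n-1, n}$ (using $l_{2n-1, n} = \Lambda_{n}$); and the exponential factor $\exp(\pm \eta_{2} a_{1}/(2 a_{2}))$ exactly cancels the term $\pm \eta_{2} a_{1}/(2 a_{2})$ appearing in both $\mathcal{D}_{1}^{\mp}(Q)$ and $\mathcal{D}_{2}^{\pm}(Q)$. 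Since $t_{i}$ is a constant multiple of $a_{i}^{-1}$, the chain rule gives $\d_{i} = -\d_{t_{i}}$.

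Substituting these identities, together with $\eta_{1}/a_{1} = t_{1}$, into \eqref{eq:5.16(1)} immediately yields \eqref{eq:5.20(6)}. For \eqref{eq:5.20(3)} I would conjugate each elementary factor by the same weight. The above identities combined with the trivial rewriting $\mathcal{D}_{2}^{\pm}(Q) - c = (\mathcal{D}_{2}^{\pm}(Q) \pm l_{2n-1, n}) - (c \pm l_{2n-1, n})$ give
\begin{align*}
(\mathcal{D}_{2}^{\pm}(Q) \mp \Lambda_{n+1}) &\;\mapsto\; -(\d_{t_{2}} + \alpha_{1}),
\\
(\mathcal{D}_{2}^{\pm}(Q) - l_{2n-2, k} + 1) &\;\mapsto\; -(\d_{t_{2}} + (l_{2n-2, k} - 1 \pm l_{2n-1, n})),
\\
(\mathcal{D}_{2}^{\pm}(Q) - l_{2n-3, p}) &\;\mapsto\; -(\d_{t_{2}} + (l_{2n-3, p} \pm l_{2n-1, n})),
\\
(\mathcal{D}_{1}^{\mp}(Q) - d_{m'}(Q) - \mathcal{D}_{2}^{\pm}(Q) \mp l_{2n-1, n}) &\;\mapsto\; -(\d_{t_{1}} - \d_{t_{2}}),
\end{align*}
and the shift constants in the middle two lines match $\alpha_{p}$ and $\beta_{q}$ directly from their definitions. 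Using finally $\pm \eta_{2} a_{1}/(2 a_{2}) = -t_{2}/t_{1}$ (which holds in both the upper- and lower-sign cases by the definition of $t_{2}$), all the overall minus signs collect with matching parity to give exactly \eqref{eq:5.20(7)}.

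The main obstacle I expect is the combinatorial bookkeeping of the two products, in particular verifying that the $1 + \#K_{6}(m')$ factors on the left side of \eqref{eq:5.20(3)} bijectively reproduce $\alpha_{1}, \ldots, \alpha_{N_{2}}$ under this correspondence, and that the $\#K_{7}(m')$ factors on the right give $\beta_{3}, \ldots, \beta_{N_{2}}$. This reduces to the identity $\#K_{6}(m') = \#K_{7}(m') + 1$ (which holds because $m - 1 \in K_{6}(m')$ whenever $q_{2n-2, m-1} < \lambda_{m-1}$, while $m - 1$ has no counterpart in $K_{7}(m')$), and to the separate treatment of the case $m' = m - 1$ with $q_{2n-2, m-1} = \lambda_{m-1}$, where the two factors $(\mathcal{D}_{2}^{\pm}(Q) \mp \Lambda_{n+1})$ and $(\mathcal{D}_{2}^{\pm}(Q) - l_{2n-2, m-1} + 1)$ generate the max/min pair $(\alpha_{1}, \alpha_{2})$ via Remark~\ref{rem:contragredient}.
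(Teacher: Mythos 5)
Your proposal is correct and takes essentially the same route as the paper: the paper's own ``proof'' consists precisely of introducing the substitution \eqref{eq:variable t}, the normalizing factor \eqref{eq:n}, and the observations $\d_{i} = -\d_{t_{i}}$, $\eta_{2}a_{1}/2a_{2} = \mp t_{2}/t_{1}$, $l_{2n-2,n} = \lambda_{n} = \Lambda_{n}$, after which the proposition is asserted with the conjugation computation left implicit. Your explicit verification of the two conjugation identities, the factor-by-factor matching with the $\alpha_{p}$, $\beta_{p}$, the sign parity $(-1)^{N_{2}}$, and the counting $\#K_{6}(m') = \#K_{7}(m') + 1$ (including the exceptional case $m' = m-1$, $q_{2n-2,m-1} = \lambda_{m-1}$) is exactly the bookkeeping the paper omits.
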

\begin{proposition}\label{proposition:local solutions}
Let $C_{j}$ be a loop starting and ending at $+ \infty$, 
crossing the real axis at $- \alpha_{j} - 1 < s < -\alpha_{j}$, 
and encircling all poles of 
$\Gamma(- \alpha_{p} - s)$, $p = j, \dots, N_{2}$, once in the negative
direction, but none of the poles of 
$\Gamma(\beta_{p} + s)$, $p = 3, \dots, j$, i.e. encircling the half real
axis $\{x + 0 i \in \C| x \geq - \alpha_{j}\}$ once in the negative
direction. 
Define 
\[
\begin{Bmatrix}
f_{1}^{K}(Q, m'; t) 
\\
f_{1}^{I}(Q, m'; t) 
\end{Bmatrix} 
:= 
\frac{1}{2 \pi \I} 
\int_{C_{1}} 
\frac{\prod_{p=1}^{N_{2}} \Gamma(-\alpha_{p} - s)}
{\prod_{p=3}^{N_{2}} \Gamma(1 - \beta_{p} - s)} 
\begin{Bmatrix} 
t_{2}^{s}\,  
K_{-s}(t_{1})
\\
(-t_{2})^{s}\,  I_{-s}(t_{1})
\end{Bmatrix} 
ds, 
\]
and, for $j = 2, \dots, N_{2}$, define 
\begin{align*}
& 
\begin{Bmatrix}
f_{j}^{K}(Q, m'; t) 
\\
f_{j}^{I}(Q, m'; t) 
\end{Bmatrix} 
\\
&:= 
\frac{1}{2 \pi \I} 
\int_{C_{j}} 
\frac{\prod_{p=j}^{N_{2}} \Gamma(-\alpha_{p} - s) 
\prod_{p=3}^{j} \Gamma(\beta_{p} + s)}
{\prod_{p=1}^{j-1} \Gamma(1 + \alpha_{p} + s) 
\prod_{p=j+1}^{N_{2}} \Gamma(1 - \beta_{p} - s)} 
\begin{Bmatrix} 
(- t_{2})^{s}\,  
K_{-s}(t_{1}) 
\\
t_{2}^{s}\,  
I_{-s}(t_{1})
\end{Bmatrix} 
ds.  
\end{align*}
Here, $K_{\mu}(z)$ and $I_{\nu}(z)$ are modified Bessel functions 
(cf. \cite{E}). 
Then these integrals absolutely converge for 
$(t_{1}, t_{2}) \in 
\C^{2} \setminus (\{t_{1} = 0\} \cup \{t_{2} = 0\})$, 
and they form a basis of the solution space of the system of 
equations \eqref{eq:5.20(6)} and \eqref{eq:5.20(7)}. 
Moreover, if $\alpha_{1} \not= \alpha_{2}$, 
the leading terms of these functions at $t_{2} = 0$ are 
\begin{align*}
& 
\begin{Bmatrix}
f_{1}^{K} 
\\
f_{1}^{I}
\end{Bmatrix} 
\quad \cdots \quad 
\frac{\prod_{p=2}^{N_{2}} \Gamma(\alpha_{1} -\alpha_{p})}
{\prod_{p=3}^{N_{2}} \Gamma(\alpha_{1} - \beta_{p} + 1)} 
\begin{Bmatrix}
t_{2}^{-\alpha_{1}}\,  
K_{\alpha_{1}}(t_{1})
\\
(-t_{2})^{-\alpha_{1}}\, 
I_{\alpha_{1}}(t_{1}) 
\end{Bmatrix}, 
\quad 
\mbox{and} 
\\
& 
\begin{Bmatrix} 
f_{j}^{K} 
\\
f_{j}^{I} 
\end{Bmatrix} 
\quad \cdots \quad 
\frac{\prod_{p=j+1}^{N_{2}} \Gamma(\alpha_{j} -\alpha_{p}) 
\prod_{p=3}^{j} \Gamma(\beta_{p} - \alpha_{j})}
{\prod_{p=1}^{j-1} \Gamma(1 + \alpha_{p} - \alpha_{j}) 
\prod_{p=j+1}^{N_{2}} \Gamma(\alpha_{j} - \beta_{p} + 1)} 
\begin{Bmatrix} 
(-t_{2})^{-\alpha_{j}}\,  
K_{\alpha_{j}}(t_{1})
\\
t_{2}^{-\alpha_{j}}\,  
I_{\alpha_{j}}(t_{1})
\end{Bmatrix} 
\end{align*}
if $j \geq 2$, respectively. 
If $\alpha_{1} = \alpha_{2}$, then the leading terms of $f_{1}^{K},
f_{1}^{I}$ are complicated. 
\end{proposition}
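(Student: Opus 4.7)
\textbf{Verification of \eqref{eq:5.20(6)}.} The plan is to work on the $s$-integrand pointwise. The modified Bessel equation reads $(\partial_{t_1}^2 - t_1^2 - \mu^2) K_\mu(t_1) = 0$ with Euler derivative $\partial_{t_1} = t_1\,\partial/\partial t_1$, and the same holds for $I_\mu$; meanwhile $\partial_{t_2}^2(\pm t_2)^s = s^2 (\pm t_2)^s$. Taking $\mu = -s$ in the Bessel factor cancels the $s^2$, so $(\partial_{t_1}^2 - \partial_{t_2}^2 - t_1^2)$ annihilates each integrand $(\pm t_2)^s K_{-s}(t_1)$ or $(\pm t_2)^s I_{-s}(t_1)$ identically in $s$. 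Differentiating under the integral then yields \eqref{eq:5.20(6)} for every $f_j^K$ and $f_j^I$.

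\textbf{Verification of \eqref{eq:5.20(7)}.} The key tool is the Bessel recurrence $(\partial_{t_1} + \mu) K_\mu(t_1) = -t_1 K_{\mu-1}(t_1)$ (and its $I_\mu$-analogue), which gives $(\partial_{t_1} - \partial_{t_2})(t_2^s K_{-s}(t_1)) = -t_1\, t_2^s K_{-s-1}(t_1)$. Applying the operator in \eqref{eq:5.20(7)} to the integrand $\Phi_1(s)\, t_2^s K_{-s}(t_1)$ of $f_1^K$, where $\Phi_1$ denotes the Gamma ratio, produces
\[
\Phi_1(s)\Bigl[\prod_{p=1}^{N_2}(s+\alpha_p)\, t_2^s K_{-s}(t_1) - \prod_{p=3}^{N_2}(s+\beta_p)\, t_2^{s+1} K_{-s-1}(t_1)\Bigr].
\]
Substituting $s\mapsto s-1$ in the second summand aligns the two Bessel arguments, and $\Gamma(z+1) = z\Gamma(z)$ gives the identity $\Phi_1(s)\prod_{p=1}^{N_2}(s+\alpha_p) = \Phi_1(s-1)\prod_{p=3}^{N_2}(s-1+\beta_p)$, so the shifted integrand exactly cancels. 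The cases $f_j^K$ with $j\geq 2$ and the $f_j^I$ are parallel, with signs adjusted. The delicate point is that the contour shift implicit in $s\mapsto s-1$ must pick up no residue: the polynomial factor $\prod(s+\alpha_p)$ precisely kills the pole of $\Phi_j(s)$ at each $s=-\alpha_p$ in the swept strip, making the deformation lawful. I would verify this pole cancellation contour by contour; this bookkeeping is the most technical step of the argument.

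\textbf{Convergence, leading terms, and basis.} Absolute convergence on $\C^2 \setminus (\{t_1=0\}\cup\{t_2=0\})$ follows from Stirling: on the vertical portion of $C_j$ the Gamma ratio decays exponentially, dominating the at-most exponential growth of $K_{-s}(t_1), I_{-s}(t_1)$, and $(\pm t_2)^s$. For the leading behavior at $t_2 \to 0$ I would collapse $C_j$ onto the enclosed poles; the rightmost lies at $s=-\alpha_j$, since $\alpha_j > \alpha_p$ for $p>j$ and the factors $\Gamma(\beta_p+s)$ with $p\leq j$ are regular on that portion of the real axis. Its residue reproduces the stated leading term by direct substitution $s=-\alpha_j$ into the Bessel factor. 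For the basis property, the system \eqref{eq:5.20(6)}--\eqref{eq:5.20(7)} has solution space of dimension $2N_2$ by a rank count: \eqref{eq:5.20(6)} is second order in $t_1$ and \eqref{eq:5.20(7)} is order $N_2$ in $\partial_{t_2}$, giving a holonomic D-module of rank $2N_2$. Linear independence of the $2N_2$ exhibited functions is immediate when the $\alpha_j$ are distinct, as their leading terms are pairwise distinguished by their $t_2$-exponents and by whether the Bessel factor is $K_{\alpha_j}$ or $I_{\alpha_j}$. The hardest case, explicitly flagged as ``complicated'' in the proposition, is $\alpha_1=\alpha_2$: the pole at $s=-\alpha_1$ becomes double, producing $\log t_2$ corrections that must be tracked while still exhibiting two independent local solutions among $f_1^K, f_1^I$.
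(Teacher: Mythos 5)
Your verification of \eqref{eq:5.20(6)} and \eqref{eq:5.20(7)} is correct and in fact more detailed than the paper, which dismisses this step with ``it is easy to verify that these integrals formally satisfy the equations'': your recurrence $(\d_{t_{1}}+\mu)K_{\mu}=-t_{1}K_{\mu-1}$, the functional identity $\Phi_{1}(s)\prod_{p=1}^{N_{2}}(s+\alpha_{p})=\Phi_{1}(s-1)\prod_{p=3}^{N_{2}}(s-1+\beta_{p})$, and the observation that the polynomial factor kills exactly the poles in the swept strip (which works because $\alpha_{p}\leq\alpha_{j}$ for $p\geq j$ forces $k=0$ for any pole $s=-\alpha_{p}+k$ of real part less than $-\alpha_{j}+1$) are all sound. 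Your residue computation of the leading terms and the independence argument also match the paper's, up to one slip: the pole at $s=-\alpha_{j}$ is the \emph{leftmost} enclosed pole (the enclosed poles $s=-\alpha_{p}+k$, $p\geq j$, $k\geq 0$, extend rightward to $+\infty$), and it dominates as $t_{2}\to 0$ precisely because it carries the most negative exponent.

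The genuine gap is in the convergence step, which is the one part of the proof the paper actually carries out. The contour $C_{j}$ is a loop starting and ending at $+\infty$; it has no vertical portion going to $\pm\I\infty$, so convergence must be established on the two horizontal tails where $s=u+\I v$, $u\to+\infty$, $v\neq 0$ fixed. In that regime neither of your asymptotic claims holds: the Bessel factors are not ``at most exponentially growing'' but grow like $\Gamma(u)$, i.e.\ $|K_{-u-\I v}(t_{1})|,\ |I_{-u-\I v}(t_{1})|\leq C(v)\exp\{(u+\tfrac12)\log|u+\I v|-u(\log\tfrac{t_{1}}{2}+1)\}$, and the Gamma ratio does not merely ``decay exponentially'' but decays like $\Gamma(u)^{-2}$, since in every $\Phi_{j}$ the net count of decaying Gamma factors (numerator factors $\Gamma(-\alpha_{p}-s)$ plus reciprocal denominator factors $\Gamma(1+\alpha_{p}+s)$, minus the growing ones) is exactly two. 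Absolute convergence holds only because two net powers of Gamma-decay beat the one power of Gamma-growth coming from the Bessel function; an argument pitting ``exponential decay'' against ``exponential growth'' does not see this balance and, taken literally, would equally well ``prove'' convergence if the ratio had only one net decaying Gamma factor, in which case the integral diverges. This is exactly the Stirling bookkeeping the paper's proof performs, and your write-up needs it (or a reduction to it) to be complete; the rest of your argument stands once this is repaired.
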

\begin{proof}
At a generic point, 
the solution space of the system of equations \eqref{eq:5.20(6)} and 
\eqref{eq:5.20(7)} is $2 N_{2}$ dimensional. 
It is easy to verify that these integrals formally satisfy the
equations \eqref{eq:5.20(6)} and \eqref{eq:5.20(7)}. 

Let $s = u + v \I$, where $u$ is a very large positive real number
and $v$ is a non-zero finite real number. 
We will see the asymptotic behavior of the integrands 
when $u \to \infty$. 
In the following of this proof, 
$C_{j}(v)$'s are positive constants which depend on $v$. 

By the asymptotic expansion of the gamma function, we know that, if
$|\arg s| < \pi$ and $|s|$ is large, 
\[
\Gamma(s + a) 
= 
s^{s-1/2+a} e^{-s} \sqrt{2 \pi} \times O(1), 
\quad 
|s| \to \infty. 
\]
Hence we have 
\begin{align*}
& 
\left|
\frac{\prod_{p=1}^{N_{2}} \Gamma(-\alpha_{p} - s)}
{\prod_{p=3}^{N} \Gamma(1 - \beta_{p} - s)} 
\right|, 
\left|
\frac{\prod_{p=j}^{N_{2}} \Gamma(-\alpha_{p} - s) 
\prod_{p=3}^{j} \Gamma(\beta_{p} + s)}
{\prod_{p=1}^{j-1} \Gamma(1 + \alpha_{p} + s) 
\prod_{p=j+1}^{N_{2}} \Gamma(1 - \beta_{p} - s)} 
\right|
\\
& 
\leq C_{1}(v) 
\exp 
\left\{
\left(
-2u - \sum_{p=1}^{N_{2}} \alpha_{p} + \sum_{p=3}^{N_{2}} \beta_{p} 
- N_{2} + 1 
\right) 
\log|u + v \I| 
+ 2 u \right\}. 
\end{align*}
Since 
\[
I_{\nu}(z) 
= 
\left(\frac{z}{2}\right)^{\nu} 
\sum_{n=0}^{\infty} 
\frac{(z/2)^{2n}}{n!\, \Gamma(\nu+n+1)} 
\quad 
\mbox{and} \quad 
K_{\nu}(z) 
= 
\frac{\pi}{2} 
\frac{I_{-\nu}(z) - I_{\nu}(z)}{\sin \nu \pi}, 
\]
and $t_{1}$ is a positive real number, 
we have 
\[
|K_{-u-v \I}(t_{1})|, 
|I_{-u-v \I}(t_{1})| 
\leq 
C_{2}(v) 
\exp 
\left\{
\left(u + \frac{1}{2} \right) \log |- u - v \I | 
- 
u (\log \frac{t_{1}}{2} + 1) 
\right\}.  
\]
Next, 
\[
|(\pm t_{2})^{u + v \I}| 
\leq C_{3}(v) |t_{2}|^{u}. 
\]
Thus the integrals in this proposition absolutely converge. 
The leading terms are obtained by the residue theorem. 
They imply that $f_{j}^{L}$, $j = 1, \dots, N_{2}$, $L = K, I$, 
are linearly independent. 
\end{proof}

\subsection{Solutions of the whole differential-difference equations}
\label{subsection:whole solutions} 
We consider whether the solutions of scalar equations
obtained in the previous subsection satisfy the whole
differential-difference equations 
$\mathcal{D}_{\tilde{\lambda}, \eta} \phi = 0$ or not. 

The equation \eqref{eq:5.15(1)}, 
with $Q$ replaced by $\tau_{m',m'} Q$, 
expresses $c(\tau_{m',m'} Q; a)$ as a sum of differentials of 
$c(Q; a)$ and $c(\tau_{0,m'} Q; a)$. 
The equation \eqref{eq:5.22}, with $j = m'$, 
expresses $c(\tau_{0,m'} Q; a)$ as a differential of $c(Q; a)$. 
By (i) eliminating $c(\tau_{0,m'} Q; a)$ from these equations, 
(ii) changing the independent variables from $a_{i}$ to $t_{i}$
and the dependent variables from $c(Q; a)$ to $f(Q; a)$, 
and (iii) simplifying the equation so obtained by using
\eqref{eq:5.20(6)} and \eqref{eq:5.20(7)}, 
we get the following shift operator. 
\begin{proposition}\label{prop:first shift operator}
Suppose $Q \in GT(\lambda)$ satisfies \eqref{eq:5.13(1)},
\eqref{eq:5.11} and \eqref{eq:5.13(4)}, 
and suppose $\tau_{m',m'} Q \in GT(\lambda)$. 
Then 
\begin{align*}
& 
f(\tau_{m',m'} Q, m'; t) 
= \mbox{\rm (nonzero constant)} \times S_{1}(m', Q) f(Q, m'; t), 
\\
& S_{1}(m', Q) 
\\
&:=
\frac{1}{t_{1} t_{2}} 
(\d_{t_{1}} + \d_{t_{2}}) 
\prod_{p = 1, \not= N_{1}}^{N_{2}}
(\d_{t_{2}} + \alpha_{p})
+ 
\frac{
\prod_{p = 3}^{N_{2}}
(\d_{t_{2}} + \beta_{p}) 
- \prod_{p = 3}^{N_{2}} (\beta_{p} - \alpha_{N_{1}} - 1)
}
{\d_{t_{2}} + \alpha_{N_{1}} + 1}. 
\end{align*}
\end{proposition}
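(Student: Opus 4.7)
The strategy is outlined in the two paragraphs preceding the statement: combine a shifted instance of \eqref{eq:5.15(1)} with \eqref{eq:5.22} to eliminate $c(\tau_{0,m'}Q;a)$, then pass to the $(t_1,t_2)$-coordinates and simplify using the scalar differential equations \eqref{eq:5.20(6)} and \eqref{eq:5.20(7)}.

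First I replace $Q$ by $\tau_{m',m'}Q$ in \eqref{eq:5.15(1)}. Its hypothesis then becomes $q_{2n-2,m'}(Q) = q_{2n-3,m'}(Q) \geq \lambda_{m'+1}$, which follows from \eqref{eq:5.13(4)} together with $\tau_{m',m'}Q \in GT(\lambda)$. Using $\tau_{-m',-m'}\tau_{m',m'}Q = Q$ and $\tau_{-m',0}\tau_{m',m'}Q = \tau_{0,m'}Q$, this shifted equation expresses a first-order operator applied to $c(Q;a)$ as a linear combination of first-order operators applied to $c(\tau_{0,m'}Q;a)$ and $c(\tau_{m',m'}Q;a)$. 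Next, since $q_{2n-2,m'} = q_{2n-3,m'}$ combined with $\tau_{m',m'}Q \in GT(\lambda)$ forces $m' \in K_6(m')$, equation \eqref{eq:5.22} with $j=m'$ writes $c(\tau_{0,m'}Q;a)$ as a polynomial in $\mathcal{D}_2^{\pm}(Q)$ acting on $c(Q;a)$. Substituting yields $c(\tau_{m',m'}Q;a) = P(a,\partial)\,c(Q;a)$ for an explicit differential operator $P$ built from $\mathcal{D}_1^{\mp}(Q)$, $\mathcal{D}_2^{\pm}(Q)$ and rational functions of the $l$-values.

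The remaining work is to pass to $f(Q,m';t)$ via \eqref{eq:variable t} and \eqref{eq:n}, and to recognize the resulting operator as $S_1(m',Q)$. A direct computation from \eqref{eq:n} gives $n(\tau_{m',m'}Q,m';a)/n(Q,m';a) = a_1^{-1}$, since $d_{m'}$ decreases by exactly one under $\tau_{m',m'}$; this accounts for the leading factor $1/t_1$ in the first summand of $S_1(m',Q)$. The reduction of the raw operator $P$ to the compact form $S_1(m',Q)$ is the main obstacle. The tools I would use are: translation of every $l$-value coefficient into the $\alpha_p,\beta_p$ notation of \S\ref{subsection:solutions of scalar equations}; systematic application of \eqref{eq:5.20(6)} to substitute $\d_{t_1}^2-\d_{t_2}^2 = t_1^2$ wherever second derivatives appear; and use of \eqref{eq:5.20(7)} in the rearranged form
\[
\prod_{p=1}^{N_2}(\d_{t_2}+\alpha_p)
= -\tfrac{t_2}{t_1}(\d_{t_1}-\d_{t_2})\prod_{p=3}^{N_2}(\d_{t_2}+\beta_p)
\]
to trade higher-order $\d_{t_2}$-products for mixed first-order expressions. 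The most delicate bookkeeping is the verification that the shift factor $(\d_{t_2}+\alpha_{N_1})$ cancels out of the first summand, producing the characteristic skip $p\ne N_1$, and that the quotient in the second summand is genuinely polynomial: the numerator $\prod_{p=3}^{N_2}(\d_{t_2}+\beta_p)$ and the constant $\prod_{p=3}^{N_2}(\beta_p-\alpha_{N_1}-1)$ agree at $\d_{t_2}=-(\alpha_{N_1}+1)$, so their difference is divisible by $\d_{t_2}+\alpha_{N_1}+1$. Lemma~\ref{lemma:5.18} absorbs the rational-function coefficients that arise in collapsing the resulting sums, and the nonzero constant out front simply collects the surviving scalar prefactors from \eqref{eq:5.15(1)} and \eqref{eq:5.22}.
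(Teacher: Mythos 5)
Your proposal is correct and follows essentially the same route as the paper: the paper's own proof is exactly the outline in the paragraph preceding the proposition --- apply \eqref{eq:5.15(1)} with $Q$ replaced by $\tau_{m',m'}Q$, eliminate $c(\tau_{0,m'}Q;a)$ via \eqref{eq:5.22} with $j=m'$, change variables from $(a,c)$ to $(t,f)$, and simplify using \eqref{eq:5.20(6)} and \eqref{eq:5.20(7)}. Your additional verifications (that the shifted pattern satisfies the hypothesis of \eqref{eq:5.15(1)}, that $\tau_{m',m'}Q\in GT(\lambda)$ forces $m'\in K_{6}(m')$, and that $d_{m'}(\tau_{m',m'}Q)=d_{m'}(Q)-1$ so the normalization ratio is $a_{1}^{-1}$) are accurate and simply make the paper's argument more explicit.
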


For notational convenience, 
let $Q' := \tau_{m',m'} Q$ and denote $\alpha_{j}(Q')$,
$\beta_{j}(Q')$ by $\alpha_{j}'$, $\beta_{j}'$, respectively. 
As before, $\alpha_{j}$, $\beta_{j}$ mean $\alpha_{j}(Q)$,
$\beta_{j}(Q)$, respectively. 
Let $N_{1} = \# (K_{6}(m')(Q) \cap [m,m']) + 2$ and 
$N_{2} = \# (K_{6}(m')(Q) \cap [-n+1,-m'-1]) + N_{1}$, 
i.e. they are the numbers $N_{1}, N_{2}$ used in the previous section, 
defined for $Q$ (not $Q'$). 

Suppose $q_{2n-2,m'} = q_{2n-3,m'} < q_{2n-3,m'-1} = q_{2n-4,m'-1}$. 
Then $K_{6}(m') (Q') = K_{6}(m') (Q)$. 
Therefore, 
\begin{align*}
& 
\alpha_{j}' = \alpha_{j}, 
\quad 
\mbox{ for } j = 1, \dots, N_{1}-1, N_{1}+1, \dots, N_{2}, 
\\
& 
\alpha_{N_{1}}' = \alpha_{N_{1}} + 1, 
\\
& 
\beta_{j}' = \beta_{j}, 
\quad 
\mbox{ for } j = 3, \dots, N_{2}. 
\end{align*}
In this case, it is easy to verify that 
\[
S_{1} (m', Q) 
f_{j}^{L}(Q, m'; t) 
= 
\prod_{p=1}^{N_{2}} (\beta_{p} - \alpha_{N_{1}} - 1) 
\, 
\times 
\left\{
\begin{array}{ll} 
f_{j}^{L}(Q', m'; t)  & \mbox{if } j \leq N_{1}, 
\\
-f_{j}^{L}(Q', m'; t) & \mbox{if } j > N_{1}, 
\end{array}
\right. 
\]
for $j = 1, 2, \dots, N_{2}$ and $L = K$ or $I$. 

Suppose $m' \geq m$ and 
$q_{2n-2,m'} = q_{2n-3,m'} = q_{2n-3,m'-1} = q_{2n-4,m'-1}$. 
Then 
$K_{6}(m')(Q') = K_{6}(m')(Q) \setminus \{m'\}$. 
It follows that 
\begin{align*}
& 
\alpha_{j}' = \alpha_{j}, 
\quad 
\mbox{ for } j = 1, \dots, N_{1}-1, 
& & 
\alpha_{j}' = \alpha_{j+1}, 
\quad 
\mbox{ for } j = N_{1}, \dots, N_{2}-1, 
\\ 
& 
\beta_{j}' = \beta_{j}, 
\quad 
\mbox{ for } j = 3, \dots, N_{1}-1. 
& & 
\beta_{j}' = \beta_{j+1}, 
\quad 
\mbox{ for } j = N_{1}, \dots, N_{2}-1. 
\end{align*}
In this case, for $L = K, I$, 
\begin{align*}
& 
S_{1} (m', Q) 
f_{j}^{L}(Q, m'; t) 
= 
\prod_{p=1}^{N} (\beta_{p} - \alpha_{N_{1}} - 1) 
\times 
\begin{cases} 
f_{j}^{L}(Q', m'; t), 
&  
\mbox{for } j < N_{1} 
\\
- f_{j-1}^{L}(Q', m'; t), 
&  
\mbox{for } j > N_{1}. 
\end{cases}
\end{align*} 
On the other hand, 
the non-zero functions $S_{1} (m', Q) f_{N_{1}}^{L}(Q, m'; t)$, 
$L = K, I$, are not solutions of
\eqref{eq:5.20(6)} and \eqref{eq:5.20(7)}. 
Since 
$(\tau_{m',m'})^{q_{2n-4,m'-1} - \lambda_{m'+1}} Q_{m'}^{-} 
= 
Q_{m'}^{+}$, we get the following proposition. 
\begin{proposition}\label{prop:S_1}
For $m' \geq m$ and $L = K, I$, 
\begin{align*}
\prod_{p=0}^{q_{2n-4,m'-1} - \lambda_{m'+1}-1} 
& S_{1}(m', \tau_{m',m'}^{p} Q) 
f_{j}^{L}(Q_{m'}^{-}, m'; t)
\\
&= 
\begin{cases}
\mbox{\rm (non-zero constant)} \times 
f_{j}^{L}(Q_{m'}^{+}, m'; t), 
& 
\mbox{if } j < N_{1}, 
\\
\mbox{not a solution of \eqref{eq:5.20(6)} and \eqref{eq:5.20(7)} for
  $Q_{m'}^{+}$} 
& \mbox{if } j = N_{1}, 
\\
\mbox{\rm (non-zero constant)} \times 
f_{j-1}^{L}(Q_{m'}^{+}, m'; t), 
& 
\mbox{if } j > N_{1}. 
\end{cases}
\end{align*}
\end{proposition}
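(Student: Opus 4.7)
The idea is to iterate the two local formulas for $S_1$ derived just before the proposition, splitting the product of shift operators according to whether $m'$ belongs to $K_6(m')$ at each intermediate stage.

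First, I would note that $Q_{m'}^+ = \tau_{m',m'}^{q_{2n-4,m'-1}-\lambda_{m'+1}} Q_{m'}^-$, and that every intermediate pattern $\tau_{m',m'}^p Q_{m'}^-$ with $0\leq p\leq q_{2n-4,m'-1}-\lambda_{m'+1}-1$ satisfies \eqref{eq:5.13(1)}, \eqref{eq:5.11} and \eqref{eq:5.13(4)}, so the explicit formulas for $S_1$ on the $f_j^L$'s apply at every stage of the iteration.

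Next, I would partition the $q_{2n-4,m'-1}-\lambda_{m'+1}$ factors into (i) the first $q_{2n-4,m'-1}-\lambda_{m'+1}-1$ applications, during which both the source pattern and its image contain $m'$ in $K_6(m')$ (the ``first case''), and (ii) the final application, which is the transition where $m'$ drops out of $K_6(m')$ (the ``second case''). Each first-case application multiplies $f_j^L$ by a non-zero scalar $\prod_p(\beta_p-\alpha_{N_1}-1)$, preserves the index $j$ (up to a sign for $j>N_1$), and increments $\alpha_{N_1}$ by one while leaving the other $\alpha_p,\beta_p$ fixed. Composing these contributions brings us to $f_j^L$ evaluated on a pattern whose parameters agree with those of $Q_{m'}^+$ except that $m'$ has not yet been removed from $K_6(m')$. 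The final second-case application then sends $f_j^L$ to $f_j^L(Q_{m'}^+,m';t)$ for $j<N_1$, to $-f_{j-1}^L(Q_{m'}^+,m';t)$ for $j>N_1$, and produces a function outside the solution space of \eqref{eq:5.20(6)} and \eqref{eq:5.20(7)} for $j=N_1$, as asserted.

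Multiplying together all the accumulated scalars yields the overall non-zero constant in the proposition. The main obstacle is verifying this non-vanishing: each intermediate scalar factor has the form $\prod_{q}(\beta_q-\alpha_{N_1}^{(p)}-1)$ with $\alpha_{N_1}^{(p)}$ incrementing by one per step, and one must check that $\alpha_{N_1}^{(p)}+1$ never coincides with any $\beta_q$ throughout the iteration. Using the strict interlacing $\alpha_{q-1}\geq\beta_q>\alpha_q$ with gap at least two for $q\geq 3$ that was established in \S\ref{subsection:solutions of scalar equations}, together with the observation that $\alpha_{N_1}^{(p)}$ first reaches the value of $\alpha_{N_1}(Q_{m'}^+)-1$ only at the critical step where the second-case formula fires, this verification goes through, but it requires careful bookkeeping of how the parameters $\alpha_p,\beta_p$ and the set $K_6(m')$ transform under each elementary shift.
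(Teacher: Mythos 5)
Your proposal is correct and follows essentially the same route as the paper: the paper likewise obtains Proposition~\ref{prop:S_1} by iterating $S_{1}$ along the chain $(\tau_{m',m'})^{p}Q_{m'}^{-}$, using the two case computations (image keeps $m'$ in $K_{6}(m')$ versus $m'$ dropping out at the final step) together with the identity $(\tau_{m',m'})^{q_{2n-4,m'-1}-\lambda_{m'+1}}Q_{m'}^{-}=Q_{m'}^{+}$. Your non-vanishing check of the accumulated scalars is exactly what the paper's ordering lemma (the interlacing of the $\alpha_{p}$, $\beta_{p}$ with gaps at least two, valid at every intermediate pattern since each satisfies \eqref{eq:5.13(1)}, \eqref{eq:5.11}, \eqref{eq:5.13(4)}) provides.
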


Suppose $m' \geq m$ and $\lambda_{m'} > q_{2n-4, m'-1}$. 
Consider $Q \in GT(\lambda)$ satisfying
\eqref{eq:5.13(1)}, \eqref{eq:5.11} and 
\begin{equation}\label{eq:5.13(4)'}
\left\{
\begin{array}{ll}
q_{2n-2,p} = \lambda_{p+1} \mbox{ for } p \in [m-1, m'-1], 
\\ 
q_{2n-3,p} = q_{2n-4,p} \mbox{ for } p \in [m-1, m'-2], 
\\
q_{2n-2,p} = \lambda_{p} \mbox{ for } p \in [m'+1, n-1], 
\\
q_{2n-3,p} = q_{2n-4,p-1} \mbox{ for } p \in [m', n-1], 
\\
q_{2n-2,m'} = q_{2n-3,m'-1} \in [q_{2n-4,m'-1}, \lambda_{m'}]. 
\end{array}
\right.
\end{equation}
Note that, if $q_{2n-2,m'} = q_{2n-3,m'-1} = q_{2n-4,m'-1}$, then such
$Q$ is $Q_{m'}^{+}$ defined in Definition~\ref{def:Q^pm}. 
If $q_{2n-2,m'} = q_{2n-3,m'-1} = \lambda_{m'}$, then such $Q$ is 
$Q_{m'-1}^{-}$. 

For $Q \in GT(\lambda)$ satisfying \eqref{eq:5.13(1)}, \eqref{eq:5.11} 
and \eqref{eq:5.13(4)'}, the equation \eqref{eq:5.10(2)}, with $j=m'$,
is 
\[
c(\tau_{m'-1,m'} Q; a) 
= 
\mbox{(nonzero constant)} 
\times 
\frac{a_{1}}{\eta_{1}} 
(\mathcal{D}_{2}^{\pm}(Q) + l_{2n-2,m'}) 
c(Q; a). 
\]
As before, let 
\begin{align*}
& 
f(Q_{m'}^{+}, m'; t) 
= 
n(Q_{m'}^{+}, m', a)^{-1} c(Q_{m'}^{+}; a), 
\\ 
& 
f(Q_{m'-1}^{-}, m'-1; t) 
= 
n(Q_{m'-1}^{-}, m'-1, a)^{-1} c(Q_{m'-1}^{-}; a). 
\end{align*}
Since 
$(\tau_{m'-1,m'})^{\lambda_{m'} - q_{2n-4,m'-1}} 
Q_{m'}^{+} 
= 
Q_{m'-1}^{-}$, 
we obtain a shift operator 
\begin{align*}
& 
f(Q_{m'-1}^{-}, m'-1; t) 
= 
\mbox{(non-zero constant)} \times S_{2}(m') f(Q_{m'}^{+}, m'; t), 
\\
& 
S_{2}(m') := 
\prod_{q=q_{2n-4,m'-1}}^{\lambda_{m'}-1} 
(\d_{t_{2}} \pm \Lambda_{n} - q - n + m').  
\end{align*}
Recall that, for
\[
K_{6}(m')(Q_{m'}^{+}) 
= 
\{m-1\} \cup 
\{j_{3}, \dots, j_{N_{1}}\} \cup \{k_{N_{1}+1}, \dots, k_{N_{2}}\},
\] 
$K_{7}(m')(Q_{m'}^{+})$ is defined to be 
\[
K_{7}(m')(Q_{m'}^{+}) 
= 
\{j_{3}-1, \dots, j_{N_{1}}-1\} \cup \{k_{N_{1}+1}, \dots, k_{N_{2}}\}. 
\]
Suppose $Q_{m'-1}^{-} \not= Q_{m'}^{+}$. 
Since 
$K_{6}(m'-1)(Q_{m'-1}^{-}) = K_{6}(m')(Q_{m'}^{+}) \cup \{-m'\}$, 
we have 
\begin{align*}
& 
K_{6}(m'-1)(Q_{m'-1}^{-}) 
= 
\{m-1\} \cup \{j_{3}, \dots, j_{N_{1}}\} \cup 
\{-m', k_{N_{1}+1}, \dots, k_{N_{2}}\}, 
\\
& 
K_{7}(m'-1)(Q_{m'-1}^{-}) 
= 
\{j_{3}-1, \dots, j_{N_{1}}-1\} \cup 
\{-m', k_{N_{1}+1}, \dots, k_{N_{2}}\}. 
\end{align*}
For simplicity, 
denote $\alpha_{j}(Q_{m'}^{+})$,
$\beta_{j}(Q_{m'}^{+})$ by $\alpha_{j}'$, $\beta_{j}'$, 
and 
$\alpha_{j}(Q_{m'-1}^{-})$,
$\beta_{j}(Q_{m'-1}^{-})$ by $\alpha_{j}''$, $\beta_{j}''$, respectively. 
They are related as follows: 
\begin{align*}
&
\alpha_{p}'' = \alpha_{p}' \quad \mbox{for } 1 \leq p \leq N_{1}, 
& & 
\beta_{p}'' = \beta_{p}' \quad \mbox{for } 3 \leq p \leq N_{1}, 
\\
& 
\alpha_{N_{1}+1}'' = \pm \Lambda_{n} - \lambda_{m'} - n + m' 
& & 
\beta_{N_{1}+1}'' = \pm \Lambda_{n} - q_{2n-4,m'-1} - n + m' + 1 
\\
& 
\alpha_{p}'' = \alpha_{p-1}' \quad \mbox{for } 
N_{1}+2 \leq p \leq N_{2}+1, 
& & 
\beta_{p}'' = \beta_{p-1}' \quad \mbox{for } 
N_{1}+2 \leq p \leq N_{2}+1, 
\end{align*}
and they are ordered as follows: 
\begin{align*}
\alpha_{1}'& >  \alpha_{2}' \geq \beta_{3}' > \alpha_{3}' \geq \dots 
> \alpha_{N_{1}-1}' \geq \beta_{N_{1}}' > \alpha_{N_{1}}' 
\\
&> 0 
> \beta_{N_{1}+1}'' > \alpha_{N_{1}+1}'' 
\geq \beta_{N_{1}+1}' > \alpha_{N_{1}+1}' \geq \beta_{N_{1}+2}' > \dots 
> 
\alpha_{N_{2}-1}' \geq \beta_{N_{2}}' > \alpha_{N_{2}}'. 
\end{align*}
Since 
\begin{align*}
S_{2}&(m') (\pm t_{2})^{s}
\\
&=
\frac{\Gamma(\beta_{N_{1}+1}''+s)}{\Gamma(1+\alpha_{N_{1}+1}''+s)} 
(\pm t_{2})^{s}
= 
(-)^{\beta_{N_{1}+1}''-\alpha_{N_{1}+1}''-1} 
\frac{\Gamma(-\alpha_{N_{1}+1}''-s)}{\Gamma(1-\beta_{N_{1}+1}''-s)} 
(\pm t_{2})^{s}, 
\end{align*}
we have the following proposition. 
\begin{proposition}\label{prop:S_2}
For $L = K$ or $I$,  
$S_{2}(m') 
f_{j}^{L}(Q_{m'}^{+}, m'; t)$ is a non-zero constant multiple of 
\begin{enumerate}
\item
$f_{j}^{L}(Q_{m'-1}^{-}, m'-1; t)$ if $j = 1, \dots, N_{1}$, 
\item
$f_{j+1}^{L}(Q_{m'-1}^{-}, m'-1; t)$ 
if $j = N_{1}+1, \dots, N_{2}$. 
\end{enumerate}
\end{proposition}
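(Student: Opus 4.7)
The plan is to pull the differential operator $S_{2}(m')$ inside the contour integral that defines $f_{j}^{L}(Q_{m'}^{+}, m'; t)$, rewrite the resulting scalar factor as a ratio of Gammas via the identity displayed immediately above the proposition, and then read off the matching with $f_{j'}^{L}(Q_{m'-1}^{-}, m'-1; t)$ for the appropriate shifted index $j'$.

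First I would observe that $S_{2}(m')$ is a polynomial in $\d_{t_{2}}$ with scalar coefficients, so it commutes with the contour integration and acts on the factor $(\pm t_{2})^{s}$ as multiplication by the scalar $\prod_{q}(s \pm \Lambda_{n} - q - n + m')$. A direct Pochhammer telescoping identifies this scalar with $\Gamma(\beta_{N_{1}+1}''+s)/\Gamma(1+\alpha_{N_{1}+1}''+s)$, and Euler's reflection formula (applied, as in the displayed computation before the proposition, using that $\beta_{N_{1}+1}'' - \alpha_{N_{1}+1}'' \geq 2$ is an integer) converts this to $(-1)^{\beta_{N_{1}+1}''-\alpha_{N_{1}+1}''-1} \Gamma(-\alpha_{N_{1}+1}''-s)/\Gamma(1-\beta_{N_{1}+1}''-s)$.

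Next I would insert this Gamma ratio into the integrand of $f_{j}^{L}(Q_{m'}^{+}, m'; t)$. Using the identifications $\alpha_{p}'' = \alpha_{p}'$ and $\beta_{p}'' = \beta_{p}'$ for $p \leq N_{1}$, together with $\alpha_{p+1}'' = \alpha_{p}'$ and $\beta_{p+1}'' = \beta_{p}'$ for $p \geq N_{1}+1$, the newly inserted factor plays exactly the role of the $p = N_{1}+1$ Gamma factor in the integrand of $f_{j'}^{L}(Q_{m'-1}^{-}, m'-1; t)$, with $j' = j$ if $j \leq N_{1}$ and $j' = j+1$ if $j > N_{1}$. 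For the contour, since $\alpha_{j}'' = \alpha_{j}'$ (case $j \leq N_{1}$) or $\alpha_{j+1}'' = \alpha_{j}'$ (case $j > N_{1}$), the crossing interval $-\alpha_{j}'-1 < s < -\alpha_{j}'$ of $C_{j}$ coincides with the one required for $C_{j'}$; moreover, because the new Gamma ratio is in fact a polynomial of degree $\beta_{N_{1}+1}'' - \alpha_{N_{1}+1}'' - 1$ in $s$ (its would-be numerator poles are annihilated by denominator zeros), no contour modification is needed.

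The main obstacle is the combinatorial bookkeeping in the second step: one must keep track of the index shift at $N_{1}+1$ and confirm that the ordering $\alpha_{N_{1}}' > 0 > \beta_{N_{1}+1}'' > \alpha_{N_{1}+1}'' \geq \beta_{N_{1}+1}' > \alpha_{N_{1}+1}'$ is precisely what forces the new pair $(\alpha_{N_{1}+1}'', \beta_{N_{1}+1}'')$ to insert at position $N_{1}+1$ in the ordered list for $Q_{m'-1}^{-}$, which in turn dictates the case split $j \leq N_{1}$ versus $j > N_{1}$. After this matching is verified, the overall constant is the sign $(-1)^{\beta_{N_{1}+1}''-\alpha_{N_{1}+1}''-1}$ times the Gamma-normalization factors absorbed from the inserted Gamma ratio; both are manifestly nonzero, completing the proof.
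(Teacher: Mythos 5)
Your proposal is correct and follows essentially the same route as the paper: the paper's own justification is precisely the displayed computation $S_{2}(m')(\pm t_{2})^{s} = \frac{\Gamma(\beta_{N_{1}+1}''+s)}{\Gamma(1+\alpha_{N_{1}+1}''+s)}(\pm t_{2})^{s} = (-1)^{\beta_{N_{1}+1}''-\alpha_{N_{1}+1}''-1}\frac{\Gamma(-\alpha_{N_{1}+1}''-s)}{\Gamma(1-\beta_{N_{1}+1}''-s)}(\pm t_{2})^{s}$, combined with the stated identifications and ordering of the primed and double-primed parameters, exactly as you argue. Your write-up merely makes explicit the steps the paper leaves implicit (pulling $S_{2}$ through the contour integral, noting the inserted ratio is a polynomial so no contour adjustment is needed, and tracking the index shift at $N_{1}+1$), all consistent with the paper.
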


Propositions~\ref{prop:S_1}, \ref{prop:S_2} enable us to judge whether
$f_{j}^{L}$, $j = 1, \dots, N_{2}$, $L = K, I$, generates the whole
solution of $\mathcal{D}_{\tilde{\lambda}, \eta} \phi = 0$. 

By Theorem~\ref{thm:corner}, $c(Q_{n-1}^{+}; a)$, therefore
$f(Q_{n-1}^{+}; t)$, determines all the $c(Q; a)$ containing the same
$\vect{q}_{1}, \dots, \vect{q}_{2n-4}$ parts as $Q_{n-1}^{+}$. 
Let $\alpha_{j} = \alpha_{j}(Q_{n-1}^{+})$. 
This is the leading exponent of $f_{j}^{L}(Q_{n-1}^{+}; t)$ 
at $t_{2} = 0$. 
Since $\alpha_{1} = \pm (\Lambda_{n} + \Lambda_{n+1})$ and 
$\alpha_{2} = \pm \Lambda_{n} + l_{2n-2,m-1} -1$, 
alternative use of Proposition~\ref{prop:S_1} and
Proposition~\ref{prop:S_2} implies that, if $j = 1, 2$ and $L = K, I$, 
\begin{align*}
f_{j}^{L}(Q_{n-1}^{+}, n-1; t) 
&\overset{\mbox{\scriptsize Prop.\ref{prop:S_1}}}{\longrightarrow}
f_{j}^{L}(Q_{n-2}^{-}, n-2; t) 
\overset{\mbox{\scriptsize Prop.\ref{prop:S_2}}}{\longrightarrow}
f_{j}^{L}(Q_{n-2}^{+}, n-2; t) 
\longrightarrow 
\dots 
\\
& \dots 
\overset{\mbox{\scriptsize Prop.\ref{prop:S_1}}}{\longrightarrow}
f_{j}^{L}(Q_{m-1}^{-}, m-1; t) 
\end{align*}
On the other hand, if $j \geq 3$, there exists 
$m' \in K_{6}(n-1)(Q_{n-1}^{+}) \cap [m, n-2]$ such that 
$\alpha_{j}(Q_{n-1}^{+}) = \pm \Lambda_{n} + l_{2n-2,m'} - 1$. 
In this case, 
\begin{align*}
& f_{j}^{L}(Q_{n-1}^{+}, n-1; t) 
\overset{\mbox{\scriptsize Prop.\ref{prop:S_1}}}{\longrightarrow}
f_{j}^{L}(Q_{n-2}^{-}, n-2; t) 
\overset{\mbox{\scriptsize Prop.\ref{prop:S_2}}}{\longrightarrow}
f_{j}^{L}(Q_{n-2}^{+}, n-2; t) 
\longrightarrow 
\dots 
\\
& \dots 
\overset{\mbox{\scriptsize Prop.\ref{prop:S_2}}}{\longrightarrow} 
f_{j}^{L}(Q_{m'}^{-}, m'; t) 
\overset{\mbox{\scriptsize Prop.\ref{prop:S_1}}}{\longrightarrow}
\mbox{not a solution of \eqref{eq:5.20(6)} and \eqref{eq:5.20(7)} for
  $Q_{m'}^{+}$}. 
\end{align*}
This implies that only $f_{j}^{L}(Q_{n-1}^{+}, n-1; t)$, 
$j = 1, 2$, $L = K, I$, can generate
a solution of the whole differential-difference equations. 
It follows that the constant $C$ in Theorem~\ref{thm:Bernstein} is at
most four. 
In the special case when $m = n$, we can check the compatibility of
the equations in Lemma~\ref{lemma:5.8}. 
Therefore, this constant $C$, which is independent of $m$, is just
four. 

\begin{theorem}\label{thm:last theorem, algebraic}
Let $\Lambda \in \Xi_{m,\pm}$, $m = 2, \dots, n$. 
The functions $f_{1}^{K}(Q_{n-1}^{+}, n-1; t)$, 
$f_{1}^{I}(Q_{n-1}^{+}, n-1; t)$, $f_{2}^{K}(Q_{n-1}^{+}, n-1; t)$, 
$f_{2}^{I}(Q_{n-1}^{+}, n-1; t)$, 
with $Q_{n-1}^{+}$ defined in Definition~\ref{def:Q^pm} 
completely determines the non-zero solutions of 
$\mathcal{D}_{\tilde{\lambda}, \eta} \phi = 0$. 
\end{theorem}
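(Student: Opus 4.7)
The plan is to combine three tools already established in the paper: the corner-vector reduction of Theorem \ref{thm:corner}, the basis of local solutions of the scalar system from Proposition \ref{proposition:local solutions}, and the shift operators $S_{1}(m', Q)$ and $S_{2}(m')$ from Propositions \ref{prop:S_1} and \ref{prop:S_2}. First, I would note that by Theorem \ref{thm:corner}, any solution $\phi$ of $\mathcal{D}_{\tilde{\lambda}, \eta} \phi = 0$ is determined once we know $c(Q_{n-1}^{+}; a)$, and by \eqref{eq:n} this amounts to knowing $f(Q_{n-1}^{+}, n-1; t)$. Since $f(Q_{n-1}^{+}, n-1; t)$ must satisfy the scalar equations \eqref{eq:5.20(6)} and \eqref{eq:5.20(7)}, it lies in the $2N_{2}$-dimensional space spanned by the functions $f_{j}^{L}(Q_{n-1}^{+}, n-1; t)$, $j = 1, \dots, N_{2}$, $L = K, I$, of Proposition \ref{proposition:local solutions}. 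The theorem is therefore equivalent to the statement that precisely four of these basis elements, namely those with $j \in \{1,2\}$ and $L \in \{K, I\}$, extend to solutions of the full differential-difference system.

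To test extendability, I would propagate each candidate through the chain of corner vectors $Q_{n-1}^{+} \to Q_{n-2}^{-} \to Q_{n-2}^{+} \to \cdots \to Q_{m-1}^{-}$ using Propositions \ref{prop:S_1} and \ref{prop:S_2} alternately. The key observation is the correspondence between the basis index $j$ and the leading exponent $\alpha_{j}(Q_{n-1}^{+})$ at $t_{2} = 0$: by the definition of $\alpha_{p}$ in \S\ref{subsection:solutions of scalar equations}, $\alpha_{1}$ equals $\pm(\Lambda_{n} + \Lambda_{n+1})$ and $\alpha_{2}$ equals $\pm \Lambda_{n} + l_{2n-2,m-1} - 1$, while $\alpha_{j}$ for $j \geq 3$ equals $\pm \Lambda_{n} + l_{2n-2,m'} - 1$ for a unique $m' \in K_{6}(n-1) \cap [m, n-2]$. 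Tracking the index bookkeeping in Proposition \ref{prop:S_1} (under which $K_{6}(m')(Q') = K_{6}(m')(Q)$ generically but $K_{6}(m')(Q') = K_{6}(m')(Q) \setminus \{m'\}$ when one of the equality cases occurs) and in Proposition \ref{prop:S_2} (which inserts a new index $-m'$ and relabels), the candidates with $j = 1, 2$ transport to actual basis elements $f_{j}^{L}$ at every corner down to $Q_{m-1}^{-}$, whereas for $j \geq 3$ the chain reaches $Q_{m'}^{-}$ and then produces $S_{1}(m', \cdot) f_{N_{1}}^{L}$, which by the last clause of Proposition \ref{prop:S_1} is \emph{not} a solution of \eqref{eq:5.20(6)}--\eqref{eq:5.20(7)} at $Q_{m'}^{+}$. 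This shows $j \geq 3$ contributes nothing.

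The main obstacle is verifying the failure of extension for $j \geq 3$, since this requires a delicate check that $S_{1}(m', \cdot) f_{N_{1}}^{L}(Q_{m'}^{-}, m'; t)$ is nonzero yet fails to solve the scalar system at $Q_{m'}^{+}$; this must be done by comparing the leading exponents before and after the shift, using the explicit transformation of $\alpha_{N_{1}}, \beta_{N_{1}}$ spelled out just before Proposition \ref{prop:S_1}. Finally, the count of four surviving basis elements must be reconciled with the Bernstein-degree formula in Theorem \ref{thm:Bernstein}: by directly checking the compatibility of the equations in Lemma \ref{lemma:5.8} in the special case $m = n$ (where the combinatorics collapses and the shift chain is trivial), one fixes the undetermined constant $C$ to be exactly four, which matches the four candidates $f_{j}^{L}(Q_{n-1}^{+}, n-1; t)$ with $j \in \{1,2\}$, $L \in \{K, I\}$ identified above, completing the proof.
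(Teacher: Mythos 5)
Your proposal is correct and follows essentially the same route as the paper: the corner-vector reduction of Theorem~\ref{thm:corner}, the basis of Proposition~\ref{proposition:local solutions}, the alternating chain of shift operators from Propositions~\ref{prop:S_1} and~\ref{prop:S_2} with the leading-exponent bookkeeping that kills the candidates $j \geq 3$ at $Q_{m'}^{+}$, and the final reconciliation with the constant $C$ of Theorem~\ref{thm:Bernstein} via the compatibility check at $m = n$. The only cosmetic difference is that you phrase the obstruction step as an extra verification to be carried out, whereas the paper has already packaged it into the $j = N_{1}$ clause of Proposition~\ref{prop:S_1}.
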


\subsection{Continuous Whittaker models} 

So far, we have investigated the space 
$\Hom_{\lie{g}, K_{\R}}(\pi_{\Lambda}^{\ast}, 
C^{\infty}(G_{\R}/N_{\R}; \eta))$. 
Here, we specify the subspace of continuous intertwining
operators 
$\Hom_{G_{\R}}^{\infty}((\pi_{\Lambda}^{\ast})_{\infty}, 
C^{\infty}(G_{\R}/N_{\R}; \eta))$. 
Note that the latter space is isomorphic to 
$\Wh_{-\eta}^{\infty}(\pi_{\Lambda}^{\ast})$. 
\begin{proposition}\label{prop:continuous intertwining space}
Suppose $G_{\R} = Spin(2n, 2)$. 
Let $\pi_{\Lambda}$ be the discrete series representation with the
Harish-Chandra parameter $\Lambda$. 
\begin{enumerate}
\item
Suppose $\Lambda \in \Xi_{m,+}$ and $\Lambda' \in \Xi_{m,-}$, 
$m = 2, \dots, n$. 
If $\Wh_{-\eta}^{\infty}(\pi_{\Lambda}^{\ast}) \not= \{0\}$, 
then $\Wh_{-\eta}^{\infty}(\pi_{\Lambda'}^{\ast}) = \{0\}$. 
\item
Let $\eta$ and $\eta'$ be non-degenerate unitary characters defined as
in \eqref{eq:character}. 
Suppose $\eta_{2} \eta_{2}' < 0$. 
If $\Wh_{-\eta}^{\infty}(\pi_{\Lambda}^{\ast}) \not= \{0\}$, 
then $\Wh_{-\eta'}^{\infty}(\pi_{\Lambda}^{\ast}) = \{0\}$. 
\item
Suppose $\Lambda \in \Xi_{m,\pm}$, $m = 2, \dots, n$. 
If $\Wh_{-\eta}^{\infty}(\pi_{\Lambda}^{\ast}) \not= \{0\}$, then 
\[
\dim \Wh_{-\eta}^{\infty}(\pi_{\Lambda}^{\ast}) 
= 
\sum_{\genfrac{}{}{0pt}{}{
\lambda_1 \geq \mu_1 \geq \lambda_2 \geq \dots 
\geq \lambda_{m-2} \geq \mu_{m-2} \geq \lambda_{m-1}}
{\lambda_m \geq \mu_1' \geq \lambda_{m+1} \geq \dots 
\geq \lambda_{n-1} \geq \mu_{n-m}' \geq |\lambda_n|} }
\dim
V_{(\mu_1,\dots,\mu_{m-2},\mu_1',\dots,\mu_{n-m}')}^{Spin(2n-3,\C)}. 
\]
\end{enumerate}
\end{proposition}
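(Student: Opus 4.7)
The plan is to combine the algebraic classification from Theorem~\ref{thm:last theorem, algebraic} with Matumoto's dimension formula (Theorem~\ref{thm:matu-2}(2)), and then to decide which of the four basis solutions $f_1^K, f_1^I, f_2^K, f_2^I$ of the differential-difference equation $\mathcal{D}_{\tilde{\lambda}, \eta} \phi = 0$ actually define continuous intertwining operators by inspecting their asymptotic behavior on $A_\R$.

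First I would rule out the $I$-type solutions. Under the change of variable $t_1 = \eta_1/a_1$, the modified Bessel function $I_\nu(t_1)$ grows exponentially as $a_1 \to 0^+$, which violates the moderate growth condition for vectors in the Casselman--Wallach $C^\infty$-globalization $(\pi_\Lambda^*)_\infty$. Since any continuous intertwiner must produce moderate-growth Whittaker functions, the basis functions $f_1^I, f_2^I$ cannot contribute. The remaining functions $f_1^K, f_2^K$, whose kernels contain $K_\nu(t_1)$ with exponential decay at $a_1 \to 0^+$, are the only candidates.

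Next I would distinguish $f_1^K$ from $f_2^K$ via the sign of $t_2 = \mp \eta_1 \eta_2 / (2 a_2)$. The Mellin--Barnes integrals for $f_1^K$ and $f_2^K$ carry the factors $t_2^s$ and $(-t_2)^s$ respectively, and these are single-valued real-analytic functions of $a_2 > 0$ only on the half-lines $t_2 > 0$ and $t_2 < 0$. When $\mp \eta_2 > 0$ we have $t_2 > 0$, so $f_1^K$ defines a smooth function on all of $A_\R$ while $f_2^K$ does not; when $\mp \eta_2 < 0$ the situation reverses. Combined with the fact that Theorem~\ref{thm:last theorem, algebraic} asserts that the four functions $f_j^L$ already span the image of all algebraic intertwiners, this exhibits an injection from the continuous Whittaker space into the span of $f_1^K$-type solutions in the good case, yielding the upper bound $\dim \Wh_{-\eta}^\infty(\pi_\Lambda^*) \leq \text{(the sum)}$.

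For the reverse bound and the vanishing statement, I would invoke Matumoto's dimension formula
\[
\dim \Wh_{-\eta}^\infty(\pi_\Lambda^*) = \frac{\#\mathcal{P}(G_\R)}{\#W_{G_\R}}\, \Deg(\pi_\Lambda^*).
\]
Here $W_{G_\R}$ is the Weyl group of the restricted root system of type $B_2$, so $\#W_{G_\R} = 8$, and a direct classification shows $\#\mathcal{P}(Spin(2n,2)) = 2$, giving the ratio $1/4$. Combined with $\Deg \pi_\Lambda^* = 4 \cdot \text{(the sum)}$ from Theorem~\ref{thm:Bernstein} and the confirmation $C=4$ from Theorem~\ref{thm:last theorem, algebraic}, this matches the upper bound exactly. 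The main obstacle is identifying the correct sign pairing between $\Lambda \in \Xi_{m,\pm}$ and the sign of $\eta_2$: one must show via a wave front set computation, or equivalently via an explicit Jacquet-type integral construction, that $-\eta \in \WF(\pi_\Lambda^*)$ precisely when $\mp \eta_2 > 0$ (using Theorem~\ref{thm:matu-2}(1)). This single fact simultaneously supplies non-vanishing in the good case, forces vanishing in the bad case, and thereby completes the proposition.
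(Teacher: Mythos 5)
Your part (3) is essentially the paper's own argument: Matumoto's formula $\dim \Wh_{-\eta}^{\infty}(\pi_{\Lambda}^{\ast}) = \bigl(\#\mathcal{P}(G_{\R})/\#W_{G_{\R}}\bigr)\Deg \pi_{\Lambda}^{\ast}$ with $\#\mathcal{P}(G_{\R})=2$ and $\#W_{G_{\R}}=8$, combined with $\Deg \pi_{\Lambda}^{\ast} = 4\times(\mbox{the sum})$, which comes from Theorem~\ref{thm:Bernstein}, Remark~\ref{rem:contragredient} (to pass from $\pi_{\Lambda}$ to $\pi_{\Lambda}^{\ast}$), and the value $C=4$ fixed by Theorem~\ref{thm:last theorem, algebraic}. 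Your preliminary ``upper bound'' obtained by embedding the continuous space into the span of the $f_{1}^{K}$-type solutions is unnecessary for this: once $\Wh_{-\eta}^{\infty}(\pi_{\Lambda}^{\ast})\not=\{0\}$, Theorem~\ref{thm:matu-2}(1) puts $-\eta$ in the wave front set and Theorem~\ref{thm:matu-2}(2) gives the dimension as an equality outright.

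The genuine gap is in parts (1) and (2). You correctly reduce both to a wave front set statement --- that the discrete series attached to $\Xi_{m,+}$ and to $\Xi_{m,-}$ have wave front sets equal to \emph{different} principal nilpotent $G_{\R}$-orbits, and that $\eta$, $\eta'$ with $\eta_{2}\eta_{2}'<0$ lie (after multiplication by $\I^{-1}$) in different such orbits --- but you then simply assert that ``one must show'' this by a wave front set computation or a Jacquet-type integral, with no indication of how. That assertion is exactly the content of (1) and (2), and it is not reachable by the tools you use elsewhere: wave front sets of discrete series are not computed directly anywhere in this paper. The paper's route is indirect: the associated varieties are already known from Chang's theory (the generic points $\xi = X_{-e_{1}\pm e_{n+1}} + X_{e_{n}\mp e_{n+1}} + X_{-e_{n}\mp e_{n+1}}$ in the proof of Proposition~\ref{prop:GK dim} generate two distinct $K$-orbits on $\lie{p}$ for the two sign families); the theorem of Schmid and Vilonen identifies the wave front cycle with the associated cycle under the Kostant--Sekiguchi correspondence; hence the two families have the two distinct principal nilpotent $G_{\R}$-orbits as wave front sets, and Theorem~\ref{thm:matu-2}(1) then yields both (1) and (2). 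Without this bridge (or some substitute for it), your proposal establishes only part (3). A secondary defect, though immaterial for this Proposition: distinguishing $f_{1}^{K}$ from $f_{2}^{K}$ by ``single-valuedness'' of $t_{2}^{s}$ versus $(-t_{2})^{s}$ is not a valid criterion --- both are defined on $\C^{2}$ minus the coordinate hyperplanes by Proposition~\ref{proposition:local solutions}; the distinction the paper actually uses (for Theorem~\ref{thm:continuous intertwining space}, not here) is Wallach's moderate growth condition, i.e.\ rapid decay of the $f_{1}^{K}$ solution versus growth of the others.
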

\begin{proof}
For our case $G_{\R} = Spin(2n, 2)$, there are two principal nilpotent
$G_{\R}$-orbits on $\lier{g}$, 
and $W_{G_{\R}} \simeq \mathfrak{S}_{2} \ltimes (\Z/2\Z)^{2}$. 
Therefore, (3) follows from Theorems~\ref{thm:matu-2},
\ref{thm:Bernstein}, Remark~\ref{rem:contragredient} and
Theorem~\ref{thm:last theorem, algebraic}. 

Via the Kostant-Sekiguchi correspondence (\cite{S}), 
these two orbits correspond to the two nilpotent $K$-orbits on
$\lie{p}$ generated by 
$X_{- e_{1} \pm e_{n+1}} + X_{e_{n} \mp e_{n+1}} 
+ X_{- e_{n} \mp e_{n+1}}$ 
(see the proof of Proposition~\ref{prop:GK dim}). 
Schmid and Vilonen (\cite{SV}) proved that the associated cycle of a
Harish-Chandra $(\lie{g}, K_{\R})$-module and the wave front cycle of
it are related under the Kostant-Sekiguchi correspondence. 
It follows that, for $m = 2, \dots, n$, the wave front set 
of the discrete series $\pi_{\Lambda}$ with $\Lambda \in \Xi_{m,+}$ and 
that of $\pi_{\Lambda'}$ with $\Lambda' \in \Xi_{m,-}$ are different
principal nilpotent $G_{\R}$-orbits. 
Therefore, (1) is a consequence of Theorem~\ref{thm:matu-2} (1). 
Recall the identification of a unitary character with an element
of $\I (\lier{n}/[\lier{n}, \lier{n}])^{\ast} 
\subset 
\I \lier{g}^{\ast} 
\simeq 
\I \lier{g}$ (cf. \S \ref{subsection:Whittaker models}). 
It is easy to check that $\eta$ and $\eta'$ are contained in different
principal nilpotent $G_{\R}$-orbits multiplied by $\I$. 
Therefore, (2) also follows from Theorem~\ref{thm:matu-2} (1). 
\end{proof}

By a theorem of Wallach (\cite{W}), 
if $\psi \in \Hom_{G_{\R}}^{\infty}((\pi_{\Lambda}^{\ast})_{\infty}, 
C^{\infty}(G_{\R}/N_{\R}; \eta))$, 
then $\psi(v)(g)$, 
$v \in (\pi_{\Lambda}^{\ast})_{\infty}$, $g \in G_{\R}$, 
must be a moderate-growth function. 
We show that, if $\mp \eta_{2} > 0$, 
then the function $f_{1}^{K}$ defined in
Proposition~\ref{proposition:local solutions} generates a rapidly
decreasing Whittaker function. 

Recall the definition \eqref{eq:variable t} of $t_{1}, t_{2}$. 
Since $a_{1}, a_{2} > 0$, $\eta_{1} > 0$ and $\mp \eta_{2} > 0$, 
we have $t_{1} > 0$ and $t_{2} > 0$. 
Let 
\begin{align*}
& 
S_{3} = 
\prod_{p=3}^{N_{2}} \prod_{q=\alpha_{p}+1}^{\beta_{p}-1} 
(-\d_{t_{2}} - q), 
& 
&
f_{0}(t) 
= 
\frac{1}{2 \pi \I} 
\int_{C_{1}} \Gamma(-\alpha_{1}-s) \Gamma(-\alpha_{2}-s) 
t_{2}^{s}\,  
K_{-s}(t_{1})\, ds. 
\end{align*} 
Then $f_{1}^{K} = S_{3} f_{0}$, so 
we show that $f_{0}$ is a rapidly decreasing function. 
Recall an integral formula 
\[
K_{\nu}(z) 
= 
\frac{1}{2} 
\left(\frac{z}{2}\right)^{\nu} 
\int_{0}^{\infty} 
\exp\left(- u - \frac{z^{2}}{4u}\right) u^{-\nu-1} du 
\]
of $K_{\nu}(z)$. 
Then $f_{0}$ is 
\begin{align*}
&
\frac{1}{2 \pi \I} 
\int_{C_{1}} \Gamma(-\alpha_{1}-s) \Gamma(-\alpha_{2}-s) 
t_{2}^{s}\,  
\left(
\frac{1}{2} 
\left(\frac{t_{1}}{2}\right)^{-s} 
\int_{0}^{\infty} 
\exp\left(- u - \frac{t_{1}^{2}}{4u}\right) u^{s-1} du
\right)\, ds
\\
&=
\int_{0}^{\infty} 
\exp\left(- u - \frac{t_{1}^{2}}{4u}\right) 
\left(
\frac{1}{4 \pi \I} 
\int_{C_{1}} \Gamma(-\alpha_{1}-s) \Gamma(-\alpha_{2}-s) 
\left(\frac{2 t_{2} u}{t_{1}}\right)^{s}\, ds 
\right) \frac{du}{u}. 
\end{align*}
By residue calculus, the inner integral is expressed by a $K$-Bessel
function, 
and then we get 
\[
f_{0} 
= 
\int_{0}^{\infty} 
\exp\left(- u - \frac{t_{1}^{2}}{4u}\right) 
\left(\frac{2 t_{2} u}{t_{1}}\right)^{-(\alpha_{1}+\alpha_{2})/2} 
K_{\alpha_{1} - \alpha_{2}} 
\left(2 \sqrt{\frac{2 t_{2} u}{t_{1}}}\right)
\frac{du}{u}. 
\]
This is essentially the same as the function $h_{r,0}$
treated in \cite[Theorem~4.4]{HO}. 
This function is proved to be a rapidly decreasing function there. 
It is not hard to see that this function generates a rapidly
decreasing solution of $\mathcal{D}_{\tilde{\lambda}, \eta} \phi = 0$. 
Therefore, the intertwining operator corresponding to this solution
is an element of 
$\Hom_{G_{\R}}^{\infty}((\pi_{\Lambda}^{\ast})_{\infty}, 
C^{\infty}(G_{\R}/N_{\R}; \eta))$. 
This result, 
together with Proposition~\ref{prop:continuous intertwining space}, 
implies the following theorem. 
\begin{theorem}\label{thm:continuous intertwining space}
Suppose $G_{\R} = Spin(2n, 2)$. 
Let $\pi_{\Lambda}$ be the discrete series representation with the
Harish-Chandra parameter $\Lambda \in \Xi_{m,\pm}$, 
$m = 2, \dots, n$. 
If $\mp \eta_{2} > 0$, then 
\begin{align*}
\dim &
\Hom_{G_{\R}}^{\infty}((\pi_{\Lambda}^{\ast})_{\infty}, 
C^{\infty}(G_{\R}/N_{\R}; \eta))
\\
&= 
\sum_{\genfrac{}{}{0pt}{}{
\lambda_1 \geq \mu_1 \geq \lambda_2 \geq \dots 
\geq \lambda_{m-2} \geq \mu_{m-2} \geq \lambda_{m-1}}
{\lambda_m \geq \mu_1' \geq \lambda_{m+1} \geq \dots 
\geq \lambda_{n-1} \geq \mu_{n-m}' \geq |\lambda_n|} }
\dim
V_{(\mu_1,\dots,\mu_{m-2},\mu_1',\dots,\mu_{n-m}')}^{Spin(2n-3,\C)}. 
\end{align*}
Each continuous intertwining operator corresponds to $f_{1}^{K}$
defined in Theorem~\ref{proposition:local solutions}. 
On the other hand, if $\mp \eta_{2} < 0$, then 
\[
\Hom_{G_{\R}}^{\infty}((\pi_{\Lambda}^{\ast})_{\infty}, 
C^{\infty}(G_{\R}/N_{\R}; \eta)) 
= 
\{0\}. 
\]
\end{theorem}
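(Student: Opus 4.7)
The plan is to combine three ingredients: Proposition~\ref{prop:continuous intertwining space}, which computes the dimension of $\Wh_{-\eta}^{\infty}(\pi_{\Lambda}^{\ast})$ whenever it is nonzero; Wallach's moderate-growth theorem, which identifies continuous intertwining operators with Whittaker functions of moderate growth; and an explicit rapid-decay estimate on the candidate function $f_1^{K}$ that makes this identification effective in our setting.

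First, I would settle the vanishing statement. By Theorem~\ref{thm:matu-2}(1), $\Wh_{-\eta}^{\infty}(\pi_{\Lambda}^{\ast})$ is nonzero if and only if $-\eta$ lies in $\WF(\pi_{\Lambda}^{\ast})$. Under the Kostant--Sekiguchi correspondence, as recalled in the proof of Proposition~\ref{prop:continuous intertwining space}, the two principal nilpotent $G_{\R}$-orbits in $\I\lier{g}^{\ast}$ are distinguished by the sign of $\eta_{2}$, and the wave front cycle of $\pi_{\Lambda}^{\ast}$ meets only one of them, determined by the sign $\mp$ of $\Xi_{m,\pm}$. Checking which sign corresponds to which orbit will show that $-\eta \in \WF(\pi_{\Lambda}^{\ast})$ exactly when $\mp\eta_{2}>0$, giving the vanishing for $\mp\eta_{2}<0$ and the dimension formula (via Proposition~\ref{prop:continuous intertwining space}(3)) for $\mp\eta_{2}>0$, once existence of at least one continuous intertwining operator is established.

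For existence in the case $\mp\eta_{2}>0$, I would exhibit a nonzero element of $\Hom_{G_{\R}}^{\infty}((\pi_{\Lambda}^{\ast})_{\infty},C^{\infty}(G_{\R}/N_{\R};\eta))$ arising from $f_{1}^{K}(Q_{n-1}^{+},n-1;t)$. By Wallach's theorem it suffices to show that this function induces a Whittaker function of moderate growth on $G_{\R}$, and in fact I aim to show rapid decay on $A_{\R}$. The coordinates $t_{1}=\eta_{1}/a_{1}>0$ and $t_{2}=\mp\eta_{1}\eta_{2}/2a_{2}>0$ are both positive, so I can factor $f_{1}^{K}=S_{3}f_{0}$, where $S_{3}$ is the polynomial differential operator $\prod_{p=3}^{N_{2}}\prod_{q=\alpha_{p}+1}^{\beta_{p}-1}(-\partial_{t_{2}}-q)$ and
\[
f_{0}(t)=\frac{1}{2\pi\I}\int_{C_{1}}\Gamma(-\alpha_{1}-s)\Gamma(-\alpha_{2}-s)\,t_{2}^{s}K_{-s}(t_{1})\,ds.
\]
Substituting the classical integral representation of $K_{\nu}$, interchanging integrals and evaluating the resulting Mellin--Barnes integral in $s$ by residues gives
\[
f_{0}=\int_{0}^{\infty}\exp\!\left(-u-\frac{t_{1}^{2}}{4u}\right)\left(\frac{2t_{2}u}{t_{1}}\right)^{-(\alpha_{1}+\alpha_{2})/2}K_{\alpha_{1}-\alpha_{2}}\!\left(2\sqrt{\tfrac{2t_{2}u}{t_{1}}}\right)\frac{du}{u},
\]
which matches the function $h_{r,0}$ of \cite[Theorem~4.4]{HO} and is therefore rapidly decreasing; applying $S_{3}$ preserves this decay.

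The main obstacle is bookkeeping rather than analysis. The hard part is verifying that the $f_{1}^{K}$ picked out here really is the function produced by the shift procedure of Section~\ref{subsection:whole solutions}, so that the rapid-decay estimate on the scalar coefficient $c(Q_{n-1}^{+};a)$ lifts to every coefficient $c(Q;a)$ in the $V_{\tilde{\lambda}}$-valued Whittaker function. This reduces, via Theorem~\ref{thm:corner} and Propositions~\ref{prop:S_1}--\ref{prop:S_2}, to the fact that the shift operators $S_{1}(m',Q)$ and $S_{2}(m')$ are polynomial differential operators in $\partial_{t_{1}},\partial_{t_{2}}$ of bounded order, hence preserve the Schwartz-type decay of $f_{0}$ on $t_{1},t_{2}>0$. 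Once this is in place, the resulting nonzero element of $\Hom_{G_{\R}}^{\infty}$ combined with Proposition~\ref{prop:continuous intertwining space}(3) gives equality of dimensions, completing the proof.
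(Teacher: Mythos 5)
Your proposal is correct in substance and, on the decisive analytic point, coincides with the paper's own proof: existence of a nonzero continuous functional for $\mp\eta_{2}>0$ is obtained exactly as in the paper, by writing $f_{1}^{K}=S_{3}f_{0}$, inserting the classical integral representation of $K_{\nu}$, evaluating the resulting Mellin--Barnes integral by residues, and recognizing $f_{0}$ as the rapidly decreasing function $h_{r,0}$ of \cite[Theorem~4.4]{HO}; this is then combined with Proposition~\ref{prop:continuous intertwining space}(3) to get the dimension formula. The one place you genuinely diverge is the vanishing half. You propose to prove it by directly deciding which of the two principal nilpotent $G_{\R}$-orbits carries $\WF(\pi_{\Lambda}^{\ast})$ and which contains $\I^{-1}(-\eta)$ (``checking which sign corresponds to which orbit''). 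The paper never performs this check, and for good reason: tracking signs through the Kostant--Sekiguchi correspondence and the Schmid--Vilonen theorem is precisely the bookkeeping its argument is built to avoid. Instead it uses only the exclusivity statements --- Proposition~\ref{prop:continuous intertwining space}(1),(2) say the space can be nonzero for at most one sign of $\eta_{2}$ (and at most one of $\Xi_{m,+},\Xi_{m,-}$) --- and then lets the analytic existence result for $\mp\eta_{2}>0$ pin down which sign occurs, so that vanishing for $\mp\eta_{2}<0$ follows with no orbit identification at all. Since you already establish existence, you should close the vanishing statement the same way; as written, your route leaves the orbit-matching as an unexecuted and unnecessary nontrivial step. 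A last small caution: $S_{1}(m',Q)$ is not literally a polynomial differential operator (it carries the rational coefficient $1/(t_{1}t_{2})$), so ``polynomial, hence preserves Schwartz-type decay'' needs a word of justification, though the paper itself only asserts this lifting step (``it is not hard to see'').
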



\begin{thebibliography}{9}

\bibitem{BB} 
A. Beilinson and J. Bernstein, 
Localisation de $\lie{g}$-modules, 
C. R. Acad. Sci. Paris Ser. I Math. \textbf{292} (1981), 15--18. 

\bibitem{C1}
J.T. Chang, 
Characteristic cycles of holomorphic discrete series, 
Trans. Amer. Math. Soc. \textbf{334} (1992), 213--227. 


\bibitem{C2}
J.T. Chang, 
Characteristic cycles of discrete series 
for $\R$-rank one groups, 
Trans. Amer. Math. Soc. \textbf{341} (1994), 603--622. 

\bibitem{E}
A. Erd\'elyi et al, 
{\it Higher Transcendental Functions, volume I}, 
McGraw-Hill, 1953. 


\bibitem{GT}
I.M. Gelfand and M.L. Tsetlin, 
Finite-dimensional representations of the group of orthogonal
  matrices, Dokl. Akad, Nauk SSSR \textbf{71} (1950), 1017--1020. 

\bibitem{HO}
T. Hayata and T. Oda, 
An explicit integral representation of Whittaker functions for
  the representations of the discrete series -- the cases of $SU(2,2)$
  --, J. Math. Kyoto Univ. \textbf{37-3} (1997), 519--530. 

\bibitem{Kn}
A. W. Knapp, 
Representation theory of Semisimple Groups: An Overview Based on
Examples, 
Princeton University Press, Princeton, (1986). 

\bibitem{Kr} 
H. Kraljevi\v{c}, 
Representation of the universal covering group of the group
  $SU(n,1)$, 
Glasnik Mathemati\v{c}ki \textbf{8(28)} No. 1 (1973), 23--72. 

\bibitem{M1}
H. Matumoto, 
Whittaker vectors and the Goodman-Wallach operators, 
Acta math. \textbf{161} (1988), 183--241. 

\bibitem{M2}
H. Matumoto, 
$C^{-\infty}$-Whittaker vectors corresponding to a principal nilpotent
orbit of a real reductive linear Lie group, and wave front set, 
Compositio Math. \textbf{82} (1992), 189--244. 

\bibitem{SV} 
W. Schmid and K. Vilonen, 
Characteristic cycles and wave front cycles of representations of
reductive Lie groups, 
Annals of Math. \textbf{151} (2000), 1071--1118. 

\bibitem{S} 
J. Sekiguchi, 
Remarks on nilpotent orbits of a symmetric pair, 
J. Math. Soc. Japan \textbf{39} (1987), 127--138. 

\bibitem{T}
K. Taniguchi, Discrete series Whittaker functions of $SU(n,1)$ and
$Spin(2n,1)$, J. Math. Sci. Univ. Tokyo {\bf 3} (1996), no.~2,
331--377.

\bibitem{Vogan}
D.A. Vogan Jr., 
Associated varieties and unipotent representations, 
in Harmonic analysis on reductive groups : 
Proceedings of the Bowdoin conference 1989, Bowdoin College, (1989), 
315--388. 

\bibitem{W} 
N. R. Wallach, 
Asymptotic expansions of generalized matrix entries of representations
of real reductive groups, 
Lie group representations, I, 287--369, Lecture Notes in
Math., \textbf{1024}, Springer Verlag, Berlin, 1983. 

\bibitem{Yamashita}
H. Yamashita, 
Embeddings of discrete series into induced representations of
  semisimple Lie groups, I -- General theory and the case of $SU(2,2)$
  --, Japan. J. Math. \textbf{16-1} (1990), 31--95. 


\end{thebibliography}
\end{document}